\theoremstyle{plain}
    \newtheorem{thm}{Theorem}
    \newtheorem{corol}[thm]{Corollary}
    \newtheorem{prop}{Proposition}[section]
    \newtheorem{otherthm}[prop]{Theorem}
    \newtheorem{lemma}[prop]{Lemma}
\theoremstyle{definition}
\theoremstyle{remark}
    \newtheorem{rem}[prop]{Remark}
    \newtheorem{question}{Question}
    \newtheorem*{claim}{Claim}
    \newtheorem*{ack}{Acknowledgements}
\numberwithin{equation}{section}
\newcommand{\R}{\mathbb{R}}\newcommand{\Z}{\mathbb{Z}}\newcommand{\N}{\mathbb{N}}
\newcommand{\D}{\mathbb{D}}
\newcommand{\cD}{\mathcal{D}}
\newcommand{\cE}{\mathcal{E}}\newcommand{\cF}{\mathcal{F}}\newcommand{\cG}{\mathcal{G}}
\newcommand{\cM}{\mathcal{M}}\newcommand{\cN}{\mathcal{N}}\newcommand{\cO}{\mathcal{O}}
\newcommand{\cQ}{\mathcal{Q}}\newcommand{\cR}{\mathcal{R}}\newcommand{\cS}{\mathcal{S}}
\newcommand{\cU}{\mathcal{U}}\newcommand{\cV}{\mathcal{V}}\newcommand{\cW}{\mathcal{W}}\newcommand{\cX}{\mathcal{X}}
\newcommand{\cZ}{\mathcal{Z}}
\newcommand{\eps}{\varepsilon}
\renewcommand{\epsilon}{\varepsilon}
\renewcommand{\setminus}{\smallsetminus}
\renewcommand{\emptyset}{\varnothing}
\newcommand{\GL}{\mathrm{GL}}
\newcommand{\CB}{\mathit{CB}}
\newcommand{\Id}{\mathit{Id}}
\newcommand{\Diff}{\mathrm{Diff}}
\newcommand{\PH}{\mathit{PH}}
\DeclareMathOperator*{\supp}{supp}
\DeclareMathOperator{\interior}{int}
\renewcommand{\angle}{\measuredangle}
\newcommand{\m}{\mathbf{m}}
\newcommand{\hm}{\widehat{m}}
\newcommand{\hJ}{\widehat{J}}
\newcommand{\hW}{\widehat{\cW}}
\newcommand{\hE}{\widehat{E}}
\newcommand{\hB}{\widehat{B}}
\def\CB{{\mathit{CB}}}
\keywords{Partial hyperbolicity, center bunching, ergodicity,
symplectic diffeomorphisms}
\subjclass[2000]{37D30, 37D25, 37J10}
\begin{document}

\title[Nonuniform Center Bunching and Generic Ergodicity]
{Nonuniform Center Bunching and
the Genericity of Ergodicity among $C^1$ Partially Hyperbolic Symplectomorphisms}

\author[A.~Avila]{Artur Avila}
\address{CNRS UMR 7599, Laboratoire de Probabilit\'es et Mod\`eles al\'eatoires,
Universit\'e de Paris VI \\ FRANCE}
\curraddr{IMPA \\ Estrada Dona Castorina, 110, 22460-320, Rio de Janeiro, RJ \\ BRAZIL}
\urladdr{www.impa.br/$\sim$avila}
\email{artur@math.sunysb.edu}

\author[J.~Bochi]{Jairo Bochi}
\address{Departamento de Matem\'atica, Pontif\'{\i}cia Universidade Cat\'olica do Rio de Janeiro \\
Rua Mq.\ S.\ Vicente, 225,
22453-900, Rio de Janeiro, RJ \\ BRAZIL}
\urladdr{www.mat.puc-rio.br/$\sim$jairo}
\email{jairo@mat.puc-rio.br}

\author[A.~Wilkinson]{Amie Wilkinson}
\address{Department of Mathematics, Northwestern University\\2033 Sheridan Rd., Evanston, IL, 60208 \\ USA}
\urladdr{www.math.northwestern.edu/$\sim$wilkinso}
\email{wilkinso@math.northwestern.edu}

\begin{abstract}
We introduce the notion of nonuniform center bunching for partially hyperbolic diffeomorphims,
and extend previous results by Burns--Wilkinson and Avila--Santamaria--Viana.
Combining this new technique with other constructions
we prove that $C^1$-generic partially hyperbolic symplectomorphisms are ergodic.
We also construct new examples of  stably ergodic partially hyperbolic diffeomorphisms.
\end{abstract}

\date{December, 2008. Revised May, 2009.}

\maketitle

\begin{center}
\textsc{Resserrement central non-uniforme et la généricité de l'ergodicité parmi les $C^1$-symplectomorphismes partiellement hyperboliques}
\end{center}

\medskip

{\footnotesize
\begin{quote}
\textsc{Résumé.} 
Nous introduisons une notion non-uniforme de resserrement central pour les difféomorphismes partiellement hyperboliques qui nous permet de généraliser quelques résultats de Burns - Wilkinson et Avila - Santamaria - Viana.
Cette nouvelle technique est utilisée, en combinaison avec d'autres constructions, pour démontrer
la généricité de l'ergodicité parmi les difféomorphismes  symplectiques partiellement hyperboliques de classe $C^1$.
De plus, nous obtenons de nouveaux exemples de dynamiques stablement ergodiques.
\end{quote}
}





\section{Introduction}

\subsection{Abundance of Ergodicity}

Let $(M,\omega)$ be a closed (ie, compact without boundary)
symplectic $C^\infty$ manifold of dimension $2N$.
Let $\Diff^1_\omega(M)$ be the space of $\omega$-preserving $C^1$ diffeomorphisms,
endowed with the $C^1$ topology.
Let $m$ be the measure induced by the volume form $\omega^{\wedge N}$,
normalized so that $m(M)=1$.

\smallskip

Let $\PH^1_\omega(M)$ be the
set of diffeomorphisms $f \in \Diff^1_\omega(M)$ that are partially hyperbolic, i.e.,
there exists an invariant splitting $T_x M = E^u(x) \oplus E^c(x) \oplus E^s(x)$, 
into nonzero bundles, 
and a positive integer $k$ such that for every $x \in M$,
\begin{equation}\label{e.def ph}
\begin{gathered}
\|(Df^k |E^u(x))^{-1}\|^{-1} > 1 > \|Df^k|E^s(x)\| \, , \\
\|(Df^k |E^u(x))^{-1}\|^{-1} > \|Df^k|E^c(x)\| \geq
\|(Df^k |E^c(x))^{-1}\|^{-1} > \|Df^k|E^s(x)\| \, .
\end{gathered}
\end{equation}
Such a splitting is automatically continuous.

\begin{thm}\label{t.main}
The set of ergodic diffeomorphisms is residual in $\PH^1_\omega(M)$.
\end{thm}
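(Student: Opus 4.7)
The plan is a standard two-step Baire argument: first show that ergodicity defines a $G_\delta$ subset of $\PH^1_\omega(M)$, and then show that ergodic diffeomorphisms are $C^1$-dense there.

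The $G_\delta$ step is routine. Fix a countable dense family $\{\phi_n\}\subset C^0(M,\R)$. For each $n$ and each pair of rationals $\epsilon,\delta>0$, the condition
\[
\exists N_0, \ m\bigl(\bigl\{x : \bigl|\tfrac{1}{N}\textstyle\sum_{j=0}^{N-1}\phi_n\circ f^j(x) - \int\phi_n\,dm\bigr|>\epsilon \text{ for some } N\geq N_0 \bigr\}\bigr)<\delta
\]
is $C^1$-open, because $m$ is fixed and finite Birkhoff sums depend $C^0$-continuously on $f$. Intersecting these conditions over $(n,\epsilon,\delta)$ gives precisely the ergodic diffeomorphisms.

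For density, given $f_0\in\PH^1_\omega(M)$ and a $C^1$-neighborhood $\cU$, I would produce an ergodic $f\in\cU$ in two stages. First, perturb $f_0$ inside $\cU$ to an accessible partially hyperbolic symplectomorphism $f_1$, invoking $C^1$-density of accessibility for symplectomorphisms (Dolgopyat--Wilkinson style). Second, apply a Bochi--Viana-type $C^1$ perturbation, preserving accessibility (which is $C^1$-open once achieved), to reach $f_2\in\cU$ whose center Lyapunov spectrum is ``non-degenerate'' in a useful sense. One should then be in one of two regimes: either uniform center bunching already holds (e.g.\ when all center exponents vanish and the center behaves isometrically along typical orbits), in which case the original Burns--Wilkinson theorem combined with accessibility yields ergodicity of $f_2$; or else the center Oseledets decomposition has nontrivial exponents on a full-measure set, in which case the paper's new \emph{nonuniform center bunching} criterion, combined with accessibility, delivers ergodicity through the extended Burns--Wilkinson theorem.

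The main obstacle is the second regime of the density step. Nonuniform center bunching is by its nature an almost-everywhere condition tied to the Oseledets decomposition on the center, so exploiting it requires pushing the Hopf argument through Pesin-type blocks rather than through uniformly defined foliated charts. The classical absolute continuity of stable and unstable holonomies along such blocks has traditionally needed $C^{1+\alpha}$ smoothness, whereas Bochi--Viana perturbations live in the $C^1$ category; reconciling these two is delicate. The authors' notion of nonuniform center bunching, together with its compatibility with the Avila--Santamaria--Viana holonomy techniques, is precisely what allows the Hopf argument to be closed under these $C^1$-only hypotheses, and this is the technical heart of the proof.
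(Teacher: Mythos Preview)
Your density argument has a genuine gap: you assume that nonuniform center bunching together with (ordinary) accessibility directly yields ergodicity via an ``extended Burns--Wilkinson theorem''. The paper explicitly states that it is \emph{unclear} whether this implication holds; the ergodicity criterion it does prove (Corollary~\ref{c.weird}) requires a strengthened accessibility where all corners of the $su$-path lie in $\CB$, and this is not what Dolgopyat--Wilkinson provides. Moreover, Theorem~\ref{t.nonunifbw} needs $C^2$ regularity, so the issue is not (as you suggest) running a Hopf argument under $C^1$-only hypotheses; rather, after perturbing the Bochi-generic $C^1$ map to a nearby $C^2$ map, one loses the exact vanishing of center exponents and is left with $m(\CB^+) > 1-\eps$ instead of full measure. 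With only a large-but-not-full set of center bunched points, neither Burns--Wilkinson nor Corollary~\ref{c.weird} applies directly.

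The paper fills this gap with two ingredients you are missing entirely. First, it manufactures a \emph{local source of ergodicity}: perturb to create a periodic point with all center eigenvalues on the unit circle, then insert on a small invariant center disk an ergodic Anosov--Katok map (Lemma~\ref{l.c disk}). Theorem~\ref{t.nonunifbw} then spreads this local ergodicity through $\CB^+$ via stable/unstable saturation, and Brin's transitivity result (Theorem~\ref{t.Brin}) propagates it globally. Second, because $\CB^+$ only has measure $>1-\eps$, this yields not ergodicity but \emph{$\eps$-near ergodicity} (every bi~essentially saturated invariant set has measure $<\eps$ or $>1-\eps$); the paper then shows that $\eps$-near ergodicity is dense for every $\eps>0$, and combines this with a tailored $G_\delta$ argument (Lemma~\ref{l.G delta}, using sets $\cG(\varphi,a,\eps)$) to conclude that ergodicity is residual. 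Your $G_\delta$ step is also not quite right as written --- the condition you display involves an infinite union over $N\ge N_0$ and is not obviously open --- but the paper's version handles this correctly and is in any case routine compared to the density step.

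Finally, your dichotomy in the density step is inverted: nontrivial center exponents would \emph{obstruct} center bunching, not help it. The relevant input from \cite{Bochi sympl} is that generically \emph{all} center exponents vanish, which is exactly what forces the forward/backward center bunching condition $\lambda_{k+1}-\lambda_\ell < \min(-\lambda_{\ell+1},\lambda_k)$ at Lyapunov regular points.
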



Our result is motivated by the following well-known conjecture of
Pugh and Shub \cite{Pugh Shub JEMS}: \emph {There is a $C^2$ open and dense
subset of the space of $C^2$ volume-preserving partially hyperbolic
diffeomorphisms formed by ergodic maps}.
Among the known results in this direction, we have:
\begin{itemize}
\item F.\ and M.\ A.\ Rodriguez-Hertz, and Ures~\cite{RRU 1d center}
proved that $C^r$-stable ergodicity is dense among
$C^r$ volume-preserving partially hyperbolic diffeomorphisms with one-dimensional center
bundle, for all $r\ge 2$.  
(See also \cite{BMVW} for an earlier result.)

\item F.\ and M.\ A.\ Rodriguez-Hertz, Tahzibi, and Ures~\cite{RRTU 2d center} proved 
that ergodicity holds on a $C^1$ open and dense subset of the
$C^2$ volume-preserving partially hyperbolic diffeomorphisms with two-dimensional center
bundle.
\end{itemize}
Together with the result from Avila~\cite{Avila}, it follows that ergodicity is $C^1$ generic
among volume-preserving partially hyperbolic diffeomorphisms with center dimension
at most~$2$.
On the other hand, the techniques yielding the results above seem less effective 
for the understanding of the case of symplectic maps, and indeed
Theorem~\ref{t.main} is the first result on denseness of ergodicity for non-Anosov
partially hyperbolic symplectomorphisms, even allowing for constraints on the center dimension.
Our approach develops some new tools of independent interest, as we explain next.

\subsection{Center Bunching Properties}   

To support their conjecture,
Pugh and Shub~\cite{Pugh Shub JEMS} provided a criterion 
for a volume-preserving partially hyperbolic map to be ergodic, 
based on the property of \emph{accessibility}, together with some technical hypotheses.
A significantly improved version of this criterion was
obtained by Burns and Wilkinson~\cite {Burns Wilk}: accessibility and \emph{center bunching} imply ergodicity.
Dolgopyat and Wilkinson~\cite{Dolgo Wilk} showed that
accessibility is open and dense in the $C^1$ topology, but center bunching is not
a dense condition unless the center dimension is~$1$ (which cannot happen for symplectic maps).
In this paper we introduce and exploit a weaker condition, called \emph{nonuniform center bunching}.

In the context of general (not necessarily volume-preserving)
partially hyperbolic diffeomorphisms,
the center bunching hypothesis in \cite{Burns Wilk}
is a global, uniform property, requiring that at every point in the
manifold, the nonconformality of the action on the center bundle
be dominated by the hyperbolicity in both the stable and unstable bundles.
By contrast, the nonuniform center bunching
property introduced here is a property of asymptotic nature
about the orbit of a single point; 
it is the intersection of a forward bunching
property of the forward orbit and a backward bunching property
of the backward orbit. 
The precise definitions are slightly technical (see  Section~\ref{s.nonunif cb}).
However, for Lyapunov regular points (which by Oseledets' theorem have full probability),
forward (resp.\ backwards) center bunching means that the biggest difference 
between the Lyapunov exponents in the center bundle is
smaller than the absolute value of the exponents in the stable (resp.\ unstable) bundle.
The set $CB^+$ of forward center bunched points for a partially hyperbolic diffeomorphism $f$
has the useful property of being {\em $\cW^s$-saturated}, meaning that
it is a union of entire stable manifolds of $f$; similarly the set $CB^{-}$ of backward
center bunched points is $\cW^u$-saturated, i.e.\ a union of unstable manifolds.

Our next main result, Theorem~\ref{t.nonunifbw},
generalizes the core result of \cite{Burns Wilk} (Theorem~5.1 of that paper).
It states that for any $C^2$ partially hyperbolic diffeomorphism, 
{\em the set of Lebesgue density points of any bi essentially saturated set meets $CB^+$
in a $\cW^s$-saturated set and $CB^-$ in a $\cW^u$-saturated set.} 
(A bi essentially saturated set is one that
coincides mod~$0$ with  a $\cW^s$-saturated set and mod~$0$ with a $\cW^u$-saturated set.)

Burns and Wilkinson \cite{Burns Wilk} obtain their ergodicity criterion 
as a simple consequence of their technical core result.  Indeed, assuming 
accessibility (or even essential accessibility), ergodicity in \cite{Burns Wilk} follows
in one step from the core result, using a Hopf argument; it is not necessary 
to establish local ergodicity first (as one does in proving ergodicity for hyperbolic systems). 
It is unclear to us whether the the Burns--Wilkinson criterion for ergodicity can be improved by
replacing uniform center bunching by almost everywhere nonuniform center bunching,
in part because the uniform version in \cite{Burns Wilk}
is by nature {\em not} a ``local ergodicity'' result.
In reality, it is possible to deduce a new ergodicity criterion 
(Corollary~\ref{c.weird}) from Theorem~\ref{t.nonunifbw}.
Namely, ergodicity follows from almost everywhere nonuniform center bunching
together with a stronger form of essential accessibility, where we only
allow $su$-paths whose corners are center-bunched points.
While this accessibility condition is far from automatic,
it can be verified in some interesting classes of examples:
see \S\ref{ss.applications} below.


\subsection{Outline of the Proof of Theorem~\ref{t.main}}\label{ss.outline}

Let us explain how nonuniform center bunching combines with other ingredients to yield Theorem~\ref{t.main}. 
Take a symplectomorphism with the following $C^1$ generic properties:
\begin{itemize}
\item[(a)] it is stably accessible, by Dolgopyat and Wilkinson~\cite{Dolgo Wilk};
\item[(b)] all central Lyapunov exponents vanish at almost every point, by Bochi \cite{Bochi sympl}.
\end{itemize}
%
Notice that property~(b) implies almost every point is center bunched.
But Theorem~\ref{t.nonunifbw} requires $C^2$ regularity.
This is achieved by taking a perturbation, which still has property~(a), but loses property~(b).
What happens is that 
each point in some set of measure close to $1$ 
has small center Lyapunov exponents
and thus is center bunched.

Before getting useful consequences from Theorem~\ref{t.nonunifbw}, we need to provide a local source of ergodicity.
This is achieved through a novel application of the Anosov--Katok \cite{Anosov Katok} examples.
(By comparison, \cite{RRTU 2d center} uses Bonatti--D\'{\i}az blenders.) 
We  proceed as follows.
By perturbing, we find a periodic point whose center eigenvalues have unit modulus.
Perturbing again, we create a disk tangent to the center direction that is invariant by a power of the map.
We can choose any dynamics close to the identity on this disk,
so we select an ergodic Anosov--Katok map.
Ergodicity is spread from the center disk to a ball around the periodic point using Theorem~\ref{t.nonunifbw},
and then to the whole manifold by accessibility.
(In fact, since the set of center bunched points is not of full measure,
a $G_\delta$ argument is necessary to conclude ergodicity --
see Section~\ref{s.proof main} for the precise procedure.)


\subsection{Further Applications of Nonuniform Center Bunching}\label{ss.applications}

By means of our ergodicity criterion (Corollary~\ref{c.weird}) 
we construct an example of a stably ergodic partially hyperbolic diffeomorphism 
that is almost everywhere nonuniformly center bunched (but not center bunched
in the sense of \cite{Burns Wilk}) in a robust way.

We also prove in this paper an extension of Theorem~\ref{t.nonunifbw}
to sections of bundles over partially hyperbolic diffeomorphisms.
This result, Theorem~\ref{t.nonunifjimmy}, brings into the nonuniform
setting a recent result of Avila, Santamaria and Viana \cite{ASV},
which they use to show that the generic bunched $SL(n,\R)$ cocycle
over an accessible, center bunched, volume-preserving partially hyperbolic diffeomorphism
has a nonvanishing exponent. 
The result from \cite{ASV} has also been used 
in establishing measurable rigidity of solutions to
the cohomological equation over center-bunched systems; see \cite{Wi}.  
Theorem~\ref{t.nonunifjimmy} has similar
applications in the setting where nonuniform center bunching
holds, and we detail some of them in Section~\ref{s.jimmy}.


We conceive that our methods
may be further extended to apply in certain ``singular
partially hyperbolic'' contexts where
partial hyperbolicity holds on an open, noncompact subset of the manifold $M$
but decays in strength near the boundary.  Such conditions hold, for
example, for geodesic flows on certain nonpositively curved manifolds.
Under suitable accessibility hypotheses, these systems should be
ergodic with respect to volume.

\subsection{Questions}
Combining results  of  \cite{Dolgo Wilk} and Brin \cite{Brin},
one obtains that topological transitivity holds for a $C^1$ open and
dense set of partially hyperbolic symplectomorphisms.  On the other hand,
the $C^1$-interior of the ergodic symplectomorphisms is contained in the
partially hyperbolic diffeomorphisms \cite{Horita Tah, SX robust trans}.
This suggests the following natural question.
\begin{question}
Can Theorem~\ref{t.main} be improved to an open (and dense) instead of residual set?
\end{question}
Notice that it is not known even whether the set of
$C^1$ Anosov ergodic maps has non-empty interior.

\medskip

Dropping partial hyperbolicity, 
recall that $C^1$ generic symplectic and volume-preserving diffeomorphisms
are transitive by \cite{Arn Bon Cro} and \cite{Bon Cro}, while
ergodicity is known to be $C^0$-generic among volume-preserving homeomorphisms
by \cite{OU}.
So the following well-known question arises:
\begin{question}
Is ergodicity generic among $C^1$ symplectic and volume-preserving
diffeomorphisms?
\end{question}

\subsection{Organization of the Paper}

In Section~\ref{s.nonunif cb} 
we define nonuniform center bunching, state Theorem~\ref{t.nonunifbw},
and derive Corollary~\ref{c.weird} from it.

In Section~\ref{s.proof main} we prove Theorem~\ref{t.main} following the outline 
given in \S\ref{ss.outline}.
As we have explained, the proof uses the existence (after perturbation)
of a periodic point with elliptic central behavior.
Such a result goes along the lines of \cite {Bon Diaz Pujals, Horita Tah, SX robust trans},
but we have not been able to find a precise reference.  
In Section~\ref{s.elliptic}, which can be read independently from the rest of the paper, 
we provide a proof of this result by reducing it to its ergodic counterpart and
applying the Ergodic Closing Lemma.
This approach is different from the one taken in the literature.
For this reason, we included an appendix explaining how to use it to reobtain 
some results from \cite{Bon Diaz Pujals}.

The proof of Theorem~\ref{t.nonunifbw}, despite having much in common with \cite{Burns Wilk},
is given here in full detail in  Section~\ref{s.amie}.
In Section~\ref{s.jimmy} we formulate and prove the more general Theorem~\ref{t.nonunifjimmy}.
The new examples of stably ergodic maps are constructed in
Section~\ref{s.examples}.

\begin{ack}
We would like to thank N.~Gourmelon for the idea used in the proof of Lemma~\ref{l.nicolas}.
This research was partially conducted during the period A.~A.\ served as a Clay Research Fellow.
J.~B.\ is partially supported by CNPq, and A.~W.\ is partially supported by the NSF.
\end{ack}

\section{Nonuniform Center Bunching and Consequences} \label{s.nonunif cb}

Throughout this section, $f$ denotes a fixed $C^2$ partially hyperbolic of a closed manifold $M$ of dimension~$d$.
(We do not require $f$ to be symplectic or even volume-preserving.)
Using a result of Gourmelon~\cite{Gourmelon}, 
we take a Riemannian metric $\| \mathord{\cdot}\|$ on $M$
for which relations~\eqref{e.def ph} hold with $k=1$.

\begin{rem}\label{r.defs ph}
The notion of partial hyperbolicity we use in this paper is called \emph{relative}.
There is a stronger form of partial hyperbolicity, called \emph{absolute}, 
which asks for the existence of a Riemannian metric such that
$\|(Df |E^u(x))^{-1}\|^{-1}>\max(1,\|Df|E^c(y)\|)$ and
$\min(1,\|(Df |E^c(y))^{-1}\|^{-1})>\|Df|E^s(z)\|$
for every $x$, $y$, $z\in M$; 
see~\cite{AV flavors}.
\end{rem}

\subsection{Saturated Sets}
If $\cF$ is a foliation with smooth leaves,
a set $X \subseteq M$ is said to be \emph{$\cF$-saturated} if it is a union of entire leaves of $\cF$.
We say that a measurable set $X$ is \emph{essentially $\cF$-saturated} if it
coincides Lebesgue mod $0$ with a $\cF$-saturated set.

We also say that a set $X$ is \emph{$\cF$-saturated at a point $x$} if
there exist $0<\delta_0<\delta_1$ such that for any $z\in X \cap B(x,\delta_0)$, we have $\cF(z,\delta_1)\subset X$.
(Here $\cF(z,\delta_1)$ denotes the
connected component of $\cF(z)\cap B(z,\delta_1)$ containing $z$.)

A measurable set $X$ is called \emph{bi essentially saturated}
if it is both essentially $\cW^u$-saturated and essentially $\cW^s$-saturated.
(Here $\cW^u$ and $\cW^s$ are the unstable and stable foliations of the partially
hyperbolic diffeomorphism $f$.)

\subsection{Nonuniform Center Bunching}

If $A:V\to W$ is a linear transformation between Banach spaces, we denote
by ${\bf m}(A)$ the {\em conorm} of $A$, defined by
$${\bf m}(A) = \inf_{v\in V, \, \|v\|=1} \|A(v)\|.$$
If $A$ is invertible, then ${\bf m}(A) = \|A^{-1}\|^{-1}$.

We say that a point $p\in M$ is \emph{forward center bunched} if
there exist $\theta>1$ and
a sequence $0=i_0<i_1<\cdots$ such that
$i_{k+1}/i_k \to 1$ and for every $k \geq 0$,
$$
\| D_{f^{i_k}(p)} f^{i_{k+1}-i_k}|E^s \|^{-1}
\geq \theta^{i_{k+1}-i_k} \cdot
\frac {\|D_{f^{i_k}(p)} f^{i_{k+1}-i_k}|E^c\|} {\m \big(D_{f^{i_k}(p)} f^{i_{k+1}-i_k}|E^c \big)} \, .
$$
The point $p$ is called \emph{backwards center bunched} if it is forward center bunched with respect to $f^{-1}$.
The set of forward, resp.\ backwards, center bunched points is denoted by $\mathit{CB}^+$, resp.\ $\mathit{CB}^-$.
Also denote $\CB = \CB^+ \cap \CB^-$.
It is easy to see that these sets are $f$-invariant.
Moreover, in Section~\ref{s.amie} we show:

\begin{prop}\label{p.CB is sat}
$\mathit{CB}^+$ is $\cW^s$-saturated and
$\mathit{CB}^-$ is $\cW^u$-saturated.
\end{prop}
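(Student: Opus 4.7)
The plan is to prove that $\CB^+$ is $\cW^s$-saturated; applying the same argument to $f^{-1}$ (which exchanges $\CB^+\leftrightarrow\CB^-$ and $\cW^s\leftrightarrow\cW^u$) then gives the other half. Fix $p\in\CB^+$ witnessed by $\theta>1$ and a sequence $0=i_0<i_1<\cdots$, fix $q\in\cW^s(p)$, and set $p_k=f^{i_k}(p)$, $q_k=f^{i_k}(q)$, $n_k=i_{k+1}-i_k$. My task is to produce a witness $\theta'>1$ and a sequence $0=i'_0<i'_1<\cdots$ with $i'_{\ell+1}/i'_\ell\to 1$ for which the forward bunching inequality holds at $q$.

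The technical core is a bounded distortion estimate along stable orbits: there exist $C>0$ and $\mu\in(0,1)$ depending on $p,q$ such that, for every $k\ge 0$ and $\sigma\in\{s,c\}$,
\[
\bigl|\log\|Df^{n_k}|E^\sigma(p_k)\|-\log\|Df^{n_k}|E^\sigma(q_k)\|\bigr|\le C\mu^k,
\]
and similarly with $\m(Df^{n_k}|E^c(\cdot))$ in place of $\|\cdot\|$. This follows from two standard inputs for a $C^2$ partially hyperbolic map: the exponential convergence $d(f^j(p),f^j(q))\le C'\lambda^j$ valid for $q\in\cW^s(p)$, and the H\"older regularity of $\log\|Df|E^\sigma\|$ and $\log\m(Df|E^c)$ on $M$, which in turn comes from the H\"older regularity of the invariant sub-bundles (for $E^c$ one uses $E^c=E^{cs}\cap E^{cu}$). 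Trivializing each bundle by a H\"older frame expresses the block derivatives as matrix cocycles, and telescoping against the geometric series $\sum_j\lambda^{\alpha j}<\infty$ gives the bound with $\mu=\lambda^\alpha$.

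Combined with the center-bunching hypothesis at $p$, this yields, for every $k$,
\[
\|Df^{n_k}|E^s(q_k)\|^{-1}\cdot\frac{\m(Df^{n_k}|E^c(q_k))}{\|Df^{n_k}|E^c(q_k)\|}\;\ge\;e^{-3C\mu^k}\theta^{n_k}.
\]
Choose any $\theta'\in(1,\theta)$. Since $n_k\ge 1$ and $\mu^k\to 0$, there exists $K$ with $e^{-3C\mu^k}\theta^{n_k}\ge\theta'^{n_k}$ for every $k\ge K$. Define the new sequence by $i'_0=0$ and $i'_\ell=i_{K+\ell-1}$ for $\ell\ge 1$: the ratio condition $i'_{\ell+1}/i'_\ell\to 1$ is inherited from the original, and the $\CB^+$ inequality at $q$ for $\ell\ge 1$ is precisely what we just arranged. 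For the remaining initial block $[0,i_K]$, I telescope the hypothesis over $k=0,\dots,K-1$ using submultiplicativity of operator norms and supermultiplicativity of conorms to obtain
\[
\|Df^{i_K}|E^s(p)\|^{-1}\cdot\frac{\m(Df^{i_K}|E^c(p))}{\|Df^{i_K}|E^c(p)\|}\;\ge\;\theta^{i_K},
\]
transport this to $q$ by a single bounded distortion estimate on $[0,i_K]$ (whose constant depends only on $d^s(p,q)$), and enlarge $K$ using $i_K\to\infty$ so that $\theta^{i_K}e^{-3C^*}\ge\theta'^{i_K}$. This supplies the $\ell=0$ case and completes the construction.

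The main obstacle is the bounded distortion lemma of the second paragraph. In dimensions greater than one, $\|Df^n|E^\sigma(\cdot)\|$ is only submultiplicative along iterates, so one must work with the full matrix cocycle in H\"older trivializations rather than with scalar products of norms. The H\"older regularity needed for the center direction is the most delicate point, but it is provided by the intersection $E^c=E^{cs}\cap E^{cu}$. Once this lemma is available, the remaining reshuffling of the index sequence to convert the pointwise inequalities at $p$ into ones at $q$ is quantitative but entirely routine.
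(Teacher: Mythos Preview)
Your overall strategy is natural, but the core distortion claim --- that
\(\bigl|\log\|Df^{n_k}|E^\sigma(p_k)\|-\log\|Df^{n_k}|E^\sigma(q_k)\|\bigr|\le C\mu^k\) for \(\sigma\in\{s,c\}\) --- is not established by your sketch, and there is no reason for it to hold when \(\dim E^\sigma>1\). The telescoping you invoke controls the sum \(\sum_j\bigl|\log\|Df|E^\sigma(f^jp)\|-\log\|Df|E^\sigma(f^jq)\|\bigr|\), but \(\log\|Df^n|E^\sigma\|\) is not that sum. Passing to the full matrix cocycle, as you propose, does not rescue the argument: if \(\tilde D_j=D_j+E_j\) with \(\|E_j\|\le\epsilon_j\) summable, the ratio \(\|\tilde\Pi_n\|/\|\Pi_n\|\) of product norms is controlled only when the condition number \(\|\Pi_j\|/\m(\Pi_j)\) grows more slowly than \(\epsilon_j^{-1}\). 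Here that would require a bunching hypothesis on \(E^s\) (and on \(E^c\)) individually, which is not assumed. Concretely, a small perturbation at step \(j\) can tilt the most-expanded direction of \(\Pi_j\) toward the most-contracted one, and the remaining iterates amplify this tilt by the condition number; without a bound on the latter, the product norm can change by an unbounded factor even under exponentially decaying perturbations. So the ``bounded distortion lemma'' you flag as the main obstacle is a genuine gap, not merely delicate.

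The paper avoids this entirely by first constructing the adapted metric \(\|\cdot\|_\star\) (the lemma immediately preceding the proof), in which the center-bunching hypothesis at \(p\) becomes a \emph{one-step} inequality \(\|D_{p_j}f|E^s\|_\star\cdot\|D_{p_j}f|E^c\|_\star/\m_\star(D_{p_j}f|E^c)\le\rho<1\) valid for every \(j\). Single-step norms and conorms at \(p_j\) and at the conjugated \(q\)-orbit differ by \(O(\epsilon_j)\), so the same one-step bound (with \(\rho\) replaced by \(1-\delta\)) holds for the conjugated \(q\)-derivative for all large \(j\). Multiplying over \(j\in[i_k,i_{k+1})\) via sub/super-multiplicativity gives the block bound \((1-\delta)^{n_k}\) in the \(\star\)-metric; since \(\|\cdot\|_\star\) agrees with \(\|\cdot\|\) up to a fixed factor at the times \(i_k\), this transfers to the original metric. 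A single bounded constant from the conjugating maps \(A_j\) is then absorbed by passing to the coarser sequence \((i_{nk})_k\). The adapted metric is thus the essential device: it converts the block hypothesis at \(p\) into a one-step inequality, and one-step inequalities --- unlike multi-step norms --- transfer across exponentially asymptotic orbits without any auxiliary bunching.
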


A much deeper property is:

\begin{thm}\label{t.nonunifbw}
Let $f$ be a $C^{2}$ partially hyperbolic diffeomorphism.
Let $X$ be a bi essentially saturated set, and let
$\hat{X}$ denote the set of Lebesgue density points of $X$.
Then $\hat{X} \cap \mathit{CB}^+$ is $\cW^s$-saturated
and $\hat{X} \cap \mathit{CB}^-$ is $\cW^u$-saturated.
\end{thm}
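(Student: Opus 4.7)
The plan is to adapt the fake-foliation and julienne machinery from Burns--Wilkinson to a dynamically varying, nonuniform setting where center bunching holds only on the windows $[i_k, i_{k+1}]$ along the orbit of a single point. I would first establish Proposition~\ref{p.CB is sat} as a warm-up, since it is both needed in the statement and instructive for what follows. If $p \in \CB^+$ with constants $\theta$ and $(i_k)$, and $q \in \cW^s(p)$, then $d(f^n p, f^n q) \leq C \lambda^n$ for some $\lambda < 1$. Because $Df$ is Lipschitz and the splitting is continuous, the multiplicative discrepancy between the relevant cocycle norms at $f^{i_k}(p)$ versus $f^{i_k}(q)$ over the window of length $n_k := i_{k+1}-i_k$ is bounded by $(1 + C'\lambda^{i_k})^{n_k}$; since $n_k = o(i_k)$ this tends to $1$, and the bunching condition at $p$ transfers to $q$ after a slight weakening of $\theta$ and a truncation of $(i_k)$. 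The analogous argument on $\cW^u$ handles $\CB^-$.

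For the main statement, fix $p \in \hat X \cap \CB^+$ and $q \in \cW^s(p)$; the aim is to show $q \in \hat X$. Following the Hopf-style strategy of Burns--Wilkinson, I would build a nested collection of ``julienne'' sets $\{J_k(x)\}$ around each point $x$ near the orbit of $p$, centered in the center-stable direction and scaled to the dynamical geometry at time $i_k$, so that (i) the conditional density of $X$ on $J_k(p)$ tends to $1$ by the assumption $p\in \hat X$, and (ii) a suitably chosen stable holonomy (or its image under $f^{i_k}$) maps $J_k(p)$ to a set quasi-comparable to $J_k(q)$. Combining this with the essential $\cW^u$-saturation of $X$ then forces $q$ to be a density point of $X$ as well; the symmetric half of the theorem for $\CB^-$ follows by applying the same argument to $f^{-1}$.

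The core of the work is adapting the construction of fake invariant foliations and their $C^1$ center-horizontal regularity to the nonuniform regime. In Burns--Wilkinson, uniform global center bunching of the time-one cocycle is what allows one to build $C^1$ fake foliations in uniform charts; here bunching is only satisfied by the time-$(i_{k+1}-i_k)$ cocycle along a single orbit. My plan is to construct fake foliations in charts around $f^{i_k}(p)$ using the iterate $f^{n_k}$ in place of $f$, exploit the uniform bunching of this iterate at this point, and then translate back by $f^{-i_k}$ to the original scale. The asymptotic condition $i_{k+1}/i_k \to 1$ is what makes the scales at consecutive times $i_k$ fit together compatibly, so that the juliennes nest correctly and the density comparisons close up across the whole sequence.

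The main obstacle I anticipate is controlling the H\"older/Lipschitz constants and absolute-continuity estimates of the stable holonomies uniformly across this sequence of charts: in Burns--Wilkinson these constants are a consequence of a single uniform bunching inequality and a single ambient metric, but here they must be derived from an asymptotic family of window-by-window inequalities along one orbit, and one must check that the cumulative distortion does not defeat the density argument. A related technical point is ensuring that the fake center foliation defined in the $f^{i_k}(p)$-chart matches, up to the appropriate error, the one defined in the $f^{i_{k+1}}(p)$-chart after one step of the dynamics; here again the sub-linearity $n_k = o(i_k)$, together with the $C^2$ hypothesis on $f$, should give the control needed to pass the uniform Burns--Wilkinson argument through a diagonal extraction.
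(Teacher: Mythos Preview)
Your high-level strategy---adapt the Burns--Wilkinson fake-foliation and julienne machinery, transfer density across $\cW^s$-holonomy, and conclude by the Hopf argument---matches the paper exactly, and your identification of the central difficulty (controlling regularity constants uniformly along a single nonuniformly bunched orbit) is on target. But the paper resolves this difficulty by a device you are missing, and without it your proposed implementation has a gap.

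You propose to build fake foliations in a chart around $f^{i_k}(p)$ ``using the iterate $f^{n_k}$ in place of $f$'' and then match these up across consecutive windows. The problem is that fake foliations are produced by an \emph{infinite} graph transform: one needs to iterate the map indefinitely in both directions, and the iterate $f^{n_k}$ lands you at $f^{i_{k+1}}(p)$, where the relevant time step is $n_{k+1}$, not $n_k$. So there is no single iterate to feed into the graph transform, and your plan to match separately constructed local foliations across windows is not a well-posed construction as stated. The paper's key idea is to avoid this altogether: rather than change the iterate, one changes the \emph{metric}. Using the polar decomposition of $D_{p_{i_k}} f^{i_{k+1}-i_k}$, one builds an adapted Riemannian metric $\|\cdot\|_\star$ on $T_{\cO^+(p)}M$ with the property that in this metric the \emph{time-one} map $Df$ is center bunched at every point of the forward orbit (with uniform constants $\nu,\gamma,\hat\gamma,\hat\nu$), at the cost of a comparison factor $B(p_j)$ between $\|\cdot\|_\star$ and $\|\cdot\|$ satisfying $\limsup B(p_j)^{1/j}=1$. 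This is exactly a Lyapunov-norm trick: the sublinearity $i_{k+1}/i_k\to 1$ is what guarantees the subexponential growth of $B$. Once this metric is in hand, the entire Burns--Wilkinson construction (fake foliations via a single graph transform for $f$, juliennes indexed by $n\in\N$ rather than by the windows $i_k$, thin-neighborhood distortion estimates, julienne quasiconformality) goes through essentially verbatim, with the only modification that the neighborhoods $\cN_{\bf r}$ in which one works must shrink subexponentially along the orbit to compensate for the growth of $B$. Your ``main obstacle'' evaporates because the H\"older and Lipschitz constants are uniform in the adapted metric; the subexponential factor $B(p_j)$ is absorbed harmlessly into the exponential estimates.

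Two smaller points. First, the juliennes in this argument are center-\emph{unstable}, not center-stable: one proves that Lebesgue density at $p$ is equivalent to $\hm_{cu}$-density on $\hJ^{cu}_n(p)$, transfers this across $\cW^s$-holonomy (which is where the Lipschitz regularity of $\hW^s$-holonomy between $\hW^c$-leaves is used), and then converts back to Lebesgue density at $q$. Second, your sketch of Proposition~\ref{p.CB is sat} is essentially correct in spirit, but note that one needs H\"older continuity of the bundles $E^s,E^c$ (not just continuity) to control the multiplicative discrepancy; the paper again organizes this via the adapted metric.
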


We remark that the hypotheses of Theorem~\ref{t.nonunifbw} are weaker
than the center bunching hypothesis in \cite{Burns Wilk}.
In the setting of \cite{Burns Wilk},
$\mathit{CB}^+=\mathit{CB}^- = M$ and one takes $i_k=k$ in the definition of forward center bunching.
(In fact, the center bunching hypothesis in  \cite{Burns Wilk}
is equivalent to the condition $\mathit{CB}^+=\mathit{CB}^- = M$
see Remark~\ref{r.unif CB} below.)

Another remark is that, as in~\cite{Burns Wilk}, it is essential that
$X$ is \emph{both} essentially $\cW^u$-saturated and  essentially $\cW^s$-saturated
in order to conclude anything.

\subsection{Relation with Lyapunov Spectrum}

Let us formulate sufficient conditions for center bunching in terms of Lyapunov exponents.


Oseledets' Theorem asserts that there exists a set of full probability
(that is, a Borel set of full measure with respect to any $f$-invariant probability)
where Lyapunov exponents and Oseledets' splitting are defined
(see for example \cite[Theorem~3.4.11 and Remark~4.2.8]{LArnold}).
The elements of this set are called \emph{Lyapunov regular points}.

If $p\in M$ is a Lyapunov regular point,
we write the Lyapunov exponents (with multiplicity) of $f$ at $p$ as:
$$
\underbrace{\lambda_1        \ge \cdots \ge \lambda_k}_{E^u} >
\underbrace{\lambda_{k+1}    \ge \cdots \ge \lambda_\ell}_{E^c} >
\underbrace{\lambda_{\ell+1} \ge \cdots \ge \lambda_d}_{E^s} \, .
$$
(The braces are shorthands meaning that $\dim E^u = k$, $\dim E^c = \ell - k$, $\dim E^s = d - \ell$.)
We say the Lyapunov spectrum of $f$ at $p$ satisfies
the \emph{forward center bunched condition} if
$$
\lambda_{k+1} - \lambda_\ell < - \lambda_{\ell+1} \, ,
$$
and the \emph{backwards center bunched condition} in the case that
$$
\lambda_{k+1} - \lambda_\ell <  \lambda_k \, .
$$

Notice that if $f$ is symplectic
then, by the symmetry between the exponents,
the forward and the backwards center bunching conditions are
equivalent to:
$$
2 \lambda_{k+1} <  \lambda_k \, .
$$

\begin{prop}\label{p.spectrum}
A Lyapunov regular point is forward (resp.\ backwards) center bunched
if and only if  its spectrum satisfies the forward (resp.\ backwards) center bunched condition.
\end{prop}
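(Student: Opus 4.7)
Setup: Define $\alpha := \lambda_{\ell+1}+\lambda_{k+1}-\lambda_\ell$; the forward spectrum condition is exactly $\alpha<0$. Write
$$\phi_n(q) := \log\|D_q f^n|E^s\| + \log\|D_q f^n|E^c\| - \log\m(D_q f^n|E^c),$$
so that, after taking logarithms, the forward center bunching property at $p$ is equivalent to the existence of $\theta>1$ and $0=i_0<i_1<\cdots$ with $i_{k+1}/i_k\to 1$ such that $\phi_{n_k}(f^{i_k}(p))\le -n_k\log\theta$ for every $k$, where $n_k := i_{k+1}-i_k$. I will prove the forward case; the backwards case follows by applying the forward case to $f^{-1}$.

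\emph{CB implies spectrum condition.} The inequalities $\|AB\|\le\|A\|\|B\|$ and $\m(AB)\ge\m(A)\m(B)$, applied term by term on $E^s$ and $E^c$, yield the ``subadditivity'' $\phi_{i_K}(p)\le\sum_{k=0}^{K-1}\phi_{n_k}(f^{i_k}(p))\le -i_K\log\theta$. Dividing by $i_K\to\infty$ and invoking Lyapunov regularity at $p$ (which gives $\phi_n(p)/n\to\alpha$), one concludes $\alpha\le -\log\theta<0$, as desired.

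\emph{Spectrum condition implies CB.} Fix $\epsilon>0$ with $\alpha+3\epsilon<0$. At a Lyapunov regular point, the standard Pesin--Oseledets construction provides an adapted (Lyapunov) inner product $\langle\cdot,\cdot\rangle'$ on $TM|_{\cO(p)}$ under which each $D_q f$ expands or contracts each Oseledets subspace $E_i(q)$ at a rate in $[e^{\mu_i-\epsilon},e^{\mu_i+\epsilon}]$. Iterating gives the uniform step-wise bound $\phi'_n(q)\le n(\alpha+3\epsilon)$ for every $q\in\cO(p)$, where $\phi'_n$ is defined from $\|\cdot\|'$. The comparison factor $B_\epsilon$ between $\|\cdot\|$ and $\|\cdot\|'$ satisfies $B_\epsilon(f^{\pm 1}(q))/B_\epsilon(q)\in[e^{-\epsilon},e^\epsilon]$, so passing back to the original norm produces
$$\phi_n(f^m(p))\;\le\; n(\alpha+3\epsilon)+C\bigl[\log B_\epsilon(f^m(p))+\log B_\epsilon(f^{m+n}(p))\bigr].$$
Now construct the sequence recursively: set $i_0=0$ and, given $i_k$, take $n_k$ growing slowly enough that $n_k=o(i_k)$, but at the same time --- for a suitable $\epsilon_k\to 0$ --- $n_k$ is large enough to dominate the Pesin error term on the right-hand side at $m=i_k$. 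Such a joint choice is possible because the comparison factor grows only subexponentially along the orbit and because the adapted-norm construction can be carried out for each $\epsilon_k$ separately. Setting $i_{k+1}:=i_k+n_k$ yields a sequence with $i_{k+1}/i_k\to 1$, and the displayed estimate gives $\phi_{n_k}(f^{i_k}(p))/n_k\to \alpha<0$, so the pointwise bunching inequality holds for some fixed $\theta>1$ for all $k$ sufficiently large; finitely many initial indices are absorbed by taking $n_0,\dots,n_{K_0}$ large enough.

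The technical heart of the argument lies in the converse direction: the window sizes $n_k$ must simultaneously be $o(i_k)$ (to preserve the ratio condition $i_{k+1}/i_k\to 1$) and large enough in absolute terms to average out the subexponential Pesin comparison error. The compatibility of these two constraints succeeds precisely because, along the orbit of a Lyapunov regular point, the comparison factor between the adapted and original norms grows subexponentially, and because the adapted norm can be rebuilt with a $k$-dependent parameter $\epsilon_k$.
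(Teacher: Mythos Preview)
Your forward implication (center bunching $\Rightarrow$ spectral condition) is correct and is the same subadditivity argument the paper uses.

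For the converse, your adapted-norm estimate is essentially equivalent to the paper's key inequality~\eqref{e.zz}: for every $\delta>0$ there is $c_\delta>0$ with $\Theta(j,n)>c_\delta e^{-\delta j}e^{\tau n}$. The paper derives this directly from Lyapunov regularity (uniform exponent bounds plus subexponential angle decay); you go through a Pesin--Lyapunov norm, which is fine.

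Where your argument becomes genuinely incomplete is the construction of $(i_k)$. From your displayed bound one gets, after using the $\epsilon_k$-tempered property of $B_{\epsilon_k}$,
\[
\phi_{n_k}(f^{i_k}(p))\;\le\; n_k(\alpha+C\epsilon_k)\;+\;C\epsilon_k\, i_k\;+\;C\log B_{\epsilon_k}(p).
\]
To get both the bunching inequality and $n_k=o(i_k)$ you are forced to let $\epsilon_k\to 0$, and then $\log B_{\epsilon_k}(p)$ may blow up at an \emph{a priori} uncontrolled rate; the naive choice $\epsilon_k=1/k$ need not work. A valid joint choice does exist (hold $\epsilon$ constant over long phases, decreasing it only once $i_k$ has grown enough to absorb the new constant), but your sentence ``the comparison factor grows only subexponentially along the orbit'' is not a proof of this and does not address the $\epsilon$-dependence of the constant.

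The paper bypasses the difficulty entirely. Having the estimate $\Theta(j,n)>c_\delta e^{-\delta j}e^{\tau n}$ for every $\delta>0$, it \emph{defines} $i_{k+1}$ as the least $i>i_k$ with $\Theta(i_k,i-i_k)>e^{(\tau/2)(i-i_k)}$---a $\delta$-free first-hitting time---and then verifies $i_{k+1}/i_k\to 1$ by applying the estimate at $i_{k+1}-1$ for each $\delta>0$ separately. This decoupling of the sequence definition from the small parameter is exactly what your recursive construction with varying $\epsilon_k$ is missing, and it makes the argument one line instead of a delicate joint-choice lemma.
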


\begin{proof}
We only need to prove the forward part of the proposition,
and the backwards part will follow by symmetry.

Fix a point $p$ and define
\begin{equation}\label{e.Theta}
\Theta(j,n) = \| D_{f^{j}(p)} f^{n}|E^s \|^{-1}
\cdot \m \big(D_{f^{j}(p)} f^{n}|E^c \big) \cdot \|D_{f^{j}(p)} f^{n}|E^c\|^{-1}  \quad j, n \ge 0 .
\end{equation}
Assume that $p$ is forward center bunched.
Let $\theta$ and $i_k$ be as in the definition of forward center bunching;
then $\Theta(i_k, i_{k+1}-i_k) > \theta^{i_{k+1}-i_k}$.
We have
$$
\Theta(0,i_k) \ge \Theta(0, i_1) \Theta(i_1, i_2 - i_1) \cdots \Theta(i_{k-1}, i_k - i_{k-1}) \ge \theta^{i_k} \, ,
$$
and in particular
\begin{equation}\label{e.non criterion}
\limsup_{n \to +\infty} \frac1n \log \Theta(0,n)> 0.
\end{equation}
If $p$ is Lyapunov regular then the $\limsup$ above equals $-\lambda_{\ell+1} +\lambda_\ell - \lambda_{k+1}$.
Thus $p$ has center bunched Lyapunov spectrum.

Conversely, assume that the point $p$ is Lyapunov regular and has center bunched Lyapunov spectrum.
Fix some $\tau$ with $0 < \tau < - \lambda_{\ell+1} - \lambda_{k+1} + \lambda_\ell$.
We claim that
\begin{equation} \label{e.zz}
\text{for every $\delta>0$ there exists $c_\delta > 0$ such that
$\Theta(j,n) > c_\delta e^{-\delta j} e^{\tau n}$ for all $j$, $n \ge 0$.}
\end{equation}
Before giving the proof, let us see how to conclude from here.
Let $i_0 = 0$.
Inductively define $i_{k+1}$ as the least $i>i_k$ such that
$\Theta(i_k, i-i_k) > e^{(\tau/2)(i-i_k)}$.  
Let us see that this sequence of times satisfies the requirements of the
definition of forward center bunching, with $\theta = \tau/2$.
For any $\delta>0$, we have
$$
c_\delta e^{-\delta i_k} e^{\tau (i_{k+1} - i_k - 1)} < \Theta(i_k, i_{k+1} - i_k - 1)
\le e^{(\tau/2)(i_{k+1}-i_k-1)} \, .
$$
It follows that if $i_k$ is sufficiently large (depending on $\delta$) then $(i_{k+1}-i_k)/i_k < 3\delta/\tau$.
This proves that $i_{k+1}/i_k \to 1$
and hence that $p \in \mathit{CB}^+$.

We are left to prove~\eqref{e.zz}.
For $1 \le i \le d = \dim M$, let $E^i(p)$ be the Oseledets space
corresponding to the Lyapunov exponent $\lambda_i(p)$.
(This notation is not standard because those spaces are not necessarily different.)
A consequence of the Lyapunov regularity 
of $p$ is that,
for each $i = 1, \ldots, d$,
the quotient $n^{-1} \log\|D_p f^n (v)\|$ converges to $\lambda_i$
uniformly over unit vectors $v \in E^i(p)$.
Thus for every $\delta>0$ there exists $K_\delta>1$ such that
$$
K_\delta^{-1} e^{(\lambda_i-\delta)n} \le  \|D_p f^n (v)\|
\le K_\delta e^{(\lambda_i+\delta)n} \, ,
\text{for all unit vectors $v\in E^i(p)$ and $n \ge 0$.}
$$
Hence, for each $n$, $j \ge 0$, we have
\begin{equation}\label{e.zzz}
\|D_{f^j(p)} f^n|_{E^i}\| \le \|D_p f^{n+j}|_{E^i}\|  / \m(D_p f^j|_{E^i})
                     \le K_\delta^2 e^{2\delta j} e^{(\lambda_i + \delta)n} \, .
\end{equation}
Another consequence of Lyapunov regularity (see \cite[Corollary~5.3.10]{LArnold})
is that
the angles between (sums of different) Oseledets spaces along the orbit of $p$ are subexponential.
In particular, for each $\delta >0$ we can find $K'_\delta>1$ such that
$$
(K'_\delta)^{-1} e^{-\delta (j+n)} \le
\frac{\|D_{f^j(p)} f^n|_{E^s}\|}{\max_{i \in [\ell+1,d]} \|D_{f^j(p)} f^n|_{E^i}\|}
\le K_\delta' e^{\delta (j+n)} \, ,
\quad \text{for each $n$, $j \ge 0$.}
$$
It follows from \eqref{e.zzz} that there exists $K_\delta''>1$ such that
$$
\|D_{f^j(p)} f^n|_{E^s}\| \le K''_\delta e^{3\delta j} e^{(\lambda_{\ell+1} + 2 \delta)n} \, ,
\quad \text{for each $n$, $j \ge 0$.}
$$
This controls the first term in~\eqref{e.Theta}.
The other two are dealt with in an analogous way, and \eqref{e.zz} follows.
\end{proof}

\begin{rem}
If $p \in \mathit{CB}^+$ then we have seen that~\eqref{e.non criterion} holds,
where $\Theta$ is defined by~\eqref{e.Theta}.
Let us show that condition~\eqref{e.non criterion} alone does not imply forward center bunching.
First notice that if $p \in \mathit{CB}^+$ then
\begin{equation}\label{e.absurd}
\liminf_{m \to \infty} \frac{1}{n_m} \log \Theta(j_m, n_m) > 0 
\text{ for any sequences $j_m$, $n_m$ with $n_m > \frac{1}{10} j_m \to \infty$.}
\end{equation}
Now let
$$
A = \begin{pmatrix} e^{-2} & 0   \\ 0 & e^{-1} \end{pmatrix} , \quad
B = \begin{pmatrix} e^{-1/2} & 0 \\ 0 & e^{-1} \end{pmatrix} , \quad
C = \begin{pmatrix} 1 & 0 \\ 0 & e^{3/4} \end{pmatrix}.
$$
Assume that $D_{f^j(p)} f|_{E^c}$ equals $C$ for every $j \ge 0$,
while the sequence $D_{f^j(p)} f|_{E^s}$, $j \ge 0$ is given by:
$$
A, B, A, A, B, B, A \text{ ($4$ times)}, B \text{ ($4$ times)}, A \text{ ($8$ times)}, B \text{ ($8$ times)},  \ldots
$$
Notice that for every $n \ge 0$, we have
$\|D_p f^n|_{E^s}\| = e^{-n}$, $\m(D_p f^n|_{E^c}) = 1$, and
$\|D_p f^n|_{E^c}\| = e^{(3/4)n}$,
so condition~\eqref{e.non criterion} is satisfied.
On the other hand, if $j = 2^{m+1} + 2^{m} - 2$ and $n=2^{m}$
then $D_{f^j(p)} f^n  = B^n$
and therefore $\Theta(j,n) = e^{(-1/4)n}$.
Hence \eqref{e.absurd} does not hold and so $p$ is not forward center bunched.
\end{rem}

\begin{rem}\label{r.unif CB}
If $\mathit{CB}^+ = \mathit{CB}^- = M$ then $f$ is center bunched in the sense of \cite{Burns Wilk}.
Indeed, let $\Theta_p(j,n)$ be as in \eqref{e.Theta}, with a subscript to indicate dependence on the point.
Assuming $\mathit{CB}^+ = M$, compactness implies that there exists $\theta>1$
and $m$ such that for every $p \in M$ there exists $i$ with $1 \le i \le m$ such that
$\Theta_p (0, i) > \theta$.
It follows that there is $c>0$ such that
$\Theta_p (0, n) > c \theta^{n/m}$.
We reason analogously for $f^{-1}$.
The conclusion follows from an adapted metric argument along the lines of \cite{Gourmelon}.
\end{rem}

\subsection{An Ergodicity Criterion}

Let us extract a criterion for ergodicity from Theorem~\ref{t.nonunifbw}.
(It is not used in the proof of Theorem~\ref{t.main},
so the reader can skip the rest of this section.)

\begin{corol}\label{c.weird}
Let $f$ be a $C^2$ partially hyperbolic volume-preserving diffeomorphism.
Let $\mathit{CB} = \mathit{CB}^+ \cap \mathit{CB}^-$ be the set of center bunched points.
Assume that almost every pair of points $x$, $y \in CB$
can be connected by an $su$-path whose corners are in $CB$.

Let $X$ be a bi~essentially saturated set such that $X\cap CB$ has positive measure.
Then $X$ has full measure in $CB$.
If $CB$ has full measure, then $f$ is ergodic, and in fact a $K$-system.	
\end{corol}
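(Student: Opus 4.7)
The plan is to combine Theorem~\ref{t.nonunifbw} with a Hopf-type chain argument along the restricted class of $su$-paths allowed by the hypothesis. First, the Lebesgue density theorem gives $\hat X = X$ mod $0$, so $\hat X \cap CB$ has positive measure, and Theorem~\ref{t.nonunifbw} yields that $\hat X \cap CB^+$ is $\cW^s$-saturated while $\hat X \cap CB^-$ is $\cW^u$-saturated.

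The main step is a propagation claim: if $x \in \hat X \cap CB$ and $y \in CB$ is joined to $x$ by an $su$-path $x = p_0, p_1, \ldots, p_n = y$ whose corners $p_1,\ldots,p_{n-1}$ all lie in $CB$, then $y \in \hat X$. I prove this by induction along the legs. Suppose the first leg lies in $\cW^s$: since $x \in CB \subseteq CB^+$, the $\cW^s$-saturation of $\hat X \cap CB^+$ delivers $p_1 \in \hat X$; as $p_1 \in CB \subseteq CB^-$, the $\cW^u$-saturation of $\hat X \cap CB^-$ then delivers $p_2 \in \hat X$; alternating through each corner yields $y \in \hat X$, and the case of a $\cW^u$ first leg is symmetric. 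Now, by the accessibility hypothesis the ``admissibly connectable'' relation is a full-measure subset of $CB \times CB$, so by Fubini I may choose $x_0 \in \hat X \cap CB$ (using positivity of its measure) such that $m$-a.e.\ $y \in CB$ is joined to $x_0$ by an admissible path; the propagation claim then gives $m$-a.e.\ $y \in CB$ lies in $\hat X \subseteq X$ mod $0$, so $X$ has full measure in $CB$.

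Assume now that $CB$ has full measure. Since $\cW^s$ and $\cW^u$ are absolutely continuous and $f$-invariant, a standard Brin-type argument shows that every $f$-invariant measurable set is bi~essentially saturated. Applied to an invariant set of positive measure, the first part then forces full measure, i.e., ergodicity. To promote this to the $K$-property I would invoke the classical fact (used in the same way in \cite{Burns Wilk}) that for a $C^2$ volume-preserving partially hyperbolic diffeomorphism every element of the Pinsker $\sigma$-algebra is mod $0$ both $\cW^s$- and $\cW^u$-saturated, hence bi~essentially saturated; the first part then forces each such set to have measure $0$ or $1$, giving triviality of the Pinsker $\sigma$-algebra.

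The main obstacle I anticipate is the bookkeeping in the propagation step: at every corner the point must lie in \emph{both} $CB^+$ (to saturate an adjacent $\cW^s$-leg) \emph{and} $CB^-$ (to saturate an adjacent $\cW^u$-leg), which is exactly why the hypothesis requires corners in $CB = CB^+ \cap CB^-$ rather than in $CB^+ \cup CB^-$; under the weaker condition the inductive chain would break at the first ``wrong-sided'' corner.
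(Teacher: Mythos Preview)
Your proof is correct and follows the same route as the paper: pass to density points, propagate membership in $\hat X$ along the $su$-path corner by corner via Theorem~\ref{t.nonunifbw}, and then invoke the standard criteria (cf.\ \cite{Burns Wilk}, Section~5) for ergodicity and the $K$-property. Your Fubini step and the inductive bookkeeping at each corner are in fact slightly more explicit than the paper's concise argument.
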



In Section~\ref{s.examples} we give applications of Corollary~\ref{c.weird}
to prove stable ergodicity of certain partially hyperbolic diffeomorphisms
that are {\em not} center bunched.

\begin{proof}[Proof of  Corollary~\ref{c.weird}]
Let $f$ and $X$ satisfy the hypotheses of Corollary~\ref{c.weird} and
let $\hat X$ be the set of Lebesgue density points of $X$.  Then
for almost every $x\in \hat X\cap CB$ and almost every $y\in CB$,
there is an $su$-path from $x$ to $y$ with corners $x_0=x,x_1,\ldots,x_k=y$ all lying
in $CB$ (that is, so that $x_i$ lies in $CB\cap (\cW^s(x_{i+1})\cup \cW^u(x_{i+1})$),
for $i=0,\ldots, k-1$).  Fix such an $x$ and $y$ and such an $su$-path.
Applying Theorem~\ref{t.nonunifbw} inductively to each pair $x_{i},x_{i+1}$,
we obtain that $x_i$ lies in $\hat X$, for $i=1,\ldots k$, and
so $y\in \hat X$.  This implies that almost every $y\in CB$ lies
in $\hat X$, and hence $X$ has full measure in $CB$.

A standard argument shows that a volume-preserving
partially hyperbolic diffeomorphism is ergodic if and only if every
bi essentially saturated, invariant set has measure
$0$ or $1$.
Moreover, $f$ is a $K$-system if every
bi essentially saturated set, invariant or not, has measure
$0$ or $1$ (see \cite{Burns Wilk}, Section 5).
If $CB$ has full measure, then any bi~essentially saturated
set has $0$ or full measure in $CB$, and hence has measure $0$ or $1$.
It follows that $f$ is ergodic, and in fact a $K$-system.
\end{proof}

\section{Proof of Theorem~\ref{t.main}}\label{s.proof main}



For $\eps>0$,
let us call a diffeomorphism $f \in \PH_\omega^1(M)$ \emph{$\eps$-nearly ergodic}
if for any bi~essentially saturated and mod~$0$ invariant set $X$, 
either $m(X) < \eps$ or $m(X) > 1- \eps$.\footnote{A related notion, $\eps$-ergodicity,
was considered by \cite{Tah}.}
The bulk of the proof of Theorem~\ref{t.main} consists in showing the following:
\begin{prop}\label{p.near ergodic}
For any  $\eps>0$, the $\eps$-nearly ergodic diffeomorphisms form a
dense subset of $\PH_\omega^1(M)$.
\end{prop}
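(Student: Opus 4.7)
The plan is to produce, arbitrarily $C^1$-close to any given $f \in \PH^1_\omega(M)$, a diffeomorphism $g$ that is $\eps$-nearly ergodic, following the outline of \S\ref{ss.outline}. The construction proceeds in three stages.

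\emph{Stage 1 (abundance of center bunched points).} Combining Bochi's genericity result (a.e.\ vanishing of central Lyapunov exponents) with the upper semicontinuity in the $C^1$ topology of integrated top central exponents, I obtain, $C^1$-close to $f$, a stably accessible $C^\infty$ symplectic $f_1$ on which every central Lyapunov exponent has modulus less than any prescribed $\eta>0$ on a set of $m$-measure at least $1-\eta$. Since partial hyperbolicity with $k=1$ in a Gourmelon metric provides a uniform spectral gap separating the central bundle from the stable and unstable ones, Proposition~\ref{p.spectrum} converts this into $m(\CB(f_1))>1-\eps/2$. The $C^1$-openness and density of stable accessibility (Dolgopyat--Wilkinson) permits imposing the two conditions simultaneously.

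\emph{Stage 2 (Anosov--Katok center disk).} Arguments deferred to Section~\ref{s.elliptic} yield a further small symplectic perturbation possessing a periodic point $p$ of period $n$ with all central eigenvalues on the unit circle. A local perturbation along the orbit of $p$ straightens a center disk $D\ni p$ of dimension $\dim E^c$, makes it $f^n$-invariant, and renders $f^n|_D$ tangent at $p$ to an isometry. Inside any $C^1$-neighborhood of $\Id_D$ in $\Diff^\infty_\omega(D)$ one may carry out an Anosov--Katok construction \cite{Anosov Katok} that produces an ergodic, zero-exponent, area-preserving diffeomorphism of $D$; inserting it in place of $f^n|_D$ produces the target $g$, which is $C^1$-close to $f$, stably accessible, satisfies $m(\CB(g))>1-\eps/2$, and has $D\subseteq \CB(g)$ up to a $D$-null set (one more invocation of Proposition~\ref{p.spectrum}).

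\emph{Stage 3 ($\eps$-near ergodicity of $g$).} Let $X$ be bi essentially saturated and $g$-invariant mod~$0$, with Lebesgue density set $\hat X$ (itself $g$-invariant mod~$0$). Since $g^n|_D$ is ergodic on $D$, $\hat X\cap D$ is either $D$-null or $D$-co-null; after possibly interchanging $X$ with $X^c$ (also bi essentially saturated and invariant), assume it is $D$-co-null. Theorem~\ref{t.nonunifbw} then makes $\hat X\cap \CB^+$ a $\cW^s$-saturated set and $\hat X\cap \CB^-$ a $\cW^u$-saturated set, so the entire stable and unstable manifolds through $D$-almost-every $q\in D\cap \CB$ lie in $\hat X$. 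Using stable accessibility of $g$, the absolute continuity of $\cW^s$ and $\cW^u$, and the bound $m(M\setminus \CB)<\eps/2$, I propagate $\hat X$ along $su$-paths whose corners can be chosen inside $\CB$ for all but an $\eps/2$-measure set of basepoints $y\in M$, concluding $m(\hat X)>1-\eps$.

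The hard part is the last propagation step: Theorem~\ref{t.nonunifbw} only saturates at $\CB^\pm$ points, so the $su$-paths from a generic $y\in M$ to $D$ must be arranged to have all corners in $\CB$. Since stable accessibility produces uniformly bounded $su$-paths and $\CB$ has measure close to $1$, generic paths can be perturbed transversally into $\CB$ by the transverse absolute continuity of $\cW^{s,u}$; carrying this out uniformly for all but an $\eps$-measure set of $y$ is the technically delicate part of the argument.
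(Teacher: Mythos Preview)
Your Stages~1 and~2 are essentially right and match the paper's construction (Lemma~\ref{l.c disk} packages the center disk together with local dynamical coherence in a box neighborhood~$B$, which you will need).

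The gap is in Stage~3. You propose to propagate $\hat X$ from $D$ outward along $su$-paths whose corners lie in $\CB$, and you correctly flag this as ``technically delicate.'' In fact this is exactly the hypothesis of Corollary~\ref{c.weird}, which the paper explicitly does \emph{not} use here. The set $\CB$ is only known to have large measure; it carries no a~priori topological structure, and your suggested mechanism (``generic paths can be perturbed transversally into $\CB$ by transverse absolute continuity'') does not work: absolute continuity controls how holonomy maps transform measures, but gives no way to move a prescribed finite collection of corner points into a prescribed full-measure set while preserving the $su$-path structure. No argument of this kind is supplied, and none is known in this generality.

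The paper avoids the issue by a different route that uses only \emph{two} saturation steps, both local, followed by Brin's theorem. Write $X_1=\hat X$, $X_0=M\setminus X_1$; by ergodicity of $f^\ell|_{D^c}$ one has $m_c(X_i)=0$ for some $i$. Since $m_c$-a.e.\ point of $D^c$ lies in $\CB$, the $\cW^u$-saturation from Theorem~\ref{t.nonunifbw} plus absolute continuity give $m_{uc}(X_i\cap D^{uc})=0$ inside the coherent box~$B$. Then $Y_i:=X_i\cap\CB^+$ is $\cW^s$-saturated, so a second application of absolute continuity yields $m(Y_i\cap B)=0$. The global step is not an $su$-path argument at all: $Y_i$ is $f$-invariant, and by Theorem~\ref{t.Brin} (accessibility $\Rightarrow$ a.e.\ dense orbits) any invariant set of positive measure meets every open set, in particular~$B$, in positive measure. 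Hence $m(Y_i)=0$, and since $m(\CB^+)>1-\eps$ one gets $m(X_i)<\eps$. Replacing your $su$-path propagation by this Brin-theorem step closes the gap.
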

In \S\S\ref{ss.zero}, \ref{ss.access}, and \ref{ss.c disk}
we review some results from the literature, which are used to prove
the proposition in \S\ref{ss.near erg}.
Then in \S\ref{ss.G delta} we explain how Proposition~\ref{p.near ergodic}
implies Theorem~\ref{t.main}.

\subsection{Zero Center Exponents} \label{ss.zero}

Given $f \in \PH^1_\omega(M)$,
the partially hyperbolic splitting $TM = E^u \oplus E^c \oplus E^s$
is not necessarily unique.
\emph{We consider from now on only the unique splitting of minimal center dimension.}
If this center dimension is constant on a $C^1$-neighborhood of $f$, we say that
$f$ has \emph{unbreakable} center bundle.
Such $f$'s form an open dense subset of $\PH^1_\omega(M)$ (by upper-semicontinuity of
the center dimension).

\medskip

To get center bunching, we will use the following:

\begin{otherthm}[Bochi~\cite{Bochi sympl}, Theorem C]\label{t.zero center}
There is a residual set $\cR \subset \PH^1_\omega(M)$
such that if $f \in \cR$ then all Lyapunov exponents in the center bundle vanish for a.e.~point.
\end{otherthm}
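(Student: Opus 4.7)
The plan is to follow the scheme developed by Bochi for his theorems on the genericity of zero Lyapunov exponents among $C^1$ symplectomorphisms, now adapted to the center subbundle of a partially hyperbolic symplectomorphism. First I would introduce a functional measuring the ``amount of nonuniformly hyperbolic behavior inside the center,'' namely
$$
\Lambda^c(f) \;=\; \int_M \sum_{\lambda^c_i(f,x) > 0} \lambda^c_i(f,x)\, dm(x),
$$
where the sum is over the positive Lyapunov exponents of $Df$ restricted to $E^c(x)$. By the Oseledets identities one can write $\Lambda^c(f) = \lim_n \frac{1}{n}\int_M \log \|\wedge^{j} Df^n|_{E^c(x)}\|\, dm(x)$ for the appropriate exterior power ($j = \tfrac12 \dim E^c$ in the symplectic center), and this subadditive presentation, together with continuity of the center splitting on the open dense subset of $\PH^1_\omega(M)$ with unbreakable center bundle, shows that $\Lambda^c$ is upper semicontinuous there.

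Next, let $\cR_0$ be the set of continuity points of $\Lambda^c$ inside that open dense subset. By a standard Baire category argument, $\cR_0$ is residual in $\PH^1_\omega(M)$. The core claim is that $\Lambda^c(f)=0$ for every $f\in \cR_0$; once this is shown, the theorem follows because the symmetry of the symplectic spectrum on $E^c$ forces all center exponents to vanish almost everywhere as soon as the positive ones do.

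The heart of the argument, and where I expect the main technical difficulty, is the dichotomy: for a continuity point $f \in \cR_0$ with $\Lambda^c(f) > 0$, either $E^c$ admits over some positive measure invariant set a nontrivial invariant dominated splitting into symplectically paired subbundles, or one can produce arbitrarily $C^1$-small $\omega$-preserving perturbations $g$ of $f$ with $\Lambda^c(g) < \Lambda^c(f) - \delta$ for some definite $\delta > 0$. The latter contradicts the continuity of $\Lambda^c$ at $f$, while the former allows an induction on $\dim E^c$: a dominated symplectic factor is peeled off and the argument is rerun on its symplectic complement. The perturbation step uses the Ma\~n\'e--Bochi rotation trick in the symplectic category: in the absence of a dominated splitting inside $E^c$, Oseledets subspaces can be brought arbitrarily close together along long Lyapunov-regular segments of a generic orbit, so small symplectic rotations inserted in Darboux charts along these segments mix the Oseledets directions and strictly reduce the integrand. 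Pasting such rotations over the disjoint levels of a Kakutani tower of large return time yields a uniform decrease in $\Lambda^c$.

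The main obstacle is the realization step: each local rotation must be implemented as a $C^1$-small, exactly $\omega$-preserving perturbation of $f$ supported in a small ball, with the $C^1$-size controlled by the geometry of the Oseledets frame and with the rotations on the separate tower levels being compatible with partial hyperbolicity of $f$ (so that the perturbation remains in $\PH^1_\omega(M)$ and the continuous center bundle of $g$ is close to that of $f$). Once this symplectic perturbation lemma is in place---which is precisely the technical heart of \cite{Bochi sympl}---the contradiction with continuity at $f \in \cR_0$ is immediate, and the residual set $\cR$ of the statement may be taken as $\cR_0$.
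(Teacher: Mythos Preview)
The paper does not give its own proof of this statement: Theorem~\ref{t.zero center} is quoted verbatim as Theorem~C of \cite{Bochi sympl} and used as a black box. Immediately after stating it, the authors only record the reformulation they need, namely that
\[
\lambda^c(f)=\inf_n \frac{1}{n}\int_M \log\|Df^n|E^c_f\|\,dm
\]
is upper semicontinuous and vanishes generically, so that $\{f:\lambda^c(f)<\delta\}$ is open and dense for every $\delta>0$.

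Your proposal is therefore not to be compared against a proof in this paper, but is rather a sketch of the argument in the cited reference \cite{Bochi sympl}. As such it is a faithful outline of Bochi's scheme: the semicontinuous functional, the residual set of continuity points, and the dichotomy between a dominated splitting of $E^c$ and the possibility of dropping the exponents by symplectic Ma\~n\'e--Bochi rotations along Kakutani towers. Two minor remarks: the paper works with the simpler functional $\lambda^c$ (the integrated top center exponent) rather than your $\Lambda^c$, which suffices by symplectic symmetry of the spectrum; and your parenthetical identification of $\Lambda^c$ with a single exterior power $\wedge^j$ requires the number of positive center exponents to be a.e.\ constant, which holds here by that same symmetry but is worth stating. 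The genuine work, as you correctly flag, is the symplectic realization lemma allowing the rotations to be implemented by $C^1$-small $\omega$-preserving perturbations---and that is exactly what \cite{Bochi sympl} supplies.
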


In other words, $\lambda^c(f)=0$ for generic $f$, where
$$
\lambda^c (f) = \lim_{n \to +\infty} \frac{1}{n} \int_M \log \|Df^n | E^c_f\| \; dm
              = \inf_{n} \frac{1}{n} \int_M \log \|Df^n | E^c_f\| \; dm.
$$
Notice that $\lambda^c(f)$ is an upper semicontinuous function of $f$.
Therefore, for any $\delta>0$, the set of $f \in \PH^1_\omega(M)$ with $\lambda^c(f)<\delta$
is open and dense (and thus, by~\cite{Zehnder}, it contains $C^2$ maps).

\subsection{Accessibility} \label{ss.access}

There are two results about accessibility that we will need:
one says that it is frequent, and the other gives a useful consequence.

\begin{otherthm}[Dolgopyat and Wilkinson \cite{Dolgo Wilk}] \label{t.DW}
There is an open and dense subset of $\PH^1_\omega (M)$
formed by accessible symplectomorphisms.
\end{otherthm}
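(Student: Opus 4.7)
The plan is to establish $C^1$-density of stable accessibility; combined with the fact that stable accessibility is open by definition, this yields an open and dense subset of accessible maps. I would reduce to a purely local perturbation statement: given any $f\in\PH^1_\omega(M)$, any $\epsilon>0$, and any point $p\in M$, one can $\epsilon$-perturb $f$ in the $C^1$ topology so that the accessibility class of $p$ becomes open, and this property persists under further $C^1$-small perturbations. Applying this iteratively along a countable dense set of points $\{p_n\}$ with a summable sequence of perturbation sizes produces a limit $f_\infty$ close to $f$ all of whose accessibility classes are open; connectivity of $M$ then forces a single class, i.e.\ $f_\infty$ is accessible. The built-in persistence at each step makes $f_\infty$ in fact stably accessible, proving density.

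The infinitesimal criterion underlying the local step is the standard accessibility wheel argument. Parametrize su-loops based at $p$ by tuples $t=(t_1,\dots,t_k)\in\R^k$ giving the signed ``times'' along alternating $\cW^s$ and $\cW^u$ legs, and let $\Phi(t)$ denote the endpoint of the loop. The differential $D\Phi(0)$ automatically reaches directions in $E^u(p)\oplus E^s(p)$ simply by varying a single long leg; the real question is whether its image also spans $E^c(p)$. If it does, then $\Phi$ is a submersion at $t=0$ onto a neighborhood of $p$, exhibiting the accessibility class of $p$ as open.

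The main obstacle, and the technical heart of the proof, is realizing a wheel with $D\Phi(0)$ surjective onto $E^c(p)$ by a $C^1$-small symplectic perturbation. I would start from an arbitrary nontrivial su-loop at $p$ and, on each leg, select a small ball $U_i$ disjoint from the rest of the loop and from $p$. A perturbation supported in $U_i$, realized as the time-$1$ map of a smooth compactly supported Hamiltonian flow so as to preserve $\omega$, displaces the stable or unstable holonomy crossing $U_i$ by any prescribed infinitesimal amount. Since $\omega$ restricts to a nondegenerate form on $E^c$, Hamiltonian perturbations can independently realize arbitrary tangent directions in $E^c(p)$ at the loop's endpoint. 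Choosing $\dim E^c$ such perturbations in disjoint small balls distributed along the legs produces a wheel whose derivative has image exactly $E^c(p)$. Surjectivity of $D\Phi(0)$ onto $E^c(p)$ is itself a $C^1$-open property, which closes the inductive step and certifies that every accessibility-class-openness event built into $f_\infty$ is stable. The delicate point — and the place where the symplectic hypothesis enters essentially — is verifying that the Hamiltonian perturbation of $f$ propagates to the expected first-order change in the stable/unstable holonomy; this is a direct computation using the implicit definition of $\cW^{s,u}$ as graphs over $E^{s,u}$, but care is needed to keep the perturbation $C^1$-small globally while producing a controllable first-order effect locally.
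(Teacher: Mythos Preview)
The paper does not give a proof of this statement; it is quoted from \cite{Dolgo Wilk} and used as a black-box ingredient, so there is no argument in the paper itself to compare against.

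On the merits of your sketch, there is a genuine gap in the submersion step. The endpoint map $\Phi(t_1,\dots,t_k)$ you describe is in general \emph{not} differentiable in the interior variables $t_1,\dots,t_{k-1}$. Varying $t_i$ moves the $(i{+}1)$-st corner smoothly along a leaf of $\cW^s$ or $\cW^u$, but the leaf of the complementary foliation through that moving corner varies only H\"older-continuously with the corner, since the strong foliations of a partially hyperbolic diffeomorphism are typically not transversally $C^1$. Hence invoking $D\Phi(0)$ and the implicit function theorem is unjustified. The same obstruction arises if you parametrize instead by the amplitudes of your Hamiltonian perturbations: the foliations $\cW^s$ and $\cW^u$ depend only continuously on $f$ in the $C^1$ topology, so the map from perturbation parameter to endpoint is again not known to be differentiable. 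The Dolgopyat--Wilkinson argument sidesteps this entirely: rather than differentiating an endpoint map, they work with continuous families of $su$-paths whose legs sweep over small stable and unstable disks, perturb $f$ locally (by Hamiltonian isotopies in the symplectic case) to displace those disks in prescribed ways, and use a direct topological covering argument to force the set of endpoints to contain an open set. Robustness is then obtained from the $C^1$-openness of that covering condition, not from a rank condition on a derivative that need not exist.
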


\begin{otherthm}[Brin~\cite{Brin}]\label{t.Brin}
If $f$ is a $C^2$ volume-preserving partially hyperbolic diffeomorphism
with the accessibility property then almost every point has a dense orbit.
\end{otherthm}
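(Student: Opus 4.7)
The plan is to show that for each nonempty open $V\subseteq M$, the set
\[
D_V := \{x\in M : f^n(x)\in V \text{ for infinitely many } n\ge 0\}
\]
has full $m$-measure. Once this is established, taking a countable basis $\{V_n\}$ of open sets and intersecting the full-measure sets $D_{V_n}$ yields a full-measure set of points with dense forward orbit; applying the same argument to $f^{-1}$ handles backward orbits.

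The positive-measure input comes from Birkhoff's ergodic theorem applied to $\mathbf{1}_V$: both forward and backward time averages converge $m$-a.e.\ to the same $f$-invariant function $g_V$, with $\int_M g_V\,dm = m(V)>0$. A short Poincar\'e-recurrence argument on each ergodic component shows that $D_V=\{g_V>0\}$ modulo $m$-null sets, so $D_V$ has positive measure.

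For saturation, I would approximate $\mathbf{1}_V$ from below by continuous functions $\phi_k\nearrow \mathbf{1}_V$ on $V$. Since $\phi_k$ is continuous and $d(f^n x, f^n y)\to 0$ for $y\in\cW^s(x)$, the Birkhoff average $\tilde g_k$ of $\phi_k$ is constant along each $\cW^s$-leaf. Monotone convergence gives $\tilde g_k\nearrow g_V$, so $g_V$ is $\cW^s$-saturated $m$-a.e.; thus $\{g_V>0\}$ is essentially $\cW^s$-saturated. Using the equality of forward and backward Birkhoff averages, the symmetric argument with $f^{-1}$ shows that $\{g_V>0\}$ is also essentially $\cW^u$-saturated. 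Hence $D_V$ is a bi essentially saturated set of positive measure.

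The main obstacle is the final step: promoting bi essential saturation together with positive measure to full measure. In general this is exactly the content of the still-open Pugh--Shub conjecture. What rescues the argument in Brin's setting is the combination of the $C^2$ hypothesis (which via Brin--Pesin gives absolute continuity of the $\cW^s$- and $\cW^u$-holonomies) with accessibility: one uses a Hopf-style transfer of Lebesgue density points of $D_V$ across successive stable and unstable legs of an $su$-path joining a typical density point to an arbitrary target point. Executing that transfer carefully --- controlling how density is preserved across each leg --- is the delicate heart of Brin's original 1975 argument, and yields $m(D_V)=1$.
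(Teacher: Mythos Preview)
The paper does not give its own proof of this statement: it cites Brin (for the absolutely partially hyperbolic case) and refers to \cite[Lemma~5]{Burns Dolgo Pesin} for the relative case used here, remarking only that absolute continuity of $\cW^s$ and $\cW^u$ is the required input.

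Your reduction to $m(D_V)=1$ and the Birkhoff setup are standard and correct, but the final paragraph is a genuine gap. You assert that $C^2$ plus accessibility ``rescues'' the argument by allowing a Hopf-style transfer of Lebesgue density points of $D_V$ along $su$-paths. But $C^2$, volume preservation, and accessibility are exactly the hypotheses of the Pugh--Shub conjecture, and the density-point transfer you describe is the Burns--Wilkinson mechanism, which is only known to work under center bunching --- a hypothesis absent here. So your proposed rescue does not separate Brin's weaker conclusion (almost-everywhere dense orbits) from full ergodicity, which remains open in this generality. What actually makes Brin's theorem tractable is structure you discard: for a \emph{continuous} $\phi\ge 0$ supported in $V$, the everywhere-defined $\phi^+(x)=\limsup_n n^{-1}\sum_{k=0}^{n-1}\phi(f^kx)$ is \emph{genuinely} (not merely essentially) constant along every $\cW^s$-leaf, and likewise $\phi^-$ along every $\cW^u$-leaf. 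Genuine saturation, together with absolute continuity of the invariant foliations, lets one run the Hopf argument along $su$-paths whose corners lie in the full-measure set $\{\phi^+=\phi^-\}$, avoiding the julienne density-point machinery altogether. Your approximation $\phi_k\nearrow \mathbf{1}_V$ was the right first move, but you should work directly with the genuinely saturated sets $\{\phi_k^{\pm}>0\}$ rather than passing to the merely essentially saturated $\{g_V>0\}$.
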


In fact, Brin proved the result above for absolute\footnote{See Remark~\ref{r.defs ph}} partially hyperbolic maps.
Another proof was given by Burns, Dolgopyat, and Pesin, see \cite[Lemma~5]{Burns Dolgo Pesin}.
Their proof applies to relative partially hyperbolic maps (the weaker definition taken in this paper):
the only necessary modification is to use
the property of absolute continuity of stable and unstable foliations in the relative case,
which is proven by Abdenur and Viana in \cite{AV flavors}.

\subsection{Creating an Ergodic Center Disk} \label{ss.c disk}
The last ingredient we will need in the proof of Proposition~\ref{p.near ergodic}
is Lemma~\ref{l.c disk} below,
whose proof needs its own preparations.
We begin finding a suitable periodic point:

\begin{otherthm}\label{t.BDP}
Let $f$ have unbreakable center.
There exists a $C^1$-perturbation $\tilde f$
that has a periodic point with $\dim E^c$ eigenvalues of modulus $1$.
\end{otherthm}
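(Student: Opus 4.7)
The plan is to reduce Theorem~\ref{t.BDP} to an ergodic counterpart---the existence, after a $C^1$-perturbation, of an ergodic invariant measure whose center Lyapunov exponents all vanish---and then to realize this measure as a periodic orbit via Ma\~n\'e's Ergodic Closing Lemma in the symplectic category, followed by a symplectic Franks-type perturbation that turns the (nearly) vanishing Lyapunov exponents into eigenvalues of modulus one. This differs from the approach in \cite{Bon Diaz Pujals, Horita Tah, SX robust trans}, where the periodic orbit is instead produced by direct combinatorial arguments on dominated splittings.

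For the ergodic step, I would invoke Theorem~\ref{t.zero center} together with the upper semicontinuity of $f \mapsto \lambda^c(f)$: after $C^1$-perturbing $f$ and applying Zehnder's $C^2$-approximation, one may assume $f$ is $C^2$ with $\lambda^c(f)<\delta$ for any prescribed $\delta>0$. Ergodic decomposition of $m$ combined with a Fubini argument then yields an ergodic component $\mu$ whose center Lyapunov exponents all have absolute value less than some $\delta'$, with $\delta'\to 0$ as $\delta\to 0$. Applying the Ergodic Closing Lemma to $(f,\mu)$ produces a $C^1$-small perturbation $f_1$ and a periodic point $p$ of period $N$ whose orbit shadows a $\mu$-typical orbit long enough that the derivative cocycle $Df_1^N(p)|_{E^c}$ closely approximates the Oseledets behavior along the shadowed orbit; consequently all its Lyapunov exponents remain small. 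A symplectic Franks-type perturbation lemma, in the spirit of Horita--Tahzibi and the other references cited in the excerpt, then furnishes a further $C^1$-small, symplectic perturbation $\tilde f$ supported in a small neighborhood of the orbit of $p$ whose effect is to replace $Df_1^N(p)|_{E^c}$ by any chosen nearby symplectic matrix; one selects a matrix with all eigenvalues on the unit circle.

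The main obstacle is the last step: moving, via a $C^1$-cheap symplectic modification, from a return map whose center eigenvalues are merely close to the unit circle to one with all center eigenvalues of modulus exactly one. The quantitative hypotheses of the symplectic Franks lemma bound the allowed correction in terms of the norm of the derivative and (inversely) the period $N$ of $p$, so the smallness of the center Lyapunov exponents obtained in the preceding step is what permits the perturbation to be realized as an actual $C^1$-small diffeomorphism. The unbreakable center hypothesis plays an auxiliary but essential role: it guarantees that for all $C^1$-nearby maps the center bundle has dimension $\dim E^c$, so that the $\dim E^c$ eigenvalues of modulus one that we produce genuinely correspond to the center direction of $\tilde f$ in the sense required by \S\ref{ss.outline}.
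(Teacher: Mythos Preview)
Your proposal is correct and follows essentially the same three-step route as the paper's Section~\ref{s.elliptic}: invoke Theorem~\ref{t.zero center} to make the center exponents small, apply the symplectic Ergodic Closing Lemma to produce an almost-elliptic periodic point (this is Lemma~\ref{l.almost elliptic}), and then push the center eigenvalues onto the unit circle by a Franks-type perturbation realized via Lemma~\ref{l.pasting}. The paper isolates the linear-algebraic content of the last step as Lemma~\ref{l.nicolas}, which is precisely your quantitative observation that a per-step symplectic correction of size $e^\epsilon-1$ suffices when the Lyapunov exponents are bounded by $\epsilon$; your detour through ergodic decomposition and Zehnder regularization is harmless but unnecessary, since the paper works directly with an $m$-closable point at which the Birkhoff average of $\log\|D^c f^k\|$ is small.
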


This result can be obtained along the lines of
\cite{SX robust trans} or \cite{Horita Tah} (which prove symplectic versions of
the results of \cite{Bon Diaz Pujals}).
In Section~\ref{s.elliptic} we give a different proof, relying on \cite{Bochi sympl}
and the Ergodic Closing Lemma~\cite{Mane ergodic}.

The following symplectic pasting lemma is established using generating functions,
see \cite{Arbieto Matheus}:

\begin{lemma}\label{l.pasting}
Let $f \in \Diff^r_\omega(M)$.
Given $\eps > 0$ there is $\delta>0$ such that
if $U \subset M$ is an open set of diameter less than $\delta$,
and $g:U \to M$ is a $C^r$-symplectic map that is $\delta$-$C^1$-close to $f|U$,
then $g$ can be extended to some $\hat{g} \in \Diff^r_\omega(M)$ that is $\eps$-$C^1$-close to $f$.
\end{lemma}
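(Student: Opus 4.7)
The plan is to argue locally in a Darboux chart and glue via symplectic generating functions, following \cite{Arbieto Matheus}.

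First I would reduce the problem to extending a map close to the identity. Set $h := f^{-1}\circ g : U \to M$, which is symplectic and, up to constants depending only on $f$, is $O(\delta)$-$C^1$-close to $\Id$. By compactness of $M$, there is a fixed $\rho_0 > 0$ such that for $\delta$ sufficiently small a $\rho_0$-neighborhood $V$ of $U$ is contained in a single Darboux chart for $\omega$. Working in that chart I may assume that $\omega$ is the standard form on $\R^{2N}$ and that the problem is to extend $h$ to a $C^r$ symplectomorphism $\tilde h$ with $\tilde h = h$ on $U$, $\tilde h = \Id$ outside $V$, and $\tilde h$ $C^1$-close to $\Id$. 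Setting $\hat g := f\circ\tilde h$ then yields an extension of $g$ equal to $f$ outside $V$, and $\|\hat g - f\|_{C^1}$ is controlled by $\|\tilde h - \Id\|_{C^1}$ up to constants depending only on $f$.

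Next I would encode $h$ by a generating function. A symplectic map $h(x,y) = (X,Y)$ sufficiently $C^1$-close to $\Id$ admits a unique $C^r$ generating function $S_h(x,Y)$ with $y = \partial_x S_h$ and $X = \partial_Y S_h$; the identity corresponds to $S_{\Id}(x,Y) = x\cdot Y$. The identity
$$\nabla(S_h - S_{\Id}) = (y - Y,\, X - x)$$
yields $\|S_h - S_{\Id}\|_{C^1} = O(\delta)$, while the $\delta$-$C^1$-closeness of $Dh$ to $\Id$ gives $\|D^2(S_h - S_{\Id})\|_{C^0} = O(\delta)$; after fixing the additive constant and using $\diam U < \delta$, one moreover has the crucial quadratic estimate $\|S_h - S_{\Id}\|_{C^0} = O(\delta^2)$. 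Choose a bump function $\chi$ supported in $V$, equal to $1$ on $U$, with $\|\chi\|_{C^k} \lesssim \rho_0^{-k}$ for $k \le 2$, and define
$$\tilde S := S_{\Id} + \chi\cdot(S_h - S_{\Id}).$$
By the Leibniz rule,
$$\|\tilde S - S_{\Id}\|_{C^2} \lesssim \delta + \rho_0^{-1}\delta + \rho_0^{-2}\delta^2 = O(\delta),$$
where the implicit constants depend only on $f$. For $\delta$ small enough, $\tilde S$ is therefore $C^2$-close to $S_{\Id}$, and the implicit function theorem produces a unique $C^r$ symplectomorphism $\tilde h$, $C^1$-close to $\Id$, that coincides with $h$ where $\chi\equiv 1$ and with $\Id$ where $\chi\equiv 0$. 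Extending $\tilde h$ by $\Id$ outside the Darboux chart completes the construction.

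The main technical point is the balance inside the Leibniz estimate. The derivatives $\|D\chi\|$ and $\|D^2\chi\|$ are forced to be of size $\rho_0^{-1}$ and $\rho_0^{-2}$, so the potentially dangerous terms in $\|D^2(\chi(S_h - S_{\Id}))\|_{C^0}$ are $\rho_0^{-1}\|S_h - S_{\Id}\|_{C^1}$ and $\rho_0^{-2}\|S_h - S_{\Id}\|_{C^0}$. The hypothesis $\diam U < \delta$ is used precisely to secure the quadratic estimate $\|S_h - S_{\Id}\|_{C^0} = O(\delta^2)$, which absorbs the $\rho_0^{-2}$ factor and keeps the final estimate first order in $\delta$; the choice $\delta = \delta(\eps, f)$ then guarantees $\|\hat g - f\|_{C^1} < \eps$.
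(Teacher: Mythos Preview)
Your approach via generating functions is exactly the method the paper invokes (it gives no proof, citing Arbieto--Matheus). There is, however, a genuine gap: the generating function $S_h$ is only defined on (the $(x,Y)$-image of) $U$, not on the larger set $V$ where your bump $\chi$ lives, so the product $\chi\cdot(S_h - S_{\Id})$ is undefined on $V\setminus U$. You cannot repair this by shrinking $\supp\chi$ into the domain of $S_h$: the transition width would then be $O(\delta)$, forcing $\|D^2\chi\|\sim\delta^{-2}$ and $\|D\chi\|\sim\delta^{-1}$, and your Leibniz bound degrades to
\[
\|\tilde S - S_{\Id}\|_{C^2} \lesssim \delta + \delta^{-1}\cdot\delta + \delta^{-2}\cdot\delta^2 = O(1),
\]
so $\tilde h$ is not forced to be $C^1$-close to the identity. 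In other words, the fixed scale $\rho_0$ in your cutoff is essential for the estimate, but it is precisely what creates the domain problem.

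The remedy is to insert an extension step before cutting off: extend $\phi:=S_h - S_{\Id}$ from its domain to $\R^{2N}$ by a $C^r$ extension operator that is bounded simultaneously in each $C^k$ norm for $k\le 2$, with constants independent of the scale of the domain (Stein's operator for Lipschitz domains, or a direct reflection construction for near-balls, does this). The extended $\phi$ then still satisfies $\|\phi\|_{C^0}=O(\delta^2)$, $\|\nabla\phi\|_{C^0}=O(\delta)$, $\|D^2\phi\|_{C^0}=O(\delta)$, and your cutoff at the fixed scale $\rho_0$ together with the rest of the argument go through verbatim. Alternatively, note that in every application of the lemma in the paper the map $g$ is in fact given on a definite neighborhood of $\overline U$ (a linear map, or an explicit product map), so $S_h$ is already defined on $V$ and no extension is needed; this is closer to how Arbieto--Matheus actually formulate their pasting lemma.
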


The Anosov--Katok constructions enter here:

\begin{otherthm}\label{t.Katok}
Let $L:\R^{2N} \to \R^{2N}$ be a symplectic linear map with all
eigenvalues of modulus $1$.
Then there exist an arbitrarily
small neighborhood $U$ of $0$ in $\R^{2N}$ and an \emph{ergodic} symplectic
diffeomorphism $g:U \to U$ that is $C^\infty$-close to $L|U$.

\end{otherthm}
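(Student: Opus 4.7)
My plan is to adapt the classical Anosov--Katok approximation-by-conjugation construction \cite{Anosov Katok} to the symplectic setting. First, by an arbitrarily $C^\infty$-small \emph{linear} symplectic perturbation of $L$---whose effect on $L|U$ can be absorbed by shrinking $U$---I would reduce to the case where $L$ has simple eigenvalues and is therefore symplectically conjugate to a direct sum of planar rotations $R_{\alpha_1}\oplus\cdots\oplus R_{\alpha_N}$ on $(\R^2)^N$. In action-angle coordinates $(I,\varphi)\in(\R_{>0}\times\T)^N$ this map becomes the translation $T_\alpha\colon(I,\varphi)\mapsto(I,\varphi+\alpha)$, and I take $U$ to be a product of small disks (invariant under $T_\alpha$), on which the constructions below are performed in action-angle coordinates with due care at the origin.

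The construction produces $g$ as a $C^\infty$-limit $g=\lim_n g_n$ of conjugated rational translations
\[
g_n \;=\; H_n\circ T_{\alpha_n}\circ H_n^{-1},
\]
where $\alpha_n\in\Q^N$ and $H_n=h_1\circ\cdots\circ h_n$ with each $h_k$ a smooth symplectomorphism of $U$ commuting with $T_{\alpha_{k-1}}$. At step $n{+}1$ I would first select $h_{n+1}$ (chosen for ergodicity, as described next), and only afterwards pick a rational $\alpha_{n+1}$ with denominator $q_{n+1}\gg q_n$ so close to $\alpha_n$ that $\|g_{n+1}-g_n\|_{C^n}<2^{-n}$. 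This forces $C^\infty$-convergence $g_n\to g$, and since each $g_n$ is symplectically conjugate to a translation $C^\infty$-close to $T_\alpha$, the limit $g$ is symplectic and $C^\infty$-close to $L|U$.

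To secure ergodicity, fix a refining sequence of measurable partitions $\xi_n$ of $U$ with $\operatorname{diam}\xi_n\to 0$. The role of $h_{n+1}$ is to arrange that $H_{n+1}(\xi_{n+1})$ is \emph{uniformly distributed} under $T_{\alpha_{n+1}}$ in the sense of Anosov--Katok: each of its atoms is a $T_{k\alpha_{n+1}}$-image ($0\le k<q_{n+1}$) of a common small template, and these translates approximately tile $U$. A standard Halmos--Rokhlin-type density computation then shows that any $g$-invariant measurable set $A\subset U$ has density tending to $0$ or $1$ in almost every atom of $H_n(\xi_n)$, which forces $m(A)\in\{0,m(U)\}$.

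The main obstacle is the construction of smooth symplectic $h_{n+1}$ commuting exactly with the periodic translation $T_{\alpha_n}$ while realizing a prescribed mixing of a fine partition. My plan is to build the required symplectomorphism first on a fundamental domain $F_n$ of $T_{\alpha_n}$---of diameter $O(1/q_n)$---using the symplectic pasting Lemma~\ref{l.pasting} to realize an arbitrary small symplectic perturbation, and then to extend to all of $U$ by $T_{\alpha_n}$-equivariance; the applicability of Lemma~\ref{l.pasting} hinges precisely on the smallness of $F_n$. Verifying that the centralizer of $T_{\alpha_n}$ in $\Diff^\infty_\omega(U)$ is rich enough to equidistribute arbitrary fine partitions is the principal technical point; in the area-preserving case this is the core of the original Anosov--Katok argument \cite{Anosov Katok}, and the higher-dimensional symplectic version follows the same scheme.
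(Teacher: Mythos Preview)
Your approach runs the full Anosov--Katok approximation-by-conjugation scheme directly in $\R^{2N}$. The paper takes a much shorter route: after the same reduction to $L$ conjugate to $R_1 \oplus \cdots \oplus R_N$ on $(\R^2)^N$, it simply applies the classical \emph{two-dimensional} Anosov--Katok theorem on each factor to obtain \emph{weakly mixing} area-preserving disk maps $g_i$ that are $C^\infty$-close to $R_i|\D$, and takes $g$ to be (a rescaled linear conjugate of) the product $g_1 \times \cdots \times g_N$. The point is that a finite direct product of weakly mixing transformations is again weakly mixing, hence ergodic (\cite{Petersen}, Theorem~2.6.1). This reduces the entire statement to a citation of the 2D case plus one elementary fact, and avoids any higher-dimensional Anosov--Katok construction.

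Your route is in principle feasible, but the proposal has a concrete gap. The fundamental domain $F_n$ of the rational translation $T_{\alpha_n}$ in the product of disks does \emph{not} have diameter $O(1/q_n)$: it is small only in the angular directions and spans the full range of the action variables, so the smallness hypothesis of Lemma~\ref{l.pasting}---on which you say the construction ``hinges precisely''---is not met. In any case Lemma~\ref{l.pasting} is about extending a local perturbation to a global symplectomorphism of a closed manifold, not about building equivariant maps from fundamental domains; the construction of the conjugating $h_{n+1}$ in a symplectic Anosov--Katok scheme requires a genuinely higher-dimensional argument (typically via explicit Hamiltonian flows commuting with $T_{\alpha_n}$), together with care at the origin where action-angle coordinates are singular. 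The paper's product-of-weakly-mixing-maps trick sidesteps all of this.
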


\begin{proof}
We may assume that $L$ has only simple eigenvalues
$\lambda_1^{\pm 1}, \ldots, \lambda_N^{\pm 1}$, all in the unit circle.
For $1 \leq i \leq N$, let $E^i$
be the $L$-invariant two-dimensional subspaces
associated to the eigenvalues $\lambda_i$, $\lambda_i^{-1}$.
Let $A_i:E^i \to \R^2$ be linear maps conjugating
$L|E^i$ to rigid rotations $R_i$.
Fix $\epsilon>0$.
If $g_i:\D \to \D$ are area-preserving maps of the unit disk $\D \subset \R^2$
that are $C^\infty$-close to $R_i | \D$, then the formula
$$
g(x) = \epsilon A_1^{-1} g_1(\epsilon^{-1} A_1 x_1) + \cdots + \epsilon A_N^{-1} g_N(\epsilon^{-1} A_N x_N),
\text{ where $x=x_1+ \cdots +x_N$ with $x_i \in E^i$,}
$$
defines, on a  small neighborhood $U$ of $0$ in $\R^{2N}$,
a symplectic map $g:U \to U$ that is $C^\infty$-close to $L|U$.
Now using a well known result of Anosov and Katok~\cite{Anosov Katok},
we choose maps $g_i$ as above that are weakly mixing.
It follows (see~\cite{Petersen}, Theorem~2.6.1)
that $g$ is weakly mixing (and hence ergodic) as well.
\end{proof}

\begin{lemma}\label{l.c disk}
For all $f$ in a $C^1$ dense subset of $PH_\omega^1(M)$, the following properties hold:
The map $f$ is $C^2$, and
there is a immersed closed disk $D^c$
such that:
\begin{enumerate}
\item the tangent space $T_x D^c$ coincides with $E^c(x)$ at each $x \in D^c$;
\item there is some $\ell$ such that $D^c$ is $f^\ell$-invariant, and moreover $D^c \cap f^i(D^c) = \emptyset$ for $0< i<\ell$;
\item the restriction of $f^\ell$ to $D^c$ is ergodic (with respect to the Riemannian volume $m_c$);
\item the disk is center bunched in the sense that
$$
\frac{\|Df|E^c(x)\|}{\m(Df|E^c(x))} < \min \big( \m(Df|E^u(x)) ,  \,  \|Df|E^s(x)\|^{-1}  \big)
\quad \text{for all $x\in D^c$};
$$
\item $f$ is dynamically coherent in a box neighborhood $B$ of $D^c$
(that is, there are foliations $\cW^c$, $\cW^{uc}$, $\cW^{cs}$ in the box $B$
that integrate the distributions $E^c$, $E^u \oplus E^c$, $E^c \oplus E^s$).
\end{enumerate}
\end{lemma}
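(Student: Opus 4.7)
The plan is to combine Theorem~\ref{t.BDP} with the symplectic pasting Lemma~\ref{l.pasting} and the Anosov--Katok construction of Theorem~\ref{t.Katok}. Starting from $f \in \PH^1_\omega(M)$ with unbreakable center, I first use Zehnder's density theorem to approximate $f$ by $C^\infty$ symplectomorphisms and apply Theorem~\ref{t.BDP} to obtain a $C^1$-small perturbation $\tilde f$ having a periodic point $p$ of period $\ell$ such that $L^c := D_p\tilde f^\ell|E^c(p)$ has all its $\dim E^c$ eigenvalues on the unit circle. By a further generic perturbation I may assume these eigenvalues are simple, so that $L^c$ is semisimple and is a linear isometry for some inner product on $E^c(p)$.

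In the symplectic setting $E^c(p)$ is a symplectic subspace of $T_pM$, while $E^u(p)$ and $E^s(p)$ are Lagrangian subspaces symplectically paired by $\omega$. I choose a Darboux chart $\varphi$ at $p$ adapted to this splitting, so that $D_p\tilde f^\ell = L^u \oplus L^c \oplus L^s$ in block form. By Theorem~\ref{t.Katok}, there is an ergodic symplectic diffeomorphism $g: W \to W$ on a small ball $W \subset E^c(p)$ that is $C^\infty$-close to $L^c|W$. Setting
\[
F(x_c, x_u, x_s) := \bigl(g(x_c),\; L^u x_u,\; L^s x_s\bigr)
\]
on a small polydisk $W \times W_u \times W_s$, the map $F$ is a symplectic $C^\infty$-map that is $C^1$-close to $\tilde f^\ell$ on that polydisk (shrunk so $\tilde f^\ell$ is close to its linearization $L$). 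I realize $F$ as the $\ell$th iterate of a perturbation $\hat f$ of $\tilde f$ by modifying $\tilde f$ on a small neighborhood of the single orbit point $q := \tilde f^{\ell-1}(p)$, chosen disjoint from the other iterates: set $\hat f := F \circ (\tilde f^{\ell-1})^{-1}$ there and extend to a $C^1$-small symplectic perturbation of $\tilde f$ on $M$ via Lemma~\ref{l.pasting}. Then $\hat f^\ell$ coincides with $F$ on a smaller polydisk around $p$, and I set $D^c := \varphi^{-1}(W \times \{0\})$.

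Properties (2) and (3) are immediate from the product form of $F$ and the ergodicity of $g$. For property (1), observe that $TD^c$ is a $D\hat f^\ell$-invariant sub-bundle of $TM|D^c$ on which the action is $Dg$ (eigenvalues of unit modulus at $p$), transverse to the invariant complement on which the action is $L^u \oplus L^s$; by uniqueness of the partially hyperbolic splitting of $\hat f$ this forces $TD^c = E^c|_{D^c}$. Property (5) is then immediate from the local product structure of $\hat f^\ell$ on the polydisk, which makes $E^c$, $E^u \oplus E^c$, and $E^c \oplus E^s$ integrate to the obvious coordinate sub-polydisks within a box neighborhood $B$ of $D^c$.

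The main obstacle is the pointwise center-bunching condition (4). Because $D\hat f|E^c(p)$ is only one of $\ell$ factors whose product equals $L^c$, it need not itself be near-isometric. I handle this by adapting the Riemannian metric. Starting from an inner product on $E^c(p)$ for which $L^c$ is an isometry, I propagate it forward along the orbit of $p$ using the maps $D\tilde f|E^c(\tilde f^i(p))$; the resulting family of inner products on $E^c(\tilde f^i(p))$ is consistent modulo $\ell$ (because $L^c$ is an isometry for the starting one) and makes each one-step derivative $D\tilde f|E^c$ along the orbit a linear isometry. Interpolating with a Gourmelon-type adapted metric~\cite{Gourmelon} yields a global Riemannian metric on $M$ for which the partial hyperbolicity relations~\eqref{e.def ph} hold with $k = 1$ and for which $\|D\hat f|E^c(x)\|/\m(D\hat f|E^c(x))$ is arbitrarily close to $1$ on a neighborhood of $p$. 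Since $\m(D\hat f|E^u(p)) > 1 > \|D\hat f|E^s(p)\|$, condition~(4) holds at $p$ and, by continuity, throughout a sufficiently small $D^c$. The delicate point is to carry out all these perturbations and metric adaptations simultaneously while maintaining symplecticity, the unbreakable-center PH splitting, and the prescribed local form of $\hat f^\ell$ needed for Anosov--Katok ergodicity on $D^c$.
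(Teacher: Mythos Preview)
Your approach is correct and follows the same overall plan as the paper: Theorem~\ref{t.BDP} for a center-elliptic periodic orbit, an adapted Darboux chart, Theorem~\ref{t.Katok} for the ergodic center dynamics, and Lemma~\ref{l.pasting} to globalize. The verifications of (1), (2), (3), (5) are essentially as in the paper.

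The one substantive difference is in how property~(4) is arranged. After obtaining $L^c = D_p\tilde f^\ell|E^c$ with simple unit-modulus eigenvalues, the paper adds a step you did not: it further perturbs the eigenvalue arguments to be \emph{rational} multiples of $2\pi$, so that $L^c$ has finite order; replacing $\ell$ by a multiple and $p$ by a suitable iterate then reduces to $L^c = \mathrm{id}$. With this in hand the Anosov--Katok map $g$ is $C^\infty$-close to the identity, so $D\hat f^\ell|E^c$ on the disk has norm ratio near~$1$ and center bunching is essentially automatic --- no bespoke metric is needed. Your alternative route (propagate an $L^c$-invariant inner product along the periodic orbit, then interpolate with a Gourmelon-type metric) is valid and in fact addresses the pointwise statement of (4) more explicitly than the paper's terse ``Then $f$ has all the desired properties,'' but it introduces exactly the ``delicate'' compatibility issues you flag in your last sentence; the paper's finite-order trick sidesteps them entirely. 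A minor stylistic difference: the paper does the pasting in two stages (first linearize $f^\ell$ near $p$, then substitute $g$ for the center block), whereas you collapse this into a single pasting near $q$; both work.
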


\begin{proof}
We will explain how to perturb
a given $f$ in order to obtain the desired properties.

First use Theorem~\ref{t.BDP} to perturb $f$ and
find a periodic point $p$ of period $\ell$ such that all eigenvalues of $Df^\ell|E^c(p)$ have modulus $1$.
Also assume that these eigenvalues are distinct and their arguments are rational mod $2\pi$,
so that $Df^\ell|E^c(p)$ is diagonalizable and a power of it is the identity.

Take a neighborhood $U$ of $p$ that is disjoint from $f^i(U)$ for $1 \le i \le \ell-1$,
and such that there is a symplectic chart $\phi: U \to \R^{2N}$,
(that is, the form $\phi_* \omega$ coincides with $\sum_{i=1}^N dp_i \wedge dq_i$,
where $p_1$, \ldots, $p_N$, $q_1$, \ldots, $q_N$ are coordinates in $\R^{2N}$.)
We can also assume that $\phi(p) =0$ and $D\phi(p)$ sends
the spaces $E^u(p)$, $E^c(p)$, and $E^s(p)$
to the planes $p_1 \cdots p_u$, $p_{u+1} \cdots p_N q_{u+1} \cdots q_N$, and $q_1 \cdots q_u$,
respectively (where $u = \dim E^u$.)

Using Lemma~\ref{l.pasting}, we can perturb $f$ so that
$\phi \circ f^\ell \circ \phi^{-1}$ coincides with the linear map
$D\phi(p) \circ Df^\ell(p) \circ D\phi^{-1}(0)$ on a neighborhood of $p$.
For simplicity, we omit the chart in the writing, thus
$$
f^\ell(x_u,x_c,x_s) = (L_u(x_u), L_c(x_c), L_s(x_s)).
$$
Recall that there is a power of $L_c$ that is the identity.
So, if necessary changing the point~$p$ and the period~$\ell$,
we can assume $L_c$ is the identity.

Next we use Theorem~\ref{t.Katok}.
Let $g: D^c \to D^c$ be an ergodic symplectic diffeomorphism, where
the disk $D^c \subset \R^{\dim E^c}$ is contained in the chart domain.
Consider the (symplectic) map
$$
G(x_u,x_c,x_s) = (L_u(x_u), g(x_c), L_s(x_s)),
\text{ defined in a neighborhood of $(0,0,0)$.}
$$
Now use Lemma~\ref{l.pasting} again to find a global $\tilde f:M \to M$ close to $f$,
such that (still in charts)
$$
f^\ell(x_u,x_c,x_s) = G(x_u,x_c,x_s)
\text{ in a neighborhood of $(0,0,0)$.}
$$
Rename $\tilde f$ to $f$.
Then $f$ has all the desired properties.
\end{proof}

\subsection{Getting Near-Ergodicity} \label{ss.near erg}

\begin{proof}[Proof of Proposition~\ref{p.near ergodic}]
Fix an open set $\cU \subseteq \PH^1_\omega(M)$ and $\eps>0$.
Let $\delta>0$ be small.
Using Theorems~\ref{t.DW} and \ref{t.zero center},
we can assume that the set $\cU$ is composed of maps $f$ that are accessible and
satisfy $\lambda^c(f) < \delta$.
With a good choice of $\delta$, the latter property implies
that for any $f\in\cU$,
the measure of the set 
of Lyapunov regular points 
whose Lyapunov spectrum satisfies the center bunching condition is at least $1-\eps$.
Thus, by Proposition~\ref{p.spectrum},
$$
m(\mathit{CB}^+) > 1 - \eps.
$$

Now take $f\in \cU$ given by Lemma~\ref{l.c disk}.
Thus we have a center bunched disk $D^c$
that is ergodic (w.r.t.\ the measure $m_c$) by a power $f^\ell$,
disjoint from its first $\ell-1$ iterates,
and has a dynamically coherent box neighborhood $B$.

We will prove that $f$ is $\eps$-nearly ergodic.
So take any bi~essentially saturated set mod~$0$ invariant set $X$.
Let $X_1$ be the (invariant) set of its Lebesgue density points, and $X_0 = M \setminus X_1$.
By Proposition~\ref{p.CB is sat} and Theorem~\ref{t.nonunifbw},
$X_j \cap \mathit{CB}^+$ is $\cW^s$-saturated and 
$X_j \cap \mathit{CB}^-$ is $\cW^u$-saturated for both $j=0,1$. 

The map $f^\ell$ has the invariant ergodic measure $m_c$, supported on $D^c$.
Thus, for some $i \in \{0,1\}$ (that will be kept fixed in the sequel),
$$
m_c(X_i) = 0.
$$

By Oseledets' Theorem, $m_c$-almost every point is Lyapunov regular for~$f^\ell$,
and hence for $f$ as well.
By Property~4 in Lemma~\ref{l.c disk}, all these points have center bunched Lyapunov spectrum,
and thus are (forward and backwards) center bunched, by Proposition~\ref{p.spectrum}.
Hence for $m_c$-almost every $x\in D^c$, the unstable manifold $\cW^u(x)$
is contained in $X_{1-i}$.
Dynamical coherence gives a foliation $\cW^{uc}$ in the box $B$
(which integrates $E^u \oplus E^c$);
let $D^{uc}$ be the leaf that contains $D^c$,
with an induced  Riemannian volume measure $m_{uc}$.
It follows from the absolute continuity of the $\cW^u$ foliation that
$$
m_{uc} (X_i \cap D^{uc}) = 0.
$$

Since the set $Y_i  = X_i \cap \mathit{CB}^+$ is $\cW^s$-saturated and $m_{uc} (Y_i \cap D^{uc}) = 0$,
absolute continuity gives $m (Y_i \cap B) = 0$.
It follows that $m(Y_i) = 0$;
indeed if the invariant set $Y_i$ 
had positive measure then, by Theorem~\ref{t.Brin},
it would have a positive measure intersection with
every set of nonempty interior, for example the box $B$.
Recalling that $m(\mathit{CB}^+) > 1 - \eps$,
we get that $m (X_i) < \eps$.
This means that either $m(X) < \eps$ or $m(X) > 1-\eps$,
as we wanted to prove.
%
\end{proof}

\subsection{The $G_\delta$ Argument}\label{ss.G delta}

We now explain how Proposition~\ref{p.near ergodic} implies Theorem~\ref{t.main}.

Given $f \in \Diff^1_\omega(M)$ and a continuous function $\varphi: M \to \R$,
define functions:
$$
\varphi_{f,n} (x) =  \frac{1}{n} \sum_{i=0}^{n-1} \varphi(f^j(x)) \, , \qquad
\varphi_f (x) = \lim_{n \to +\infty} \varphi_{f,n} (x)  \quad \text{(defined a.e.).}
$$

For $\varphi \in C^0(M,\R)$, $a\in \R$, and $\eps>0$, let
$\cG (\varphi, a, \eps)$ be the set of $f$ such that
$m[\varphi_f \ge a] \ge 1-\eps$ or $m[\varphi_f \le a] \ge 1-\eps$.
(Here $[\varphi_f \ge a]$ is a shorthand for the set of $x\in M$
where $\varphi_f(x)$ exists and is greater than or equal to $a$.)

\begin{lemma}\label{l.G delta}
$\cG(\varphi, a, \eps)$ is a $G_\delta$ subset of $\Diff^1_\omega(M)$.
\end{lemma}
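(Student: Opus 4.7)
The plan is to write $\cG(\varphi,a,\eps) = \cG^+ \cup \cG^-$, where
\[
\cG^+ = \{f : m[\varphi_f \ge a] \ge 1-\eps\} \quad \text{and} \quad \cG^- = \{f : m[\varphi_f \le a] \ge 1-\eps\},
\]
and to prove that each of these is $G_\delta$. A finite union of $G_\delta$ sets in a metric space is again $G_\delta$: once we have open decreasing representations $\cG^\pm = \bigcap_n V_n^\pm$ with $V_n^\pm \supseteq V_{n+1}^\pm$, one has $\cG^+ \cup \cG^- = \bigcap_n (V_n^+ \cup V_n^-)$. By the symmetry $\varphi \leftrightarrow -\varphi$, $a \leftrightarrow -a$, it is enough to treat $\cG^+$.

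The central step is to replace the Birkhoff limit $\varphi_f$, which depends on $f$ in a badly discontinuous way, by the finite averages $\varphi_{f,n}$, which depend continuously on $f$. Concretely, I would establish the characterization
\[
f \in \cG^+ \quad\Longleftrightarrow\quad \forall\,k,L\in\N,\; \exists\,n \ge L \text{ with } m\bigl[\varphi_{f,n} > a - 1/k\bigr] > 1-\eps-1/k.
\]
Since $f$ preserves $m$ and $\varphi$ is bounded, Birkhoff's theorem (plus dominated convergence) yields $\varphi_{f,n}\to\varphi_f$ in $m$-measure. For $(\Rightarrow)$, pick $n$ so large that $m[|\varphi_{f,n}-\varphi_f|>1/(2k)]<1/(2k)$; then $m[\varphi_{f,n} > a-1/k] \ge m[\varphi_f > a-1/(2k)] - 1/(2k) \ge 1-\eps-1/(2k) > 1-\eps-1/k$. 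For $(\Leftarrow)$, for each $k$ I would first choose $L_k$ so that $m[|\varphi_{f,n}-\varphi_f|>1/(2k)]<1/(2k)$ for every $n\ge L_k$, then pick $n_k\ge L_k$ witnessing the right-hand side; this yields $m[\varphi_f > a-2/k] > 1-\eps-2/k$, and sending $k\to\infty$ gives $m[\varphi_f \ge a] \ge 1-\eps$ by continuity of measure.

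Finally, I would show that for each fixed $n$ and $k$, the set
\[
\cU_{n,k} := \bigl\{f : m[\varphi_{f,n} > a-1/k] > 1-\eps-1/k\bigr\}
\]
is open in $\Diff^1_\omega(M)$. Since $\varphi$ is uniformly continuous on the compact manifold $M$, the assignment $f \mapsto \varphi_{f,n} = n^{-1}\sum_{j=0}^{n-1}\varphi\circ f^j$ is continuous with values in $C^0(M,\R)$; thus $f\mapsto m[\varphi_{f,n} > a-1/k]$ is lower semicontinuous (a $C^0$ perturbation of $\varphi_{f,n}$ of size less than $\eta$ has superlevel set containing $\{\varphi_{f,n}>a-1/k+\eta\}$, whose measure tends to $m[\varphi_{f,n}>a-1/k]$ as $\eta\to 0^+$). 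Putting everything together, $\cG^+ = \bigcap_{k,L}\bigcup_{n\ge L}\cU_{n,k}$ is $G_\delta$, completing the proof.

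The only technically delicate point is the backward direction of the characterization: the outer quantifier must commit to $L$ before choosing $n$, yet still recover information about the genuine Birkhoff limit $\varphi_f$. The presence of both $\forall k$ and $\forall L$ is exactly what sidesteps this difficulty, by permitting $L$ to be taken large enough to control $m[|\varphi_{f,n}-\varphi_f|>1/(2k)]$ uniformly for $n\ge L$.
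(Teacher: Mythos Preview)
Your proof is correct and follows the same overall strategy as the paper: split $\cG(\varphi,a,\eps)$ into the two halves $\cG^\pm$, reduce to showing each is $G_\delta$, and replace the Birkhoff limit $\varphi_f$ by the finite averages $\varphi_{f,n}$, which depend continuously on $f$.

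The difference lies in how the replacement is made. The paper works set-theoretically: it writes $[\varphi_f\ge a]=[\limsup_n \varphi_{f,n}\ge a]=\bigcap_{b<a}\bigcap_{n_0}\bigcup_{n\ge n_0}[\varphi_{f,n}>b]$ mod~$0$, and accordingly its basic open sets are of the form $\{f: m(\bigcup_{n=n_0}^{n_1}[\varphi_{f,n}>b])>\beta\}$, i.e.\ they involve a \emph{range} of times. No appeal to convergence in measure is needed; the backward implication is immediate from the set identity. You instead use that $\varphi_{f,n}\to\varphi_f$ in measure (a consequence of Birkhoff on a probability space), which lets you work with superlevel sets at a \emph{single} time $n$; the price is that the backward implication requires first choosing $L_k$ large enough to control $m[|\varphi_{f,n}-\varphi_f|>1/(2k)]$ before invoking the hypothesis. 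Both arguments are short and natural; the paper's is slightly more self-contained, while yours uses one fewer quantifier in the basic open sets.
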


\begin{proof}
Define
$$
\cF(\varphi, a, \alpha) = \big\{ f ; \; m [\varphi_f \ge a] \ge \alpha \big\} .
$$
So we have
$$
\cG (\varphi, a, \eps) = \cF(\varphi, a, 1-\eps) \cup \cF(-\varphi, -a, 1-\eps),
$$
We are going to prove that $\cF(\varphi, a, \alpha)$ is a $G_\delta$.
Since the finite union of $G_\delta$'s is a $G_\delta$, \footnote{Proof:
$\bigcap A_n \cup \bigcap B_n = \bigcap (A_1 \cap \cdots \cap A_n) \cup (B_1 \cap \cdots \cap B_n)$.}
the lemma will follow.

Let $\varphi$, $a$, $\alpha$ be fixed.
Given $b< a$, $\beta < \alpha$, and $n_0$, $n_1\in \N$
with $n_0 \le n_1$, let $\cU(b, \beta, n_0, n_1)$ be the set of $f$ such that
the set $\left[ \max_{n \in [n_0, n_1]} \varphi_{f,n} > b \right]$ has measure $>\beta$.
Then $\cU(b, \beta, n_0, n_1)$ is open.

We will check that:
\begin{equation}\label{e.g delta}
\cF(\varphi,a,\alpha) =
\bigcap_{b<a} \bigcap_{\beta<\alpha} \bigcap_{n_0} \bigcup_{n_1>n_0} \cU(b, \beta, n_0, n_1) \, ,
\end{equation}
where $b$ and $\beta$ are take rational values.
First, we have:
$$
[\varphi_f \ge a] = [\limsup \varphi_{f,n} \ge a] =
\bigcap_{b<a} \bigcap_{n_0} \bigcup_{n \ge n_0} [\varphi_{f,n} > b] \quad \text{mod $0$.}
$$
Then we have the following equivalences:
\begin{align*}
m[\varphi_f \ge a] \ge \alpha
\ &\Longleftrightarrow \
\forall b<a, \ \forall n_0, \ m\left(\bigcup_{n \ge n_0} [\varphi_{f,n} > b] \right) \ge  \alpha
\\ &\Longleftrightarrow \
\forall b<a, \ \forall n_0, \ \forall \beta<\alpha, \ \exists n_1 > n_0 \text{ s.t.} \
\ m\left(\bigcup_{n=n_0}^{n_1} [\varphi_{f,n} > b] \right) \ge \alpha \, .
\end{align*}
This proves \eqref{e.g delta}, and hence that $\cF(\varphi,a,\alpha)$ is a $G_\delta$.
\end{proof}

\begin{proof}[Proof of Theorem~\ref{t.main}]
First, we claim that if $f$ is a $\eps$-nearly ergodic map, 
then $f \in \cG(\varphi, a, \eps)$ for any
$\varphi\in C^0(M,\R)$ and $a\in \R$.
Indeed, let $X$ be the (invariant) set of points $x\in M$ where $\limsup \varphi_{f,n}(x) \ge a$.
This is a bi~essentially saturated set, because
it is $\cW^s$-saturated and it coincides mod $0$ with
the $\cW^u$-saturated set $[\limsup \varphi_{f^{-1},n} \ge a]$.
Since $f$ is $\eps$-nearly ergodic,
$m(X)$ is either less than $\eps$ or greater than $1-\eps$.
So either $m[\varphi_f > a]$ or $m[\varphi_f \le a]$ is greater than $1-\eps$,
showing that $f \in \cG(\varphi,a,\eps)$.

It follows from Proposition~\ref{p.near ergodic} that the sets
$\cG (\varphi, a, \eps)$ are dense in $\PH^1_\omega(M)$,
while  Lemma~\ref{l.G delta} says they are $G_\delta$.
Thus to complete the proof of the theorem,
we need only to see that the set of ergodic diffeomorphisms is precisely
$$
\bigcap_{\varphi, a, \eps} \cG (\varphi, a, \eps) \, ,
$$
where $\varphi$ varies on a dense subset $\cD$ of $C^0(M,\R)$, and $a$ and $\eps$ take rational values.
Indeed, if $f$ is not ergodic then we can find $\varphi\in \cD$, $a<b$ and $0< \eps < 1/2$ such that
$[\varphi_f<a]$ and $[\varphi_f>b]$ both have measure greater than $\eps$.
Then $f$ cannot belong to $\cG (\varphi, a, \eps) \cap \cG (\varphi, b, \eps)$.
\end{proof}

\section{A Proof of Theorem~\ref{t.BDP}} \label{s.elliptic}

The following is the symplectic version of  Ma\~n\'e's
Ergodic Closing Lemma~\cite{Mane ergodic}, proved by Arnaud~\cite{Ar}.
If $f\in \Diff^1_\omega(M)$
and $x \in M$, we say that $x$ is \emph{$f$-closable} if for every $\eps>0$
there exists a $\eps$-perturbation $\tilde f \in \Diff^1_\omega(M)$
such that $x$ is periodic for $\tilde f$ and moreover
$d(\tilde f^i x, f^i x) < \eps$ for every $i$ between $0$ and
the $\tilde f$-period of $x$.

\begin{otherthm}[\cite {Ar}] \label{t.ECL}
For every $f \in \Diff_\omega^1(M)$, $m$-almost every point is $f$-closable.
\end{otherthm}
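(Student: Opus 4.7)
My plan is to adapt Ma\~n\'e's original proof of the ergodic closing lemma to the symplectic setting, using the Pugh--Robinson symplectic $C^1$-closing lemma in place of Pugh's classical one. For each $\eps > 0$, let $\Sigma_\eps(f)$ denote the set of points admitting an $\eps$-closing in the sense of the definition; since the set of closable points equals $\bigcap_k \Sigma_{1/k}(f)$, it suffices to prove $m(\Sigma_\eps(f)) = 1$ for every $\eps > 0$.

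The first ingredient is a localized symplectic closing lemma: for each $\eps > 0$ there exist $\delta > 0$ and $N \in \N$ such that whenever $d(f^n x, x) < \delta$ with $n \geq N$ and some intermediate iterate $y = f^j x$ (with $0 < j < n$) admits an arbitrarily small ball $B$ around it disjoint from every other $f^i x$ ($0 \leq i < n$, $i \neq j$), there is $\tilde f \in \Diff^1_\omega(M)$ with $\|\tilde f - f\|_{C^1} < \eps$, equal to $f$ outside $B$, making $x$ periodic of period exactly $n$ and satisfying $d(\tilde f^i x, f^i x) < \eps$ for $0 \leq i \leq n$. This is the symplectic analogue of Pugh--Robinson; it is established by composing small symplectic ``push'' perturbations supported inside a Darboux flowbox around $y$, designed so that their total effect sends $f^{j-1}x$ to $f^{-(n-j)}x$ while preserving $\omega$.

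The second ingredient is Ma\~n\'e's purely measure-theoretic selection lemma: for any decreasing sequence $V_1 \supseteq V_2 \supseteq \cdots$ of measurable sets with $V = \bigcap_k V_k$ of positive measure, for $m$-almost every $x \in V$ and every $k$ there exist arbitrarily large $n$ with $d(f^n x, x)$ as small as desired together with an intermediate index $j$ such that $f^j x$ lies outside $V_k$ and is separated from the rest of the orbit segment $\{f^i x\}_{i=0}^{n-1}$ by a definite amount. This follows from Birkhoff's ergodic theorem applied to $\chi_{V_k}$, combined with a pigeonhole argument comparing the actual density of the orbit in $V_k$ against $m(V_k)$.

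Finally I would argue by contradiction. If $B := M \setminus \Sigma_\eps(f)$ had positive measure, replace $B$ by a full-measure $f$-invariant subset of Birkhoff-typical points, pick $x_0 \in B$, and apply the selection lemma with $V_k$ a decreasing family of open neighborhoods of $B$ to extract $n, j$ as above. The symplectic closing lemma applied at $f^j x_0$ then produces $\tilde f$ witnessing $x_0 \in \Sigma_\eps(f)$, a contradiction. The main obstacle is the first ingredient: producing a symplectic, $C^1$-small, spatially localized perturbation that makes the \emph{given} point $x$ exactly periodic of the correct period, with its whole orbit $\eps$-shadowed, is the technical heart of Arnaud's argument, since the symplectic constraint severely limits how a perturbation can be supported inside a small ball (one cannot simply translate a piece of orbit without breaking $\omega$-invariance, as one does in Pugh's original construction). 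The measure-theoretic selection lemma, by contrast, is insensitive to the symplectic structure and carries over from Ma\~n\'e's original proof essentially verbatim.
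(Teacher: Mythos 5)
First, note that the paper does not prove this statement at all: Theorem~\ref{t.ECL} is quoted from Arnaud's memoir \cite{Ar}, so there is no internal argument to compare against. Your overall scheme (Ma\~n\'e's measure-theoretic selection plus a conservative/symplectic localized perturbation lemma in place of Pugh's) is indeed the route that the literature follows, but as a proof your sketch has a genuine gap, and it sits exactly where you place the ``main obstacle.''

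Your first ingredient is false as stated, and it is not a technicality that can be outsourced to ``symplectic push maps in a Darboux flowbox.'' A $C^1$-perturbation of size $\eps$ supported in a ball of radius $r$ can displace points only by roughly $\eps r$; hence a perturbation supported in an \emph{arbitrarily small} ball $B$ around a single intermediate iterate $y=f^jx$ cannot redirect the orbit by the amount needed to close it, unless the required displacement is already much smaller than the radius of $B$. To close an orbit whose return distance is $d$ with an $\eps$-small perturbation, the support must have radius of order $d/\eps$, i.e.\ \emph{much larger} than $d$, and one must then guarantee that the other points of the orbit segment avoid a box of that comparatively large size. This is impossible with a single box, which is precisely why Pugh, Ma\~n\'e and Arnaud distribute the closing over $N(\eps)$ consecutive iterates, performing a small fraction of the correction in each of $N$ disjoint boxes along the orbit, and why the selection lemma must produce returns that are close \emph{relative to} the separation from the remaining orbit points inside those boxes (a geometric ``relative isolation'' condition that your Birkhoff-plus-pigeonhole statement about $V_k$ does not encode). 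In the symplectic case there is the additional task of realizing each elementary displacement by an exact $\omega$-preserving map with controlled $C^1$-norm and prescribed small support (via generating functions or Hamiltonian flows), and of keeping the composition of the $N$ corrections symplectic and $\eps$-small; this is the content of Arnaud's work. So what you have is an accurate outline of the known strategy with the two quantitative lemmas that constitute the theorem left unproved, and the one you do state precisely (the single-ball closing lemma) is stated in a form that cannot be proved.
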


The first step to obtain Theorem~\ref{t.BDP} is to find  an ``almost elliptic'' periodic point,
that is a periodic point whose center eigenvalues are close to the unit circle:

\begin{lemma}\label{l.almost elliptic}
Let $f \in \PH_\omega^1(M)$ have unbreakable center.
Then for every $\eps>0$ there exists an $\eps$-perturbation $\tilde f$ and
a periodic point $x$ of period $p$ for $\tilde f$ such that
all eigenvalues $\mu_i$ of
$D\tilde f^p | E^c(x)$ satisfy $\left| \log |\mu_i| \right| \le \epsilon p$.
\end{lemma}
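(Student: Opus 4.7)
The plan is to combine Theorem~\ref{t.zero center} (Bochi's generic vanishing of the central Lyapunov exponents) with the Ergodic Closing Lemma (Theorem~\ref{t.ECL}).

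First, since unbreakability of the center bundle is a $C^1$-open condition and the residual set $\cR$ of Theorem~\ref{t.zero center} is dense, I approximate $f$ in the $C^1$ topology by some $f_1 \in \cR$ with unbreakable center of the same dimension; for $f_1$, every central Lyapunov exponent vanishes on a set of full $m$-measure. I then choose a point $x_0$ that is simultaneously Lyapunov regular for $f_1$ with vanishing center spectrum, $f_1$-closable (by Theorem~\ref{t.ECL}), and aperiodic (the first two conditions carve out full measure subsets, and for the third one may, if necessary, pass to a further $C^1$-small perturbation of $f_1$). Given the target $\eps > 0$, Lyapunov regularity of $x_0$ produces an integer $N_0$ with
\[e^{-\eps n/3} \le \m\bigl(Df_1^n|E^c(x_0)\bigr) \le \|Df_1^n|E^c(x_0)\| \le e^{\eps n/3} \qquad \text{for every } n \ge N_0.\]

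Next I apply the closing lemma with a parameter $\delta > 0$: it yields a $\delta$-perturbation $\tilde f$ of $f_1$ such that $x_0$ is $\tilde f$-periodic of some period $p = p(\delta)$, with $d(\tilde f^i x_0, f_1^i x_0) < \delta$ for every $0 \le i \le p$. Since $x_0$ is not $f_1$-periodic, the shadowing relation $d(x_0, f_1^p x_0) < \delta$ forces $p(\delta) \to \infty$ as $\delta \to 0$; hence for $\delta$ small enough one has $p \ge N_0$.

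The heart of the argument is to conclude from these ingredients that
\[e^{-\eps p} \le \m\bigl(D\tilde f^p|E^c_{\tilde f}(x_0)\bigr) \le \|D\tilde f^p|E^c_{\tilde f}(x_0)\| \le e^{\eps p},\]
which immediately yields $\bigl|\log|\mu_i|\bigr| \le \eps p$ for every eigenvalue $\mu_i$ of $D\tilde f^p|E^c_{\tilde f}(x_0)$, since the modulus of each eigenvalue lies between the conorm and the norm. For this I would use the $C^1$-robustness of partial hyperbolicity: $E^c_{\tilde f}$ at points of the $\tilde f$-orbit of $x_0$ is close to $E^c_{f_1}$ at the corresponding $f_1$-iterates, so after identification via near-identity projections the one-step center cocycles of $\tilde f$ and of $f_1$ differ in operator norm by an error $\eta(\delta) \to 0$. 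The main obstacle is to control the accumulation of this error across the $p$ iterates: per-step errors could in principle amplify exponentially, but the subexponential growth rate $\eps/3$ of the $f_1$-center cocycle leaves a slack of $2\eps/3$ to absorb them, provided $\delta$, and hence $\eta(\delta)$, is chosen sufficiently small.
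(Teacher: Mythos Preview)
Your overall strategy (Bochi's generic zero center exponents plus the Ergodic Closing Lemma) is the same as the paper's, but the ``heart of the argument'' has a real gap. The bound $\|Df_1^n|E^c(x_0)\|\le e^{\eps n/3}$ controls only the products starting at $x_0$; it does \emph{not} say that the one-step maps $Df_1|E^c(f_1^i x_0)$ have norm close to~$1$, so your claim that the per-step error is absorbed by the ``slack $2\eps/3$'' is unfounded. If you try to make it precise via telescoping, the difference $D\tilde f^p - Df_1^p$ involves the intermediate blocks $Df_1^{j-i}(f_1^i x_0)|E^c$, and even full Lyapunov regularity only gives $\|Df_1^{j-i}(f_1^i x_0)|E^c\|\le C_\delta e^{\delta j}$ (a tempered estimate, growing with the position $j$ along the orbit). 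Running a Gronwall argument with these bounds, one finds that closing the induction requires $\eta(\delta)\lesssim e^{-\eps p(\delta)/3}$; but the Ergodic Closing Lemma gives no quantitative link between the perturbation size $\delta$ and the resulting period $p(\delta)$, so there is no way to arrange this.

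The paper sidesteps the accumulation problem entirely. It first fixes, \emph{before} perturbing, an integer $k$ with $\frac{1}{k}\int_M\log\|D^c f^k\|\,dm<\eps$, and picks a closable $x$ where the Birkhoff averages $\frac{1}{km}\sum_{i=0}^{m-1}\log\|D^c f^k(f^{ik}x)\|$ converge to something $<\eps$. After closing to a period $p$, it uses submultiplicativity to bound $\frac{1}{p}\log\|D^c\tilde f^{p}(x)\|$ by the average of the $k$-block log-norms $\log\|D^c\tilde f^k(\tilde f^{ik}x)\|$. Since $k$ is fixed, each such block is uniformly close to $\log\|D^c f^k(f^{ik}x)\|$ by plain $C^1$ continuity, with an error independent of $i$; averaging over $i$ then yields the desired bound with no accumulation. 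Finally, the paper only needs the upper bound on $\|D^c\tilde f^{p}\|$: the lower bound on the eigenvalue moduli comes for free from the symplectic symmetry of the spectrum.
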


\begin{proof}
Let $f$ and $\eps$ be given.
Write $D^c f= Df | E^c$.
Since the eigenvalues of a symplectic map are symmetric,
to prove the lemma it suffices to find an $\eps$-perturbation $\tilde f$
with a periodic point $x$ of period $p$ such that:
\begin{equation} \label {e.bla}
\|D^c \tilde f^{m p} (x)\|<e^{\epsilon mp} \text { for some } m \geq 1.
\end{equation}

Due to Theorem~\ref{t.zero center}, we can assume that $\lambda^c(f)=0$.
Therefore there exists $k$ such that
$\frac{1}{k} \int_M \log \| D^c f^k \| \, dm < \epsilon$.
Hence, for all $x$ in a set of positive measure,
\begin{equation} \label {bla2}
\lim_{m \to \infty} \frac{1}{km}
\sum_{i=0}^{m-1} \log \|D^c f^k(f^{ik}(x))\| < \epsilon ,
\end{equation}
By Theorem~\ref{t.ECL}, we can take an $f$-closable point $x$ such that \eqref{bla2} holds.
If $x$ is periodic then \eqref{bla2} follows with $\tilde f=f$.
Otherwise, let $f_j \in \PH_\omega^1(M)$ be a sequence converging to
$f$ in the $C^1$ topology such that $x$ is periodic (of period $p_j$)
for $f_j$.  Then $p_j \to \infty$.  Let $m_j= \lfloor p_j/k \rfloor$.
We estimate:
$$
\frac{1}{p_j}\log \| D^c f^{p_j}_j(x) \| \le
\frac {1} {k m_j}
\sum_{i=0}^{m_j-1} \log \|D^c f_j^k(f_j^{ik}(x)) \| + \frac {1} {m_j}
\log \|D^c f_j\|_\infty
$$
As $j \to \infty$, the right hand side converges to the left hand side of \eqref{bla2}.
Thus the result follows with $\tilde f=f_j$ for $j$
sufficiently large.
\end{proof}

Next we see how the eigenvalues can be adjusted:

\begin{lemma}\label{l.nicolas}
Let $\eps>0$.  Let $A_1$, \ldots, $A_n$ be symplectic matrices
and let $2d$ be the number
of eigenvalues $\mu_i$ of $A_n \cdots A_1$ (counted with multiplicity) such
that $\frac {1} {n} \log |\mu_i| \le \epsilon$.  Then there exist symplectic matrices
$B_1$, \ldots, $B_n$ such that $\|B_i-\Id\|  \le e^\epsilon-1$ and
$A_n B_n \cdots A_1 B_1$ has exactly $2d$ eigenvalues
(counted with multiplicity) in the unit circle.
\end{lemma}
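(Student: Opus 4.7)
My plan is built on the following telescoping identity. Define $P_i := A_{i-1}\cdots A_1$ (so $P_1 = \Id$ and $P_{n+1} = A := A_n\cdots A_1$), and for any symplectic matrices $D_1,\ldots,D_n$ set $B_i := P_i D_i P_i^{-1}$. A direct computation using $A_iP_i=P_{i+1}$ shows that $A_iB_i = P_{i+1}D_iP_i^{-1}$, and hence the intermediate $P_j^{-1}P_j$ factors collapse in the product:
\[
A_nB_n\cdots A_1B_1 = P_{n+1}\,D_nD_{n-1}\cdots D_1\,P_1^{-1} = A\cdot(D_n\cdots D_1).
\]
So the lemma reduces to choosing $D_i$'s with two properties: (i) the cumulative product $R := D_n\cdots D_1$ is such that $AR$ has exactly $2d$ eigenvalues on the unit circle; and (ii) each perturbation $B_i = P_iD_iP_i^{-1}$ satisfies $\|B_i-\Id\|\le e^\epsilon-1$.

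For (i), I would decompose $\R^{2N}$ into $A$-invariant symplectic blocks given by the symplectic Jordan structure, and define $R$ blockwise. On each block corresponding to a symplectic eigenvalue orbit contained in the targeted set $\{|\mu|\le e^{n\epsilon}\}$, $R$ acts as the ``radial'' symplectic map sending each such eigenvalue $\mu$ to $\mu/|\mu|$; on blocks outside the target set, $R$ is the identity. A short case analysis (handling real pairs, complex quadruples, and ``mixed'' pairs where only one eigenvalue of a symplectic pair lies in the target set) shows that $R$ is symplectic and that $AR$ has exactly $2d$ eigenvalues of modulus~$1$, while the remaining eigenvalues of $A$, untouched by $R$, stay outside the unit circle.

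The main obstacle is (ii), and this is where Gourmelon's idea enters. A naive distribution $D_i = R^{1/n}$ does not give the bound $\|B_i-\Id\|\le e^\epsilon-1$, because although $R^{1/n}$ has norm $\le e^\epsilon-1$ in the eigenbasis of $A$, the conjugation $P_i R^{1/n} P_i^{-1}$ amplifies this by the condition number of $P_i$, which can itself be of order $e^{n\epsilon}$. The remedy is to use the symplectic polar decomposition $P_i = K_iS_i$, with $K_i$ unitary symplectic and $S_i$ positive-definite symplectic: since $K_i$ is an isometry, $\|P_iXP_i^{-1}\| = \|S_iXS_i^{-1}\|$, and this equals $\|X\|$ whenever $X$ commutes with $S_i$. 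One therefore selects each $D_i$ from the centralizer of $S_i$, a maximal torus in $\Sp(2N)$, securing the per-step bound $\|B_i-\Id\|=\|D_i-\Id\|$ in the standard operator norm.

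The final step, which I expect to be the most delicate, is verifying that the global $R$ can be factored as $D_n\cdots D_1$ with $D_i$ in the centralizer $Z(S_i)$ and $\|D_i-\Id\|\le e^\epsilon-1$. Linearizing near the identity, this amounts to decomposing $\log R = \sum_i X_i$ with $X_i\in\mathrm{Lie}\,Z(S_i)$ and $\|X_i\|\le \epsilon$; since $\log R$ has norm at most $n\epsilon$ (it is an element of the symplectic Cartan subalgebra realizing a cumulative log-modulus correction of at most $n\epsilon$), a convex-combination/averaging argument along the cocycle orbit distributes it across the $n$ steps. A direct verification then confirms symplecticity of the $B_i$, the norm bound $\|B_i-\Id\|\le e^\epsilon-1$, and the advertised spectrum of $A_nB_n\cdots A_1B_1$, completing the proof.
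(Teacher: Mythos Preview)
Your telescoping identity is correct and is a clean reformulation, but the argument breaks down at two places.

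First, the global correction $R$ you build blockwise in the Jordan/eigenbasis of $A$ need not satisfy $\|\log R\|\le n\epsilon$ in the \emph{operator} norm. It has small eigenvalues, but the eigenbasis of $A$ can be arbitrarily ill-conditioned. For a concrete $n=1$ example, take $A=\begin{pmatrix} e^{\epsilon/2} & T \\ 0 & e^{-\epsilon/2}\end{pmatrix}$ with $T$ large: your $R$, diagonal in the eigenbasis of $A$, is $\begin{pmatrix} e^{-\epsilon/2} & -T \\ 0 & e^{\epsilon/2}\end{pmatrix}$ in standard coordinates, and $\|R-\Id\|\approx T$. So the premise ``$\log R$ has norm at most $n\epsilon$'' that you feed into the factorization step is already false in general.

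Second, even if $\|\log R\|\le n\epsilon$ held, the factorization $R=D_n\cdots D_1$ with $D_i\in Z(S_i)$ and $\|D_i-\Id\|\le e^\epsilon-1$ is not justified. The centralizers $Z(S_i)$ are maximal tori determined by the singular-value structure of the partial products $P_i$; they vary with $i$ and bear no relation to the eigenstructure of $A$ or to $R$. Your ``convex-combination/averaging'' sentence does not address why $\log R$ lies in the sum $\sum_i \mathrm{Lie}\,Z(S_i)$ with the required norm distribution --- and since the $D_i$'s need not commute, the linearization $\log(D_n\cdots D_1)\approx\sum\log D_i$ itself requires justification.

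The paper avoids both obstacles by working with the Lyapunov \emph{flag} $F_0\subsetneq F_1\subsetneq\cdots\subsetneq F_t$ of $A$ rather than its eigenspaces. The flag (unlike individual eigenspaces, or the polar parts $S_i$) is carried along by each $A_k$: $A^k(F_i)=F_i^k$. One chooses symplectic \emph{orthogonal} matrices $C_k$ sending $F_i^k$ to a fixed standard flag, defines a single small diagonal symplectic $\Lambda$ (entries $e^{\pm\lambda_i/n}$ on the graded pieces to be corrected), and sets $B_k=C_k^{-1}\Lambda C_k$. Orthogonality of $C_k$ gives $\|B_k-\Id\|=\|\Lambda-\Id\|\le e^\epsilon-1$ for free, and the flag-preservation ensures $T=A_nB_n\cdots A_1B_1$ acts on each quotient $F_i/F_{i-1}$ as the desired scalar multiple of $A^n$. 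No global $R$ is ever assembled, and no commutation hypothesis is needed.
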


\begin{proof}
Assume the matrices have size $2N \times 2N$.
Let $\{p_1, \dots, p_N, q_1, \dots, q_N\}$ be the canonical symplectic and orthonormal basis of $\R^{2N}$.

Write $A^i=A_i \cdots A_1$.  Let $\lambda_1> \cdots > \lambda_t$ be the
Lyapunov exponents of $A^n$ and let $\{0\} =
F_0 \subsetneq F_1 \subsetneq \cdots \subsetneq F_t = \R^{2N}$ be the
Lyapunov filtration of $A^n$, that is, $A^n(F_i) = F_i$ and the action
of $A^n$ on $F_i/F_{i-1}$ has eigenvalues of modulus $e^{\lambda_i}$.
Let $r(i)$ be the dimension of $F_i$.  Let $m \geq 0$ be maximal with
$\lambda_m>0$, and let $0 \leq u \leq m$ be maximal with
$\lambda_u \geq \epsilon$.  Notice that $\dim F_{t-u}/F_u=2d$.

Let $F_i^k=A^k(F_i)$, $0 \leq k \leq n-1$.
There exists symplectic orthogonal matrices $C_0$, \ldots, $C_{n-1}$
such that if $i \leq m$ then $C_k F^k_i$ is spanned by
$p_1$, \ldots, $p_{r(i)}$.  It follows that if $i>m$ then
$C_k F^k_i$ is spanned by $p_1$, \dots, $p_d$, $q_d$, \dots, $q_{r(i)-d}$.

Let us consider a symplectic matrix $\Lambda$ such
that $\Lambda p_k=p_k$ and $\Lambda q_k=q_k$, unless
$r(i-1) < k \leq r(i)$ for some $u<i \leq m$,
in which case we let $\Lambda p_k=e^{-\lambda_i/n} p_k$,
$\Lambda q_k=e^{\lambda_i/n} q_k$.

Let $B_k=C_k^{-1} \Lambda C_k$.  Then
$T=A_n B_n \cdots A_1 B_1$ preserves the spaces
$F_i$, $0 \leq i \leq t$, and $T$ acts on $F_i/F_{i-1}$ as $A^n$,
unless $u<i \leq m$ or $k-m< i \leq k-u$,
in which case $T$ acts as $e^{-\lambda_i} A^n$.  It follows that the
action of $T$ on $F_{k-u}/F_u$ has only zero Lyapunov exponents.
The result follows.
\end{proof}

\begin{proof}[Proof of Theorem~\ref{t.BDP}]
By Lemma~\ref{l.almost elliptic} we can perturb $f$ and
create an ``almost elliptic'' periodic point.
Lemma~\ref{l.nicolas} says that the derivatives along this orbit can be perturbed
to become completely elliptic.
Using Lemma~\ref{l.pasting} we can realize this by a further  perturbation of
the diffeomorphism.
\end{proof}

The argument above could have been carried out by appealing to
the easier cocycle version of \cite{Bochi sympl} obtained in \cite{Bochi Viana}:
see the Appendix of this paper.


\section{Proof of Theorem~\ref{t.nonunifbw}}\label{s.amie}
We adopt as much as possible notation that is consistent with
the notation in \cite{Burns Wilk}, as the proof of Theorem~\ref{t.nonunifbw}
has many parallels with the proof of Theorem 3.1 there.  A few
statements are also adapted bearing in mind the needs of the proof of
Theorem \ref {t.nonunifjimmy} given in the appendix.

\subsection{Density}

If $\nu$ is a measure and
$A$ and $B$ are $\nu$-measurable sets with $\nu(B) >0$,
we define the {\em density of $A$ in $B$} by:
 $$
 \nu(A:B) = \frac{\nu(A\cap B)}{\nu(B)}.
 $$
A point $x\in M$ is a {\em Lebesgue density point}
of a measurable set $X\subseteq M$ if
$$\lim_{r\to 0} m(X: B_r(x)) = 1.$$
The Lebesgue Density Theorem implies that if $X$ is a measurable
set and $\widehat X$ is the set of Lebesgue density points of $X$,
then $m(X \vartriangle \widehat X) = 0$.

Lebesgue density points can be characterized using nested sequences
of measurable sets.
We say that a sequence of measurable sets $Y_n$ {\em nests} at point $x$
if $Y_0\supset Y_1 \supset Y_2 \supset \cdots \supset \{x\}$, and
$$\bigcap_n Y_n = \{x\}.$$
A nested
sequence of measurable sets $Y_n$ is {\em regular} if
there exists $\delta> 0$ such that, for all $n\geq 0$,
we have $m(Y_n)>0$, and
$$m(Y_{n+1})\geq \delta m(Y_n).$$

Two nested sequences of sets $Y_n$ and $Z_n$ are {\em internested} if there exists
a $k\geq 1$ such that, for all $n\geq 0$, we have
$$Y_{n+k} \subseteq Z_n,\qquad \hbox{ and } \qquad Z_{n+k}\subseteq Y_{n}.$$
The following lemma is a straightforward consequence of the definitions.

\begin{lemma}[\cite{Burns Wilk}, Lemma 2.1]\label{l=compreg}
Let $Y_n$ and $Z_n$ be internested sequences of measurable sets, with
$Y_n$ regular.
Then $Z_n$ is also regular.  If the sets $Y_n$ have positive measure, then
so do the $Z_n$, and, for any measurable set $X$,
$$\lim_{n\to\infty} m(X:Y_n) = 1 \quad\Longleftrightarrow \quad \lim_{n\to\infty} m(X:Z_n) = 1.$$
\end{lemma}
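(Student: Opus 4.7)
The plan is to exploit the internesting relation as a two-sided sandwich that transfers both measure bounds and density estimates between the two sequences. By the definition of internesting, for the constant $k$ given, we have $Y_{n+k} \subseteq Z_n$ for all $n \geq 0$, and, replacing $n$ by $n-k$ in the other inclusion, $Z_n \subseteq Y_{n-k}$ for all $n \geq k$. Thus
$$
Y_{n+k} \;\subseteq\; Z_n \;\subseteq\; Y_{n-k} \qquad (n \geq k),
$$
and this inclusion chain is the engine of every estimate below. Iterating the regularity inequality $m(Y_{n+1}) \geq \delta m(Y_n)$ gives $m(Y_{n+j}) \geq \delta^{j} m(Y_n)$ for every $j \geq 0$.

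First I would check positivity and regularity of $Z_n$. Positivity is immediate: $m(Z_n) \geq m(Y_{n+k}) \geq \delta^{k} m(Y_n) > 0$. For regularity, combining the two halves of the sandwich and iterating regularity of $Y_n$ yields, for $n \geq k$,
$$
m(Z_{n+1}) \;\geq\; m(Y_{n+1+k}) \;\geq\; \delta^{2k+1}\, m(Y_{n-k}) \;\geq\; \delta^{2k+1}\, m(Z_n),
$$
so $Z_n$ is regular with constant $\delta^{2k+1}$ (the finitely many initial indices $n<k$ contribute only a fixed multiplicative loss that can be absorbed by shrinking the constant).

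Next, for the density equivalence, the same sandwich controls $m(X:Z_n)$ in terms of $m(X:Y_{n-k})$. Writing $Z_n \setminus X \subseteq Y_{n-k} \setminus X$ and using $m(Z_n) \geq \delta^{2k} m(Y_{n-k})$ gives
$$
1 - m(X:Z_n) \;=\; \frac{m(Z_n \setminus X)}{m(Z_n)} \;\leq\; \delta^{-2k}\,\frac{m(Y_{n-k}\setminus X)}{m(Y_{n-k})} \;=\; \delta^{-2k}\bigl(1 - m(X:Y_{n-k})\bigr).
$$
So $m(X:Y_n)\to 1$ implies $m(X:Z_n)\to 1$. The reverse implication follows by interchanging the roles of $Y_n$ and $Z_n$, which is legitimate because internesting is symmetric and, by the first part of the lemma, $Z_n$ is also regular.

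I do not expect any real obstacle here: the entire argument is bookkeeping on the definitions, and the only care needed is to absorb the initial indices $0 \leq n < k$ (where the inclusion $Z_n \subseteq Y_{n-k}$ is vacuous) into the regularity constant, which can be done by reducing $\delta^{2k+1}$ by a finite factor depending only on $k$ and $\delta$.
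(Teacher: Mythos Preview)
Your proof is correct and is precisely the straightforward bookkeeping the paper has in mind: the paper does not give a proof, stating only that the lemma ``is a straightforward consequence of the definitions'' and citing \cite{Burns Wilk}. Your sandwich argument and the complement estimate for the density are exactly the natural way to unpack those definitions.
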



\subsection{Foliations and Absolute Continuity}

Let $\cF$ be a foliation with smooth $d$-dimensional leaves.
An open set $U\subset M$ is a  {\em foliation box for $\cF$} if it is the image  of
$\R^{n-d}\times \R^{d}$ under a homeomorphism that sends each
vertical $\R^d$-slice into a leaf of $\cF$. The images of the
vertical $\R^d$-slices are called {\em local leaves of $\cF$ in $U$}.

A {\em smooth transversal} to $\cF$ in $U$ is a smooth codimension-$d$
disk in $U$ that intersects each local leaf in $U$ exactly once and whose
tangent bundle is uniformly transverse to $T\cF$.
If $\tau_1$ and $\tau_2$ are two smooth transversals to $\cF$ in $U$,
we have the {\em holonomy map} $h_{\cF}: \tau_1 \to \tau_2$,
which takes a point in $\tau_1$ to the intersection  of its local leaf
in $U$ with $\tau_2$.

If $S \subseteq M$ is a smooth submanifold, we denote by
$m_S$ the volume of the
induced Riemannian metric on $S$. If $\cF$ is a foliation with smooth leaves,
and $A$ is contained in a single leaf of $\cF$ and is measurable in that
leaf, then we denote by $m_\cF(A)$ the induced Riemannian
volume of $A$ in that leaf.

A foliation $\cF$ with smooth leaves is {\em transversely
absolutely continuous with bounded
Jacobians} if  for every angle $\alpha\in(0,\pi/2]$,
there exists $C\geq 1$ and $R_0>0$ such that, for every foliation
box $U$ of diameter less than $R_0$, any two smooth
transversals $\tau_1, \tau_2$ to
$\cF$ in $U$ of angle at least $\alpha$ with $\cF$, and any
$m_{\tau_1}$--measurable set $A$ contained in $\tau_1$:
\begin{equation}\label{e=acholon}
 C^{-1} m_{\tau_1}(A) \leq m_{\tau_2}(h_\cF(A))\leq C m_{\tau_1}(A).
\end{equation}

The foliations $\cW^s$ and $\cW^u$ for a partially hyperbolic diffeomorphism
are transversely
absolutely continuous with bounded
Jacobians (see \cite{AV flavors}).

Let $\cF$ be an absolutely continuous foliation
and let $U$ be a foliation box for $\cF$.
Let  $\tau$ be a smooth transversal to $\cF$ in $U$.
Let $Y\subseteq U$ be a measurable set.
For a point $q \in \tau$, we define the
{\em fiber $Y(q)$ of $Y$ over $q$} to be the
intersection of $Y$ with the local leaf of $\cF$ in $U$ containing $q$.
The {\em base $\tau_Y$ of $Y$} is
the set of all $q\in \tau$ such that the fiber $Y(q)$ is
$m_\cF$-measurable and  $m_\cF(Y(q))>0$.
The absolute continuity of $\cF$ implies that $\tau_Y$ is
 $m_{\tau}$-measurable.
We say  ``$Y$ fibers over $Z$'' to indicate that $Z=\tau_Y$.

If, for some $c\geq 1$, the inequalities
$$
c^{-1} \leq \frac{m_\cF(Y(q))}{m_\cF(Y(q'))} \leq c
$$
hold for all $q,q'\in \tau_Y$, then we say that {\em $Y$ has $c$-uniform fibers}. A sequence of measurable sets $Y_n$ contained in $U$ has
{\em $c$-uniform fibers} if each set in the sequence has $c$-uniform fibers,
with $c$ independent of $n$.

\begin{prop}[\cite{Burns Wilk}, \S~2.3] \label{p=unifreg}\label{p=compmeas}\label{p=compmeas2}
Suppose that the foliation $\cF$ is absolutely continuous
with bounded Jacobians. Let $U$ be a foliation box for $\cF$,
and let  $\tau$ be a smooth transversal to $\cF$ in $U$.
Let $Y_n$ and $Z_n$ be sequences of measurable subsets of $U$
with $c$-uniform fibers.
\begin{enumerate}
\item Suppose that there exists $\delta>0$ such that:
\begin{enumerate}
\item for all $n \geq 0$,
$$
m_{\tau}(\tau_{Y_{n+1}}) \geq \delta m_{\tau}(\tau_{Y_n});
$$

\item for all $n \geq 0$,
there are points $z\in\tau_{Y_{n+1}}, z'\in \tau_{Y_n}$ with
$$
 m_{\cF}(Y_{n+1}(z)) \geq \delta m_{\cF}(Y_n(z')).
$$
\end{enumerate}
Then $Y_n$ is regular.
\item Suppose that $\tau_{Y_n} = \tau_{Z_n}$, for all $n$ and that
$Y_n$ and $Z_n$ both nest at a common point $x$.
Then, for any set $X\subseteq U$ that is  essentially $\cF$-saturated at $x$, we have the equivalence:
$$
\lim_{n\to\infty} m(X:Y_
n) = 1\,\Longleftrightarrow\,
\lim_{n\to\infty}m(X:Z_n) = 1.
$$
\item For every  measurable
set $X$ that is  $\cF$-saturated at $x$, we have the equivalence:
$$
\lim_{n\to\infty} m(X:Y_n) = 1\,\Longleftrightarrow\,
\lim_{n\to\infty}m_\tau(\tau_X:\tau_{Y_n}) = 1.
$$
\end{enumerate}
\end{prop}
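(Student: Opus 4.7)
The common tool for all three parts will be a Fubini-type estimate for any set $Y \subseteq U$ with $c$-uniform fibers:
\begin{equation*}
m(Y) = \int_{\tau_Y} J(q)\, m_\cF(Y(q))\, dm_\tau(q),
\end{equation*}
where $J(q)$ is the Jacobian of the projection along $\cF$ from the local leaf through $q$ onto the transversal $\tau$. Transverse absolute continuity with bounded Jacobians gives $J_{\min} \le J(q) \le J_{\max}$ throughout $U$, and combining this with the $c$-uniform fiber condition yields the two-sided estimate
\begin{equation*}
C^{-1}\, m_\cF(Y(z_0))\, m_\tau(\tau_Y) \;\le\; m(Y) \;\le\; C\, m_\cF(Y(z_0))\, m_\tau(\tau_Y)
\end{equation*}
for any $z_0 \in \tau_Y$, with $C$ depending only on $c$, $J_{\min}$, and $J_{\max}$.

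Given this, part (1) is immediate: applying the bound to both $Y_n$ and $Y_{n+1}$ with the witness points $z$ and $z'$ from hypothesis (b) gives $m(Y_{n+1})/m(Y_n) \ge C^{-2}\delta^2$, which is regularity with constant $C^{-2}\delta^2$. For parts (2) and (3), the key observation is that when $X$ is strictly $\cF$-saturated, the fiber of $X \cap Y_n$ over $q$ equals $Y_n(q)$ for $q \in \tau_X$ and is empty (mod~$0$) otherwise; hence $X \cap Y_n$ inherits $c$-uniform fibers over the base $\tau_X \cap \tau_{Y_n}$, as does its complement $Y_n \setminus X$ over $\tau_{Y_n} \setminus \tau_X$. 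Applying the two-sided estimate to this complement yields a sharp comparison $1 - m(X:Y_n) \asymp 1 - m_\tau(\tau_X : \tau_{Y_n})$, so these quantities tend to $0$ simultaneously. This gives part (3), once one notes that for $n$ large $Y_n \subseteq B(x,\delta_0)$, so the assumption that $X$ is $\cF$-saturated at $x$ translates into the required fiber-by-fiber saturation inside $Y_n$. For part (2) I reduce to part (3): by definition, $X$ coincides mod~$0$ on some neighborhood of $x$ with a set $\tilde X$ that is $\cF$-saturated at $x$, and for $n$ large both $Y_n$ and $Z_n$ sit inside that neighborhood, so $m(X:Y_n) = m(\tilde X : Y_n)$ and similarly for $Z_n$. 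Applying part (3) to $\tilde X$ and using $\tau_{Y_n} = \tau_{Z_n}$ to equate the two base-density limits delivers the equivalence.

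The main technical point is arranging the constants $C$ uniformly in $n$; this is automatic once everything takes place inside a single foliation box $U$ of small diameter on which the fixed transversal $\tau$ has angle bounded below with $\cF$, so that transverse absolute continuity with bounded Jacobians furnishes a single $C$ valid throughout. A secondary subtlety arises in part (3): one must choose $U$ small enough that every local leaf through a point of $B(x,\delta_0)$ is contained in an $\cF$-ball of radius $\delta_1$, so that the saturation-at-$x$ property delivers the desired saturation of each fiber of $X \cap Y_n$. Both obstacles are dealt with by a one-time shrinking of $U$ at the outset; beyond this, the arguments parallel the already-published \cite{Burns Wilk} treatment of absolutely continuous foliations.
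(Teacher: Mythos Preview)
The paper does not prove this proposition; it is quoted from \cite{Burns Wilk}, \S~2.3, and used as a black box. Your argument is the standard one and is essentially what appears in Burns--Wilkinson: derive a two-sided Fubini estimate $m(Y) \asymp m_\cF(\text{fiber}) \cdot m_\tau(\text{base})$ from transverse absolute continuity with bounded Jacobians and the $c$-uniform fiber hypothesis, then read off (1) directly and (3) by applying the estimate to $Y_n \setminus X$ versus $Y_n$, reducing (2) to (3) via the common base $\tau_{Y_n} = \tau_{Z_n}$.

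Two small points worth tightening. First, the Fubini identity you write down is not quite what the paper's hypothesis (holonomy between \emph{transversals} has bounded Jacobian) hands you directly; one usually fixes a reference local leaf $L_0$, writes $m|_U$ via a product chart as $m_{L_0} \times m_\tau$ up to a bounded density, and then uses the holonomy bound to pass from $m_{L_0}$ to $m_\cF$ on each leaf. The conclusion is the same two-sided estimate, but the intermediate object $J(q)$ you invoke deserves a sentence of justification. Second, in part~(3) you need $Y_n \subset B(x,\delta_0)$ eventually, which requires nesting at $x$; the proposition as stated introduces $x$ and the nesting hypothesis only in part~(2), so you are implicitly assuming those carry over to part~(3). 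That is how the proposition is used later in the paper (e.g.\ for $G_n(x)$ in the proof of Proposition~\ref{p=sliceden}), so the reading is correct, but it is worth flagging.
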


\subsection{Construction of an Adapted Metric}

We begin with some notation.
Again fix the diffeomorphism $f\colon M\to M$.
For $x\in M$ and $j\in\Z$ we denote by $x_j$ the $j$th iterate $f^j(x)$.
If $\alpha$, $\beta$ are positive functions defined on the forward orbit
$\cO^+(p) = \{p_j  ; \;  j\geq 0\}$
of some $p\in M$,
we write $\alpha \prec \beta$ if there exists a positive constant $\lambda
< 1$ such that
for all $y\in \cO^+(p)$:
$$\frac{\alpha(y)}{\beta(y)} <\lambda.$$
Notice that if $\alpha, \beta$ happen to extend from $\cO^+(p)$ to
continuous functions on $M$ satisfying
the pointwise inequality $\alpha < \beta$, then compactness of $M$ implies
that $\alpha\prec \beta$.

If $\alpha$ is a positive function, and $j\geq 1$ is
an integer, let
$$
\alpha_j(x) = \alpha(x)\alpha(x_1)\cdots\alpha(x_{j-1}),
$$
and
$$
\alpha_{-j}(x) =
\alpha(x_{-j})^{-1}\alpha(x_{-j+1})^{-1}\cdots\alpha(x_{-1})^{-1}.
$$
We set $\alpha_0(x) = 1$.
Observe that $\alpha_j$ is a
multiplicative cocycle; in particular, we have
$\alpha_{-j}(x)^{-1}= \alpha_j(x_{-j})$.

\begin{lemma}

Let $f\colon M\to M$ be $C^{1}$ and partially hyperbolic, and let $p\in CB^+$.
Then there exist functions $B$, $\nu$, $\hat\nu$, $\gamma$, $\hat\gamma\colon\cO^+(p)\to
\R_+$, bounded from below, and
a Riemannian metric $\|\mathord{\cdot}\|_\star$ defined on $T_{\cO^+(p)}M$
with the following properties:

\begin{enumerate}
\item $\nu \prec \gamma\hat\gamma \le 1$ and $\hat\nu\prec 1$; 
\item for $y$ in $\cO^+(p)$,
$$
\|D_y f|_{E^s}\|_\star \prec \nu(y) \prec \gamma(y) \prec \m_\star(D_y
f|_{E^c}) \leq \|D_y f|_{E^c}\|_\star \prec \hat \gamma(y)^{-1} \prec \hat
\nu(y)^{-1} \prec \m_\star(D_y f|_{E^u});
$$
\item $\limsup_{j\to\infty} B(p_j)^{1/j} = 1$;
\item for all $v \in T_{p_j}M$ and $j\geq 0$:
\begin{equation}\label{e=metriccompare}
                \| v \|  \leq  \| v \|_\star \leq B(p_j) \| v \|.
\end{equation}
\end{enumerate}
\end{lemma}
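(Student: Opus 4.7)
The plan is to build $\nu,\gamma,\hat\gamma,\hat\nu$ as block-wise geometric means over the center-bunching partition of $p\in CB^+$, to construct $\|\cdot\|_\star$ by a Mather--Gourmelon style adjustment on each summand within each block, and to read off the subexponential growth of $B$ from the defining property $i_{k+1}/i_k\to 1$ of the bunching partition.

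Concretely, fix $\theta>1$ and the sequence $0=i_0<i_1<\cdots$ from the definition of $CB^+$, and set $\Delta_k=i_{k+1}-i_k$, $A_k^\bullet=D_{p_{i_k}}f^{\Delta_k}|_{E^\bullet}$, so the bunching inequality reads $\|A_k^s\|\cdot\|A_k^c\|/\m(A_k^c)\le\theta^{-\Delta_k}$. For $i_k\le j<i_{k+1}$, declare
$$
\nu(p_j)=\theta^{-1/4}\|A_k^s\|^{1/\Delta_k},\qquad \gamma(p_j)=\m(A_k^c)^{1/\Delta_k},\qquad \hat\gamma(p_j)=\|A_k^c\|^{-1/\Delta_k},
$$
and take $\hat\nu$ to be a positive constant chosen strictly below $\min(1,\,\inf_j\hat\gamma(p_j),\,\theta_0)$, where $\theta_0=\min_M\m(Df|E^u)>1$ comes from partial hyperbolicity. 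Condition (1) is then immediate: the bunching gives $\nu/(\gamma\hat\gamma)\le\theta^{-5/4}<1$; the product $\gamma\hat\gamma=(\m(A_k^c)/\|A_k^c\|)^{1/\Delta_k}$ is at most $1$; and $\hat\nu<1$ by construction. All four functions are bounded above and below by constants depending only on $K=\sup_M\max(\|Df\|,\|Df^{-1}\|)$ and $\theta_0$, via the uniform bounds $K^{-1}\le\|A_k^\bullet\|^{1/\Delta_k},\m(A_k^\bullet)^{1/\Delta_k}\le K$.

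Now build $\|\cdot\|_\star$ by declaring $E^u\oplus E^c\oplus E^s$ to be $\star$-orthogonal along the orbit and modifying each summand separately. On $E^u$ keep $\|\cdot\|_\star=\|\cdot\|$, so that $\m_\star(Df|E^u)\ge\theta_0>\hat\nu^{-1}$ is automatic. On $E^s$ and $E^c$, within each block $[i_k,i_{k+1}]$ reset to the original inner product at $p_{i_k}$, and at each interior point redefine the norm by a finite Mather-type weighted sum of forward- and backward-transported copies of the original norm over the remainder of the block; choose the weights so that within the block $\|Df|E^s\|_\star<\lambda_s\,\nu$ and $\lambda_c^{-1}\gamma<\m_\star(Df|E^c)\le\|Df|E^c\|_\star<\lambda_c\,\hat\gamma^{-1}$, for constants $\lambda_s,\lambda_c<1$. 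The safety factor $\theta^{-1/4}$ built into $\nu$, together with the slack in the submultiplicativity bounds $\|A_k^\bullet\|\le\prod_l\|Df_{p_{i_k+l}}|E^\bullet\|$, absorbs the discrepancy between pointwise and block-average norms and makes this adjustment realizable. Arrange $\|v\|_\star\ge\|v\|$ by including the $n=0$ term in each weighted sum, and define $B(p_j)$ to be the maximal dilation of $\|\cdot\|_\star$ over $\|\cdot\|$ on $T_{p_j}M$.

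For property (3), each weighted sum within a block of length $\Delta_k$ has at most $\Delta_k$ terms, each bounded by $K^{2\Delta_k}$, so $B(p_j)\le C\,K^{\Delta_k}$ for $i_k\le j<i_{k+1}$; hence $B(p_j)^{1/j}\le C^{1/i_k}K^{\Delta_k/i_k}\to 1$, using $i_{k+1}/i_k\to 1$. I expect the main technical obstacle to be the simultaneous realization on $E^c$ of the two-sided bound $\gamma\prec\m_\star(Df|E^c)\le\|Df|E^c\|_\star\prec\hat\gamma^{-1}$ in the higher-dimensional case: no scalar rescaling can both widen the minimum and narrow the maximum singular value of $Df|E^c$ pointwise, so one must use a genuinely anisotropic Mather-sum adjustment on $E^c$ that is controlled both forward and backward within each block while respecting the boundary-reset at $p_{i_k}$ required for the subexponential estimate on $B$.
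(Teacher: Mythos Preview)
Your overall architecture matches the paper's: define $\nu,\gamma,\hat\gamma$ as block-wise $\Delta_k$-th roots over the bunching partition $\{i_k\}$, build the metric block by block with a reset at each $p_{i_k}$, and read off $B(p_j)^{1/j}\to 1$ from $\Delta_k/i_k\to 0$. The estimates for property~(1), the boundedness from below, and the subexponential growth of $B$ are all fine.

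The genuine gap is exactly the one you flag at the end: the construction of $\|\cdot\|_\star$ on $E^c$. You propose an ``anisotropic Mather-sum'' but give no actual formula and no argument that the weights can be chosen to force \emph{both} $\m_\star(D_{p_j}f|_{E^c})$ to equal (or be close to) $\m(A_k^c)^{1/\Delta_k}$ \emph{and} $\|D_{p_j}f|_{E^c}\|_\star$ to equal $\|A_k^c\|^{1/\Delta_k}$. A Mather sum $\|v\|_\star^2=\sum_n w_n\|Df^n v\|^2$ is a scalar weighting of transported norms; it can enforce a one-sided contraction or expansion inequality, but it does not in general simultaneously pin down the top and bottom singular values of a non-conformal $Df|_{E^c}$ to prescribed block-averaged targets. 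Without this, property~(2) on $E^c$ is asserted but not proved.

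The paper closes this gap with a different device: polar decomposition. Write $D_{p_{i_k}}f^{\Delta_k}=O_kP_k^{\Delta_k}$ with $P_k$ self-adjoint positive (with respect to the orthogonalized metric $\|\cdot\|_1$) and $O_k$ an isometry; then declare the new metric on $T_{p_j}M$ (for $i_k\le j<i_{k+1}$) to be the one making
\[
D_{p_{i_k}}f^{\,j-i_k}\cdot P_k^{-(j-i_k)}:(T_{p_{i_k}}M,\|\cdot\|_1)\to(T_{p_j}M,\|\cdot\|_2)
\]
an isometry. In this metric each single step $D_{p_j}f$ is conjugate to $P_k$, so its operator norm and conorm on each invariant subbundle are \emph{exactly} the $\Delta_k$-th roots of those of $A_k^\bullet$. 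This gives the two-sided control on $E^c$ for free, with no weight-tuning needed, and the rest (scaling by $D_j$ to guarantee $\|\cdot\|\le\|\cdot\|_\star$ and monotonicity) is bookkeeping. If you rewrite your $E^c$ step using this polar-decomposition metric instead of a Mather sum, your proof goes through.

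A minor side remark: your choice of $\hat\nu$ as a constant needs $\hat\nu^{-1}<\theta_0$ (not $\hat\nu<\theta_0$) to get $\hat\nu^{-1}\prec\m_\star(Df|_{E^u})$; and you must check that the interval $(\theta_0^{-1},\,\inf_j\hat\gamma(p_j))$ is nonempty, which is not automatic. The paper instead defines $\hat\nu$ via the $\star$-metric on $E^u$ (also built by the same polar decomposition), avoiding this issue.
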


\begin{proof}

Let $\| \mathord{\cdot} \|_1$ be a Riemannian metric on $T_{\cO^+(p)} M$ that
coincides with $\| \mathord{\cdot} \|$ on each of the three spaces $E^s(p_j)$,
$E^c(p_j)$, and $E^u(p_j)$, but with respect to which those three spaces are
orthogonal.  Notice that there exists a constant $C \geq 1$ such
that $C^{-1} \|v\| \leq \|v\|_1 \leq C \|v\|$ for every $v \in T_{\cO^+(p)}
M$.

Let us define another Riemannian metric $\| \mathord{\cdot} \|_2$ on $T_{\cO^+(p)} M$ as
follows.  Let $i_k$ be as in the definition of forward center bunching.
With respect to the inner product induced by $\| \mathord{\cdot} \|_1$,
the linear map $D_{p_{i_k}} f^{i_{k+1}-i_k}$
can be written in a unique way as $O_k P^{i_{k+1}-i_k}_k$
where $P_k:T_{p_{i_k}} M \to T_{p_{i_k}} M$ is selfadjoint positive and
$O_k:T_{p_{i_k}} M \to T_{p_{i_{k+1}}} M$ is an isometry: 
indeed $P_k^{2(i_{k+1}-i_k)} = 
(D_{p_{i_k}} f^{i_{k+1}-i_k})^* \cdot D_{p_{i_k}} f^{i_{k+1}-i_k}$.
Notice that $P_k$ preserves the spaces $E^s(p_{i_k})$, $E^c(p_{i_k})$, and $E^u(p_{i_k})$.
Define $\| \mathord{\cdot} \|_2$ on $T_{\cO^+(p)} M$ so that
for $i_k \leq j<i_{k+1}$, 
the map 
$$
D_{p_{i_k}} f^{j-i_k} \cdot P_k^{-(j-i_k)}: (T_{p_{i_k}} M, \| \mathord{\cdot} \|_1) \to 
(T_{p_j} M, \| \mathord{\cdot} \|_2)
$$
is an isometry.
By construction, for each $i_k \leq j<i_{k+1}$, and for each subbundle
$F=E^u$, $E^c$, $E^s$, we have
$\|D_{p_j} f|_F\|_2^{i_{k+1}-i_k}=\|D_{p_{i_k}}f^{i_{k+1}-i_k}|_F\|$ and
$\m_2(D_{p_j} f|_F)^{i_{k+1}-i_k}=\m(D_{p_{i_k}}f^{i_{k+1}-i_k}|_F)$.
The definitions of partial hyperbolicity and forward center bunching then
immediately imply that there exists $\rho<1$ such that
\begin{gather*}
\| D_y f|_{E^s}\|_2 \leq \rho^2 \m_2( D_y f|_{E^c}) \min \{1,
\|D_y f|_{E^c}\|_2^{-1}\}, \quad \text{and} \\
\max \{1,\| D_y f|_{E^c}\|_2\} \leq \rho^2 \m_2( D_y f|_{E^u} )
\end{gather*}
for every $y \in \cO^+(p)$.

Notice that $\| \mathord{\cdot} \|_2$ and $\| \mathord{\cdot} \|_1$ coincide for $T_{p_{i_k}} M$
for each $k$.  Let $C_j \geq 1$ be minimal such that
$C^{-1}_j \|v\| \leq \|v\|_2 \leq C_j \|v\|$ for every $v \in T_{p_j} M$.
The condition $i_{k+1}/i_k \to 1$ then implies that $C_j^{1/j} \to 1$.  Let
$D_j \geq C_j$ be a sequence such that 
$D_j \leq D_{j+1} \leq \rho^{-1} D_j$ and $D_j^{1/j} \to 1$.  
For every $j \geq 0$, let
$\| \mathord{\cdot} \|_\star= D_j \| \mathord{\cdot} \|_2$ over $T_{p_j} M$,
and $B(p_j)=D_j C_j$.
For $y \in \cO^+(p)$, we define 
$\nu(y)=\rho^{-1/4} \|D_y f|_{E^s}\|_\star$, 
$\gamma(y)=\rho^{1/4} \m_\star(D_y f|_{E^c})$,  
$\hat \gamma(y)= (\rho^{1/4} \|D_y f|_{E^c}\|_\star)^{-1}$,  and
$\hat \nu(y) = (\rho^{-1/4} \m_\star(D_y f |_{E^u}))^{-1}$.
All desired properties are straightforward to check.
\end{proof}

We next show that the sets $\mathit{CB}^+$ and $\mathit{CB}^-$ 
are respectively $\cW^s$ and $\cW^u$-saturated.

\begin{proof}[Proof of Proposition~\ref{p.CB is sat}]



We will use the previous lemma and its proof.
Let $p \in \mathit{CB}^+$ and $q \in \cW^s(p)$, and let
$p_j=f^j(p)$, $q_j=f^j(q)$.  Choose invertible
linear maps $A_j:T_{p_j} M \to T_{q_j} M$, bounded and with bounded
inverses with respect to $\| \mathord{\cdot} \|$,
that preserve the bundles $E^s$ and $E^c$,
and such that $A^{-1}_{j+1} D_{q_j} f A_j$ is
exponentially close to $D_{p_j}
f$ (here we use that $Df$ and the bundles $E^s$ and
$E^c$ are H\"older).  This implies that
$A^{-1}_{j+1} D_{q_j} f  A_j$
is also exponentially close to $D_{p_j} f$ with respect to
$\| \mathord{\cdot} \|_\star$.  It follows that there exists $\delta>0$ such that
$$\|A_{j+1}^{-1} D_{q_j}
f A_j|_{E^s}\|_\star \cdot \m_\star(A_{i_{j+1}}^{-1} D_{q_j}
f A_j|_{E^c})^{-1}  \cdot \|A_{j+1}^{-1} D_{q_j}
f A_j|_{E^c}\|_\star \leq 1-\delta$$ for every sufficiently large $j$.
Let $i_k$ be as in the definition of forward center bunching
for $p$.  By the proof of the previous lemma,
$\| \mathord{\cdot} \|_\star$ and $\| \mathord{\cdot} \|$ coincide 
modulo a constant factor over
$E^s(p_{i_k})$ and $E^c(p_{i_k})$, so
$$\|A_{i_{k+1}}^{-1} D_{q_{i_k}}
f^{i_{k+1}-i_k} A_{i_k}|_{E^s}\| \cdot
\m(A_{i_{k+1}}^{-1} D_{q_{i_k}}
f^{i_{k+1}-i_k} A_{i_k}|_{E^c})^{-1} \cdot
\|A_{i_{k+1}}^{-1} D_{q_{i_k}}
f^{i_{k+1}-i_k} A_{i_k}|_{E^c}\| \leq (1-\delta)^{i_{k+1}-i_k}$$ for
every $k$ sufficiently large.
Since the maps $A_j$, $A_j^{-1}$ are uniformly bounded with respect to
$\| \mathord{\cdot} \|$, and preserve $E^s$ and $E^c$,
we see that there exists $n \geq 1$
such that every $k \geq 0$,
$$
\|D_{q_{i_{nk}}}
f^{i_{nk+n}-i_{nk}}|_{E^s}\|^{-1} \geq
(1+\delta)^{i_{nk+n}-i_{nk}} \,
\frac {\|D_{q_{i_{nk}}} f^{i_{nk+n}-i_{nk}}|_{E^c}\|}
{\m(D_{q_{i_{nk}}} f^{i_{nk+n}-i_{nk}}|_{E^c})} \, .
$$
Since $i_{nk+n}/i_{nk} \to 1$, we conclude that $q \in \mathit{CB}^+$.

It follows by symmetry that $\mathit{CB}^-$ is $\cW^u$-saturated.
\end{proof}

Fix $R_0>0$ less than injectivity radius of $M$ in the original $\|\mathord{\cdot} \|$ metric.
Let $\exp$ denote the exponential map for the $\|\mathord{\cdot} \|$ metric.
Consider the neighborhood $\cN_{R_0}$ of $\cO^+(p)$ defined by
$$\cN_{R_0} = \bigsqcup_{j\geq 0} B(p_j,R_0),$$
where $B(x,r)$ denotes the ball of radius $r$ centered at $x$ in the original Riemannian metric.
The manifold $\cN_{R_0}$ carries the restriction of the original Riemannian metric.
When we speak of volumes and induced Riemannian volumes on submanifolds of $\cN_{R_0}$,
it will always be with respect to this metric.

We introduce two other metrics on $\cN_{R_0}$ that will be used in this proof, one of them
closely related to (and comparable to) the original metric.  The first metric is the
{\em flat $\|\mathord{\cdot} \|$ metric}, denoted $\|\mathord{\cdot} \|_\flat$,
which is the (locally) flat Riemannian metric defined as follows.  For $x\in B(p_j, R_0)$,
and $v,w\in T_{x}M$, we set
$$\langle v,w \rangle_\flat =  \langle D_x\exp_{p_j}^{-1}(v), D_x\exp_{p_j}^{-1}(w)\rangle_{p_j},$$
where we make the standard identification  $T_{u}(T_pM) \simeq (T_pM)$.
In the distance $d_\flat$ induced by this metric, we have, for $q,q'\in B(p_j, R_0)$,
$d_\flat(q, q') = \|\exp_{p_j}^{-1}(q)-\exp_{p_j}^{-1}(q')\|_{p_j}$.
Compactness of $M$ implies that $\|\mathord{\cdot} \|$ and $\|\mathord{\cdot} \|_\flat$ are comparable.

Next we extend the $\|\mathord{\cdot}\|_\star$ metric, which is defined on $T_{\cO^+(p)}M$, to a
flat metric  $\|\mathord{\cdot}\|_\star$ on $\cN_{R_0}$ using the same type of construction.
For $x\in B(p_j, R_0)$, and $v,w\in T_{p_j}M$, we set
$$
\langle v,w \rangle_\star =  \langle D_x\exp_{p_j}^{-1}(v), D_x\exp_{p_j}^{-1}(w) \rangle_{\star,p_j}.
$$
Denote by $d_\star$ the  distance induced by this Riemannian metric,
so that, for $q,q'\in B(p_j,R_0)$, we  have
$d_\star(q, q') = \|\exp_{p_j}^{-1}(q)-\exp_{p_j}^{-1}(q')\|_\star$.

The results of this section imply that on $B(p_j,R_0)$, we have
$K d_\flat \leq d_\star \leq B(p_j) d_\flat$.
Thus on any component $B(p_j,R_0)$, the $\star$ and $\flat$ metrics
are uniformly comparable.  The degree of comparability decays subexponentially
as $j\to \infty$. For $q\in \cN_{R_0}$ and $r>0$ sufficiently small,
we denote by $B_\star(q,r)$ the $d_\star$-ball of radius $r$ centered at $q$.

By uniformly rescaling the $\|\mathord{\cdot}\|_\flat$ and $\|\mathord{\cdot}\|_\star$ metrics by
the same constant factor,
we may assume that for some $R > 1$, and any $x \in M$,
the Riemannian balls $B_\star(x,R)$ and $B(x,R)$ are
contained in foliation boxes for both $\cW^s$ and $\cW^u$.
We assume both $R$ and $R_0$ are large enough so that
all the objects considered in the sequel
are small compared with $R$ and $R_0$.

\subsection{Fake Invariant Foliations}

Let ${\bf r}:\cO^+(p)\to \R_+$ be any positive function such that
$\sup_{j\geq 0} {\bf r}(p_j) \leq R_0$.
Denote by $\cN_{{\bf r}}$ the following neighborhood of $\cO^+(p)$:
$$ \cN_{{\bf r}} = \bigsqcup_{j\geq 0} B(p_j,{\bf r}(j)).$$

If $\cF$ is a foliation of $\cN_{\bf r}$, and  $B_\star(x,r)$ is contained
in a foliation box $U$ for $\cF$, then we will denote by
$\cF_\star(x,r)$ the intersection of the local leaf of $\cF$
at $x$ with $B_\star(x,r)$. Notice that
$\cF_\star(x,r) \subseteq \cF(x,K^{-1}r)$.

\begin{prop}\label{p=fakenu}
For every $\epsilon > 0$, there exist functions ${\bf r}$, ${\bf R}\colon\cO^+(p)\to \R$
satisfying:
$${\bf r}\prec {\bf R},\quad \sup_{y\in\cO^+(p)} {\bf R}(y)<R_0, \quad
\inf_{j\geq 0} \frac{{\bf r}(p_{j+1})}{{\bf r}(p_j)} > e^{-\epsilon},\quad\hbox{and}\quad \inf_{j\geq 0}\frac{{\bf R}(p_{j+1})}{{\bf R}(p_j)} > e^{-\epsilon},
$$
and such that the neighborhood $\cN_{\bf R}$ is foliated by
foliations $\hW^u$, $\hW^s$, $\hW^c$, $\hW^{cu}$ and $\hW^{cs}$ with the
following properties, for each $\beta \in \{u,s, c, cu, cs\}$:
\begin{enumerate}

\item {\bf Almost tangency to invariant distributions:}
For each $q\in \cN_{\bf R}$, the leaf $\hW^\beta(q)$ is $C^1$ and
the tangent space $T_q\hW^\beta(q)$ lies in a cone of $\| \mathord{\cdot}\|_\star$-angle
$\eps$ about $E^\beta(q)$ and also within a cone of $\| \mathord{\cdot}\|$-angle
$\eps$ about $E^\beta(q)$.

\item {\bf Local invariance:} for each $y\in \cO^+(p)$ and $q\in B(y,{\bf r}(y))$,
$$f(\hW^\beta(q,{\bf r}(y))) \subset
\hW^\beta(q_1),\,\hbox{ and }f^{-1}(\hW^\beta(q_1,{\bf r}(y_1))) \subset
\hW^\beta(q).$$

\item {\bf Exponential growth bounds at local scales:} The following hold for all $n\geq 0$ and $y\in\cO^+(p)$.
\begin{enumerate}
\item Suppose that $q_j\in B_\star(y_j,{\bf r}(y_j))$ for $0 \leq j \leq n-1$.

If
$q' \in \hW^{s}(q,{\bf r}(y))$,
then $q_n' \in \hW^{s}(q_n,{\bf r}(y_n))$, and
$$
d_\star(q_n,q_n') \leq \nu_n(y) d_\star(q,q').
$$
If $q_j' \in \hW^{cs}(q_j,{\bf r}(y_j))$ for $0 \leq j \leq n-1$, then
$q_n' \in \hW^{cs}(q_n)$, and
 $$d_\star(q_n,q_n') \leq \hat\gamma_n(y)^{-1} d_\star(q,q').$$

\item Suppose that $q_{-j}\in B_\star(y_{n-j},{\bf r})$ for $0 \leq j \leq n-1$.

If $q' \in \hW^{u}(q,{\bf r}(y_n))$,
then $q_{-n}' \in \hW^{u}(q_{-n},{\bf r}(y))$, and
$$d_\star(q_{-n},q_{-n}') \leq \hat\nu_{n}(y) d_\star(q,q').$$
If $q_{-j}' \in \hW^{cu}(q_{-j},{\bf r}(y_{n-j}))$ for $0 \leq j \leq n-1$, then
 $q_{-n}' \in \hW^{cu}(q_{-n})$, and
$$d_\star(q_{-n},q_{-n}') \leq \gamma_{n}(y)^{-1} d_\star(q,q').$$
\end{enumerate}

\item {\bf Coherence:}  $\hW^s$ and $\hW^c$ subfoliate  $\hW^{cs}$;
 $\hW^u$ and $\hW^c$ subfoliate  $\hW^{cu}$.

\item {\bf Uniqueness:}
$\hW^s(p)= \cW^s(p,{\bf R}(p))$, and $\hW^u(p)= \cW^u(p,{\bf R}(p))$.

\item {\bf Regularity:} The foliations $\hW^u$, $\hW^s$, $\hW^c$, $\hW^{cu}$ and $\hW^{cs}$ and their tangent distributions are uniformly H\"older continuous, in both the $d_\star$ and $d$ metrics.

\item{\bf Regularity of the strong foliation inside weak leaves:}
the restriction of the foliation $\hW^s$ to each leaf of $\hW^{cs}$ is absolutely continuous
with bounded jacobians, and the restriction of the foliation $\hW^u$ to each leaf of $\hW^{cu}$ is absolutely continuous with bounded jacobians (with respect to the standard Riemannian metric and volume).

There exists a constant $L>0$ such that for any $p'\in \cW^{s}(p)$,
the $\hW^s$-holonomy map $h^s\colon \hW^c(p)\to \hW^c(p')$ is
$L$-bi-Lipschitz at $p$.  That is, for all $q\in \hW^c(p)$, we have:
$$L^{-1} d_\star(p,q) \leq d_\star(h^s(p), h^s(q)) \leq L d_\star(p,q).$$
\end{enumerate}
\end{prop}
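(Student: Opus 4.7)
The construction follows the template of the fake foliations in Burns--Wilkinson, but the uniform center-bunching estimates are replaced by the nonautonomous estimates encoded in the adapted metric $\|\mathord{\cdot}\|_\star$. The overall plan is to (i) linearize $f$ locally around each $p_j=f^j(p)$ via the exponential map; (ii) extend the local linearization to a globally-defined diffeomorphism of $T_{p_j}M$ that agrees with the linear part $D_{p_j}f$ far from the origin; (iii) run a nonautonomous Hadamard--Perron graph-transform argument on the resulting sequence $\{\bar f_j\}$ to obtain invariant foliations in the tangent spaces; and (iv) push them down to $\cN_{\bf R}$ by $\exp_{p_j}$.

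First I would choose the radii ${\bf r}, {\bf R}\colon\cO^+(p)\to\R_+$ so that ${\bf r}(p_j)\le {\bf R}(p_j)\le R_0/B(p_j)^2$ and both decay like $e^{-\epsilon' j}$ with $\epsilon'<\epsilon$. Because $B(p_j)^{1/j}\to 1$, this is compatible with the normalization condition $\inf_j {\bf r}(p_{j+1})/{\bf r}(p_j)>e^{-\epsilon}$. In the local chart given by $\exp_{p_j}$, the $C^2$-map $\tilde f_j=\exp_{p_{j+1}}^{-1}\circ f\circ \exp_{p_j}$ satisfies $D\tilde f_j(0)=D_{p_j}f$, and on $B_\star(0,{\bf r}(p_j))$ is $o(1)$-$C^1$-close to $D_{p_j}f$ in the $\|\mathord{\cdot}\|_\star$-metric. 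Multiplying by a $\|\mathord{\cdot}\|_\star$-radial bump supported in $B_\star(0,2{\bf r}(p_j))$, I obtain $\bar f_j\colon T_{p_j}M\to T_{p_{j+1}}M$ that agrees with $\tilde f_j$ on $B_\star(0,{\bf r}(p_j))$, equals $D_{p_j}f$ outside $B_\star(0,2{\bf r}(p_j))$, and is globally a $\|\mathord{\cdot}\|_\star$-partially hyperbolic, forward and backwards center bunched map with bounds given by $\nu, \hat\nu, \gamma, \hat\gamma$ of the lemma (up to a factor $1+o(1)$).

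Next I would apply a nonautonomous graph transform to $\{\bar f_j\}$ acting on the respective tangent spaces, each equipped with the flat metric $\|\mathord{\cdot}\|_\star$. The key point is that the action on the appropriate Grassmannians is a contraction (for $\hW^{cs}$ and $\hW^{cu}$) and a sharper contraction (for $\hW^s$ and $\hW^u$), with rates controlled by ratios such as $\nu/\gamma$ and $\hat\nu\hat\gamma$. The resulting invariant distributions integrate by the classical argument to $C^1$ foliations $\widehat\cW^{cs}, \widehat\cW^{cu}, \widehat\cW^{s}, \widehat\cW^{u}$ of each $T_{p_j}M$, and $\widehat\cW^c:=\widehat\cW^{cs}\cap\widehat\cW^{cu}$. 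Pulling back by $\exp_{p_j}$ yields foliations of $B(p_j,{\bf R}(p_j))$; coherence of the cs/cu/c foliations (property 4) is a tautology from the graph-transform intersection, and properties 1 and 6 (tangency and H\"older regularity of leaves and distributions in both metrics) come from the standard graph-transform estimates together with the uniform comparability of $d_\star$ and $d_\flat$ on each chart. Property 2 (local invariance) follows because on $B_\star(0,{\bf r}(p_j))$ the map $\bar f_j$ coincides with $\tilde f_j$. Property 3 (growth bounds) follows by iterating the $\bar f_j$-bounds and using the multiplicative cocycle identity $\nu_n(y)=\prod_{k=0}^{n-1}\nu(y_k)$, etc. Property 5 (uniqueness of $\hW^s(p)$ and $\hW^u(p)$) follows because the true strong manifolds $\cW^s(p,{\bf R}(p))$ and $\cW^u(p,{\bf R}(p))$ are also invariant with the right contraction rates, hence must coincide with the fake ones at the basepoint.

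The main obstacle is property 7, the absolute continuity and Lipschitz holonomy inside $\hW^{cs}$-leaves (and analogously inside $\hW^{cu}$-leaves). As in Burns--Wilkinson, this is where forward center bunching is actually used: the contraction of $\hW^s$-holonomy along $\hW^c$ inside a $\hW^{cs}$-leaf is governed precisely by the ratio $\|D\bar f_j|E^s\|_\star\cdot(\m_\star(D\bar f_j|E^c))^{-1}\|D\bar f_j|E^c\|_\star$, which by construction of the adapted metric is dominated by a uniform factor $<1$. The nonautonomous Journ\'e-type argument then yields the desired bi-Lipschitz bound at $p$, with Lipschitz constant $L$ depending only on the forward center-bunching data, not on the orbit segment, which is crucial for the later use of these foliations in the proof of Theorem~\ref{t.nonunifbw}. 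Balancing this estimate against the subexponential blow-up of $B(p_j)$ is what forces ${\bf r}$ and ${\bf R}$ to be chosen subexponentially decaying but with $\epsilon$-slow variation, which is possible exactly because $B(p_j)^{1/j}\to 1$.
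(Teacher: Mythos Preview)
Your overall architecture matches the paper's proof closely: linearize via $\exp_{p_j}$, glue to the linear map $D_{p_j}f$ with a bump, run a nonautonomous graph transform on the sequence $\{\bar f_j\}$, then push forward by $\exp$. Properties (1)--(5) follow essentially as you say. Two points, however, are not right as stated.

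First, your appeal to ``uniform comparability of $d_\star$ and $d_\flat$ on each chart'' to get H\"older regularity in both metrics simultaneously is incorrect: the comparison constant on $B(p_j,R_0)$ is $B(p_j)$, which is \emph{not} uniformly bounded (only subexponential). The paper instead verifies separately that the extended map $F$ is uniformly $C^{1+\delta}$ in the original metric (trivially, since $f$ is $C^2$) and uniformly $C^{1+\alpha}$ in the $\star$-metric for any $\alpha<1$ (because the Lipschitz constant $L(p_j)$ of $DF$ in $\|\cdot\|_\star$ grows like $B(p_j)$, but the $\star$-diameter of the support shrinks like $B(p_j)\,{\bf r}(p_j)\to 0$, which buys back uniform H\"older continuity at any exponent strictly below $1$). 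The pointwise H\"older section theorem is then applied in each metric independently. This is exactly where the choice ${\bf R}(p_j)\sim e^{-\epsilon' j}$ against $B(p_j)^{1/j}\to 1$ does real work, and it is not captured by a comparability claim.

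Second, the Lipschitz holonomy in property~(7) is not a Journ\'e-type statement, and the paper does not prove it that way. Journ\'e gives regularity of a function that is regular along two transverse foliations; here one needs that the $\hW^s$-holonomy between $\hW^c$-leaves is bi-Lipschitz \emph{at the basepoint $p$} (not uniformly along the leaf). The paper's argument (its Lemma on Lipschitz holonomy) is a direct backward-iteration estimate: given $q\in\hW^c(p)$ at scale $d_\star(p,q)\sim\rho_n$ for a suitable cocycle $\rho$ with $\nu\gamma^{-1}\prec\rho\prec\min\{1,\hat\gamma\}$, one iterates forward to time $n$, writes $w_j=v_j'-v_j$ in exponential coordinates, and proves inductively (backward in $j$) that $\|\pi^c_j w_j\|_\star\le K\lambda^j\gamma_j\|v_0\|_\star$ using the Lipschitz bound on $DF^{-1}$ and the bunching inequality $\nu\gamma^{-1}\hat\gamma^{-1}\prec 1$. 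The constant $L$ then depends on $\sup_j B(p_{j+1})(\kappa/\lambda)^{j+1}$, which is finite precisely because $B(p_j)^{1/j}\to 1$. You have correctly identified the governing ratio, but the mechanism you name would not produce this conclusion.
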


\begin{proof}[Proof of Proposition~\ref{p=fakenu}.]
The proof follows closely the proof of Proposition 3.1 in \cite{Burns Wilk}.
Our construction will be performed in two steps. In the first, we construct
foliations of each tangent space $T_yM$, $y\in\cO(p)$. In the second step, we use the
exponential map $\exp_y$ to project these foliations from a neighborhood of the origin in $T_yM$ to
a neighborhood of $y$.

The argument diverges slightly from the argument in \cite{Burns Wilk} in that, because
we are in the nonuniform setting, the H\"older continuity of $Df$ (in this case Lipschitz continuity)
must be used explicitly in the construction of the fake foliations.

\medskip

\noindent{\bf Step 1.} We extend the $\|\mathord{\cdot}\|_\star$-metric on $T_{\cO^+(p)}M$ to a metric on $T_{\cO(p)}M$,
which we also denote by $\|\mathord{\cdot}\|_\star$, by setting it equal to $\|\mathord{\cdot} \|$ on $\bigsqcup_{j\leq 0} T_{p_j}M$.
Extend the function $B$ to $\cO(p)$ by setting $B(p_j) = 1$ for $j\leq 0$.

Fix a constant $R_1<R_0$ such that the diameter of $f(B(x,R_1))$ is less than $R_0$,
for all $x\in M$.
For $v\in T_{p_j}M$, $\|v\|\leq R_1$, let $\tilde f_j(v) = \exp_{p_{j+1}}^{-1}\circ f \circ \exp_{p_j}(v)$.
Then $D_0\tilde f_j = D_{p_j}f$ and so, since $f$ is $C^2$:
\begin{equation}\label{e=c2}
\tilde{f}_j(v) = D_{p_j}f(v) + O(\|v\|^2), \qquad\hbox{and} \qquad \|D_{v}\tilde{f}_j - D_{p_j}f\| \leq	O(\|v\|),
\end{equation}
uniformly in $j$. Fix a family of smooth bump functions $\{\beta_r:\R\to [0,1], r>0\}$ with the
properties that $|\beta_r'|\leq 3r^{-1}$, $\beta_r(t) = 1$ for $|t|\leq r^2$, and $\beta_r(t) = 0$
for $|t|\geq 4r^2$.

For $r\in (0,R_1)$, define $F_{j,r}:T_{p_j}M\to T_{p_{j+1}}(M) $ by:
$$F_{j,r}(v) = \beta_r(\|v\|^2) \tilde{f}_j(v) + (1 - \beta_r(\|v\|^2)) D_{p_j} f(v).
$$
One easily checks using (\ref{e=c2}) that $d_{C^1}(F_{j,r}, D_{p_j} f) \leq O(r)$, uniformly in $j$
and that $F_{j,r}(v) = \tilde{f}(v)$ for $\|v\|\leq r$, and $F_{j,r}(v) = D_{p_j}f(v)$ for $\|v\|\geq 2r$.

For any function ${\bf r}\colon \cO(p)\to \R_+$
with $\sup_{y\in \cO(p)} {\bf r}(y) < R_1$, define a $C^2$ bundle map
$F_{\bf r}\colon T_{\cO(p)}M\to T_{\cO(p)}M$, by setting
$F_{\bf r} = F_{{\bf{r}(p_j)},j}$ on $T_{p_j}M$.
Then $F_{\bf r}$ covers
$f:\cO(p)\to \cO(p)$, and has the following properties:
\begin{enumerate}
\item $F_{\bf r}$ coincides with $\exp_{p_{j+1}}^{-1}\circ f \circ \exp_{p_j}$
on the $\|\mathord{\cdot} \|$-ball of radius ${\bf r}(p_j)$ in $T_{p_j}M$
and with $D_{p_j}f$ outside the ball of
radius $2{\bf r}(p_j)$
\item The $C^1$ distance from $F_{\bf r}$ to $Df$ on approaches $0$ uniformly as
$|{\bf r}|_\infty\to 0$. In particular, on $T_{p_j}M$, we have $d_{C^1}(F_{\bf r}, D_{p_j} f) \leq
O({\bf r}(p_j))$.
\end{enumerate}
When measured in the $\|\mathord{\cdot}\|_\star$-metric, the $C^1$ distance between two functions on $T_{p_j}(M)$
is multiplied by $B(p_j)$.  It follows that:
\begin{enumerate}
\item[3)] On $T_{p_j}M$, we have $d_{C^1}(F_{\bf r}, D_{p_j}f)_\star \leq O(B(p_j) {\bf r}(p_j)),$ uniformly in $j$;
that is, $d_{C^1}(F_{\bf r}, Df)_\star \leq O(B{\bf r})$.
\end{enumerate}

Let $\epsilon>0$ be given. Fix $\epsilon_1 < \epsilon$
such that
\begin{equation}\label{e=eps1choice}
e^{-2\epsilon} > \sup_{y\in\cO^+(p)} \max\left\{ \nu(y),\, \hat\nu(y),\, \frac{\nu(y)}{\gamma(y)},\,  \frac{\nu(y)}{\gamma(y)},\, \frac{\hat\nu(y)}{\hat\gamma(y)}, \, \frac{\nu(y)}{\gamma\hat\gamma(y)} \right\}.
\end{equation}
For $c>0$,
define a function ${\bf R}_{c}\colon \cO(p)\to \R_+$ by
$${\bf R}_{c}(p_j) =
\begin{cases}
c, & \hbox{if } j\leq 0\\
ce^{-j\epsilon'}, & \hbox{if } j>0.
\end{cases}
$$
Since $\limsup_{j\to\infty} B(p_j){\bf R}_{c}(p_j) = 0$,
the argument above shows that $d_{C^1}(F_{{\bf r}_{c}}, Df)_\star$ tends to
$0$ uniformly as $c\to 0$. This also implies that the $C^1$ distance in the original
Riemannian metric $\|\mathord{\cdot} \|$ tends to $0$ uniformly in $c$.

Since $Df$ is uniformly partially hyperbolic in both metrics, we may choose $c$ sufficiently small so that
$F = F_{{\bf R}_{c}}$ is uniformly partially hyperbolic in both $\| \mathord{\cdot}\|$ and $\|\mathord{\cdot} \|_\star$ metrics.
Note that $F$ is $C^{1+Lip}$ in the $\|\mathord{\cdot}\|_\star$ metric,
with Lipschitz constant of $DF$, $DF^{-1}$ on
$T_{p_j}M$ bounded by a constant $L(p_j)>0$
with the property $\limsup_{j\to\infty} L(p_j)^{1/j} = 1$.
$F$ is  uniformly $C^2$ in $\|\mathord{\cdot} \|$ metric.
Note also that $F$ is $C^{1-\epsilon}$ in the $\|\mathord{\cdot}\|_\star$ metric,
with H\"older constant of $DF$, $DF$ on
$T_{p_j}M$ bounded by a constant.
If $c$ is small enough, the equivalents  of inequalities  (3)--(6)
will hold for $TF$.

If $c$ is sufficiently small,
standard graph transform arguments give stable, unstable, center-stable,
and center-unstable foliations for
$F_r$ inside
each $T_pM$. These foliations are uniquely determined by
the extension $F$ and the requirement that their leaves be graphs of bounded functions.
We obtain a center foliation by intersecting the leaves of the
center-unstable and center-stable foliations.  While $TM$ is not compact, all of the relevant estimates for $F$ are uniform, and it is this,
not compactness, that counts.

The uniqueness of the stable and unstable foliations imply, via a standard
argument (see, e.g.\ \cite{HPS}, Theorem 6.1~(e)), that
the stable foliation subfoliates the center-stable, and the
unstable subfoliates the center-unstable.

We now discuss the regularity properties of these foliations
of $TM$. Our foliations of $TM$ have been constructed as
the unique fixed points of graph transform maps.
We can apply the above results to the $F$-invariant splittings of $TTM$ as
the sum of the stable and  center-unstable bundles for $F$
and as the sum of the center-stable and unstable bundles for $F$.
It follows from the pointwise H\"older section theorem (see \cite{Pugh Shub Wilk}, Theorem A)
 that both the center-unstable and unstable bundles and
the corresponding foliations are H\"older continuous as long as
$F$ is $C^{1+\delta}$ for some $\delta >0$.  Since $F$ is
$C^{1+\delta}$ uniformly in both $\|\mathord{\cdot} \|$ and $\|\mathord{\cdot}\|_\star$ metrics, it follows that the
bundles are uniformly H\"older in both metrics.

We obtain the H\"older continuity of the
center-stable and stable bundles for $F_r$ and the corresponding foliations
by thinking of the same splittings as $F_r^{-1}$-invariant.
H\"older regularity of the center bundle and foliation is obtained by
noticing the the center is the intersection of the center-stable and center-unstable.

The absolute continuity with bounded Jacobians of
the unstable foliation inside of the center-unstable foliation is a standard
result, using only partial hyperbolicity, dynamical coherence, the fact that $F$ is
uniformly $C^{1+\delta}$, and the H\"older continuity of the bundles in the partially hyperbolic
splitting.  Similarly, the stable foliation for $F$ is absolutely continuous with bounded
Jacobians when considered as a subfoliation of the center-stable.

The Lipschitz continuity of the stable inside of the center-stable is proved
in Lemma~\ref{l=lipschitz} below.

\medskip

\noindent{\bf Step 2.} \
We now have foliations of $T_yM$, for
each $y\in \cO(p)$.  We obtain the foliations $\hW^u, \hW^c, \hW^s, \hW^{cu}$, and $\hW^{cs}$ by applying the exponential map $\exp_y$ to the corresponding foliations of $T_yM$ inside the ball around the origin of radius ${\bf R}_c(y)$.

If $c$ is sufficiently small, then
the distribution $E^\beta_q$
lies within the angular $\eps/2$-cone about the parallel translate of
$E^\beta_y$, for every $\beta \in \{u,s, c, cu, cs\}$, $y\in \cO^+(p)$, and all $q\in B(y,{\bf R}_c(y))$.
Combining this fact with the preceding discussion, we obtain that property~(1) holds if $c$ is
sufficiently small.

Property~(2) --- local invariance --- follows from invariance under $F_r$ of the foliations of $TM$ and the fact that $\exp_{f(y)}(F(y,v)) = f(\exp_y(y,v))$ provided $\|v\| \leq {\bf R}_c(y)$.

Having chosen $c$, we now choose $c_1$ small enough so that, for all $y\in \cO^{+}(p)$,
$f(B(y,2{\bf R}_{c_1}(y))) \subset B(f(y),{\bf R}_c(y))$ and
$f^{-1}(B(y,2{\bf R}_{c_1}(y))) \subset B(f^{-1}(y),{\bf R}_c(y))$, and so that, for all
$q\in f(B(y,{\bf R}_{c_1}(y)))$,
\begin{align*}
q' \in \hW_p^s(q,{\bf R}_{c_1}(y))) \quad &\Longrightarrow \quad d_\star(f(q),f(q')) \leq \nu(y)\, d_\star(q,q'),\\
q' \in \hW_p^u(q,{\bf R}_{c_1}(y))) \quad &\Longrightarrow \quad d_\star(f^{-1}(q),f^{-1}(q')) \leq \hat\nu(f^{-1}(y))\, d_\star(q,q'),\\
q' \in \hW_p^{cs}(q,{\bf R}_{c_1}(y))) \quad &\Longrightarrow \quad  d_\star(f(q),f(q')) \leq \hat\gamma(y)^{-1} \,d_\star(q,q'),\\
q' \in \hW^{cu}(q,{\bf R}_{c_1}(y)))\quad &\Longrightarrow \quad d_\star(f^{-1}(q),f^{-1}(q')) \leq \gamma(f^{-1}(y))^{-1}\, d_\star(q,q').
\end{align*}
We set ${\bf R} = {\bf R}_c$ and ${\bf r} = {\bf R}_{c_1}$.

Property (3) --- exponential growth bounds at local scales --- is now
proved by a simple inductive argument.

Properties (4)--(7) --- coherence, uniqueness, regularity and regularity of the strong foliation inside weak leaves --- follow immediately from the corresponding properties of the foliations of $TM$ discussed above,
except for the Lipschitz continuity statement, which we now prove:

\begin{lemma}\label{l=lipschitz} The $\hW^s$ holonomy maps between $\hW^c$ manifolds are Lipschitz at $p$.
\end{lemma}

\begin{proof}[Proof of Lemma~\ref{l=lipschitz}]

Fix a function $\rho$ satisfying ${\nu}{\gamma}^{-1} \prec \rho \prec \min\{1,\hat\gamma\}$, and such that
$\kappa< e^{-\epsilon_1}$, where
$$\kappa = \sup_{y\in \cO^+(p)}\max\{({\nu}{\gamma}^{-1}\rho^{-1})(y), \, (\rho\hat\gamma^{-1})(y)\}.$$
Note that this is possible because (\ref{e=eps1choice}) implies that $\sup_{y\in \cO^+(p)} \max\{{\nu}{\gamma}^{-1}(y),  {\nu}\gamma^{-1}\hat\gamma^{-1}(y) \} < e^{-2\epsilon_1}$.
Fix a constant $\lambda \in (\kappa,e^{-\epsilon_1})$.
Observe that
\begin{equation}\label{e=kappa}
\sup_{y\in\cO^+(p)} (\nu\gamma^{-1}\hat\gamma^{-1}(y)) < \kappa,
\end{equation}
since $\rho \prec \min\{1,\hat\gamma\}$.

Since $B(p_{j})^{1/j}\to 1$ as $j\to\infty$, there exists a constant $C>0$ such that
$$\sup_{j\geq 0} B(p_j)(\kappa\lambda^{-1})^j < C.
$$

Let $\theta$ be the H\"older exponent of the partially hyperbolic splitting, in the $\star$-metric,
and let $H$ be the $\theta$-H\"older norm.
Choose $\delta >0 $ and $N>0$ such that:
\begin{itemize}
\item  $H\left((\delta\nu_j(p))^\theta) + (\rho_n\hat\gamma_j(p)^{-1})^\theta \right) < 1/2 - \epsilon$
for all $n\geq N$ and $j=0,\ldots, n$,
\item $\rho_N(p) < \delta/3$, and
\item $1- \lambda - 4\delta C \sup_{y\in \cO^+(p)}\gamma(y) > 0$.
\end{itemize}
Finally, choose $K>2\delta$ satisfying:
$$ K >  \sup_{j\in\N}\frac{8\delta B(p_{j+1})(\kappa\lambda^{-1})^{j+1}}{1- \lambda - 4\delta B(p_{j+1})\kappa^{j+1} \gamma(p_{j+1})},
$$
and let $L=3 + 2K$.

We will show that for each $p'\in\hW^{s}_\star(p,\delta/3)$, and for every
$q\in \hW^c_{loc}(p)$:
$$d_\star(p,q) \leq \rho_N(p) \implies  L^{-1}d_\star(p,q)\leq d_\star(h^s(p), h^s(q)) \leq L^{-1}d_\star(p,q),$$
where $h^s\colon \hW^c_{loc}(p)\to \hW^c(p')$ is the $\hW^s$-holonomy map.
We prove the righthand inequality; the proof of the lefthand inequality is given by switching the roles of
$p$ and $p'$.

Let $p'\in \hW^{s}_\star(p,\delta/3)$ be given, and let $q\in \hW^c_\star(p,\rho_N(p))$.
Denote by $q'$ the image of $q$ under $h^s$ (by definition $h^s(p)=p'$).
Fix $n\geq N$ such that $\rho_{n}(p) \leq  d_\star(p, q) < \rho_{n-1}(p)$.
Note that $d_\star(p,p')<\delta/3<\delta$ and $d_\star(q,q')<\delta$, by the triangle inequality.

\begin{lemma}\label{l=iteratebound} For $j=0,\ldots, n$, we have $\{p_j, p_j', q_j, q_j'\}\subset \cN_{\bf r}$.
Moreover:
\begin{enumerate}
\item $\rho_n(p)\gamma_j(p) \leq d_\star(p_j, q_j) \leq \rho_{n-1}(p) \hat\gamma_j(p)^{-1}$, and
\item $\max\{d_\star(p_j,p_j'), \, d_\star(q_j, q_j')\} < \delta \nu_j(p).$
\end{enumerate}
\end{lemma}
\begin{proof}
The proof is a simple inductive argument using
Part 3 of Proposition~\ref{p=fakenu}.
\end{proof}

We will work in $\|\mathord{\cdot} \|$-exponential coordinates in $\cN_{\bf r}$.
For $j\in \N$ and $x\in B_\star((p_j),{\bf r})$, denote by $\tilde{x}$ the point $\exp_{p_j}^{-1}(x)$.
Note that $\tilde{p_j} = 0$.
Let $v_j = \tilde{q_j} - \tilde{p_j}$, let $v_j' =  \tilde{q_j'} - \tilde{p_j'}$, and let $w_j = v_j'-v_j$.
Lemma~\ref{l=iteratebound} implies that for $j=0,\ldots,n$, we have
$(\rho_n\gamma_j)(p) \leq \|v_j\|_\star  \leq (\rho_{n-1}\hat\gamma_j^{-1})(p)$
and  $\|w_j\|_\star \leq d_\star(p_j,p_j') + d_\star(q_j,q_j') \leq 2\delta \nu_j(p)$.
Let $\pi^c_j:T_{p_j}M\to E^c_{p_j}$ be the linear projection with kernel $\left(E^u\oplus E^s\right)_{p_j}$,
and let $\pi^{us}_j:T_{p_j}M\to  \left(E^u\oplus E^s\right)_{p_j}$ be the linear projection with kernel $ E^c_{p_j}$.

The vectors $v_j$ and $v_j'$ lie in uniform cones about $E^c_{p_j}$ with respect to the splitting
$T_{p_j}M = E^c_{p_j}\oplus \left(E^u\oplus E^s\right)_{p_j}$:
\begin{lemma} For $j=0,\ldots n$, we have $\|\pi^{us}_j(v_j)\|_\star \leq \frac{1}{2}\| v_j \|_\star$,
$\|\pi^{us}_j(v_j')\|_\star \leq \frac{1}{2}\| v_j' \|_\star$,  $\|v_j\|_\star \leq \frac{3}{2} \|\pi^c_j(v_j)\|_\star$
and $\|v_j'\|_\star \leq \frac{3}{2} \|\pi^c_j(v_j')\|_\star$.
\end{lemma}

\begin{proof} $T_{p_j}\hW^c$ and $T_{q_j}\hW^c$ both lie in the $\epsilon$-cone about
$E^c(p_j)$, and the tangent distribution to $\hW^c$ is H\"older continuous.
Hence $\tan\angle_\star(T_{p_j}\hW^c, T_{p_j'}\hW^c) \leq H d_\star(p_j,p_j')^\theta \leq   H(\delta\nu_i(p))^\theta$, and
$\tan\angle_\star(T_{q_j}\hW^c, T_{q_j'}\hW^c) \leq H d_\star(q_j,q_j')^\theta \leq  H(\delta\nu_i(p))^\theta $.
Furthermore $\tan\angle_\star(T_{p_j}\hW^c, T_{q_j}\hW^c) \leq H d_\star(p_j,q_j)^\theta  \leq H (\rho_n(p)\hat\gamma_j(p)^{-1})^\theta$.
This implies that $$\tan\angle_\star(T_{p_j'}\hW^c, T_{q_j'}\hW^c) \leq H\left((\delta\nu_j(p))^\theta) + (\rho_n(p)\hat\gamma_j(p)^{-1})^\theta \right) < 1/2 - \epsilon$$ for $j=0,\ldots, n$,
by our choice of $\delta$.
\end{proof}

Since the points $\{p_j,p_j',q_j, q_j'\}$ all lie in $\cN_{\bf r}$,
in which $F$ coincides with $\tilde{f} = \exp^{-1}\circ f\circ \exp$,
we have that $\tilde{x}_j = F^j(\tilde x)$, for $x\in \{p,p',q,q'\}$.
The Mean Value Theorem implies that $v_{j-1} = \int_{0}^1 D_{\tilde{p_j} + tv_j}F (v_j)\, dt$ and
$v_{j-1}' = \int_{0}^1 D_{\tilde{p_j'} + tv_j'}F(v_j')\, dt$; subtracting
these expressions, we obtain:
$$w_{j-1} = \int_{0}^1\left( D_{\tilde{p_j} + tv_j}F^{-1}(v_j) - D_{\tilde{p_j'} + tv_j'}F^{-1}(v_j') \right)\, dt.$$
and
$$\pi^c_j(w_{j-1}) = \int_{0}^1\pi^c_j\left( D_{\tilde{p_j} + tv_j}F^{-1}(v_j) - D_{\tilde{p_j'} + tv_j'}F^{-1}(v_j') \right)\, dt.$$
Then $\|\pi^c_{j-1}(w_{j-1})\|_\star \leq \mathrm{(I)} + \mathrm{(II)}$ where
\begin{align*}
\mathrm{(I)} &= \int_{0}^1\left\| \pi^c_{j-1} D_{\tilde{p_j} + tv_j}F^{-1}\left(v_j-v_j'\right)\right\|_\star \, dt,
\\
\mathrm{(II)} &= \int_{0}^1\left\| \left(\pi^c_{j-1} D_{\tilde{p_j} + tv_j}F^{-1} - \pi^c_{j-1}D_{\tilde{p_j'} + tv_j'}F^{-1}\right)(v_j') \right\|_\star \, dt.
\end{align*}
We have
$$
\mathrm{(II)} \leq \int_{0}^1 B(p_j)\|v_j-v_j'\|_\star \|v_j'\|_\star \, dt
\leq B(p_j)\|w_j\|_\star\|v_j'\|_\star ,
$$
since $DF^{-1}$ is Lipschitz with norm $B(p_j)$ on $T_{p_j}M$.

We next estimate the expression $\mathrm{(I)}$.
Since $D_{p_j}F^{-1} = D_{p_j}f^{-1}$, which sends the splitting $\left(E^u\oplus E^c \oplus E^s\right)_{p_j}$ to
$\left(E^u\oplus E^c \oplus E^s\right)_{p_{j-1}}$ and has norm on $E^c$ bounded by $\gamma(p_j)^{-1}$, we have that:
$$
\int_{0}^1\left\| \pi^c_{j-1} D_{\tilde{p_j}}F^{-1}\left(w_j\right)\right\|_\star \, dt
\leq \gamma(p_j)^{-1}\|\pi^c_{j} w_{j}\|_\star.
$$
Hence
\begin{align*}
\mathrm{(I)}  &= \int_{0}^1\left\| \pi^c_{j-1} D_{\tilde{p_j} + tv_j}F^{-1}\left(w_j\right)\right\|_\star \, dt \\
&\leq \int_{0}^1\left\| \pi^c_{j-1} \left(D_{\tilde{p_j}}F^{-1} - D_{\tilde{p_j} + tv_j}F^{-1}\right) \left(w_j\right)\right\|_\star \, dt + \int_{0}^1\left\| \pi^c_{j-1} D_{\tilde{p_j}}F^{-1}\left(w_j\right)\right\|_\star \, dt  \\
&\leq \int_{0}^1\left\| \pi^c_{j-1} \left(D_{\tilde{p_j}}F^{-1} - D_{\tilde{p_j} + tv_j}F^{-1}\right) \left(w_j\right)\right\|_\star \, dt +
\gamma(p_j)^{-1}\|\pi^c_{j} w_{j}\|_\star\\
&\leq B(p_j)\|v_j\|_\star \|w_j\|_\star +
\gamma(p_j)^{-1}\|\pi^c_{j} w_{j}\|_\star,
\end{align*}
again using the Lipschitz continuity of $DF^{-1}$.  We conclude that
\begin{equation}\label{e=iterwj}
\begin{aligned}
\|\pi^c_{j-1}(w_{j-1})\|_\star &\leq
\gamma(p_j)^{-1}\|\pi^c_{j} w_{j}\|_\star + B(p_j)\left( \|v_j\|_\star \|w_j\|_\star +\|w_j\|_\star\|v_j'\|_\star\right) \\
&\leq \gamma(p_j)^{-1}\|\pi^c_{j} w_{j}\|_\star + 2\delta B(p_j)\nu_j(p)\left( \|v_j\|_\star + \|v_j'\|_\star\right),
\end{aligned}
\end{equation}
using the bound $\|w_j\|_\star  \leq 2\delta \nu_j(p)$.

\begin{claim} For $j = 0,\ldots,n$, we have $\|\pi^c_j w_j\|_\star \leq K \lambda^j \gamma_j(p) \|v_0\|_\star $
and $  \|v_j'\|_\star \leq (3 + 2K\lambda^j) \|v_j\|_\star$
\end{claim}

\begin{proof}
We prove it by backwards induction on $n$.
The base case is $j=n$. Observe that:
$$
\|\pi^c_n w_n\|_\star \leq  \|w_n\|_\star \leq 2\delta \nu_n(p) = 2\delta \frac{\nu_n(p)}{(\rho\gamma)_n(p)} \gamma_n(p)\rho_n(p)  < 2 \delta \lambda^n \gamma_n(p) \|v_0\|_\star < K\lambda^n \gamma_n(p) \|v_0\|_\star.
$$
Since $\|w_n\|\ast \leq 2\delta \nu_n(p) \leq 2\delta \lambda^n (\rho\gamma)_n(p) \leq  2\delta \lambda^n \|v_n\|_\star$,
we also obtain that
$$
\|v_n'\|_\star \leq  \|v_n\|_\star +  \|v_n- v_n'\|_\star
=  \|v_n\|_\star + \|w_n\|_\star \leq  \|v_n\|_\star(1 +  2\delta \lambda^n) \leq  \|v_n\|_\star(3 +  2K\lambda^n).
$$

Now suppose that the claim holds for some $(j+1)\leq n$. Then, by (\ref{e=iterwj}):
\begin{align*}
\|\pi^c_{j}(w_{j})\|_\star &\leq  \gamma(p_{j+1})^{-1}\|\pi^c_{j+1} w_{j+1}\|_\star + 2\delta B(p_{j+1})\nu_{j+1}(p)\left( \|v_{j+1}\|_\star + \|v_{j+1}'\|_\star\right)\\
&\leq \gamma(p_{j+1})^{-1} K \lambda^{j+1} \gamma_{j+1}(p) \|v_0\|_\star + 2\delta B(p_{j+1})\nu_{j+1}(p)(4 + 2K\lambda^{j+1} )\|v_{j+1}\|_\star\\
&\leq K \lambda^{j+1} \gamma_{j}(p) \|v_0\|_\star + 2\delta B(p_{j+1})\|v_{0}\|_\star(\nu\hat\gamma^{-1})_{j+1}(p)(4 + 2K\lambda^{j+1} )\\
&\leq K \eta \lambda^{j} \gamma_{j}(p)\|v_0\|_\star,
\end{align*}
where
\begin{align*}
\eta &= \lambda + 8\delta B(p_{j+1}) \frac{(\nu\gamma^{-1}\hat\gamma^{-1})_{j+1}(p)}{K\lambda^{j+1}} \gamma(p_{j+1})
+4\delta B(p_{j+1})(\nu\gamma^{-1}\hat\gamma^{-1})_{j+1}(p) \gamma(p_{j+1})\\
&\leq \lambda + 8\delta B(p_{j+1}) \frac{\kappa^{j+1}}{K\lambda^{j+1}} \gamma(p_{j+1}) +4\delta B(p_{j+1})\kappa^{j+1} \gamma(p_{j+1}),
\end{align*}
by (\ref{e=kappa}).
Then $\eta <1$, since
$$ K >  \frac{8\delta B(p_{j+1})(\kappa\lambda^{-1})^{j+1}}{1- \lambda - 4\delta B(p_{j+1})\kappa^{j+1} \gamma(p_{j+1})}.
$$
This implies that $\|\pi^c_{j}(w_{j})\|_\star \leq K \lambda^{j} \gamma_{j}(p)\|v_0\|_\star$, completing the
inductive step for the first assertion of the claim.

Finally, to prove the inductive step for the second part of the claim, we have:
\begin{align*}
\|v_j'\|_\star &\leq  \|v_j\|_\star +  \|v_j- v_j'\|_\star\\
&\leq  \|v_j\|_\star +  \|\pi^c_j (v_j- v_j')\|_\star +\|\pi^{us}_j (v_j- v_j')\|_\star\\
&\leq  \|v_j\|_\star +  \|\pi^c_j (w_j)\|_\star +\|\pi^{us}_j (v_j)\|_\star +\|\pi^{us}_j(v_j')\|_\star\\
&\leq  \|v_j\|_\star + K \lambda^j \gamma_j(p) \|v_0\|_\star+ .5\|v_j\|_\star + .5\|v_j'\|_\star\\
&\leq  \|v_j\|_\star + K \lambda^j \|v_j\|_\star + .5\|v_j\|_\star + .5 \|v_j'\|_\star;\\
\end{align*}
Solving for $\|v_j'\|_\star$, we obtain that $\|v_j'\|_\star \leq (3 + 2K \lambda^j) \|v_j\|_\star$, as desired.
\end{proof}

The claim finishes the proof of Lemma~\ref{l=lipschitz}; setting, $j=0$ we see
that
$$d_\star(h^s(p),h^s(q)) = \|v_0'\|_\star \leq (3+2 K)\|v_0\|_\star = L d_\star(p,q).$$
\end{proof}
\end{proof}

Given this proposition, the proof now proceeds  as the proof of Theorem~5.1 in \cite{Burns Wilk},
with a few modifications, which we will describe in the sequel.

\subsection{Distortion Estimates in Thin Neighborhoods} 

Fix $p\in M$ satisfying the bunching hypotheses of Theorem~\ref{t.nonunifbw}.
Henceforth the entire analysis will take place in a neighborhood of the forward orbit of $p$.

We choose $\epsilon>0$:
\begin{itemize}
\item much smaller than $\pi/2$, which is the $\star$-angle between the bundles
of the partially hyperbolic splitting over $\cO^+(p)$.
\item small enough that
\begin{equation}\label{e=epschoice}
e^{-\epsilon} > \sup_{y\in\cO^+(p)} \max\left\{ \nu(y),\, \hat\nu(y),\, \frac{\nu(y)}{\gamma(y)},\, \frac{\hat\nu(y)}{\hat\gamma(y)}, \, \frac{\nu(y)}{\gamma\hat\gamma(y)} \right\}.
\end{equation}
\end{itemize}

Let ${\bf r}$, ${\bf R}\colon \cO^+(p)\to \R_+$ and foliations $\hW^u$, $\hW^s$, $\hW^c$, $\hW^{cu}$ and $\hW^{cs}$ be given by Proposition~\ref{p=fakenu}, using this value of $\epsilon$.  By uniformly
rescaling the $\|\mathord{\cdot}\|_\star$ metric on $\cN_{\bf R}$, we may assume that
$$\inf_{y\in\cO^+(p)} {\bf r}(y) \gg 1.$$ We may also assume that if $x,y\in B_\star(p_j,{\bf r})$, then
$\hW^{cs}(x) \cap \hW^u(y)$,
$\hW^{cs}(x) \cap \cW^u_{loc}(y)$, $\hW^{cu}(x) \cap \hW^s(y)$
and  $\hW^{cu}(x) \cap \cW^s_{loc}(y)$
are single points. We denote by $\hm_a$ the measure $m_{\hW^a}$ induced by the volume form $\|\mathord{\cdot}\|$.

We next choose functions $\sigma, \tau\colon \cO^+(p)\to \R_+$ satisfying
\begin{equation}\label{e=sigmatauchoice}
\sigma \prec \min\{1, \hat\gamma\},\quad\hbox{ and }
\qquad \nu \prec \tau  \prec \sigma\gamma,
\end{equation}
and such that $\kappa = \sup_{y\in\cO^+(p)}\sigma\hat\gamma^{-1}(y) < e^{-\epsilon}$ (this
is possible because of (\ref{e=epschoice})).
Note that these inequalities  also imply that
$$\tau\hat\nu \prec \sigma\gamma\hat\nu \prec \sigma\gamma\hat\gamma \le \sigma.$$
For the rest of the proof, except where we indicate otherwise, cocycles will
be evaluated at the point $p$.  We will also drop the dependence on
$p$ from the notation; thus, if $\alpha$ is a cocycle, then
$\alpha_n(p)$ will be abbreviated to $\alpha_n$.

Using these functions and the fake foliations,
we next define a sequence of thin neighborhoods $T_{n}$ of $\cW^s_\star(p,1)$.
We first define a neighborhood $S_{n}$ in $\hW^{cs}(p)$ by:
$$S_{n} = \bigcup_{x\in \cW^s_\star(p,1)} \hW^c_\star(x,\sigma_n),$$
and then define the neighborhood $T_n$ by:
\begin{equation}\label{e=Tn}
T_{n} = f^{-n}\left(\bigcup_{z\in f^n(S_{n})}\hW^u_\star(z,\tau_n)\cup\cW^u_\star(z,\tau_n)\right).
\end{equation}

\begin{lemma}[c.f.\ \cite{Burns Wilk}, Lemma 4.3] \label{l=kappa}
The set $T_n$ is well-defined. There exists $C>0$ and $0<\kappa<1$ such that, for every $n\geq 0$,
$$f^j(T_{n}) \subset B_\star(p_j, C\kappa^j),$$
for $j=0,\ldots, n$.
\end{lemma}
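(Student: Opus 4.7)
The plan is to unpack the definition of $T_n$, track three separate contributions to the deviation of $f^j(q)$ from $p_j$, and then sum them. Fix $q \in T_n$. By construction there exist $x_0 \in \cW^s_\star(p,1)$, a point $y_0 \in \hW^c_\star(x_0, \sigma_n) \subseteq S_n$, and a point $z_n := f^n(q) \in \hW^u_\star(f^n(y_0), \tau_n) \cup \cW^u_\star(f^n(y_0), \tau_n)$. Write $x_j := f^j(x_0)$ and $y_j := f^j(y_0)$ for $0 \leq j \leq n$. The triangle inequality gives
$$
d_\star(p_j, f^j(q)) \;\leq\; d_\star(p_j, x_j) \,+\, d_\star(x_j, y_j) \,+\, d_\star(y_j, f^j(q)),
$$
so it suffices to bound each of the three summands by a constant multiple of $\kappa^j$, with $\kappa < 1$ independent of $n$, $j$, and the choice of $q$.

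First, I will apply Proposition~\ref{p=fakenu}(3)(a) to the $\hW^s$-pair $(p, x_0)$ (with base $y = p$) to obtain $d_\star(p_j, x_j) \leq \nu_j\, d_\star(p, x_0) \leq \nu_j$ for $j = 0, \ldots, n$; since $\nu \prec 1$ uniformly, this is bounded by $\kappa_1^{\,j}$ for some $\kappa_1 < 1$, and simultaneously certifies the hypothesis that $x_j$ stays inside $B_\star(p_j, {\bf r}(p_j))$, needed in the next step. Second, the $\hW^{cs}$-part of Proposition~\ref{p=fakenu}(3)(a) applied to $(x_0, y_0)$ (again with base $y = p$) yields $d_\star(x_j, y_j) \leq \hat\gamma_j^{-1}\,\sigma_n$, modulo the bootstrap that $y_j \in \hW^{cs}(x_j, {\bf r}(p_j))$, which will follow inductively in $j$ from the very bound being proved. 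Factoring the cocycles,
$$
\hat\gamma_j^{-1}\sigma_n \;=\; \prod_{i=0}^{j-1} \frac{\sigma(p_i)}{\hat\gamma(p_i)} \,\cdot\, \prod_{i=j}^{n-1} \sigma(p_i) \;\leq\; \kappa_2^{\,j},
$$
because both $\sigma/\hat\gamma \prec 1$ and $\sigma \prec 1$ by \eqref{e=sigmatauchoice}, so each factor is at most some constant strictly less than $1$.

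For the third term, I will invoke Proposition~\ref{p=fakenu}(3)(b) with base $y = p_j$, over $n - j$ backward iterations, on the pair $(y_n, z_n)$; both points lie close to $p_n$ thanks to the first two estimates at $j = n$. This produces
$$
d_\star(y_j, f^j(q)) \;\leq\; \hat\nu_{n-j}(p_j)\,\tau_n \;=\; \tau_j \cdot (\tau\hat\nu)_{n-j}(p_j) \;\leq\; \tau_j \;\leq\; \kappa_3^{\,j},
$$
where the rewriting uses $\tau_n = \tau_j \cdot \tau_{n-j}(p_j)$, the inequality $\tau\hat\nu \prec \sigma \prec 1$ recorded in the text preceding \eqref{e=Tn}, and the fact that $\tau \prec \sigma\gamma \leq \sigma\hat\gamma^{-1} \prec 1$ (using $\gamma\hat\gamma \leq 1$ and $\sigma \prec \hat\gamma$). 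Setting $\kappa := \max\{\kappa_1, \kappa_2, \kappa_3\} < 1$ and summing, the required bound $d_\star(p_j, f^j(q)) \leq C\kappa^j$ follows. The well-definedness of $T_n$ drops out as a byproduct: evaluating the first two estimates at $j = n$ gives $d_\star(f^n(y_0), p_n) \leq \nu_n + \hat\gamma_n^{-1}\sigma_n$, which is far smaller than ${\bf r}(p_n)$, so each ball $\hW^u_\star(f^n(y_0), \tau_n) \cup \cW^u_\star(f^n(y_0), \tau_n)$ lies inside a genuine foliation box around $p_n$.

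The one technical subtlety that needs care --- and which I expect to be the heart of the argument --- is the simultaneous bootstrapping of the three estimates, since the hypotheses of Proposition~\ref{p=fakenu}(3)(a) and (3)(b) require the intermediate iterates to remain in an ${\bf r}$-neighborhood of $\cO^+(p)$. I plan to handle this by an outer induction on $j$ (and, for the backward estimate, on $n-j$), exploiting the fact that all the right-hand sides shrink geometrically and never leave the ${\bf r}$-tubular neighborhood once they enter it. With the inductive step in place, the remainder of the proof is a straightforward manipulation of the cocycles $\nu$, $\sigma$, $\hat\gamma$, $\tau$, $\hat\nu$ using the inequalities gathered in \eqref{e=sigmatauchoice} together with the bound $\sigma\hat\gamma^{-1} < e^{-\epsilon}$.
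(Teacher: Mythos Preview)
Your proposal is correct and follows essentially the same approach as the paper. The paper organizes the argument slightly differently---first bounding $f^j(S_n)$ (combining your first two terms into a single statement \eqref{e=sliceest}), then verifying well-definedness of $T_n$, and finally adding the backward-contracted unstable piece---but the underlying decomposition into stable, center, and unstable contributions, the invocation of Proposition~\ref{p=fakenu}(3)(a) and (3)(b), and the cocycle manipulations using \eqref{e=sigmatauchoice} are identical; the paper also makes explicit that the resulting $\kappa$ satisfies $\kappa<e^{-\epsilon}$, which is exactly what is needed to close your bootstrap against the decay rate of ${\bf r}$.
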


\begin{proof} Suppose first that $x\in \cW^s_\star(p,1)$
and  $y\in \hW^c(x,\sigma_n)$.  By part 3(a) of Proposition~\ref{p=fakenu}, we then have
$$y_j\in \hW^c_\star(x_j, \hat\gamma_{j}^{-1}\sigma_{n})
\subset \hW^c_\star(x_j, 1) \subset B_\star(p_j,2),$$
for $0\leq j\leq n$.  In fact, since
$\sigma \prec \min\{\hat\gamma, 1\}$,
the quantity $\hat\gamma_{j}^{-1}\sigma_{n} <
\hat\gamma_{j}^{-1}\sigma_{j} \leq \kappa^j$ is exponentially small in $j$,
as is the $\star$-diameter of $f^j(\cW^s(p,1))$. This implies
that for some $C>0$ and for every $n\geq 0$,
\begin{equation}\label{e=sliceest}
f^j(S_{n}) \subset B_\star(p_j, C\kappa^j),
\quad\text{for $j=0,\ldots, n$.}
\end{equation}

For every $x\in S_{n}$, we have that $B_\star(x_n, \tau_n) \subset B_\star(p_n,{\bf r}(p_n))$, and
so the set $T_n$ is well-defined by \eqref{e=Tn}.
Proposition~\ref{p=fakenu} 
implies
that the leaves of $\hW^u_{p_j}$ and $\cW^u_{loc}$ are uniformly contracted
by $f^{-1}$ as long as they stay near the orbit of $p$.
Because $\kappa<e^{-\epsilon}$, the image of $f^n(T_n)$, for $n$ sufficiently
large, remains in the neighborhood  $\cN_{\bf r}$ of $\cO^+(p)$
in which the fake foliations are defined and the expansion and contraction estimates hold.

Combining these facts with~\eqref{e=sliceest}, we obtain the conclusion.
\end{proof}

\begin{lemma}[c.f.\ \cite{Burns Wilk}, Lemma 4.4]\label{l=constalpha}
Let $\alpha:M\to \R$ be a positive, uniformly H\"older continuous function.
Then there is a constant $C\geq 1$ such that,
for all $n\geq0$ and all $x, y\in T_{n}$,
$$C^{-1}\leq \frac{\alpha_n(y)}{\alpha_n(x)} \leq C.$$
\end{lemma}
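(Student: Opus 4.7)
The plan is to telescope the cocycle $\alpha_n$ and use the exponential shrinking of the iterated thin neighborhood provided by the preceding lemma. First, for any $x,y\in T_n$ and any $0\le j\le n$, Lemma~\ref{l=kappa} places both $f^j(x)$ and $f^j(y)$ in the $\star$-ball $B_\star(p_j,C\kappa^j)$, so
$$d_\star(f^j(x),f^j(y)) \le 2C\kappa^j.$$
Since $\|\cdot\|\le\|\cdot\|_\star$ by \eqref{e=metriccompare}, the same bound holds in the original Riemannian metric, giving $d(f^j(x),f^j(y)) \le 2C\kappa^j$.

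Next, I would observe that since $\alpha$ is positive and uniformly Hölder continuous on the compact manifold $M$, the function $\log\alpha$ is itself bounded and uniformly Hölder; fix constants $K>0$ and $\theta\in(0,1]$ with
$$|\log\alpha(z)-\log\alpha(w)|\le K\,d(z,w)^\theta\qquad\text{for all }z,w\in M.$$
Telescoping along the orbit and applying the distance estimate gives
$$\left|\log\frac{\alpha_n(y)}{\alpha_n(x)}\right|
\le \sum_{j=0}^{n-1}|\log\alpha(f^j(y))-\log\alpha(f^j(x))|
\le K(2C)^\theta\sum_{j=0}^{\infty}\kappa^{\theta j}
= \frac{K(2C)^\theta}{1-\kappa^\theta},$$
which is independent of $n$. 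Exponentiating yields the desired uniform constant.

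There is no real obstacle here: the lemma is a routine bounded-distortion estimate once the main geometric work---the uniform exponential contraction of the iterated thin neighborhood $T_n$ onto the orbit of $p$---has been absorbed into Lemma~\ref{l=kappa}. In particular, the subexponential conversion factor $B(p_j)$ between the $\star$-metric and the original metric never intervenes, because only the one-sided comparison $d\le d_\star$ is needed to transfer the distance bound from the adapted metric (in which the fake-foliation contractions are stated) to the original metric (in which $\alpha$ is assumed to be Hölder).
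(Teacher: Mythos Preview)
Your proof is correct and follows essentially the same approach as the paper: use Lemma~\ref{l=kappa} to get exponential smallness of $f^j(T_n)$ in the $d_\star$-metric, transfer to the original metric via the one-sided comparison, and then apply the standard telescoping bounded-distortion estimate. The paper packages the last step as a separate elementary lemma (Lemma~\ref{l=dist}) rather than writing out the geometric sum, but the content is identical.
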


\begin{proof} Since $d\leq K^{-1}d_\star$, Lemma~\ref{l=kappa} implies that the diameter of $f^j(T_n)$ remains exponentially small in the $d$ metric, for $j=0,\ldots,n$.  Since $f$ is $C^{1+\delta}$, the lemma follows from
the following elementary distortion estimate:

\begin{lemma}[\cite{Burns Wilk}, Lemma 4.1]\label{l=dist}
Let $\alpha:M\to \R$ be a positive H\"older continuous function, with exponent $\theta>0$.  Then there exists a constant $H>0$ such that the following holds, for all $p,q\in M$, $B>0$ and $n\geq 1$:
$$\sum_{i=0}^{n-1} d(p_i, q_i)^\theta \leq B\quad\Longrightarrow\quad e^{-{H}B} \leq \frac{\alpha_n(p)}{\alpha_n(q)} \leq e^{{H}B},$$
and
$$\sum_{i=1}^{n} d(p_{-i}, q_{-i})^\theta \leq B \quad\Longrightarrow\quad e^{-{H}B} \leq \frac{\alpha_{-n}(p)}{\alpha_{-n}(q)} \leq e^{{H}B}.$$
\end{lemma}
\end{proof}

\subsection{Juliennes}

The next step is to define juliennes. For each $x\in \cW^s_\star(p,1)$
one defines a sequence $\{\hJ^{cu}_n(x)\}_{n\geq 0}$ of center-unstable juliennes, which
lie in the fake center-unstable manifold $\hW^{cu}(x)$ and shrink exponentially
as $n \to \infty$ while becoming increasingly thin in the $\hW^u$-direction.

Define, for all $x\in \cW^s(p,1)$,$$
\hB^c_n(x) = \hW^c_\star(x,\sigma_n).
$$
Note that
$$S_n = \bigcup_{x \in \cW^s(p,1)} \hB^c_{n}(x).$$
For $y\in S_n$, we may then define two types of {\em unstable juliennes}:
$$
\hJ^u_n(y) = f^{-n}(\hW^u_\star(y_n,\tau_n))
$$
and
$$
J^u_n(y) = f^{-n}(\cW^u_\star(y_n,\tau_n)).
$$
Observe that for all $y\in S_n$, the sets
$\hJ^u_n(y)$ and $J^u_n(y)$ are contained in $T_n$.

For each $x\in  \cW^s(p,1)$ and $n\geq 0$, we then
define the {\em center-unstable julienne} centered at $x$ of order $n$:
$$
\hJ^{cu}_n(x) = \bigcup_{q \in \hB^c_n(x)} \hJ^u_n(q).
$$
Note that, by their construction, the sets
$\hJ^{cu}_n(x)$ are contained in $T_n$, for all $n\geq 0$ and
$x\in \cW^s(p,1)$.

The crucial properties of center unstable juliennes are summarized in
the next three Propositions.  We state them in a slightly
more general form than we will need for the proof of
Theorem~\ref{t.nonunifbw}; the more general formulation
will be used in the proof of Theorem~\ref{t.nonunifjimmy}.

\begin{prop}[c.f.\ \cite{Burns Wilk}, Proposition 5.3]\label{p=inclusion}
Let $x,x'\in \cW^s(p,1)$, and let
$h^s: \hW^{cu}(x) \to \hW^{cu}(x')$  be the holonomy map
induced by the stable foliation
$\cW^s$.
Then the sequences $h^s(\hJ^{cu}_n(x))$ and $\hJ^{cu}_{n}(x')$
are internested.
\end{prop}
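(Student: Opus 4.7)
The plan is to mimic the proof of Proposition~5.3 in Burns--Wilkinson, translating the entire configuration forward by $f^n$. Since $x,x'\in\cW^s(p,1)$, part~3(a) of Proposition~\ref{p=fakenu} gives $d_\star(f^n(x),f^n(x'))\le C\nu_n$, and the conjugated holonomy $f^n\circ h^s\circ f^{-n}$ is the $\cW^s$-holonomy between $f^n(\hW^{cu}(x))$ and $f^n(\hW^{cu}(x'))$; this moves points a $\star$-distance of order $\nu_n$. By part~7 of Proposition~\ref{p=fakenu} (applied after sliding along $\hW^u$, which is absolutely continuous with bounded Jacobians inside $\hW^{cu}$), this holonomy is uniformly bi-Lipschitz.

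Next I would describe the shape of the iterated juliennes. By local invariance and the growth estimates, $f^n(\hB^c_n(x))$ is a center disk of $\star$-radius bounded below by $\gamma_n\sigma_n$ and above by $\hat\gamma_n^{-1}\sigma_n$, and $f^n(\hJ^u_n(q))=\hW^u_\star(f^n(q),\tau_n)$. Using coherence (part~4 of Proposition~\ref{p=fakenu}), the set $f^n(\hJ^{cu}_n(x))$ has the local product structure of a union of $\hW^u$-disks of radius $\tau_n$ over a $\hW^c$-disk of center radius $\asymp \gamma_n\sigma_n$, with an analogous description for $x'$, centered at $f^n(x')$.

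The key quantitative comparison is that the choices $\nu\prec \tau$ and $\nu\prec \sigma\gamma$ in~\eqref{e=sigmatauchoice} produce a uniform $\lambda<1$ with $\nu_n/\tau_n\le \lambda^n$ and $\nu_n/(\gamma_n\sigma_n)\le \lambda^n$. Consequently the stable displacement $C\nu_n$ separating $f^n(x)$ from $f^n(x')$ is exponentially smaller than both the unstable radius $\tau_n$ and the center radius $\gamma_n\sigma_n$ of the box. Since the sequences $\tau_n$ and $\sigma_n$ satisfy $\tau_{n+1}/\tau_n$ and $\sigma_{n+1}/\sigma_n$ bounded from below by a positive constant, a single fixed integer $k$ (independent of $n$) absorbs the holonomy displacement: the $\hW^s$-image of the box around $f^{n+k}(x)$ fits inside the box around $f^n(x')$. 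Pulling back by $f^{-n}$ using local invariance of the fake foliations yields the inclusion $h^s(\hJ^{cu}_{n+k}(x))\subset \hJ^{cu}_n(x')$. The reverse inclusion is symmetric; this is exactly the definition of internesting.

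The main obstacle will be executing the box comparison inside $\hW^{cu}$ rigorously in the nonuniform setting: the constants coming from the Lipschitz control of the product structure, the distortion estimate (Lemma~\ref{l=constalpha}), and the holonomy regularity from part~7 of Proposition~\ref{p=fakenu} must all be shown to be uniform in $n$, despite the subexponential growth of $B(p_j)$. Concretely one needs to verify that the $\hW^s$-holonomy between $f^n(\hW^{cu}(x))$ and $f^n(\hW^{cu}(x'))$ has a Lipschitz constant independent of $n$ on balls of radius comparable to $\tau_n$; this is where one uses that $p\in\mathit{CB}^+$ so that the subexponential corrections are swallowed by the exponential gap produced by $\nu\prec \tau$ and $\nu\prec \sigma\gamma$.
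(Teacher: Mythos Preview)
Your proposal correctly identifies the key inequalities $\nu\prec\tau$ and $\tau\prec\sigma\gamma$, but the route of iterating the entire julienne forward and comparing boxes at time $n$ has a genuine gap in the center direction. The set $f^n(\hB^c_n(x))$ is not a round $\star$-disk; its ``radius'' ranges between $\gamma_n\sigma_n$ and $\hat\gamma_n^{-1}\sigma_n$, and these are not comparable (their ratio $(\gamma_n\hat\gamma_n)^{-1}$ can grow, since only $\gamma\hat\gamma\le 1$ is assumed). Knowing that the $\cW^s$-holonomy displaces points by $O(\nu_n)$ at time $n$ does not let you conclude that this distorted center set fits inside $f^n(\hB^c_{n-k}(x'))$, which is equally distorted in an a priori unrelated way. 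To compare the two you would need Lipschitz control of the $\hW^s$-holonomy between $\hW^c$-leaves at the point $p_n$, but part~7 of Proposition~\ref{p=fakenu} establishes this only at $p$; this is precisely where center bunching enters, via Lemma~\ref{l=lipschitz}, and that argument iterates forward from $p$, so it does not transfer to $p_n$ with a uniform constant. You flag this as ``the main obstacle,'' but it is not a technicality that the subexponential growth of $B(p_j)$ will absorb --- it is the heart of the matter.

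The paper's proof separates the two directions. For the center base it works at time $0$ (Lemma~\ref{l=centhol}): both $\hB^c_n(x)$ and $\hB^c_{n-k_1}(x')$ are round $\star$-disks of radii $\sigma_n$ and $\sigma_{n-k_1}$, and the $\hW^s$-holonomy is $L$-bi-Lipschitz at $x$ by part~7, giving $\hat h^s(\hB^c_n(x))\subset\hW^c_\star(x',L\sigma_n)\subset\hB^c_{n-k_1}(x')$. For the unstable fibers it works at time $n$ (Lemma~\ref{l=holon}), much as you outline for the $\hW^u$ piece: with $q'=\hat h^s(q)$ already fixed in $S_n$, one projects the $\cW^s$-holonomy image $y'$ along $\hW^u$ onto $\hW^c(q')$, obtains $d_\star(y'_n,z'_n)=O(\tau_n)$ from transversality and $\nu_n\prec\tau_n$, and then pulls the center landing point back along $\hW^c$ to get $d_\star(q',z')=O(\gamma_n^{-1}\tau_n)=o(\sigma_n)$. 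The two lemmas combine to give the inclusion. Your approach can be salvaged only by making this same separation.
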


\begin{prop}[c.f.\ \cite{Burns Wilk}, Proposition 5.4]\label{p=julmeas}  There exist $\delta > 0$ and
$c \geq 1$ such that, for all $x\in \cW^s(p,1)$, and all $q,q'\in S_n$,
the following hold, for all $n\geq 0$:
$$
c^{-1}\leq \frac{\hm_{u}(\hJ^u_n(q))}{\hm_{u}(\hJ^u_n(q'))}\leq c,
$$
$$
c^{-1}\leq \frac{m_{u}(J^u_n(q))}{m_{u}(J^u_n(q'))}\leq c,
$$
$${\hm_{u}(\hJ^{u}_{n+1}(q))}\geq \delta {\hm_{u}(\hJ^{u}_{n}(q))},$$
and
$${\hm_{cu}(\hJ^{cu}_{n+1}(x))}\geq \delta {\hm_{cu}(\hJ^{cu}_{n}(x))}.$$
\end{prop}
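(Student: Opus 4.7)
The proof splits naturally into the four assertions, and my plan is to handle them in order, each time reducing to an elementary volume computation via a bounded-distortion estimate supplied by Lemma~\ref{l=constalpha}. Throughout, I will use that $\hJ^u_n(q)\subset T_n$ (so the backward orbit $q,q_{-1},\dots,q_{-n}$ stays in $T_n$), which by Lemma~\ref{l=kappa} gives $d$-distance at most $C\kappa^j$ between any two such points at time $n-j$, so that H\"older distortion sums are uniformly bounded.

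For assertion~(a), change variables: $\hm_u(\hJ^u_n(q))=\int_{\hW^u_\star(q_n,\tau_n)}|\det(Df^{-n}|_{E^u})|\,d\hm_u$. Applying Lemma~\ref{l=constalpha} to $\alpha(y)=|\det(Df|_{E^u})(y)|^{-1}$ (which is uniformly H\"older since $f$ is $C^2$ and $E^u$ is H\"older) yields $\hm_u(\hJ^u_n(q))\asymp|\det(Df^{-n}|_{E^u})(q)|\cdot\hm_u(\hW^u_\star(q_n,\tau_n))$, with multiplicative constants independent of $n$ and $q\in S_n$. Since $q_n,q_n'$ both lie in the fixed compact $d$-neighborhood $f^n(T_n)$ (hence the two local $\star$-metrics on the transverse tangent spaces to $\hW^u$ are comparable), the two unstable $\star$-balls have comparable $\hm_u$-volumes, and dividing the two expressions gives (a). Assertion~(b) is proved identically, using that $\cW^u$ is also $C^1$, almost tangent to $E^u$, so the two foliations give comparable transverse volumes.

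For assertion~(c), I first verify the inclusion $\hJ^u_{n+1}(q)\subseteq\hJ^u_n(q)$. Pulling back by $f^n$ this reduces to showing $f^{-1}\bigl(\hW^u_\star(q_{n+1},\tau_{n+1})\bigr)\subseteq\hW^u_\star(q_n,\tau_n)$, which holds by part (3)(b) of Proposition~\ref{p=fakenu} together with $\hat\nu\,\tau\prec 1$ (this follows from $\hat\nu\prec 1$ and the chosen bound $\tau\prec\sigma\gamma$, recalling that $\sigma\gamma\preceq 1$ after rescaling). Once the inclusion holds, the same change-of-variables as in (a) gives
\[
\frac{\hm_u(\hJ^u_{n+1}(q))}{\hm_u(\hJ^u_n(q))}\asymp |\det(Df^{-1}|_{E^u})(q_{n+1})|\cdot\frac{\hm_u(\hW^u_\star(q_{n+1},\tau_{n+1}))}{\hm_u(\hW^u_\star(q_n,\tau_n))}.
\]
The Jacobian factor is bounded below by a global constant (since $\|Df|_{E^u}\|$ is bounded above on $M$). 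The volume ratio is controlled by the fact that $\tau_{n+1}/\tau_n=\tau(p_n)$ is bounded below (because $\tau\succ\nu$ and $\nu$ is bounded below by partial hyperbolicity and compactness), giving the uniform lower bound $\delta>0$.

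For assertion~(d), I write $\hJ^{cu}_n(x)$ as a disjoint union of the $\hW^u$-plaques $\hJ^u_n(q)$ over $q\in\hB^c_n(x)=\hW^c_\star(x,\sigma_n)$, and use absolute continuity of $\hW^u$ inside $\hW^{cu}$ (part (7) of Proposition~\ref{p=fakenu}) together with the $c$-uniform fibers (assertion (a) just proved) to reduce $\hm_{cu}(\hJ^{cu}_n(x))/\hm_{cu}(\hJ^{cu}_{n+1}(x))$ to the product of $\hm_c(\hB^c_n(x))/\hm_c(\hB^c_{n+1}(x))$ and $\hm_u(\hJ^u_n(q))/\hm_u(\hJ^u_{n+1}(q))$ up to bounded error. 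The second factor is bounded below by assertion (c), and the first by the same Jacobian-distortion argument applied in the center direction: $\sigma_{n+1}/\sigma_n=\sigma(p_n)$ is bounded below, and $|\det(Df|_{E^c})|$ is bounded above on $M$. The main obstacle I anticipate is bookkeeping the interaction between the $d_\star$-balls (in which juliennes are defined) and the $\hm_u,\hm_{cu}$-volumes (which are with respect to the original Riemannian metric); this is handled by observing that $T_n$ sits inside the fixed ball $B(p,R_0)$, on which $d_\star$ and $d$ are uniformly comparable, so all $B(p_j)$-factors encountered at later iterates disappear once one pulls back to $T_n$ itself.
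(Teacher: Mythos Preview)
Your overall strategy matches the paper's: use the H\"older distortion estimate of Lemma~\ref{l=constalpha} to reduce the julienne volume ratios to ratios of the forward images $\hW^u_\star(q_n,\tau_n)$, and then handle assertion~(d) via the fibered structure, $c$-uniformity, and Proposition~\ref{p=unifreg}. Assertions~(c) and~(d) are essentially fine as you have them. But there is a real gap in your treatment of assertion~(a) (and hence~(b)).

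You assert that since $q_n,q_n'$ lie close together, ``the two unstable $\star$-balls have comparable $\hm_u$-volumes'', and you justify this in the last paragraph by saying that everything pulls back to $T_n\subset B(p,R_0)$, where $d_\star$ and $d$ are uniformly comparable. The pull-back indeed controls the Jacobian factor, but the \emph{shape} of the set $\hW^u_\star(q_n,\tau_n)$ is fixed at time~$n$, where $d_\star$ and $d$ differ by the factor $B(p_n)$, which is \emph{not} uniformly bounded. In the flat chart at $p_n$, the $\star$-ball is an ellipsoid of eccentricity $\sim B(p_n)$; intersecting it with two nearby $\hW^u$-leaves whose tangent directions differ by an angle $\theta$ can change the $\hm_u$-volume by a factor of order $(1+B(p_n)\,\theta)^u$. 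Nothing you have written bounds this.

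What is missing is exactly the content of the paper's Lemma~\ref{l=eqn21}: by Lemma~\ref{l=kappa} one has $d(q_n,q_n')=O(\kappa^n)$, so by uniform H\"older continuity of the tangent distribution $\hE^u$ the angle satisfies $\theta=O(\beta^n)$ with $\beta=\kappa^{\theta_0}<1$; and since $\limsup_{n\to\infty} B(p_n)^{1/n}=1$ (property~(3) of the adapted metric), the product $B(p_n)\beta^n$ is bounded. This is the place where the subexponential control on the adapted metric is actually used. Once you insert this argument, your change-of-variables proof goes through. A milder version of the same issue appears in your displayed formula for~(c), where the two $\star$-balls live at times $n$ and $n+1$; the paper avoids this by pulling both to time~$n$ before comparing, but since $\|\cdot\|_\star$ at $p_n$ and $p_{n+1}$ differ only by a bounded factor (by construction $D_{j+1}/D_j\le\rho^{-1}$), your route works there as well.
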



\begin{prop}[c.f.\ \cite{Burns Wilk}, Proposition 5.5]\label{p=sliceden}
Let $X$ be a measurable set that is both $\cW^s$-saturated and
essentially $\cW^u$-saturated at some point $x\in \cW^s(p)$.
Then $x$ is a Lebesgue density point of $X$ if and only if:
$$
\lim_{n\to\infty} \hm_{cu}(X:\hJ^{cu}_n(x)) = 1.
$$
\end{prop}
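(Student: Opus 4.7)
The plan is to follow the template of Burns and Wilkinson~\cite{Burns Wilk} for their analogous proposition, adapted to our nonuniform setting. The strategy is to thicken the center-unstable juliennes into full-dimensional \emph{julienne neighborhoods}
$$
\hat J_n(x) = \bigcup_{y \in \hJ^{cu}_n(x)} \cW^s_\star(y,\eps_n),
$$
where $\eps_n$ decays at a rate dictated by the forward contraction of $\cW^s$, and to prove two equivalences: (A) $\lim_n m(X:\hat J_n(x)) = 1$ iff $\lim_n \hm_{cu}(X:\hJ^{cu}_n(x)) = 1$; and (B) $\lim_n m(X:\hat J_n(x))=1$ iff $x$ is a Lebesgue density point of $X$. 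Combined, these yield the proposition.

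For (A), the key observation is that $\hat J_n(x)$ has $c$-uniform $\cW^s$-fibers over the base $\hJ^{cu}_n(x) \subset \hW^{cu}(x)$, by the transverse absolute continuity with bounded Jacobians of $\cW^s$. Combined with the regularity of the base sequence from Proposition~\ref{p=julmeas}, Proposition~\ref{p=unifreg}(1) yields that $\hat J_n(x)$ itself is regular. Since $X$ is $\cW^s$-saturated at $x$, Proposition~\ref{p=compmeas2}(3) then gives the equivalence between densities of $X$ in $\hat J_n(x)$ and in $\hJ^{cu}_n(x)$.

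For (B), I must compare $\hat J_n(x)$ with round Riemannian balls, and here the main obstacle lies: the juliennes shrink anisotropically in the three invariant bundles, so they are not directly internested with round Riemannian balls. I circumvent this by introducing a companion sequence $\hat J^u_n(x)$, built symmetrically by fattening short unstable segments through $x$ by center-stable disks at matched rates. By the small-angle tangency of the fake distributions to $E^u$, $E^c$, $E^s$ from Proposition~\ref{p=fakenu}(1), $\hat J^u_n(x)$ is internested with a regular sequence of Riemannian balls $B(x,r_n)$, so Lemma~\ref{l=compreg} converts Lebesgue density into density on $\hat J^u_n(x)$. Both $\hat J_n(x)$ and $\hat J^u_n(x)$ fiber with $c$-uniform fibers over a common smooth $\cW^u$-transversal through $x$, with equal bases up to internesting, so the essential $\cW^u$-saturation of $X$ at $x$ permits Proposition~\ref{p=compmeas2}(2) to transfer densities of $X$ between the two julienne families.

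The main technical hurdle is the careful matching of the parameters $\eps_n$, $r_n$, and those defining $\hat J^u_n(x)$, so that all internestings hold uniformly in $n$ despite the subexponential discrepancy between $\|\cdot\|_\star$ and the original Riemannian metric (controlled by the function $B$). Once these are arranged, combining (A) and (B) completes the proof.
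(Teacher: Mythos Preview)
Your part (A) is correct and matches the paper's final step exactly: the paper's set $G_n(x) = \bigcup_{q \in \hJ^{cu}_n(x)} \cW^s_\star(q,\sigma_n)$ is your $\hat J_n(x)$ with $\eps_n = \sigma_n$, and the passage from $G_n(x)$ to $\hJ^{cu}_n(x)$ uses the $\cW^s$-saturation of $X$ together with Proposition~\ref{p=compmeas2}(3), just as you say.

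Part (B), however, has a genuine gap. Your central claim --- that $\hat J_n(x)$ ``fibers with $c$-uniform fibers over a common smooth $\cW^u$-transversal'' --- is unjustified, and establishing something equivalent to it is in fact the main work of the proof. The set $\hat J_n(x) = G_n(x)$ is constructed as a $\cW^s$-fibration over $\hJ^{cu}_n(x) \subset \hW^{cu}(x)$; it has no natural $\cW^u$-fibration. If you slice it by local $\cW^u$-leaves, the resulting ``fibers'' are determined by how $\cW^s$-holonomy carries $\cW^u$-leaves across the julienne $\hJ^{cu}_n(x)$, and there is no direct reason these slices should be $c$-uniform: the julienne is highly anisotropic (its $\hW^u$-fibers $\hJ^u_n$ are exponentially thinner than its $\hW^c$-base $\hB^c_n$), the fake $\hW^u$ and the real $\cW^u$ only approximately agree, and $\cW^s$-holonomy distorts $cu$-geometry. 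Note also that Proposition~\ref{p=compmeas}(2) requires \emph{identical} bases $\tau_{Y_n} = \tau_{Z_n}$, not merely internested ones.

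The paper bridges the gap between balls and $G_n(x)$ by a chain of five intermediate sets $B_n, C_n, D_n, E_n, F_n$, all nesting at $x$. The essential $\cW^u$-saturation of $X$ is used exactly once, at the step $C_n \leftrightarrow D_n$: both of these sets fiber over the \emph{same} base $D^{cs}_n(x) \subset \hW^{cs}(x)$ with genuine $\cW^u$-fibers ($C_n$ with round $\cW^u_\star$-disks of radius $\sigma_n$, $D_n$ with real unstable juliennes $J^u_n$), and both families of fibers are $c$-uniform by construction and by Proposition~\ref{p=julmeas}, so Proposition~\ref{p=compmeas}(2) applies directly. The remaining links are all internestings: $B_n \leftrightarrow C_n$ by transversality, $D_n \leftrightarrow E_n$ by comparing real and fake unstable juliennes, $F_n \leftrightarrow G_n$ by a diameter estimate, and --- crucially --- $E_n \leftrightarrow F_n$, which invokes Proposition~\ref{p=inclusion} (julienne quasiconformality of $\cW^s$-holonomy), itself a substantial result relying on the Lipschitz property in Proposition~\ref{p=fakenu}(7). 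Your proposal effectively collapses this whole chain into a single unsupported assertion about the $\cW^u$-fibration of $G_n(x)$.
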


Assuming these propositions, we conclude the:

\begin{proof}[Proof of Theorem~\ref{t.nonunifbw}]
Let $X$ be a bi~essentially saturated set, and let
$X^s$ be an essential $\cW^s$-saturate of $X$.
Since $m(X \vartriangle X^s) = 0$,
the Lebesgue density points of $X$ are precisely the same as those of $X^s$.
Suppose that $x\in \cW^s(p,1)$ is a
Lebesgue density point of $X^s$.  Proposition~\ref{p=sliceden}
implies that $x$ is a $cu$-julienne density point of $X^s$.

To finish the proof, we show that every $x'\in \cW^s(p,1)$ is
a  $cu$-julienne density point of $X^s$.  Then
by Proposition~\ref{p=sliceden}, every $x'\in \cW^s(p,1)$ is
a Lebesgue density point of $X^s$, and so $\cW^s(p,1)\subset \hat X$.
Notice that if $p$ satisfies the hypotheses of Theorem~\ref{t.nonunifbw},
thes so does every $p'\in\cW^s(p)$.  Hence if $\cW^s(p)\cap \hat X\ne \emptyset$,
then $\cW^s(p)\subset \hat X$, completing the proof.

Let $h^s: \hW^{cu}(x) \to \hW^{cu}(x')$  be the holonomy map
induced by the stable foliation $\cW^s$.
The sequence $h^s(\hJ^{cu}_n(x))\subset \hW^{cu}(x')$ nests at $x'$.

Transverse absolute continuity of $h^s$ with bounded
Jacobians implies that
$$\lim_{n\to\infty} \hm_{cu}(X^s:\hJ^{cu}_n(x)) = 1\quad\Longleftrightarrow\quad
\lim_{n\to\infty} \hm_{cu}(h^s(X^s):h^s(\hJ^{cu}_n(x))) = 1.$$
Since $X^s$ is $s$-saturated, we then have:
$$\lim_{n\to\infty} \hm_{cu}(X^s:\hJ^{cu}_n(x)) = 1\quad\Longleftrightarrow\quad
\lim_{n\to\infty} \hm_{cu}(X^s:h^s(\hJ^{cu}_n(x))) = 1.$$
Since we are assuming that $x$ is a $cu$-julienne density point of $X^s$, we thus have
$$\lim_{n\to\infty} \hm_{cu}(X^s:h^s(\hJ^{cu}_n(x))) = 1.$$

Working inside of $\hW^{cu}(x')$, we will apply Lemma~\ref{l=compreg}
to the sequences $h^s(\hJ^{cu}_n(x))$ and
$\hJ^{cu}_n(x')$, which both nest at $x'$.
Proposition~\ref{p=inclusion} implies that these sequences are internested.
Proposition~\ref{p=julmeas} implies that $\hJ^{cu}_n(x')$ is regular
with respect to the induced
Riemannian measure $\hm_{cu}$ on $\hW^{cu}(x')$.
Lemma~\ref{l=compreg} now implies that
$$\lim_{n\to\infty} \hm_{cu}(X^s:h^s(\hJ^{cu}_n(x))) = 1\quad\Longleftrightarrow\quad\lim_{n\to\infty} \hm_{cu}(X^s:\hJ^{cu}_n(x')) = 1,$$
and so $x'$ is a $cu$-julienne density point of $X^s$. It follows from Proposition~\ref{p=sliceden} that $x'$ is a Lebesgue density point of $X^s$, and thus of $X$. 
\end{proof}

We extract from this proof a proposition that will be used
in the proof of Theorem~\ref{t.nonunifjimmy}:

\begin{prop}\label{p.holonomyinv} Let $Y\subset \hW^{cu}(p)$ be a measurable subset, let
$x\in \cW^s(p,1)$, and let $Y'$ be the image of $Y$ under $\cW^s$-holonomy.
Then $p$ is a  $cu$-julienne density point of $Y$ if and only if $x$ is a  $cu$-julienne density point of $Y'$.

\end{prop}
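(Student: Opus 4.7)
The plan is to mirror the density-transfer argument already deployed in the proof of Theorem~\ref{t.nonunifbw}, but with the set $Y$ replacing the bi essentially saturated set $X^s$; since we are handed $Y'$ as the holonomy image of $Y$ by definition, we will not even need a saturation step. By symmetry (applying the same argument to $(h^s)^{-1}$) it suffices to prove one implication, so I will assume $p$ is a $cu$-julienne density point of $Y$, i.e.\ $\lim_{n\to\infty} \hm_{cu}(Y : \hJ^{cu}_n(p)) = 1$, and show that $\lim_{n\to\infty} \hm_{cu}(Y' : \hJ^{cu}_n(x)) = 1$.

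First I would set $h^s : \hW^{cu}(p) \to \hW^{cu}(x)$ to be the $\cW^s$-holonomy. Since $x \in \cW^s(p,1)$ and the fake center-unstable manifolds are small, the map $h^s$ is a well-defined homeomorphism between small neighborhoods. Moreover, part~(7) of Proposition~\ref{p=fakenu} together with the fact that $\hW^s$ subfoliates $\hW^{cs}$, combined with the transverse absolute continuity of $\cW^s$ (with bounded Jacobians) as a foliation of $M$, gives that $h^s$ is absolutely continuous with bounded Jacobians with respect to the Riemannian measures $\hm_{cu}$ on $\hW^{cu}(p)$ and $\hW^{cu}(x)$. Consequently
\[
\lim_{n\to\infty} \hm_{cu}(Y : \hJ^{cu}_n(p)) = 1
\iff
\lim_{n\to\infty} \hm_{cu}\bigl(h^s(Y) : h^s(\hJ^{cu}_n(p))\bigr) = 1,
\]
and because $Y' = h^s(Y)$ by definition, the right-hand side is exactly $\lim_{n\to\infty} \hm_{cu}(Y' : h^s(\hJ^{cu}_n(p))) = 1$.

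Next I would swap $h^s(\hJ^{cu}_n(p))$ for $\hJ^{cu}_n(x)$ inside $\hW^{cu}(x)$. Both sequences nest at $x$, and Proposition~\ref{p=inclusion} asserts that they are internested. By Proposition~\ref{p=julmeas} the sequence $\hJ^{cu}_n(x)$ is regular with respect to $\hm_{cu}$. Applying Lemma~\ref{l=compreg} to these two internested sequences, with $\hJ^{cu}_n(x)$ playing the role of the regular reference, yields
\[
\lim_{n\to\infty} \hm_{cu}\bigl(Y' : h^s(\hJ^{cu}_n(p))\bigr) = 1
\iff
\lim_{n\to\infty} \hm_{cu}\bigl(Y' : \hJ^{cu}_n(x)\bigr) = 1.
\]
Combining this with the previous equivalence proves that $x$ is a $cu$-julienne density point of $Y'$, and the reverse implication follows by exchanging the roles of $p$ and $x$ via the inverse holonomy.

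The step that carries the substantive content is the absolute continuity of $h^s$ between fake center-unstable leaves, but this is already packaged into the toolkit assembled in Section~\ref{s.amie}: part~(7) of Proposition~\ref{p=fakenu} provides the bounded-Jacobian regularity of the strong stable foliation inside the fake $cs$-leaves, and this, together with the absolute continuity of the true $\cW^s$ foliation from \cite{AV flavors}, is exactly what one needs. So no genuinely new estimate is required; the proof is just a careful bookkeeping of the density-transfer mechanism already used in deducing Theorem~\ref{t.nonunifbw} from Propositions~\ref{p=inclusion}, \ref{p=julmeas}, and \ref{p=sliceden}, stripped of the saturation hypothesis on the ambient set.
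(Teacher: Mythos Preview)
Your argument is correct and is precisely the density-transfer mechanism the paper extracts from the proof of Theorem~\ref{t.nonunifbw}: transverse absolute continuity of $\cW^s$ between the fake $cu$-leaves, then internesting (Proposition~\ref{p=inclusion}) plus regularity (Proposition~\ref{p=julmeas}) fed into Lemma~\ref{l=compreg}. The only superfluous ingredient is your invocation of part~(7) of Proposition~\ref{p=fakenu}; all that is actually needed is the transverse absolute continuity of the true $\cW^s$ foliation with bounded Jacobians (from \cite{AV flavors}), since $\hW^{cu}(p)$ and $\hW^{cu}(x)$ are smooth transversals to~$\cW^s$.
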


\subsection{Julienne Quasiconformality}
Here we prove Proposition~\ref{p=inclusion}.
The proof is taken {\em mutatis mutandis} from \cite{Burns Wilk}.

By a simple argument reversing the roles of $x$ and $x'$,
it will suffice to show that $k$ can be chosen so that
\begin{equation}\label{e=inc}
h^s(\hJ^{cu}_n(x))\subseteq \hJ^{cu}_{n-k}(x'),
\end{equation}
for all $n\geq k$, whenever $x$ and $x'$ satisfy the hypotheses of the
proposition.

In order to prove that $k$ can be chosen so that (\ref{e=inc}) holds,
we need two lemmas.
\begin{lemma}\label{l=centhol} There
exists a positive integer $k_1$ such that, for all $x, x'\in \cW^s(p)$,
$$
\hat{h}^s(\hB^c_n(x)) \subseteq \hB^c_{n-k_1}(x'),
$$
for all $n\geq k_1$, where $\hat{h}^s:\hW^{cu}_{loc}(x)\to \hW^{cu}(x')$
is the local $\hW^s$ holonomy.
\end{lemma}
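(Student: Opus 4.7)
The plan is to push the configuration forward by $f^n$ into a scale where stable holonomy between fake center leaves is controlled by a uniform Lipschitz constant, apply that estimate, and pull back.

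Given $y\in\hB^c_n(x)=\hW^c_\star(x,\sigma_n)$, set $y':=\hat{h}^s(y)$. Coherence of the fake foliations (Property~4 of Proposition~\ref{p=fakenu}) together with $x'\in\hW^s(x)\subset\hW^{cs}(x)$ forces $\hW^s(y)\cap\hW^{cu}(x')\subset\hW^{cs}(x)\cap\hW^{cu}(x')=\hW^c(x')$, so $y'\in\hW^c(x')$ and the task reduces to bounding the center distance $d_\star(x',y')$. Iterating forward, using Properties~2 and~3(a) of Proposition~\ref{p=fakenu}, puts the four points $x_n, y_n, x_n', y_n'$ all in an exponentially small $\star$-neighborhood of $p_n$: one has $d_\star(x_n,y_n)\leq \hat\gamma_n^{-1}\sigma_n$, $d_\star(x_n,x_n')\leq \nu_n\,d_\star(x,x')$, and $y_n'=\hat{h}^s(y_n)\in\hW^c(x_n')$ by local invariance of the fake stable leaves.

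Because $CB^+$ is forward invariant (Proposition~\ref{p.CB is sat}), every $p_n$ lies in $CB^+$ and the entire fake-foliation construction can be redone with base orbit $\cO^+(p_n)$. Re-running Lemma~\ref{l=lipschitz} in that setting produces a Lipschitz bound $L$ for the stable holonomy between fake center leaves at $x_n$, and this $L$ can be taken uniform in $n$ since every constant in that proof ($\delta$, $K$, $N$, and the H\"older norm $H$) depends only on orbit-wide sup/inf bounds of the cocycles $\nu,\gamma,\hat\gamma$ and on the subexponential factor $B$, all of which are absorbed by the adapted $\star$-metric. Combined with the pullback estimate $d_\star(x',y')\leq \gamma_n^{-1} d_\star(x_n', y_n')$ for center leaves, this gives
\[
d_\star(x',y')\leq L\,(\gamma_n\hat\gamma_n)^{-1}\sigma_n.
\]
The defining inequalities on $\sigma,\tau$ together with Property~1 ($\nu\prec\gamma\hat\gamma\le 1$) are then used to extract a uniform $\lambda<1$ and $k_1\geq 1$ such that $L(\gamma_n\hat\gamma_n)^{-1}\sigma_n\leq\sigma_{n-k_1}$, which finishes the proof.

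The hard part is really Step two: verifying that the Lipschitz constant $L$ from Lemma~\ref{l=lipschitz} is genuinely uniform when the base is shifted along $\cO^+(p)$. One must track how the constants there depend on the base point and note that the adapted $\star$-metric has been designed precisely to eliminate the subexponential dependence on $B(p_n)$. A secondary bookkeeping step is the choice of $k_1$: writing $\sigma_n/\sigma_{n-k_1}=\prod_{i=n-k_1}^{n-1}\sigma(p_i)$ and using the cocycle inequalities among $\sigma,\nu,\gamma,\hat\gamma$ to absorb the factor $L(\gamma_n\hat\gamma_n)^{-1}$ into the shift $k_1$ uniformly in $n$, $x$, $x'$, and $y$.
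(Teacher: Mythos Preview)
Your approach has a fatal gap in the final step. The bound you obtain,
\[
d_\star(x',y')\leq L\,(\gamma_n\hat\gamma_n)^{-1}\sigma_n,
\]
cannot be absorbed into $\sigma_{n-k_1}$ for any \emph{fixed} $k_1$. Indeed $\gamma\hat\gamma = \m_\star(Df|_{E^c})/\|Df|_{E^c}\|_\star \le 1$, with strict inequality whenever the center action is nonconformal, so $(\gamma_n\hat\gamma_n)^{-1}$ typically grows without bound (even exponentially) in $n$. On the other hand $\sigma_{n-k_1}/\sigma_n = (\sigma(p_{n-k_1})\cdots\sigma(p_{n-1}))^{-1}$ lies between two constants depending only on $k_1$, so no finite $k_1$ can compensate. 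The inequalities $\nu\prec\gamma\hat\gamma$ and $\sigma\prec\min\{1,\hat\gamma\}$ that you invoke do not help: they relate $\gamma\hat\gamma$ to $\nu$, not to $\sigma$, and there is no assumption forcing $\sigma$ to dominate the center nonconformality. This is precisely the penalty of a ``round trip'': pushing forward to time $n$ along a center leaf costs $\hat\gamma_n^{-1}$, pulling back costs $\gamma_n^{-1}$, and the product is the cumulative nonconformality of $Df^n|_{E^c}$, which center bunching controls only relative to the hyperbolic rate, not relative to $\sigma$.

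The paper avoids iteration entirely. Property~7 of Proposition~\ref{p=fakenu} (i.e.\ Lemma~\ref{l=lipschitz}) already gives the $\hW^s$-holonomy between fake center leaves as $L$-bi-Lipschitz \emph{at time zero}, at all sufficiently small scales; since $\sigma_n\to 0$, one gets directly $\hat h^s(\hW^c_\star(x,\sigma_n))\subset \hW^c_\star(x',L\sigma_n)$ (factoring through $\hW^c(p)$ if necessary, at the cost of squaring~$L$). Then $\sigma\prec 1$ gives $L\sigma_n\le\sigma_{n-k_1}$ as soon as $k_1$ is large enough that $\sup_y\sigma(y)^{k_1}<L^{-1}$. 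Your detour through time $n$ is not only unnecessary but actively introduces the uncontrolled loss. (Your secondary claim, that $L$ can be taken uniform in $n$ when Lemma~\ref{l=lipschitz} is re-run with base point $p_n$, is also doubtful --- the adapted $\star$-metric, the function $B$, and all the constants $\delta,N,K$ in that proof change with the base point --- but this becomes moot once the final estimate fails.)
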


\begin{proof} 
Proposition~\ref{p=fakenu} implies that $\hat{h}^s$ is
$L$-Lipschitz at $x$, for some $L\geq 1$. Therefore the
image of $\hW^c_\star(x,\sigma_n)$ under $\hat{h}^s$
is contained in $\hW^c_\star(x',L\sigma_n)\subseteq \hW^c_\star(x',\sigma_{n-k_1})$,
 for any
$k_1$ large enough so that $\sigma_{-k_1} > L$. 
\end{proof}

\begin{lemma}\label{l=holon} There exists a positive integer
$k_2$ such that the
following holds for every integer
$n\geq k_2$. Suppose $q,q'\in S_n$, with $q' \in \hW^s(q)$.
Let $y\in \hJ^u_n(q)$, and let $y'$ be the image of $y$ under $\cW^s$ holonomy
from $\hW^{cu}_{loc}(q)$ to $\hW^{cu}(q')$.  Then
$$
y' \in \hJ^{u}_{n-k_2}(z'),
$$
for some $z'\in \hW^c_\star(q', \sigma_{n-k_2})$.
\end{lemma}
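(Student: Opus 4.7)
The plan is to define $z'$ by intersecting a fake unstable leaf with a fake center leaf inside the coherent subfoliation of $\hW^{cu}(q')$, and then verify the two required size estimates by iterating forward to time $n$, where distances are easier to control, and then pulling back using the exponential rates provided by Proposition~\ref{p=fakenu}. First I note that, just as in Lemma~\ref{l=kappa}, all four orbit segments $\{q_j\}$, $\{q'_j\}$, $\{y_j\}$, $\{y'_j\}$ for $0\le j\le n$ remain in the neighborhood $\cN_{\mathbf{r}}$ where the fake foliations are defined: for $q_j,y_j$ this is Lemma~\ref{l=kappa}; for $q'_j$ it follows because $q'\in\hW^s(q)$ and forward iteration on $\hW^s$ contracts at the rate $\nu\prec 1$; and for $y'_j$ it follows from uniform $\star$-contraction of the real stable foliation $\cW^s$ at rate comparable to $\nu$. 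By local invariance of $\hW^{cu}$, the point $y'_n$ then lies in $\hW^{cu}(q'_n)$. Using that $\hW^u$ and $\hW^c$ subfoliate $\hW^{cu}$, I define $z'_n$ to be the unique intersection $\hW^u(y'_n)\cap\hW^c(q'_n)$ near $q'_n$, and put $z':=f^{-n}(z'_n)\in\hW^c(q')\cap\hW^u(y')$.

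For the size estimates at time $n$, the triangle inequality gives
\[
d_\star(y'_n,q'_n)\le d_\star(y_n,q_n)+d_\star(q_n,q'_n)+d_\star(y_n,y'_n).
\]
The first summand is $<\tau_n$ since $y\in\hJ^u_n(q)$; the second is bounded by $C\nu_n$ by part~3(a) of Proposition~\ref{p=fakenu}; and the third is bounded by $C\nu_n$ using the uniform $\star$-contraction of $\cW^s$. Since $\nu\prec\tau$, the last two terms are negligible compared to $\tau_n$, and $d_\star(y'_n,q'_n)\le(1+o(1))\tau_n$. The uniform transversality of $E^u$ and $E^c$ together with property~(1) of Proposition~\ref{p=fakenu} then makes the projection inside $\hW^{cu}(q'_n)$ onto $\hW^c(q'_n)$ along $\hW^u$-leaves bi-Lipschitz in the $\star$-metric, so
\[
\max\bigl\{d_\star(z'_n,q'_n),\,d_\star(y'_n,z'_n)\bigr\}\le C_1\,d_\star(y'_n,q'_n)\le C_2\tau_n.
\]

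To verify $z'\in\hW^c_\star(q',\sigma_{n-k_2})$, I apply part~3(b) of Proposition~\ref{p=fakenu} to pull back along $\hW^{cu}$ from time $n$ to time $0$: this gives $d_\star(z',q')\le \gamma_n^{-1}\,d_\star(z'_n,q'_n)\le C_2\,\gamma_n^{-1}\tau_n$. The relation $\tau\prec\sigma\gamma$ yields $\gamma_n^{-1}\tau_n\le\lambda^n\sigma_n\le\lambda^n\sigma_{n-k_2}$, which is strictly less than $\sigma_{n-k_2}$ once $n$ is large. To verify $y'_{n-k_2}\in\hW^u_\star(z'_{n-k_2},\tau_{n-k_2})$, I use that $y'_n$ and $z'_n$ lie on the same fake unstable leaf and apply part~3(b) to the foliation $\hW^u$ over $k_2$ backward iterates starting at time $n$; the contraction factor is $\hat\nu_{k_2}(p_{n-k_2})\le\mu^{k_2}$ with $\mu<1$ independent of $n$ (since $\hat\nu\prec 1$). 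Combined with $\tau_n\le\tau_{n-k_2}$ (since $\tau\prec 1$), this yields
\[
d_\star(y'_{n-k_2},z'_{n-k_2})\le \mu^{k_2}C_2\tau_{n-k_2},
\]
which is less than $\tau_{n-k_2}$ provided $k_2$ is chosen so that $\mu^{k_2}C_2<1$.

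The main obstacle is Step~3, namely controlling the real-stable holonomy displacement $d_\star(y_n,y'_n)$ in the nonuniform $\star$-metric when the exponential-growth estimates of Proposition~\ref{p=fakenu} are formally stated only for the fake foliation $\hW^s$. To handle this, one must exploit that $\nu$ literally bounds the $\star$-norm of $Df|_{E^s}$ along $\cO^+(p)$ and that the orbits $y_j,y'_j$ stay within $\cN_{\mathbf{r}}$, so that comparability across orbit points (Lemma~\ref{l=constalpha}) transfers the fake-foliation contraction estimate to the real stable holonomy. Once this is in place, all the other estimates are direct applications of the hyperbolicity cocycle inequalities $\nu\prec\tau\prec\sigma\gamma$ and $\sigma,\tau,\hat\nu\prec 1$ that were built into the setup.
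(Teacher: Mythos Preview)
Your argument is correct and follows essentially the same route as the paper's proof: define $z'$ via the intersection $\hW^u(y')\cap\hW^c(q')$, reduce to the two estimates $d_\star(y'_n,z'_n)=O(\tau_n)$ and $d_\star(q',z')=O(\sigma_n)$ by working at time $n$, bound the relevant distances using the triangle inequality together with $d_\star(q_n,y_n)\le\tau_n$ and $d_\star(q_n,q'_n),d_\star(y_n,y'_n)=O(\nu_n)$, and finally pull back using $\tau\prec\sigma\gamma$. The paper phrases the transversality step slightly differently (it views $q_n$ and $z'_n$ as images of $y_n$ and $y'_n$ under $\hW^u$-holonomy between $\hW^{cs}_{loc}(y_n)$ and $\hW^{cs}(q_n)$, rather than projecting inside $\hW^{cu}(q'_n)$), and it leaves the absorption of the $O$-constant into $k_2$ implicit rather than iterating backward $k_2$ steps explicitly, but these are cosmetic differences. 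You are also right to flag the $\cW^s$ versus $\hW^s$ issue for the bound on $d_\star(y_n,y'_n)$: the paper simply cites part~3(a) of Proposition~\ref{p=fakenu} here, which is literally stated for $\hW^s$, but the same contraction estimate holds for the real $\cW^s$ because its leaves are tangent to $E^s$ and $\|Df|_{E^s}\|_\star\prec\nu$ throughout $\cN_{\mathbf r}$; your sketch of how to justify this is adequate.
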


\begin{proof}
Let $z'$ be the unique point
in $\hW^u(y')\cap \hW^c(q')$.  It is not hard to see
that $z'_j\in \cN_r$, for $j=0,\ldots, n-1$ and that
$z'_{n}$ is the unique point
in $\hW^u(y'_{n})\cap \hW^c(q'_{n})$.
It will suffice to prove that $d_\star(y_{n}',z_{n}')
= O(\tau_n)$ and
$d_\star(q',z') = O(\sigma_n)$.

We have $d_\star(q_{n}, y_{n}) \leq \tau_n$ because
$y\in f^{-n}(\cW^u_\star(q_{n},\tau_n))$. By Proposition~\ref{p=fakenu}, 3(a),
we also have
that
$d_\star(q_{n},q'_{n}) = O(\nu_n)$ and  $d_\star(y_{n},y'_{n}) = O(\nu_n)$, since
$d_\star(q,q')$ and $d_\star(y,y')$ are both $O(1)$. Note that
$q_n$ and $z_n'$ are, respectively, the images of $y_n$ and $y_n'$
under $\hW^u$-hononomy between $\hW^{cs}_{loc}(y_n)$ and
$\hW^{cs}(q_n)$. Uniform transversality of the foliations
$\hW^u$ and $\hW^{cs}$ implies that
$$d_\star(y_n',z_n') = O(\max\{d_\star(q_n,y_n), d_\star(y_n,y_n')\}) = O(\tau_n),$$
since $\nu < \tau$.

We next show that $d_\star(q',z') = O(\sigma_n)$.
By the triangle inequality,
$$d_\star(q_{n}',z_{n}') \leq d_\star(q_n',q_n) + d_\star(q_n,y_n) + d_\star(y_n, y_n') + d_\star(y_n', z_n').$$
All four of the quantities on the right-hand side are easily seen to be
$O(\tau_n)$.
Since $q_{n}'$ and $z_{n}'$ lie in the same $\hW^c$-leaf at
$d_\star$-distance $O(\tau_n)$, Proposition~\ref{p=fakenu}
now implies that
$d_\star(q',z') = O((\gamma_{n})^{-1}\tau_n)$.
But $\tau$ and $\sigma$ were chosen so that $\tau \prec \gamma\sigma$.
Hence $(\gamma_{n})^{-1}\tau_n< \sigma_n$ and
$d_\star(q',z') = O(\sigma_n)$, as desired. 
\end{proof}

\begin{proof}[Proof of Proposition~\ref{p=inclusion}]
As noted above, it suffices to prove the inclusion~(\ref{e=inc}).
For $q \in \hat{B}^c_n(x)$, let $q' = \hat{h}^s(q)$. Then $q' \in \hat{B}^c_{n-k_1}(x')$
by Lemma~\ref{l=centhol}. Hence $q,q'\in S_{n-k_1}$
 and we can apply Lemma~\ref{l=holon} to obtain
$$
h^s(\hat{J}^{cu}_n(x)) \subseteq \bigcup_{z \in Q} \hat{J}^{u}_{n-k_2}(z),
$$
where
$$Q=\bigcup_{q' \in \hat{B}^c_{n-k_1}(x')} \hat{B}^{c}_{n-k_2}(q').$$
For $k\geq k_2$, we have:
$$\bigcup_{z \in Q} \hat{J}^{u}_{n-k_2}(z)\subseteq
\bigcup_{z \in Q} \hat{J}^{u}_{n-k}(z).$$
It therefore suffices to find $k\geq k_2$ such that $Q\subseteq \hat{B}^c_{n-k}(x')$.
This latter inclusion holds if:
$$
\sigma_{n-k_1} + \sigma_{n-k_2} \leq \sigma_{n-k},$$
which is obviously true for all $n\geq k$, if $k$ is sufficiently large.
\end{proof}

\subsection{Julienne Measure}
Next we give the:
\begin{proof}[Proof of Proposition~\ref{p=julmeas}]
Recall that we are using the standard Riemannian volumes and induced
Riemannian volumes on submanfiolds (not the $\|\mathord{\cdot} \|_\star$-volumes).

\begin{lemma}[cf.\ inequalities (21), \cite{Burns Wilk}]\label{l=eqn21}
There exists a constant $C_1>1$ such that, for all $n\geq 0$:
\begin{equation}\label{e=init}
C_1^{-1} \leq
   \frac{\hm_{u}\big( \hW^u_\star(q_{n},\tau_n(p)) \big)}{\hm_{u}\big( \hW^u_\star(q_{n}',\tau_n(p)) \big)}
             \leq C_1,
\end{equation}
for all $q,q'\in S_{n}$, where $\hW^u_\star(x,r) = \hW^u(x)\cap B_\star(x,r)$,
and $\hm_{u}$ is the induced Riemannian metric on $\hW^u$-leaves.
\end{lemma}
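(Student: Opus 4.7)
The strategy is a direct geometric comparison at time $n$, exploiting the fact that both $q_n$ and $q_n'$ lie in a common Riemannian chart at $p_n$, combined with the locally flat structure of the $\star$-metric in that chart.

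I would start by observing that $q, q' \in S_n \subseteq T_n$, so by Lemma~\ref{l=kappa}, both $q_n = f^n(q)$ and $q_n' = f^n(q')$ lie in $f^n(T_n) \subseteq B_\star(p_n, C\kappa^n) \subseteq B(p_n, R_0)$ for all sufficiently large $n$; the finitely many small $n$ are handled by compactness. Within $B(p_n, R_0)$ the $\star$-metric is, by construction, the flat (Euclidean) extension of $\|\cdot\|_{\star, p_n}$ via $\exp_{p_n}^{-1}$, and by Proposition~\ref{p=fakenu}(1) and (6) the leaves of $\hW^u$ through points of $B(p_n, R_0)$ are $C^1$ graphs with tangent distributions lying in an $\eps$-cone around $E^u$ in the $\star$-angle.

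Next I would argue that for any $y \in B(p_n, R_0)$ and any sufficiently small $\tau$, the $\star$-area of $\hW^u_\star(y,\tau)$ is comparable to $\tau^{d_u}$, with implicit constants depending only on the cone opening $\eps$ and the dimension $d_u$, not on $y$ or $n$: this is the usual statement that a thin graph over a linear subspace, measured in a flat metric, has approximately Euclidean area. The standard Riemannian volume $\hm_u$ differs from the $\star$-area by a Jacobian factor $J_n(y)$, equal to the determinant of the identity map $(T_yM, \|\cdot\|_\star)\to (T_yM, \|\cdot\|)$ restricted to the tangent plane of the leaf at $y$.

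The main obstacle is that $J_n(y)$ is \emph{not} uniformly bounded in $n$: since $\|v\|_\star \le B(p_n)\|v\|$ on $T_{p_n}M$, at $y=p_n$ the Jacobian $J_n(p_n)$ can be as small as $B(p_n)^{-d_u}$, which shrinks subexponentially. The decisive observation that resolves this is that $J_n(y)$ varies smoothly over $B(p_n, R_0)$ with modulus of continuity controlled uniformly by the standard Riemannian structure on $M$, so that the \emph{ratio} $J_n(q_n)/J_n(q_n')$ is bounded by a constant independent of $n$, with the $B(p_n)^{-d_u}$ factors cancelling between numerator and denominator. Combining this ratio bound with the uniform $\star$-area comparison yields the desired inequality $C_1^{-1} \le \hm_u(\hW^u_\star(q_n,\tau_n))/\hm_u(\hW^u_\star(q_n',\tau_n)) \le C_1$.
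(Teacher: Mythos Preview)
Your overall decomposition (compare $\star$-areas, then compare the Jacobians $J_n$ relating $\star$-volume to Riemannian volume) is sound, and the first step is fine. The gap is in the second step. Your claim that $J_n(y)$ has ``modulus of continuity controlled uniformly by the standard Riemannian structure on $M$'' is not correct, and the phrase ``with the $B(p_n)^{-d_u}$ factors cancelling'' hides the real issue. The Jacobian $J_n(y)$ is the $u$-dimensional volume distortion of the identity $(T_y\hW^u,\|\cdot\|_\star)\to(T_y\hW^u,\|\cdot\|)$, and in the flat exponential chart this is the determinant of a fixed linear map of eccentricity $B(p_n)$ restricted to the moving subspace $T_y\hW^u$. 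Such a restricted determinant does \emph{not} depend on the subspace with a modulus of continuity independent of the eccentricity: already in two dimensions with $P=\mathrm{diag}(1,B)$, a line rotated by angle $\delta$ from an eigendirection changes the restricted norm by a factor $\sqrt{1+B^2\delta^2}$. So two subspaces at angle $\delta$ can give a Jacobian ratio of order $(1+B(p_n)\delta)^{u}$, which is unbounded in $n$ unless $\delta$ is correspondingly small.

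What actually saves the estimate --- and this is the point the paper's proof makes explicit --- is that the two tangent planes $T_{q_n}\hW^u$ and $T_{q_n'}\hW^u$ are not merely at bounded distance but are \emph{exponentially} close: by H\"older continuity of the fake unstable distribution and Lemma~\ref{l=kappa}, their angle is $O(d(q_n,q_n')^\theta)=O(\kappa^{\theta n})=O(\beta^n)$ with $\beta<1$. The relevant ratio is then bounded by $C'(1+B(p_n)\beta^n)^u$, and since $\limsup_n B(p_n)^{1/n}=1$ while $\beta<1$, the product $B(p_n)\beta^n$ is bounded (in fact tends to $0$). This interplay between the subexponential growth of the metric distortion $B(p_n)$ and the exponential proximity of $q_n,q_n'$ is the heart of the argument; once you insert it, your approach and the paper's coincide.
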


\begin{proof}
Recall that the flat $\|\mathord{\cdot} \|_{\flat}$ metric on $T_{p_n}M$
and the Riemannian metric in a neighborhood of $p_n$ (viewed in exponential coordinates at $p_n$)
are uniformly comparable. We will estimate the ratio in (\ref{e=init}) using
the volume on $T_{p_n}M$ induced by $\|\mathord{\cdot} \|_{\flat}$.

On $T_{p_n}M$, the $\|\mathord{\cdot} \|_\star$-metric is
also flat: the ball of radius
$\tau_n$ at $q_n$ is just a translate by $q_n-q_n'$ of a  $\|\mathord{\cdot}\|_\star$-ball of radius
$\tau_n$ at $q_n'$.  Viewed in the $\|\mathord{\cdot} \|_{\flat}$ metric,
a $\star$-ball of radius $\tau_n$ in $T_{p_n}M$ is an ellipsoid with
eccentricity bounded by $K^{-1} B_n$. The intersection of such a ball centered
at $q_n$ of radius $\tau_n(p)$ with $\hW^u(q_n)$ gives the set $\hW^u(q_{n},\tau_n(p))$.
Since the leaves of $\hW^u(q_n)$ are tangent
to a uniformly H\"older continuous distribution, the volumes of these sets
are uniformly comparable to the intersection of $T_{q_n}\hW^u(q_n)$
with $B_\star(q_n,\tau_n(p))$.  This is also a ($u$-dimensional) ellipsoid, call it
$\cE(q_n)$.  Similarly we have an ellipsoid $\cE(q_n')$ centered at $q_n'$.

The distance between the spaces $T_{q_n}\hW^u(q_n)$ and $T_{q_n'}\hW^u(q_n)$
(translated by $q_n-q_n'$) is of the order of $d(q_n,q_n')^\theta$, for some
$\theta\in(0,1]$,  and so is bounded by $c\beta^n$,
where $\beta = \kappa^\theta <1$.  The bound on the eccentricity of $B_\star(q_n,\tau_n)$ then implies
that the ratio between the $u$-dimensional volumes of $\cE(q_n)$ and $\cE(q_n')$ is
bounded above by $D_n = C'(1 + c K^{-1} B_n \beta^n)^u$ and below by $D_n^{-1}$, for
some constant $C'$.
Since $\limsup_{n\to\infty} B_n^{1/n} = 1$, there exists a constant $D$ such that
$D_n\leq D$ for all $n$. We conclude that there exists a constant $C$ satisfying (\ref{e=init}), for all
$n$ and all $q,q'\in S_n$.
\end{proof}

Let $\hE^s, \hE^c$, and $\hE^u$ be the tangent distributions to the
leaves of $\hW^s, \hW^c$, and $\hW^u$, respectively. They are H\"older
continuous by Proposition~\ref{p=fakenu}, part~6.   Furthermore,
the restrictions of
these distributions to $T_n$ are invariant under $Df^j$, for $j=1,\ldots n$.
We next observe that the Jacobian $\mathrm{Jac}(Df^{n}\vert_{\hE^u})$
is nearly constant when
restricted to the set $T_n$.
More precisely, we have:
\begin{lemma}\label{l=jacdist}
There exists $C_2\geq 1$ such that, for all $n\geq 1$, and
all $y,y'\in T_n$,
\begin{equation*}
C_2^{-1}\leq \frac{\mathrm{Jac}(Df^{n}\vert_{\hE^u})(y)}
                  {\mathrm{Jac}(Df^{n}\vert_{\hE^u})(y')} \leq C_2.
\end{equation*}
\end{lemma}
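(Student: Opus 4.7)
The plan is to recognize the $n$-step Jacobian as a multiplicative cocycle over the one-step Jacobian and then invoke Lemma~\ref{l=constalpha}, which has already been proved. First I would define $\alpha\colon\cN_{\bf r}\to\R_+$ by $\alpha(x)=\mathrm{Jac}(Df|\hE^u(x))$. Because $f$ is $C^2$ and the distribution $\hE^u$ is uniformly H\"older continuous (Proposition~\ref{p=fakenu}, part~6), the function $\alpha$ is positive and uniformly H\"older continuous on its domain. One also verifies that all forward iterates $f^j(T_n)$, $0\le j\le n$, lie inside $\cN_{\bf r}$, which follows from Lemma~\ref{l=kappa} together with the rescaling $\inf_y {\bf r}(y)\gg 1$ performed earlier in the section.

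Next, the paragraph immediately preceding the lemma records that $\hE^u$ is $Df^j$-invariant on $T_n$ for $1\le j\le n$. The chain rule then yields the identity
$$\mathrm{Jac}(Df^n|\hE^u)(y) = \prod_{j=0}^{n-1}\alpha(f^j(y)) = \alpha_n(y), \qquad y\in T_n,$$
where $\alpha_n$ is the multiplicative cocycle notation of Section~5.2. Applying Lemma~\ref{l=constalpha} both to $\alpha$ and to $\alpha^{-1}$ (or, equivalently, to $\log\alpha$) produces a constant $C_2\ge 1$, independent of $n$, such that
$$C_2^{-1}\;\le\;\frac{\alpha_n(y)}{\alpha_n(y')}\;\le\;C_2 \qquad \text{for all } y,y'\in T_n,$$
which is exactly the desired conclusion.

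There is essentially no new obstacle here: the hard work has already been packaged into Lemma~\ref{l=constalpha}, whose proof exploits the exponential contraction $f^j(T_n)\subset B_\star(p_j,C\kappa^j)$ supplied by Lemma~\ref{l=kappa} and the summable H\"older distortion estimate of Lemma~\ref{l=dist}. The one step that requires a moment's thought is the H\"older regularity of $\alpha$, but this is inherited directly from the $C^2$ regularity of $f$ and the H\"older regularity of the fake unstable bundle built in Proposition~\ref{p=fakenu}. Thus the lemma is a direct corollary of Lemma~\ref{l=constalpha} applied to the one-step unstable Jacobian.
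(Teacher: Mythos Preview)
Your proof is correct and follows exactly the paper's approach: the paper's proof is the single sentence ``By the Chain Rule, these inequalities follow from Lemma~\ref{l=constalpha} with $\alpha = \mathrm{Jac}(Df\vert_{\hE^u})$,'' and you have simply unpacked this, supplying the justification (via Proposition~\ref{p=fakenu}, part~6, and the $C^2$ regularity of $f$) for why $\alpha$ is H\"older.
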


\begin{proof}
By the Chain Rule, these inequalities follow from Lemma~\ref{l=constalpha} with
$\alpha = \hbox{Jac}(Df\vert_{\hE^u})$.
\end{proof}

Let $q\in  S_n$, and let $X\subseteq \hJ^u_n(q)$ be a measurable
set (such as $\hJ^u_n(q)$ itself). Then:
$$
\hm_u(f^n(X)) = \int_{X}
       \hbox{\rm Jac}(Tf^{n}\vert_{\hE^u})(x)\,d\hm_u(x).
$$
From this and Lemma~\ref{l=jacdist} we then obtain:
\begin{lemma}\label{l=generalcompare} There exists
$C_3>0$ such that, for all $n\geq 0$, for
any $q,q'\in S_n$, and any
measurable sets $X\subset \hJ^u_n(q), X'\subset \hJ^u_n(q')$,
we have:
$$C_3^{-1}\frac{\hm_u(f^n(X))}{\hm_u(f^{n}(X'))} \leq \frac{\hm_u(X)}{\hm_u(X')}
\leq C_3 \frac{\hm_u(f^n(X))}{\hm_u(f^{n}(X'))}.$$
\end{lemma}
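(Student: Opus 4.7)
The plan is to derive Lemma~\ref{l=generalcompare} directly from the bounded-distortion statement of Lemma~\ref{l=jacdist}, using the change-of-variables formula that precedes the lemma statement. This is essentially a formal computation: since $X \subseteq \hJ^u_n(q) \subseteq T_n$ and $X' \subseteq \hJ^u_n(q') \subseteq T_n$, every point of $X \cup X'$ lies in the set on which the Jacobian of $f^n$ along $\hE^u$ is nearly constant.

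Concretely, I would fix an arbitrary reference point $y_0 \in T_n$ and let $J_0 = \mathrm{Jac}(Df^n|_{\hE^u})(y_0)$. By Lemma~\ref{l=jacdist}, for every $y \in T_n$ we have $C_2^{-1} J_0 \leq \mathrm{Jac}(Df^n|_{\hE^u})(y) \leq C_2 J_0$. Applying the change-of-variables formula
\[
\hm_u(f^n(X)) = \int_X \mathrm{Jac}(Df^n|_{\hE^u})(x)\, d\hm_u(x)
\]
to $X$ (and likewise to $X'$) then yields the two-sided bound
\[
C_2^{-1} J_0\, \hm_u(X) \leq \hm_u(f^n(X)) \leq C_2 J_0\, \hm_u(X),
\]
and the analogous bound with $X'$ in place of $X$.

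Taking the ratio of these inequalities, the reference value $J_0$ cancels and the quotient $\hm_u(f^n(X))/\hm_u(f^n(X'))$ is seen to be comparable to $\hm_u(X)/\hm_u(X')$ up to a multiplicative constant $C_2^{2}$. Setting $C_3 = C_2^{2}$ gives the lemma. No step here is a serious obstacle: the entire content has already been packaged into Lemma~\ref{l=jacdist} (which in turn relied on Lemma~\ref{l=constalpha} applied to $\alpha = \mathrm{Jac}(Df|_{\hE^u})$, a H\"older function since $\hE^u$ is H\"older by Proposition~\ref{p=fakenu}(6)). The only point requiring a moment's care is to make sure that both $X$ and $X'$ really are contained in $T_n$, which is built into the definition of the $\hJ^u_n(q)$ in~\eqref{e=Tn} and the surrounding discussion.
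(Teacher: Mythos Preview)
Your proposal is correct and follows essentially the same approach as the paper: the paper's proof is a single sentence stating that the lemma follows from the change-of-variables formula $\hm_u(f^n(X)) = \int_X \mathrm{Jac}(Df^n|_{\hE^u})\,d\hm_u$ together with Lemma~\ref{l=jacdist}, and your argument simply makes this computation explicit, arriving at $C_3 = C_2^2$.
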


Recall that $f^n(\hJ^u_n(q)) = \hW^u_\star(q_n,\tau_n)$, for
$q\in S_n$.
The first conclusion of Proposition~\ref{p=julmeas} now
follows from (\ref{e=init}) and Lemma~\ref{l=generalcompare} with
$X= \hJ^u_n(q)$ and $X' = \hJ^u_n(q')$.

The second conclusion is proved similarly.

We next show that there exists $\delta>0$ such that
\begin{equation}\label{e=juratio}
\frac{\hm_{u}(\hJ^{u}_{n+1}(q))}{\hm_{u}(\hJ^{u}_{n}(q))} \geq \delta,
\end{equation}
for all $n\geq 0$ and all $q\in S_n$.
To obtain (\ref{e=juratio}), we will apply
Lemma~\ref{l=generalcompare} with $q=q'$, $X=\hJ^u_{n+1}(q)$,
and $X' = \hJ^u_n(q)$.  This gives us:
$$
\frac{\hm_u(\hJ^{u}_{n+1}(q))}{\hm_u(\hJ^{u}_{n}(q))} \geq
C_3^{-1} \frac{\hm_u(f^{n}(\hJ^{u}_{n+1}(q)))}{\hm_u(f^{n}(\hJ^{u}_{n}(q)))}.
$$
But $f^{n}(\hJ^{u}_{n+1}(q)) =
f^{-1} (\hW^{u}_\star(q_{n+1},\tau_{n+1}))$
and  $f^{n}(\hJ^{u}_{n}(q))=
\hW^u_\star(q_n, \tau_n)$,
and hence:
$$
\frac{\hm_u(f^{n}(\hJ^{u}_{n+1}(q)))}{\hm_u(f^{n}(\hJ^{u}_{n}(q)))} =
\frac{\hm_u(f^{-1} (\hW^{u}_\star(q_{n+1},\tau_{n+1})))}{\hm_u(\hW^u_\star(q_n, \tau_n))}.\\
$$
We show that this ratio is uniformly bounded below away from $0$.
Since $\tau_{n+1}/\tau_n$ is uniformly bounded, and $f$ is uniformly
$C^1$ in the $\star$-metric on $\cN_{\bf r}$, there exists a constant
$\mu<1$ (independent of $n$) such that $f^{-1} (\hW^{u}_\star(q_{n+1},\tau_{n+1}))$ contains the set
$\hW^u_\star(q_n, \mu\tau_n)$. Since $\|\mathord{\cdot}\|_\star$ is a locally flat metric, the set $B_\star(q_n, \mu\tau_n)$
is just the set  $B_\star(q_n, \tau_n)$ dilated (in exponential coordinates at $p_j$)
from $q_n$ by a factor of $\mu$. Since the leaves of the $\cW^u$ foliation are uniformly smooth,
the volumes of  $\hW^u_\star(q_n, C\tau_n)$ and $\hW^u_\star(q_n, \tau_n)$
in the $\|\mathord{\cdot}\|$-metric are therefore uniformly comparable.  This implies
that their Riemannian volumes are comparable.

To prove the final claim, we begin by observing that,
considered as a subset of $\hW^{cu}(x)$, the set $\hJ^{cu}_n(x)$ fibers over $\hB^c_{n}(x)$ with $\hW^u$-fibers $\hJ^u_n(q)$.  We have just proved that these fibers are $c$-uniform.
Since  $\sigma_{n+1}/\sigma_{n} = \sigma(p_n)$ is uniformly bounded away from $0$, the ratio
$$
\frac{\hm_c(\hB^c_{n+1}(x))}{\hm_c(\hB^c_{n}(x))} = 
\frac{\hm_c(\hW^{c}(x,\sigma_{n+1}))}{\hm_c(\hW^{c}(x,\sigma_{n}))} 
$$
is bounded away from $0$, uniformly in $x$ and $n$.
Thus the sequence of bases $\hB^c_{n}(x)$ of $\hJ^{cu}_n(x)$
is regular in the induced Riemannian volume $\hm_c$.
Proposition~\ref{p=fakenu}, part 7.
implies that, considered as a subfoliation of $\hW^{cu}(x)$,
$\hW^u$ is absolutely continuous with bounded
Jacobians. Proposition~\ref{p=unifreg} implies
that the sequence $\hJ^{cu}_n(x)$ is regular, with respect to the
induced Riemannian measure $\hm_{cu}$.  This proves the final claim
of Proposition~\ref{p=julmeas}.
\end{proof}

\subsection{Julienne Density}
We now come to the:
\begin{proof}[Proof of Proposition~\ref{p=sliceden}] 
We must show that if
a measurable set $X$ is both  $\cW^s$-saturated and
 essentially $\cW^u$-saturated at a point
 $x\in \cW^s_\star(p,1)$, then $x$ is a Lebesgue density point of $X$ if and only if
$$
\lim_{n\to\infty} \hm_{cu}(X:\hJ^{cu}_n(x)) = 1.
$$

As in \cite{Burns Wilk}, we will establish the following chain of equivalences:
\begin{alignat*}{3}
 \text{$x$ is a Lebesgue density point of $X$}
  \quad \Longleftrightarrow \quad
              &\lim_{n\to\infty} m(X:B_n(x)) &&= 1 \\
  \quad \Longleftrightarrow \quad
              &\lim_{n\to\infty} m(X:C_n(x)) &&= 1 \\
  \quad \Longleftrightarrow \quad
              &\lim_{n\to\infty} m(X:D_n(x)) &&= 1 \\
  \quad \Longleftrightarrow \quad
              &\lim_{n\to\infty} m(X:E_n(x)) &&= 1 \\
  \quad \Longleftrightarrow \quad
              &\lim_{n\to\infty} m(X:F_n(x)) &&= 1 \\
  \quad \Longleftrightarrow \quad
              &\lim_{n\to\infty} m(X:G_n(x)) &&= 1 \\
  \quad \Longleftrightarrow \quad
              &\lim_{n\to\infty} \hm_{cu}(X:\hJ^{cu}_n(x)) &&= 1.
\end{alignat*}
The sets $B_n(x)$ through $G_n(x)$ are defined as follows.
The set $B_n(x)$ is a $\star$-Riemannian ball in $M$:
$$
B_n(x) = B_\star(x,\sigma_n).
$$
The sets  $C_n(x)$, $D_n(x)$ and $E_n(x)$
will fiber over the same base $D^{cs}_n(x)$, where
$$
 D^{cs}_n(x) = \bigcup_{x'\in \hW^s_\star(x,\sigma_n)}\hB^c_n(x').
 $$
Proposition~\ref{p=fakenu}, part (4) implies that
$D^{cs}_n(x)$ is contained in the $C^1$ submanifold $\hW^{cs}(x)$;
the sequences $D^{cs}_n(x)$ and  $\hW^{cs}_\star(x,\sigma_n)$ are internested.
Let
$$C_n(x)
=\bigcup_{q\in D^{cs}_n(x)} \cW^u_\star(q,\sigma_n),$$
and let
$$
D_n(x) = \bigcup_{q\in D^{cs}_n(x)} J^u_n(q).$$
The set $E_n(x)$ is nearly identical to $D_n(x)$, with the difference
that the $J^u_n$-fibers are replaced with
$\hJ^u_n$-fibers:
$$
E_n(x) = \bigcup_{q\in D^{cs}_n(x)} \hJ^u_n(q) =
\bigcup_{x' \in \hW^s_\star(x,\sigma_n)} \hJ^{cu}_n(x')
= \bigcup_{x' \in \cW^s_\star(x,\sigma_n)} \hJ^{cu}_n(x').$$
The rightmost equality follows
from the fact that $\hW^s_\star(x,\sigma_n) =\cW^s_\star(x,\sigma_n)$,
for all $x\in \cW^s(p,1)$ (Proposition~\ref{p=fakenu}, part (5))

We define $F_n(x)$ to be the foliation product of $\hJ^{cu}_n(x)$ and $\cW^s_\star(x,\sigma_n)$:
$$
F_n(x) = \bigcup_{q\in \hJ^{cu}_n(x), \,\, q'\in \cW^s_\star(x,\sigma_n)} \cW^s(q)\cap
\hW^{cu}(q').
$$
This definition makes sense since the
foliations $\hW^{cu}$ and $\cW^s$ are transverse.
  Finally, let
$$
G_n(x) = \bigcup_{q \in \hJ^{cu}_n(x)} \cW^s_\star(q,\sigma_n).$$
We now prove these equivalences, following the outline described above.

First, recall that $B_n(x)$ is a round $d_\star$-ball about $x$ of radius $\sigma_n$.
The forward implication in the first equivalence
is obvious from the definition of $B_n(x)$.
The backward implication follows from this definition and the fact that
the ratio $\sigma_{n+1}/\sigma_{n} = \sigma(p_n)$
of successive radii is less than $1$, and is
bounded away from both $0$ and $1$ independently
of $n$. From this we also see that $B_n(x)$ is regular.

The set $C_n(x)$ fibers over $D^{cs}_n(x)$, with fiber
$\cW^u_\star(x',\sigma_n)$ over $x'\in D^{cs}_n(x)$.
The sequence $D^{cs}_n(x)$ internests with the
sequence of disks $\hW^{cs}_\star(x, \sigma_n)$, by continuity
and transversality of the foliations $\hW^c$ and $\hW^s$.
Continuity and transversality of the foliations $\cW^u$ and
$\hW^{cs}$  then imply
that $C_n(x)$ and $B_n(x)$ are internested.

To prove the equivalence
$$\lim_{n\to\infty} m(X:C_n(x)) = 1 \Longleftrightarrow
              \lim_{n\to\infty} m(X:D_n(x)) = 1,$$
we note that $C_n(x)$ and $D_n(x)$ both fiber over $D^{cs}_n(x)$,
with $\cW^u$-fibers.  Since $X$ is  essentially $\cW^u$-saturated at $x$,
Proposition~\ref{p=compmeas} implies that it suffices to show that the
fibers of $C_n(x)$ and $D_n(x)$ are both $c$-uniform.  The fibers
of of $C_n(x)$ are easily seen to be uniform, because they are
all comparable to balls in $\cW^u$ of fixed radius $\sigma_n$.
The fibers of $D_n(x)$ are the unstable juliennes $J^u_n(x')$,
for $x'\in D^{cs}_n(x)$.  Uniformity of these fibers follows
from Proposition~\ref{p=julmeas}.

We next prove:

\begin{lemma} The sequences $D_n(x)$ and $E_n(x)$ are internested.
\end{lemma}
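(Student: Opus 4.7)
Both $D_n(x)$ and $E_n(x)$ fiber over the common base $D^{cs}_n(x)$, with corresponding fibers $J^u_n(q)=f^{-n}(\cW^u_\star(q_n,\tau_n))$ and $\hJ^u_n(q)=f^{-n}(\hW^u_\star(q_n,\tau_n))$ differing only in whether the true or the fake unstable foliation is used. To establish internesting I would exhibit a uniform integer $k$ for which $D_{n+k}(x)\subseteq E_n(x)$ and $E_{n+k}(x)\subseteq D_n(x)$ hold for every $n$.

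The comparison is cleanest after pushing forward by $f^n$ into the exponentially shrinking ball $B_\star(p_n,C\kappa^n)$ supplied by Lemma~\ref{l=kappa}. On this ball the tangent distributions $T\cW^u=E^u$ and $T\hW^u=\hE^u$ coincide at $p_n$: local invariance of $\hW^u$ combined with the Uniqueness property Proposition~\ref{p=fakenu}(5) at $p$ forces $T_{p_j}\hW^u=E^u(p_j)$ all along $\cO^+(p)$, by induction on $j$. Since both distributions are H\"older continuous with some exponent $\theta>0$, they differ by $O(\kappa^{n\theta})$ throughout $B_\star(p_n,C\kappa^n)$, so that the real and fake unstable leaves through any basepoint $q_n\in f^n(T_n)$ are $C^1$-close on the relevant scales.

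To establish $D_{n+k}(x)\subseteq E_n(x)$ I would fix $q\in D^{cs}_{n+k}(x)$ and $y\in J^u_{n+k}(q)$. Backward invariance and contraction of $\cW^u$ give $y_n\in\cW^u_\star(q_n,r_k)$ where $r_k:=\hat\nu_k(p_n)\tau_{n+k}\leq (\hat\nu\tau)_k(p_n)\,\tau_n$; the inequalities $\hat\nu\tau\prec\sigma\prec 1$ (a consequence of the choices~\eqref{e=sigmatauchoice}) imply that $r_k/\tau_n\to 0$ geometrically in $k$, uniformly in $n$. Transversality of $\hW^u$ and $\hW^{cs}$ in the foliation box around $p_n$ then supplies a unique $q'_n:=\hW^{cs}(q_n)\cap\hW^u(y_n)$, with $d_\star(q_n,q'_n)\leq C r_k$. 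For $k$ large enough that $C r_k$ fits within the $\sigma_n$-margin of $q_n$ inside $f^n(D^{cs}_n(x))$ --- a comparison which reduces, via the geometric decay of all the auxiliary cocycles, to a uniform choice of $k$ --- the pullback $q'=f^{-n}(q'_n)$ lies in $D^{cs}_n(x)$; and the $O(\kappa^{n\theta})$ angle between $E^u(q'_n)$ and $\hE^u(q'_n)$ together with $r_k\ll\tau_n$ forces $d_\star(q'_n,y_n)\leq\tau_n$, so $y\in\hJ^u_n(q')\subseteq E_n(x)$.

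The reverse inclusion $E_{n+k}(x)\subseteq D_n(x)$ would follow by the symmetric argument, with the roles of $\cW^u$ and $\hW^u$ interchanged and with Proposition~\ref{p=fakenu}(3b) supplying the backward contraction of $\hW^u$; taking $k$ to be the larger of the two thresholds would complete the proof. The main technical point --- and the only place I expect to need care --- is to ensure that both the angle mismatch $O(\kappa^{n\theta})$ and the perturbation $r_k$ are simultaneously absorbed by the buffers in the fiber radius $\tau_n$ and in the base radius $\sigma_n$, but this is automatic from the geometric decay of the auxiliary cocycles built into~\eqref{e=sigmatauchoice}.
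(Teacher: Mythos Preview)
Your approach is essentially the paper's: fiber over $D^{cs}_n(x)$, compare the real and fake unstable ``julienne'' fibers after iteration, and use transversality to switch foliations. Two points need correction.

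First, the H\"older angle estimate $O(\kappa^{n\theta})$ is a red herring. To bound $d_\star(q'_n,y_n)$ you only need that $\hW^u$ is uniformly transverse to $\hW^{cs}$ (both lie in fixed $\epsilon$-cones by Proposition~\ref{p=fakenu}(1)): since $q_n\in\hW^{cs}(x_n)$ and $d_\star(y_n,q_n)\leq r_k$, transversality alone gives $d_\star(y_n,q'_n)=O(r_k)$, which is $\leq\tau_n$ for $k$ large. The paper uses exactly this.

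Second, and more seriously, your base inclusion argument has a level mismatch. You assert that ``$Cr_k$ fits within the $\sigma_n$-margin of $q_n$ inside $f^n(D^{cs}_n(x))$'', but $\sigma_n$ is the radius defining $D^{cs}_n(x)$ at level~$0$, while $Cr_k$ is a distance measured at level~$n$; the set $f^n(D^{cs}_n(x))$ is highly distorted (contracted by $\nu_n$ in the $\hW^s$ direction, stretched by up to $\hat\gamma_n^{-1}$ in the $\hW^c$ direction), so no direct comparison is possible there. The paper instead pulls back to level~$0$: since $y_n$ and $q'_n$ lie on the same $\hW^u$-leaf, backward contraction (Proposition~\ref{p=fakenu}(3b)) gives $d_\star(y,q')=O(\hat\nu_n\tau_n)$, and similarly $d_\star(y,q)=O(\hat\nu_n\tau_n)$; the triangle inequality and the key relation $\hat\nu\tau\prec\sigma$ then yield $d_\star(q,q')=o(\sigma_n)$, which places $q'$ in $D^{cs}_n(x)$. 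This pull-back is the step your write-up is missing.
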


\begin{proof} 
Recall that
$$D_n(x) = \bigcup_{q\in D^{cs}_n(x)} J^u_n(q),\quad \hbox{ and }\quad
E_n(x) = \bigcup_{q\in D^{cs}_n(x)} \hJ^u_n(q).$$
Internesting of the sequences $D_n(x)$ and $E_n(x)$ means that there is
a $k\geq 0$ such that, for all $n\geq k$,
$$D_n(x)\subseteq E_{n-k}(x) \quad \hbox{ and }\quad E_n(x)\subseteq D_{n-k}(x).$$
We will show that there is a $k$ for which the first inclusion holds.  Reversing the roles of of $\cW^u$ and $\hW^u$ in the proof gives the second
inclusion.

Suppose $y\in D_n(x)$. Then $y\in  J^u_n(q) = f^{-n}(\cW^u_\star(q_n,\tau_n))$, for
some $q\in D^{cs}_n(x)$; in particular,
\begin{equation}\label{e=firstau}
d_\star(y_n,q_n) = O(\tau_n).
\end{equation}
  Let $\hat q$
be the unique point of intersection of
$\hW^u(y)$ with $\hW^{cs}(x)$.   We will show that $y\in E_{n-k}(x)$,
for some $k$ that is independent of $n$.  In order to do this, it
suffices to show that $\hat q\in D^{cs}_{n-k}(x)$ and
$y\in \hJ^u_{n-k}(\hat q) =   f^{-(n-k)}(\hW^u_\star(\hat q_{n-k},\tau_{n-k}))$.

In order to prove that $\hat q\in D^{cs}_{n-k}(x)$ it will suffice to show that
\begin{equation}\label{e=sigman}
d_\star(q,\hat q) = o(\sigma_n)
\end{equation}
(in fact, $O(\sigma_n)$ would suffice, but the argument gives $o(\sigma_n)$).
In order to prove that $y\in \hJ^u_{n-k}(\hat q)$ it will suffice to show
that
\begin{equation}\label{e=taun}
d_\star(y_n,\hat q_n) = O(\tau_n).
\end{equation}

Equation (\ref{e=sigman}) follows easily from (\ref{e=taun}).
Since $y_n$ and $\hat q_n$ lie in the same $\hW^u$
leaf, Proposition~\ref{p=fakenu} and (\ref{e=taun}) imply
that
\begin{equation}\label{e=taun2}
d_\star(y, \hat q) = O(\hat\nu_n \tau_n) = o(\sigma_n),
\end{equation}
since $\hat\nu \tau \prec \sigma$. Similarly,  Proposition~\ref{p=fakenu} and
(\ref{e=firstau}) imply that
\begin{equation}\label{e=taun3}
d_\star(y, q) = o(\sigma_n).
\end{equation}
Applying the triangle inequality to (\ref{e=taun2}) and
(\ref{e=taun3}) gives (\ref{e=sigman}).

It remains to prove (\ref{e=taun}).
Recall from the construction of
the fake foliations in Proposition~\ref{p=fakenu} that,
 at any point $z$ in the neighborhood $\cN_{\bf r}$ of the orbit
of $p$ in which the fake foliations are defined, the
tangent space  $T_z\hW^u(z)$ lies in the
$\eps$-cone about $T_z \cW^u(z) = E^u(z)$.  Furthermore,
the angle between $T_z\hW^{cs}(z)$ and  either $T_z\hW^u(z)$
or $T_z\cW^u(z)$ is uniformly bounded away from $0$.  Note that
$\hat q_n$ is the unique point in $\hW^u(y_n)\cap \hW^{cs}(x_n)$ and
$q_n$ is the unique point in $\cW^u(y_n) \cap \hW^{cs}(x_n)$;
combining this with (\ref{e=firstau}) gives:
$$d_\star(y_n, \hat q_n) = O (d_\star(y_n, q_n)) = O(\tau_n).$$
This completes the proof.
\end{proof}

We next show:
\begin{lemma} $E_n(x)$ and $F_n(x)$ are internested, as
are $F_n(x)$ and $G_n(x)$.
\end{lemma}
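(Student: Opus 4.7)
My plan is to verify both internestings by exploiting the structure of $F_n(x)$ as a ``foliation product'' and reducing everything to previously established tools: Proposition~\ref{p=inclusion} (julienne quasi-invariance under $\cW^s$-holonomy) for the comparison with $E_n(x)$, and uniform transversality of $\cW^s$ and $\hW^{cu}$ together with the geometric decay of $\sigma_n$ for the comparison with $G_n(x)$.

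For $E_n$ versus $F_n$, I would regroup the defining union of $F_n(x)$: for each fixed $q'\in\cW^s_\star(x,\sigma_n)$, the set $\bigcup_{q\in\hJ^{cu}_n(x)} \cW^s(q)\cap \hW^{cu}(q')$ equals $h^s_{x,q'}\bigl(\hJ^{cu}_n(x)\bigr)$, where $h^s_{x,q'}\colon\hW^{cu}(x)\to \hW^{cu}(q')$ denotes the $\cW^s$-holonomy. Hence
$$
F_n(x) \;=\; \bigcup_{q'\in \cW^s_\star(x,\sigma_n)} h^s_{x,q'}\bigl(\hJ^{cu}_n(x)\bigr),
$$
which shares the indexing base $\cW^s_\star(x,\sigma_n)$ with $E_n(x)=\bigcup_{q'\in\cW^s_\star(x,\sigma_n)}\hJ^{cu}_n(q')$. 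Applying Proposition~\ref{p=inclusion} fiber-by-fiber yields an integer $k$, uniform in $q'$, such that $h^s_{x,q'}(\hJ^{cu}_n(x))$ and $\hJ^{cu}_n(q')$ are $k$-internested; taking unions over $q'$ then gives the internesting of $E_n$ and $F_n$.

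For $F_n$ versus $G_n$, I would slice both sets by individual $\cW^s$-leaves. For each $q\in\hJ^{cu}_n(x)$: $G_n(x)\cap \cW^s(q)=\cW^s_\star(q,\sigma_n)$, while $F_n(x)\cap \cW^s(q)=\pi_q\bigl(\cW^s_\star(x,\sigma_n)\bigr)$, where $\pi_q\colon \cW^s(x)\to \cW^s(q)$ is the $\hW^{cu}$-holonomy (note $\pi_q(x)=q$ because $q\in\hW^{cu}(x)$). Provided $\pi_q$ distorts $d_\star$-distances at $x$ by a uniformly bounded factor, there will be constants $0<c_1<c_2$, independent of $q$ and of large $n$, with
$$
\cW^s_\star(q,c_1\sigma_n)\;\subseteq\; F_n(x)\cap \cW^s(q)\;\subseteq\; \cW^s_\star(q,c_2\sigma_n).
$$
Since $\sigma\prec 1$ yields $\sigma_{n+k}/\sigma_n\le \lambda^k$ for some $\lambda<1$, choosing $k$ so that $c_2\lambda^k<1$ and $\lambda^k<c_1$, and using $\hJ^{cu}_{n+k}(x)\subseteq \hJ^{cu}_n(x)$, gives $F_{n+k}(x)\subseteq G_n(x)$ and $G_{n+k}(x)\subseteq F_n(x)$.

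The main obstacle I expect is establishing the bi-Lipschitz-type distortion bound for $\pi_q$ uniformly in $q\in\hJ^{cu}_n(x)$. Since $\hW^{cu}$ has only H\"older transverse regularity (Proposition~\ref{p=fakenu}, item~6), a crude estimate gives only H\"older bounds on $\pi_q$, which are insufficient to compete with the geometric decay of $\sigma_n$. The remedy is a cancellation: the deviation $\pi_q(y)-q-(y-x)$ factors as a H\"older modulus in $y-x$ times the ``horizontal'' scale of $q$ within $\hJ^{cu}_n(x)$, and both factors shrink geometrically with $n$, so the error is dominated by the linear term for all $n$ large enough. Uniformity of $c_1,c_2$ in $q$ (and in $x$ as it ranges over $\cW^s_\star(p,1)$) follows from the uniform cone and regularity bounds of Proposition~\ref{p=fakenu}; in spirit the argument parallels Lemma~\ref{l=lipschitz}.
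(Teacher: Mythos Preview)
Your treatment of $E_n$ versus $F_n$ is exactly the paper's: both sets fiber over $\cW^s_\star(x,\sigma_n)=\hW^s_\star(x,\sigma_n)$, the fiber of $E_n$ over $x'$ being $\hJ^{cu}_n(x')$ and that of $F_n$ being $h^s_{x,x'}(\hJ^{cu}_n(x))$, and Proposition~\ref{p=inclusion} gives the internesting fiber by fiber.

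For $F_n$ versus $G_n$ your structural setup (both sets fiber over $\hJ^{cu}_n(x)$ with $\cW^s$-fibers $\pi_q(\cW^s_\star(x,\sigma_n))$ and $\cW^s_\star(q,\sigma_n)$) also matches the paper, but you are overestimating the difficulty of the key estimate. The distortion bound you need on $\pi_q$ does \emph{not} require transverse regularity of $\hW^{cu}$, nor any cancellation argument in the spirit of Lemma~\ref{l=lipschitz}; it follows from the $\epsilon$-cone condition in Proposition~\ref{p=fakenu}(1) alone. Take $q\in\hJ^{cu}_n(x)$ and $x'$ on the boundary of $\cW^s_\star(x,\sigma_n)$, and set $q'=\pi_q(x')\in\cW^s(q)\cap\hW^{cu}(x')$. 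In exponential coordinates at $p$ (so $x=0$), decompose vectors along $E^{cu}\oplus E^s$. Since $q'-q$ lies along a $\cW^s$-leaf, $|(q'-q)_{cu}|\le\epsilon\,|q'-q|$; since $q-x$ and $q'-x'$ lie along $\hW^{cu}$-leaves of length $O(\sigma_n)$ (for $q'-x'$ this is Proposition~\ref{p=inclusion}, or another transversality step), one has $|q_s|,|(q'-x')_s|=O(\epsilon\sigma_n)$; and since $d_\star(x,x')=\sigma_n$ along $\cW^s$, $|x'_s|=\sigma_n(1+O(\epsilon))$. Hence
\[
|(q'-q)_s|=|x'_s+(q'-x')_s-q_s|=\sigma_n\bigl(1+O(\epsilon)\bigr),
\]
so $d_\star(q,q')=\sigma_n(1+O(\epsilon))$, which yields your constants $c_1,c_2$ uniformly in $q$ and $n$. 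Your proposed H\"older-cancellation remedy would also work but is unnecessary machinery.

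The paper argues the same point more elliptically: it notes (via Proposition~\ref{p=inclusion}) that $q'\in\hJ^{cu}_n(x')$, that the juliennes have diameter $O(\sigma_n)$, and that $d_\star(x,x')=\sigma_n$, and concludes $\sigma_{n+k}\le d_\star(q,q')\le\sigma_{n-k}$. The upper bound is the triangle inequality; the lower bound cannot come from the triangle inequality alone (the julienne diameters are $\Theta(\sigma_n)$, not $o(\sigma_n)$) and is tacitly the transversality computation above.
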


\begin{proof} 
The sets $E_n(x)$ and $F_n(x)$ both fiber over the
same base $\hW^s_\star(x,\sigma_n)$.  The fibers of $E_n(x)$ are the
$cu$-juliennes$\hJ^{cu}_n(x')$, for $x'\in \hW^s(x,\sigma_n)$.
the fibers of $F_n(x)$ are images of $\hJ^{cu}_n(x)$ under
$\cW^s$-holonomy from $\hW^{cu}(x)$ to $\hW^{cu}(x')$, for
$x'\in \hW^s_\star(x,\sigma_n)$.  It follows immediately from
Proposition~\ref{p=inclusion}
that the sequences $E_n(x)$ and
$F_n(x)$ are internested.

To see that $F_n(x)$ and
$G_n(x)$ are internested, suppose that
$q'$ lies in the boundary of the fiber of $F_n(x)$ that lies in
$\cW^s(q)$
for some $q \in \hJ^{cu}_n(x)$.
Then $q' \in \hJ^{cu}_n(x')$ for a point $x'$ that lies in
the boundary of $\cW^s_\star(x,\sigma_n)$. The diameters of
$\hJ^{cu}_n(x)$ and $\hJ^{cu}_n(x')$
are both $O(\sigma_n)$, and $d_\star(x,x') = \sigma_n$.
Hence, if $k$ is large enough, we will have
 $$
\sigma_{n+k} \leq d_\star(q,q') \leq {\sigma_{n-k}}.
$$
Thus all points on the boundary of the fiber of $F_n(x)$  in $\cW^s_{loc}(q)$
lie outside $\cW^s_\star(q,\sigma_{n+k})$ and inside $\cW^s_\star(q,\sigma_{n-k})$.
\end{proof}

We now know that any two of
$D_n(x), E_n(x), F_n(x)$ and $G_n(x)$ are internested.
As discussed above, to prove the fourth through sixth
equivalences, it now suffices to show:

\begin{lemma}\label{l=gnreg}
The sequence $G_n(x)$ is regular for each $x \in \cW^s(p,1)$.
\end{lemma}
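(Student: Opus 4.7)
The plan is to apply Proposition~\ref{p=unifreg}(1) to the sequence $G_n(x)$, with $\cF = \cW^s$ and the $C^1$ submanifold $\hW^{cu}(x)$ playing the role of the transversal. Since $G_n(x)$ fibers by its very construction over $\hJ^{cu}_n(x) \subset \hW^{cu}(x)$ with $\cW^s$-fibers $\cW^s_\star(q, \sigma_n)$, it suffices to verify the two hypotheses of that proposition: regularity of the base sequence and $c$-uniformity of the fibers.

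Regularity of the bases $\hJ^{cu}_n(x)$ in $\hm_{cu}$ is exactly the last conclusion of Proposition~\ref{p=julmeas}, so no further work is needed for this step. For the $c$-uniformity of the fibers, I will combine two ingredients: first, that the stable foliation $\cW^s$ has $C^\infty$ leaves with uniformly bounded geometry on the compact manifold $M$, so the induced Riemannian volume $m_{\cW^s}(\cW^s_\star(y,r))$ is comparable to $r^{\dim E^s}$ with constants independent of $y$ and of $r$ (for $r$ small); and second, that by the choice of $\sigma$ in \eqref{e=sigmatauchoice}, the ratio $\sigma_{n+1}/\sigma_n = \sigma(p_n)$ is bounded away from both $0$ and $1$. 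The first observation gives uniformity across points $q \in \hJ^{cu}_n(x)$ at each fixed level, and the combination yields the required comparison between consecutive levels (take $z = z' = x$, for instance, noting that $x$ lies in every $\hJ^{cu}_n(x)$).

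The only mild obstacle is that Proposition~\ref{p=unifreg} is formally stated for smooth transversals, whereas $\hW^{cu}(x)$ is only $C^1$. By Proposition~\ref{p=fakenu}, however, its tangent distribution lies in a small cone about $E^{cu}$ and is H\"older continuous with uniform bounds, so $\hW^{cu}(x)$ is uniformly transverse to $\cW^s$ with controlled $C^1$ geometry. The standard proof of absolute continuity of $\cW^s$ then yields bounded Jacobians for the $\cW^s$-holonomy between a smooth transversal and $\hW^{cu}(x)$, so the conclusion of Proposition~\ref{p=unifreg}(1) extends verbatim to this setting. Handling this $C^1$-versus-smooth discrepancy is the only step requiring real care; once it is in place, the proposition immediately delivers the desired regularity of $G_n(x)$.
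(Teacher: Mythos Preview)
Your proof is correct and follows essentially the same approach as the paper: fiber $G_n(x)$ over the base $\hJ^{cu}_n(x)$ with $\cW^s$-fibers, invoke Proposition~\ref{p=julmeas} for regularity of the base, use the boundedness of $\sigma_{n+1}/\sigma_n$ (together with uniform leaf geometry) for the fiber estimates, and conclude via Proposition~\ref{p=unifreg}(1). Your added remark about the transversal $\hW^{cu}(x)$ being only $C^1$ rather than smooth is a fair point of care that the paper leaves implicit; since the paper already applies Proposition~\ref{p=unifreg} with $\hW^{cu}$ leaves as transversals elsewhere (e.g., in the proof of the last claim of Proposition~\ref{p=julmeas}), this is treated as a routine extension.
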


\begin{proof}
The set
$$G_n(x) = \bigcup_{q\in \hJ^{cu}_n(x)} \cW^s_\star(q,\sigma_n)$$
fibers over $\hJ^{cu}_n(x)$, with $\cW^s$-fibers
$\cW^s_\star(q,\sigma_n)$.  Since $\cW^s$ is absolutely continuous,
Proposition~\ref{p=unifreg} implies that regularity of $G_n(x)$ follows from
regularity of the base sequence and fiber sequence.
Proposition~\ref{p=julmeas} implies that the sequence
$\hJ^{cu}_n(x)$ is regular in the induced measure $\hm_{cu}$.
As we remarked above, the ratio
$\sigma_{n+1}/\sigma_{n} = \sigma(p_n)$ is uniformly bounded below away
from $0$.  Consequently, the ratio
$$\frac{m_s(\cW^s_\star(q,\sigma_{n+1}))}{m_s(\cW^s_\star(q,\sigma_{n}))}$$
is bounded away $0$, uniformly in $x, q$, and $n$.
The regularity of $G_n(x)$ now follows from Proposition~\ref{p=unifreg}.
\end{proof}

To prove the final equivalence, we use the fact that
$G_n(x)$ fibers over $\hJ^{cu}_n(x)$ with $c$-uniform fibers
and apply Proposition~\ref{p=compmeas2}.  Here we use the
fact that $X$ is $\cW^s$-saturated.
This completes the proof of Proposition~\ref{p=sliceden}. 
\end{proof}


\section{Cocycle Saturation} \label {s.jimmy}

We now explain a generalization of Theorem~\ref{t.nonunifbw} involving
saturation properties of sections.  This brings the results of
\cite{ASV} into the nonuniform setting. We review the notations
from \cite{ASV}.  In this discussion $M$ denotes a closed manifold
and $f\colon M\to M$ a partially hyperbolic diffeomorphism.

A Hausdorff topological space $P$ is {\em refinable} if there exists
an increasing sequence of countable partitions $\cQ_1\prec \cQ_2\prec\cdots\prec \cQ_n\prec \cdots$
into measurable sets such that any sequence $(Q_n)_{n\in\N}$ with $Q_n\in\cQ_n$
and $\bigcap Q_n \neq \emptyset$ converges to a point $\eta\in P$ in the sense
that every neighborhood of $\eta$ contains all $Q_n$ for $n$ sufficiently large.
Every separable metric space is refinable.

We shall consider continuous fiber bundles $\cX$ over $M$ with fiber a Hausdorff
topological space $P$. Such a fiber bundle is {\em refinable} if
$P$ is refinable.

A fiber bundle $\pi\colon \cX\to M$ {\em has stable and unstable holonomies} if,
for every $x,y\in M$ with $y\in \cW^\ast(x)$ and $\ast\in\{u,s\}$, there exists
a homeomorphism $h^\ast_{x,y}\colon \pi^{-1}(x)\to \pi^{-1}(y)$ with the following
properties:
\begin{enumerate}
\item $h^\ast_{x,x} = Id_{\pi^{-1}(x)}$, and $h^\ast_{y,z}\circ h^\ast_{x,y} = h^\ast_{x,z}$;
\item the map $(x,y,\eta,d_\ast(x,y))\mapsto h^\ast_{x,y}(\eta)$ is continuous
on its domain (a subset of $M \times M \times \cX \times [0,\infty)$), where
$d_\ast(x,y)$ stands for the distance between $x$ and $y$ in
$\cW^\ast(x)$.\footnote {This can be reformulated, in view of (1), as
requiring that $(x,y,\eta) \mapsto h^*_{x,y}(\eta)$ is continuous
when we restrict $x$ and $y$ to belong to {\it local} $\cW^*$ leaves.}
\end{enumerate}
Our main result concerns the saturation properties of sections
of refinable bundles with stable and unstable holonomies.  In
analogy with the definition of stable saturated set, we say that
a section $\Psi\colon M\to\cX$ is {\em $h^s$-saturated} if,
for every $x\in M$ and $y\in \cW^s(x)$:
$$\Psi(y)  = h^s_{x,y}(\Psi(x)).$$
We similarly define {\em $h^u$-saturated} sections (the terms
{\em $s$-invariant} and {\em $u$-invariant} are used in \cite{ASV}). A section
is {\em bisaturated} if it is both $h^s$- and $h^u$-saturated.
A section $\Psi$ is {\em bi~essentially saturated} if there exist
an $h^s$-saturated section $\Psi^s$ and a $h^u$-saturated section
$\Psi^u$ such that $\Psi = \Psi^s = \Psi^u$ almost everywhere with respect
to volume on~$M$.

\medskip

\noindent{\bf Examples:}
\begin{enumerate}
\item Let $\cX = M\times\{0,1\}$ and set $h^\ast_{x,y}(\eta) = \eta$. In this trivial example,
if $A\subset M$ is
a ($\cW^s$/$\cW^u$/bi) - saturated set, then $x\mapsto (x,1_A(x))$ is an ($h^s$/$h^u$/bi) - saturated
section. If $A$ is bi~essentially saturated, then so is the associated section.

\item (cf.~\cite{Wi}, Proposition 4.7) Every H\"older-continuous function $\psi\colon M\to \R$ determines
stable and unstable holonomy maps on the bundle $M\times \R$,
invariant under the skew product $(x,\eta)\mapsto (f(x), \eta + \psi(x))$.

If $\Psi\colon M\to \R$ is a 
continuous solution to the cohomological equation
\begin{eqnarray}\label{e.cohom}
\psi = \Psi\circ f - \Psi,
\end{eqnarray}
then $\Psi$ is a bisaturated section.
Moreover, if  $f$ is $C^2$, volume-preserving and ergodic,
$\Psi\colon M\to \R$ is measurable, and the
equation (\ref{e.cohom}) holds almost everywhere with respect to volume,
then $\Psi$ is a bi~essentially saturated section.

\item (cf.~\cite{ASV}) Let $A\colon M\to SL(n,\R)$ be a H\"older-continuous
matrix-valued cocycle.  If this cocycle is dominated (in the
sense of \cite{ASV}), then it determines 
in a natural way stable and unstable holonomies on the refinable fiber bundle
$\cX = M\times \cM(\R P^{n-1})$, where $\cM(\R P^{n-1})$ is the space
of probability measures on the projective space $\R P^{n-1}$.

Suppose that  the Lyapunov exponents
of $A_n(f) = (A\circ f^{n-1}) (A\circ f^{n-1})\cdots A$ vanish almost everywhere.  
Then $A$ determines a bi~essentially saturated
section of the bundle $\cX$. These results are proved in \cite{ASV}
and used to show that the generic such cocycle over
an accessible, center-bunched partially hyperbolic
diffeomorphism has a nonvanishing exponent.
\end{enumerate}

Our main result expands Theorem~\ref{t.nonunifbw} to include bi~esentially saturated
sections. Following \cite{ASV}, we introduce an analogue for measurable
sections of the notion of density point for measurable sets.

Let $\pi\colon \cX\to M$ be a refinable bundle.
We say that $p\in M$ is a {\em point of measurable continuity} for
a section $\Psi\colon M\to \cX$, if there exists $\eta \in \cX$
such that $p$ is a Lebesgue density point of $\Psi^{-1}(V)$, for
every open neighborhood $V$ of $\eta$ in $\cX$.  If such an $\eta$ exists,
it is unique, and is called the {\em density value} of $\Psi$
at $p$.

Let $MC(\Psi)$ be the set of points of measurable continuity
of $\Psi$.  We define a measurable section $\tilde\Psi\colon MC(\Psi)\to \cX$
by setting $\tilde\Psi(p)$ to be the density value of $\Psi$ at $p$.
Then $MC(\Psi)$ has full volume in $M$, and
$\tilde\Psi = \Psi$ almost everywhere, with respect to volume (see
Lemma 7.10, \cite{ASV}).

%
%


\begin{thm}[cf. Theorem 7.6, \cite{ASV}]\label{t.nonunifjimmy} 
Let $f$ be $C^2$
and partially hyperbolic, and let $\cX$ be a refinable fiber bundle
with stable and unstable holonomies.

Then, for any bi~essentially saturated
section $\Psi\colon M\to \cX$:
\begin{enumerate}
\item $ MC(\Psi)\cap \CB^{+}$ is $\cW^s$-saturated, and  the restriction of $\tilde\Psi$ to $MC(\Psi)\cap CB^{+}$ 
is $h^s$-saturated;
\item $MC(\Psi)\cap \CB^{-}$ is $\cW^u$-saturated, and  the restriction of $\tilde\Psi$ to $MC(\Psi)\cap \CB^{-}$ 
is $h^u$-saturated;
\end{enumerate}

\end{thm}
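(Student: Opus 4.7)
The strategy is to adapt the proof of Theorem~7.6 of \cite{ASV} to the nonuniform setting, using Proposition~\ref{p.holonomyinv} (invariance of $cu$-julienne density points under $\cW^s$-holonomy) as the key technical device in place of the uniform julienne machinery from \cite{ASV}. By symmetry it suffices to prove~(1). Since $CB^+$ is already $\cW^s$-saturated (Proposition~\ref{p.CB is sat}), what remains is the section-level statement: for $p\in MC(\Psi)\cap CB^+$ with density value $\eta=\tilde\Psi(p)$, and any $q\in\cW^s(p)$, we want $q\in MC(\Psi)$ and $\tilde\Psi(q)=h^s_{p,q}(\eta)$. Since stable leaves are path connected and the bundle holonomies satisfy $h^s_{y,z}\circ h^s_{x,y}=h^s_{x,z}$, a chaining argument reduces this to the local version: for every $p'\in\cW^s(p,1)$, one has $p'\in MC(\Psi)$ and $\tilde\Psi(p')=h^s_{p,p'}(\eta)$.

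To establish the local version, let $\Psi^s$ and $\Psi^u$ be the $h^s$- and $h^u$-saturated representatives of $\Psi$, and set $\eta'=h^s_{p,p'}(\eta)$. Using refinability of the fiber, showing $\tilde\Psi(p')=\eta'$ reduces to proving, for every open neighborhood $V'$ of $\eta'$, that $p'$ is a Lebesgue density point of $\Psi^{-1}(V')$. By continuity of the bundle holonomy, choose a small open $V\ni\eta$ contained in a local trivialization over a common neighborhood of $p$ and $p'$, small enough that $h^s_{x,y}(V\cap\pi^{-1}(x))\subseteq V'$ whenever $x$ is near $p$ and $y\in\cW^s(x)$ is near $p'$. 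Write $X=\Psi^{-1}(V)$; the hypothesis $p\in MC(\Psi)$ with $\eta\in V$ implies that $p$ is a Lebesgue density point of $X$, and $X$ equals mod~$0$ both $(\Psi^s)^{-1}(V)$ and $(\Psi^u)^{-1}(V)$, each of which enjoys a twisted saturation property: its $\cW^s$- (resp.\ $\cW^u$-) holonomy image within the trivializing chart lies in $\Psi^{-1}(V')$ mod~$0$.

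The transfer of density from $p$ to $p'$ is then carried out essentially as in the proof of Theorem~\ref{t.nonunifbw}. The chain of equivalences $B_n\sim C_n\sim\cdots\sim G_n\sim\hJ^{cu}_n$ from the proof of Proposition~\ref{p=sliceden} shows that $p$ is a $cu$-julienne density point of $X$; applying Proposition~\ref{p.holonomyinv} to $Y=X\cap\hW^{cu}(p)$ transfers this to $p'$, and by our choice of $V$ and $V'$ the $\cW^s$-holonomy image $Y'\subseteq\hW^{cu}(p')$ of $Y$ is contained mod~$0$ in $\Psi^{-1}(V')\cap\hW^{cu}(p')$; running the chain of equivalences in reverse at $p'$ converts $cu$-julienne density into Lebesgue density of $\Psi^{-1}(V')$ at $p'$, as required. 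The main obstacle is the bookkeeping needed for this twisted saturation: the chain in Proposition~\ref{p=sliceden} is stated for sets that are literally $\cW^s$-saturated and essentially $\cW^u$-saturated, whereas here we only have saturation up to the bundle holonomies $h^s$, $h^u$, so one must verify that the arguments based on absolute continuity of $\cW^s$ and $\cW^u$, on local products, and on julienne geometry are robust enough to pass through this twist, so that the density statements at $p$ and $p'$ genuinely refer to $\Psi^{-1}(V)$ and $\Psi^{-1}(V')$ respectively.
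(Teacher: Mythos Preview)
Your approach is correct and essentially parallels the paper's, but the ``obstacle'' you flag in the last paragraph is illusory, and the paper's packaging dissolves it neatly. The point you are missing is that $(\Psi^s)^{-1}(V)$ \emph{is} genuinely $\cW^s$-saturated at $p$ in the sense required by Proposition~\ref{p=sliceden}: if $z$ is near $p$ with $\Psi^s(z)\in V$ and $z'\in\cW^s(z)$ is near $z$, then $\Psi^s(z')=h^s_{z,z'}(\Psi^s(z))$ is close to $\Psi^s(z)$ by continuity of the bundle holonomy, hence still in the open set $V$. Likewise $(\Psi^u)^{-1}(V)$ is $\cW^u$-saturated at $p$. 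So the chain of equivalences in Proposition~\ref{p=sliceden} applies verbatim to $(\Psi^s)^{-1}(V)$ at $p$ and to $(\Psi^s)^{-1}(V')$ at $p'$; no robustness check of the julienne machinery under a ``twist'' is needed.

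The paper organizes the transfer step slightly differently, in a way that makes the $\cW^s$-saturation visible along the entire local stable leaf at once rather than separately at $p$ and $p'$. Given a neighborhood $V$ of $\eta'=h^s_{p,p'}(\eta)$, it introduces the $\cF^s$-tube $N_\delta=\bigcup_{z\in B(\eta,\delta)}\cF^s(z,1)$ and chooses $\eps,\delta$ so that $\pi^{-1}(B(p',\eps))\cap N_\delta\subset V$. Because $N_\delta$ is saturated by local $\cF^s$-leaves, the set $(\Psi^s)^{-1}(N_\delta)$ is saturated by local $\cW^s(\cdot,1)$-leaves; thus Proposition~\ref{p.holonomyinv} moves $cu$-julienne density of this \emph{single} set from $p$ to $p'$ with no change of set, and then one localizes near $p'$ to land inside $(\Psi^s)^{-1}(V)$. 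This is exactly your $V\rightsquigarrow V'$ manoeuvre, repackaged so that the saturation hypothesis of Proposition~\ref{p=sliceden} is manifestly met.
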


\begin{proof} The proof follows the same lines as
Theorem 7.6 in \cite{ASV}.  The proof there adapts the
proof of the main result in \cite{Burns Wilk}, and we correspondingly
adapt the proof of Theorem~\ref{t.nonunifbw} here.

We first prove the theorem under the assumption that the bundle
$\cX$ has stable and unstable holonomies.
We prove the first part of the theorem; the second part follows from the first,
replacing $f$ by $f^{-1}$.
Let $\pi\colon \cX\to M$ be a refinable bundle with stable
and unstable holonomies.
The holonomy  maps $h^s$ and $h^u$ define foliations 
$\cF^s$ and $\cF^u$ of $\cX$; the leaf of $\cF^\ast$ through
a point $\eta\in \cX$ is:
$$
\cF^\ast(\eta) = \{ h^\ast_{\pi(\eta), y}(\eta)\colon y\in \cW^\ast(\pi(\eta))\}.
$$
We similarly define for $r>0$ the local leaf: 
$$\cF^\ast(\eta,r)= \{ h^\ast_{\pi(\eta), y}(\eta)\colon y\in \cW^\ast(\pi(\eta),r)\}.
$$
Observe that a section $\Phi$ is
$\ast$-saturated if and only if its image $\Phi(M)\subset \cX$ is a union of whole leaves
of $\cF^\ast$.


Fix a bi~essentially saturated section $\Psi\colon M\to \cX$.
Recall that bi~essential saturation of $\Psi$ means that there
exist an $h^s$-saturated section $\Psi^s$ and a
$h^u$-saturated section $\Psi^u$ such that
$\Psi^s=\Psi^u=\Psi$ almost everywhere.  

Fix $x\in MC(\Psi)\cap CB^{+}$, and let $\eta = \tilde\Psi(x)$ be 
the density value of $\Psi$ at $x$.
Note that $\eta$ is also a density value for $\Psi^s$ and $\Psi^u$.
We will show that for every $y\in \cW^s(x,1)$, $h^s_{x,y}(\eta)$
is a (the) density value of $\Psi$ at $y$. Since
$CB^+$ is $\cW^s$-saturated, this will simultaneously establish
that $MC(\Psi)\cap CB^{+}$
is $\cW^s$-saturated and that the restriction of $\tilde\Psi$
to $MC(\Psi)\cap CB^+$ is $h^s$-saturated.

To this end, fix $y\in \cW^s(x,1)$, and 
let $V$ be a neighborhood of $h^s_{x,y}(\eta)$ in $\cX$.
Note that $h^s_{x,y}(\eta)$ lies on the local leaf $\cF^s(\eta, 1)$.
To show that $h^s_{x,y}(\eta)$ is a density value for 
$\Psi$ at $y$, we must show that $y$ is a density point of $\Psi^{-1}(V)$.

Continuity of the stable holonomy maps in $\cX$ and
stable saturation of $\Psi^s$ together imply that $(\Psi^s)^{-1}(V)$ is  
$\cW^s$-saturated at $y$; recall this means that there exist $0<\delta_0<\delta_1$
such that for any $z\in B(y,\delta_0) \cap (\Psi^s)^{-1}(V)$, we have $\cW^s(z,\delta_1)\subset \Psi_s^{-1}(V)$.
Similarly, $(\Psi^u)^{-1}(V)$ is   $\cW^u$-saturated at $y$,
and so  $\Psi^{-1}(V)$ is  bi~essentially saturated at $y$.

Fix $\eps>0$ and $\delta>0$ such that 
$\pi^{-1}(B(y,\eps)) \cap N_\delta \subset V$,
where
$$N_\delta = \bigcup_{z\in B(\eta,\delta)} \cF^s(z,1)$$
is the union of the local $\cF^s$ leaves through
$B(\eta, \delta )$ in $\cX$.  Since $N_\delta$
is saturated by local $\cF^s(\mathord{\cdot},1)$ leaves, and the section
$\Psi^s$ is $h^s$-saturated, it follows that the set $(\Psi^s)^{-1}(N_\delta)$
is saturated by local $\cW^s(\mathord{\cdot},1)$-leaves.
The set $\Psi^{-1}(B(\eta, \delta ))$ is  bi~essentially saturated
at $x$
and coincides mod~$0$ with the set $(\Psi^s)^{-1}(B(\eta, \delta))$,
which is  $h^s$-saturated at $x$.
Since $x\in MD(\Psi)$, it is a Lebesgue density point of  $\Psi^{-1}(B(\eta, \delta ))$.
But $x$ is also an element of $CB^+$, and so
Proposition~\ref{p=sliceden} implies that $x$ is a $cu$-julienne density point of
$(\Psi^s)^{-1}(B(\eta, \delta ))$, and hence of $(\Psi^s)^{-1}(N_\delta)$ as well.

Now, since  $(\Psi^s)^{-1}(N_\delta)$
is saturated by local $\cW^s(\mathord{\cdot},1)$-leaves, and $x\in CB^+(f)$,
Proposition~\ref{p.holonomyinv}
implies that $y$ is also a $cu$-julienne density point of 
$(\Psi^s)^{-1}(N_\delta)$.  Thus $y$ is a $cu$-julienne
density point of $B(y,\eps)\cap (\Psi^s)^{-1}(N_\delta)$.
But 
$$B(y,\eps)\cap (\Psi^s)^{-1}(N_\delta) = (\Psi^s)^{-1}(\pi^{-1}(B(y,\eps)) \cap N_\delta) \subset (\Psi^s)^{-1}(V);$$ 
since the latter set is  $\cW^s$-saturated and  essentially $\cW^u$-saturated at $y$,
and since $y\in CB^+(f)$, Proposition~\ref{p=sliceden} implies
that $y$ is a Lebesgue density point of $(\Psi^s)^{-1}(V)$.
Finally, since $(\Psi^s)^{-1}(V) = \Psi^{-1}(V) \,\mod 0$, 
we obtain that $y$ is a Lebesgue density point of $\Psi^{-1}(V)$.
This completes the proof of Theorem~\ref{t.nonunifjimmy}.
\end{proof}

\section{Examples} \label{s.examples}

Here we will be interested first in the construction of a $C^2$-open class of
maps which are not uniformly center bunched, but display
{\it nonuniform center bunching} in the
sense that the set $\CB$ of center bunched points has full Lebesgue measure.  We then
show, using Corollary \ref {c.weird},
that this class contains $C^2$-stably ergodic maps, and describe an application
of Theorem \ref {t.nonunifjimmy} to the cohomological equation.


All of the following constructions can be carried out in the volume-preserving
setting.  We do it in the symplectic setting, as the arguments are slightly more
subtle.

\subsection{A nonuniformly, but not uniformly, center bunched example}

Let $P$, $Q$ and $S$ be compact symplectic manifolds, and let $F\colon P\to P$,
$G\colon Q\to Q$
and $H\colon S\to S$ be symplectic $C^2$ diffeomorphisms with the following properties:
\begin{enumerate}
\item $F$ and $G$ are Anosov diffeomorphisms.

\item We have
$$
\sup_Q \|DG \vert_{E^s_G}\| < \inf_S \m(DH)^2 \le \sup_S \| DH \|^2 < \inf_Q \m \big( DG\vert_{E^u_G} \big) , 
$$
so that $G\times H\colon Q\times S\to Q\times S$ is partially hyperbolic
and center bunched, with center bundle tangent to the $S$ factor.

\item We have
$$
\sup_P \|DF\vert_{E^s_F}\| < \inf_S \m(DH) \le \sup_S \| DH \| < \inf_P \m \big( DF\vert_{E^u_F} \big) ,
$$
so that $F\times G\times H$ is partially hyperbolic on $M=P\times Q\times S$, 
with center bundle tangent to the 
$S$ factor.

\item  Indicating by $m_P$ the normalized volume measure induced by the
symplectic form on $P$, we have 
$$
\int \log \|DF\vert_{E^s_F}\| \, dm_P<  2 \inf_S \log \m(DH) \le 
2 \sup_S \log \|DH\| < \int \log \m(DF\vert_{E^u_F}) \, dm_P \, .
$$

\item There exists a point $p \in P$ of period $k$ under $F$ such that:
$$ 
\|D_p F^k\vert_{E^s_F}\| < \m(D G\vert_{E^s_G})^k  < 
\|D G\vert_{E^u_G}\|^k < \m(D_p F^k\vert_{E^u_F}) \, ,
$$
which implies that $\bigcup_{j=0}^{k-1} \{F^j(p)\}\times Q\times S$ 
is normally hyperbolic and contained in $\CB$.
\end{enumerate}
Let $\omega$ be the symplectic form in $M = P \times Q \times S$ given by the sum of
the forms on $P$, $Q$ and $S$.  Then $f_0 = F \times G \times H:M \to M$ is
symplectic.


\begin{lemma} \label {product1}

If $f$ is a $C^2$ volume-preserving ($C^2$-small) perturbation of $f_0$,
then $f$ is nonuniformly center bunched.

\end{lemma}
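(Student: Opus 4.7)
The plan is to characterize nonuniform center bunching through the Lyapunov spectrum via Proposition~\ref{p.spectrum} and then transfer the necessary exponent bounds from $f_0$ to its perturbations using the normal hyperbolicity of the center foliation.  Since $f$ is symplectic, at a Lyapunov-regular point $p$ both the forward and backward center-bunching conditions reduce to
\[
2\lambda^c_+(f,p)<\lambda^u_-(f,p),
\]
where $\lambda^c_+$ is the top center exponent and $\lambda^u_-$ is the smallest positive exponent at $p$.  It therefore suffices to verify this inequality at $m$-a.e.\ $p$.

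First I would check the inequality for $f_0=F\times G\times H$.  At $m$-a.e.\ $(x,y,s)$ the product structure splits the exponents: $\lambda^c_+(f_0,(x,y,s))=\mu_1(s)\leq\sup_S\log\|DH\|$, where $\mu_1(s)$ is the top exponent of $H$, and $\lambda^u_-(f_0,(x,y,s))=\min(\alpha(x),\beta(y))$, where $\alpha(x)$ and $\beta(y)$ are the smallest positive Lyapunov exponents of $F$ at $x$ and $G$ at $y$.  Since $F$ and $G$ are Anosov and volume-preserving, hence ergodic, Kingman's theorem applied to the superadditive cocycles $n\mapsto\log\m(D_xF^n|E^u_F)$ and $n\mapsto\log\m(D_yG^n|E^u_G)$, together with hypotheses~(2) and~(4), yields $\alpha(x)\geq\int\log\m(DF|E^u_F)\,dm_P>2\sup_S\log\|DH\|$ for $m_P$-a.e.\ $x$ and the uniform bound $\beta(y)\geq\log\inf_Q\m(DG|E^u_G)>2\sup_S\log\|DH\|$ for every $y$.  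Hence there is a fixed $\delta>0$ with $\lambda^u_-(f_0,\cdot)-2\lambda^c_+(f_0,\cdot)>\delta$ at $m$-a.e.\ point of $M$.

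To extend the estimate to a $C^2$-perturbation $f$, I use that $\cW^c_{f_0}$, the foliation by the smooth fibers $\{(x,y)\}\times S$, is $f_0$-invariant, dynamically coherent, and plainly normally hyperbolic by (3).  Hirsch--Pugh--Shub therefore provide a dynamically coherent $f$-invariant center foliation $\cW^c_f$ whose $C^2$ leaves are $C^1$-close to the original fibers; these leaves still project bijectively onto $P\times Q$, so $f$ descends along $\pi\colon M\to P\times Q$ to an Anosov diffeomorphism $\bar f$ that is $C^1$-close to $F\times G$ and preserves the invariant measure $\pi_*m$.  The splitting $TM=E^c_f\oplus(E^u_f\oplus E^s_f)$ identifies the hyperbolic Lyapunov exponents of $f$ at $p$ with those of $\bar f$ at $\pi(p)$, while $\lambda^c_+(f,p)\leq\sup_M\log\|Df|E^c_f\|$.

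Finally I apply semicontinuity under $C^1$ perturbation.  By Kingman, $\int\lambda^u_-(\bar f,\cdot)\,d(\pi_*m)$ equals $\sup_n n^{-1}\int\log\m(D\bar f^n|E^u_{\bar f})\,d(\pi_*m)$, a supremum of quantities continuous in $f$ and hence lower semicontinuous in $f$.  Absolute continuity of $\pi_*m$ (inherited from absolute continuity of the center foliation) makes $\bar f$ ergodic with respect to $\pi_*m$, so $\lambda^u_-(\bar f,\cdot)$ is $\pi_*m$-a.e.\ a constant that exceeds $2\sup_S\log\|DH\|+\delta/2$ once $f$ is close enough to $f_0$; meanwhile $\sup_M\log\|Df|E^c_f\|$ is $C^1$-continuous in $f$ and stays within $\sup_S\log\|DH\|+\delta/8$ of its value at $f_0$.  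Combining, $2\lambda^c_+(f,\cdot)<\lambda^u_-(f,\cdot)$ at $m$-a.e.\ $p$, and Proposition~\ref{p.spectrum} yields $m(\CB)=1$.  The main obstacle is precisely the construction and control of the Anosov quotient $\bar f$: dynamical coherence of $\cW^c_f$ and absolute continuity of $\pi_*m$ on $P\times Q$ (needed to guarantee ergodicity of $\bar f$ and the a.e.\ constancy of its exponents) are consequences of the HPS machinery together with the absolute continuity of the center foliation, but require care.
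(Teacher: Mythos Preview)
Your reduction via Proposition~\ref{p.spectrum} to the pointwise spectral inequality $2\lambda^c_+<\lambda^u_-$ is correct, as is your verification of this inequality for $f_0$ using ergodicity of $F$ and $G$ together with hypotheses~(2) and~(4).

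The gap is in the perturbation step. You invoke HPS to obtain a persistent center foliation $\cW^c_f$ and then pass to a quotient Anosov diffeomorphism $\bar f$ on $P\times Q$, claiming that $\bar f$ is ergodic with respect to $\pi_*m$ because the center foliation is absolutely continuous. But HPS gives only a $C^0$ leaf conjugacy (with $C^1$ leaves): the quotient $M/\cW^c_f$ acquires no smooth structure, $\bar f$ is merely a homeomorphism, and one cannot speak of $D\bar f$ or of its Lyapunov exponents. More seriously, absolute continuity of the center foliation is \emph{not} a consequence of HPS and can fail robustly---already for perturbations of products, as in the Shub--Wilkinson examples. Without it, $\pi_*m$ need not be absolutely continuous on $P\times Q$, and there is no reason for $\bar f$ to be ergodic with respect to $\pi_*m$. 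Lacking ergodicity, the lower bound on $\int\lambda^u_-\,dm$ coming from semicontinuity does not yield the required \emph{almost-everywhere} inequality: $f$ could in principle have ergodic components on which $\lambda^u_-$ is small.

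The paper avoids the quotient entirely and confronts the possible non-ergodicity directly. For any $f$-invariant set $W$ of positive measure, the normalized restriction $\mu=m|_W/m(W)$ is simultaneously a $u$-state and an $s$-state, because $\cW^u_f$ and $\cW^s_f$ are absolutely continuous when $f$ is $C^2$. The set of such $su$-states is compact and varies upper-semicontinuously with $f$ in the $C^2$ topology; for $f_0$, every $su$-state projects to Lebesgue on $P\times Q$ (its projection is an absolutely continuous invariant measure for the ergodic Anosov map $F\times G$). Hence for $f$ close to $f_0$ the projection to $P$ of any such $\mu$ is weak-$*$ close to $m_P$, which bounds $\frac{1}{m(W)}\int_W\log\|Df|_{E^s_f}\|\,dm$ uniformly over all invariant $W$ and yields the a.e.\ spectral inequality. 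This is exactly the mechanism your quotient argument was reaching for, but carried out on $M$ without ever needing regularity of $\cW^c_f$.
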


\begin{proof}

To show that almost every orbit is forward center bunched,
it is enough to prove that for any $f$-invariant set  $W$ of positive Lebesgue
measure, we have
$$
\frac {1} {m(W)} \int_W \log \|Df|_{E^s_f}\| \, dm < 
2 \inf_M \log \m (Df|_{E^c_f}).
$$
We notice that $E^s_f$ is close to $E^s_F \oplus E^s_G$ and $E^c_f$ is close to
$TS$ everywhere.  Thus the right hand side is close to $2 \inf_S \log {\bf m}(DH)$,
while the left hand side is bounded, up to small error, by the maximum of
$\sup_Q \|DG|_{E^s_G}\|$ and $\int \log \|DF|_{E^s_F}\| \,  d\pi_* \mu$, where
$\mu$ is the normalized restriction of the
Lebesgue measure $m$ to $W$ and $\pi:M \to P$ is the
coordinate projection.  By (2) and (4), we are reduced to showing that $\pi_* \mu$
is weak-$*$ close to $m_P$.

An $f$-invariant probability measure which is absolutely continuous with
respect to the unstable foliation
$\cW^u_f$ will be called an $u$-state for $f$.  One defines
$s$-states analogously.  Let $\cU(f)$ be the set of $u$-states for $f$ and
$\cS(f)$ be the set of $s$-states for $f$.  An $u$-state that is also an
$s$-state will be called an $su$-state.
Since $f$ is $C^2$, the $\cW^u_f$ and $\cW^s_f$ foliations are absolutely
continuous, thus $\mu$ is an $su$-state.  We are going to
show that this already implies that $\pi_* \mu$ is close to $m_P$.

The uniform expansion in the unstable direction as we iterate forward
has a regularization effect which implies that there is an
apriori bound on the densities of the disintegration of an $u$-state for $f$
along $\cW^u_f$: the quotient between the densities at
different points in the same unstable leaf is bounded by $K^d$ where $K$
is a constant (uniform in a $C^2$ neighborhood of $f_0$) and
$d$ is the distance between the points inside the leaf.  (Recall that the
density is defined, in each leaf, only up to scaling but the quotient is
well defined and given by the Anosov-Sinai cocycle; see formula (11.4) in \cite{BDV livro}.)

This bound has the important consequence that $\cU(f)$
is closed (and hence compact) in the weak-$*$ topology.  Moreover, in a $C^2$
neighborhood $\cV$ of $f_0$, the set $\bigcup_{f \in \cV}
\{f\} \times \cU(f)$ is also closed.  We call this fact the
upper-semicontinuity in $f$ 
of the set of $u$-states, see \cite {BDPP} for a detailed proof.

Analogous considerations show that the set of
$s$-states is upper-semicontinuous in $f$.  Thus
$\bigcup_{f \in \cV} \{f\} \times (\cS(f) \cap \cU(f))$ is closed as well,
so the set of $su$-states also depends upper-semicontinuously on $f$.

The product structure of the foliations implies
that an $su$-state for $f_0$
projects onto an $su$-state for $F \times G$, which is $C^2$ Anosov, and
the absence of a central
direction for $F \times G$
implies that the projection is absolutely continuous.  Since $F
\times G$ is Anosov, it is ergodic so the projection is Lebesgue on $P
\times Q$.  Projecting again, we conclude that $\pi_* \nu=m_P$ whenever $\nu$
is an $su$-state for $f_0$ (in fact, an $su$-state for $f_0$ is just the product
of Lebesgue on $P \times Q$ by an arbitrary invariant probability measure on
$S$).  By upper-semicontinuity, if $f$ is
close to $f_0$, the projection of any $su$-state for $f$ is weak-$*$ close
to $m_P$.  The result follows.
\end{proof}

Notice also that we may construct the map $f_0$ so that 
no $f$ nearby is center bunched.
For example, one can arrange that the conditions above hold
and in addition 
there are hyperbolic periodic points $p' = F^\ell(p')$, $q=H^m(q)$ such that 
$$ 
\rho( D_{p'} F^\ell \vert_{E^s_F}) ^{1/\ell} >  \rho( D_{q} H^m )^{-2/m} \, ,
$$ 
where $\rho$ denotes spectral radius.
Note that the main theorem in \cite{Burns Wilk} does not apply to such an example, nor to its
perturbations.

\subsection{Stable ergodicity}

Condition (5) implies that for any $C^1$ perturbation
$f$ of $f_0$, there exists a normally hyperbolic manifold $N_f$,
$C^1$-close to
$\bigcup_{j=0}^{k-1} \{F^j(p)\} \times Q\times S$, whose connected components are
permuted under $f$.


Let us say that $N_f$ is
accessible if for any $x$ and $y$ in the same connected component
of $N_f$, there is an $su$-path in $N_f$ connecting $x$ and $y$.  We say
that $N_f$ is stably accessible if $N_g$ is accessible for every $g$ in a
neighborhood of $f$ in $\Diff^1_\omega(M\times N\times P)$.
These properties are non-void:

\begin{lemma} \label {product2}
For any neighborhood $\cZ$ of $f_0$ in $\Diff^\infty_\omega(M)$
there exists $f\in \cZ$ such that $N_f$ is stably accessible.
\end{lemma}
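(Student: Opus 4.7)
My plan is to construct $f$ as a local symplectic modification of $f_0=F\times G\times H$ supported on a thin tubular neighborhood of $\{F^{k-1}(p)\}\times Q\times S$, leaving $N_{f_0}$ invariant while replacing the return map of $f^k$ on the component through $p$ with a prescribed stably accessible symplectomorphism of $Q\times S$.

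First I would apply Theorem~\ref{t.DW} to the partially hyperbolic symplectomorphism $G^k\times H^k$ of $Q\times S$: any neighborhood of $G^k\times H^k$ contains a stably accessible symplectic diffeomorphism, which by $C^\infty$-approximation (stable accessibility being $C^1$-open) may be taken in $\Diff^\infty_\omega(Q\times S)$; call it $h$. A symplectic pasting argument in the spirit of Lemma~\ref{l.pasting} then produces $f\in\Diff^\infty_\omega(M)\cap\cZ$ which agrees with $f_0$ outside a tube around $\{F^{k-1}(p)\}\times Q\times S$ disjoint from its other iterates, and which sends $(F^{k-1}(p),q,s)$ to $(p,h\circ(G\times H)^{-(k-1)}(q,s))$. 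Because $\bigsqcup_{j=0}^{k-1}\{F^j(p)\}\times Q\times S$ remains $f$-invariant and normally hyperbolic, the HPS uniqueness theorem identifies it with $N_f$, and a direct computation gives $f^k|_{\{p\}\times Q\times S}=h$. The restrictions $\cW^s_f\cap N_f$ and $\cW^u_f\cap N_f$ on each component coincide with the stable and unstable foliations of $h$ on $Q\times S$, so accessibility of $N_f$ reduces to accessibility of $h$, which holds by construction.

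For stable accessibility, let $g\in\Diff^1_\omega(M)$ be $C^1$-close to $f$. HPS persistence produces a normally hyperbolic $N_g$ that is $C^1$-close to $N_f$; each component is a symplectic submanifold of $(M,\omega)$, since the restricted form is $C^0$-close to the non-degenerate form on $N_f$ and lies in the same de Rham class. Moser's trick then yields a $C^1$-close-to-identity symplectomorphism from $Q\times S$ to a component of $N_g$, conjugating the return map of $g^k$ into a symplectic $C^1$-perturbation of $h$ on $Q\times S$. Stable accessibility of $h$ in $\Diff^1_\omega(Q\times S)$ gives accessibility of this conjugated map, and hence of $N_g$.

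The main obstacle is the pasting step, since the modification is supported on a tube around a possibly large submanifold, outside the literal hypothesis of Lemma~\ref{l.pasting}. I expect to handle this by expressing the required local modification of $G\times H$, which is $C^\infty$-close to the identity of $(Q\times S,\omega_Q+\omega_S)$, as the time-$1$ map of a small Hamiltonian flow on $Q\times S$; globalizing the Hamiltonian to $M$ via a smooth cutoff in the $P$-factor supported near $F^{k-1}(p)$; and composing $f_0$ with the time-$1$ flow of the resulting Hamiltonian vector field on $(M,\omega)$. The remaining verifications are direct.
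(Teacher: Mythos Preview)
Your approach is essentially correct, but the paper's argument is considerably simpler and sidesteps precisely the obstacle you identify. Rather than localizing the perturbation near one component of $N_{f_0}$, the paper exploits the product structure of $M=P\times Q\times S$ globally: by \cite{Shub Wilk} there exists $\Phi\in\Diff^\infty_{\omega_{Q\times S}}(Q\times S)$ arbitrarily $C^\infty$-close to the identity such that $\Phi\circ(G\times H)$ is stably accessible, and one then sets $f=(\Id_P\times\Phi)\circ f_0 = F\times\bigl(\Phi\circ(G\times H)\bigr)$. This is automatically a $C^\infty$ symplectomorphism of $(M,\omega)$ lying in $\cZ$, it leaves $N_{f_0}=\bigsqcup_{j=0}^{k-1}\{F^j(p)\}\times Q\times S$ invariant, and its restriction to each component is precisely $\Phi\circ(G\times H)$. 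No pasting, no Hamiltonian cutoff, no need to work with the $k$-th return map.

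Your route would also succeed, but the ``main obstacle'' you flag conceals a small genuine gap: writing the near-identity modification as the time-$1$ map of a Hamiltonian flow on $Q\times S$ requires it to be a \emph{Hamiltonian} (not merely symplectic) diffeomorphism, which is not automatic when $H^1(Q\times S;\R)\neq 0$. This is fixable---the Dolgopyat--Wilkinson perturbations can be taken Hamiltonian---but the global product construction makes the issue disappear. Similarly, the Moser step in your stability argument is unnecessary: stable accessibility of $\Phi\circ(G\times H)$ is $C^1$-open among all partially hyperbolic diffeomorphisms of $Q\times S$ (not just symplectic ones), so the HPS identification of a component of $N_g$ with $Q\times S$ already exhibits $g|_{N_g}$ as a perturbation in the right class, without any symplectic bookkeeping.
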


\begin{proof} In \cite{Shub Wilk} it is shown that for every neighborhood $\cV$ of the identity
in $\Diff^\infty_{\omega_{Q\times S}}(Q \times S)$ there exists $\Phi\in \cV$ such that
$\Phi \circ (G\times H)$ is stably accessible. For such a $\Phi$,
define $\phi\in \Diff^\infty_{\omega}(M\times N\times P)$ by
$\phi = \Id_{M}\times  \Phi$. Then $\phi\circ f_0$ is close to $f_0$ and
satisfies the desired properties.
\end{proof}

\begin{lemma}\label{l.CB access}
If $f$ is $C^1$ near $f_0$ and $N_f$ is accessible, then we can join any two points in
$\CB$ by an $su$-path with corners in $\CB$.
\end{lemma}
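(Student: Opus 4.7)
The plan is to exploit the saturation properties of $\CB^\pm$ together with the fact that $N_f\subset\CB$ (which follows from condition~(5) for $f_0$ and persists under $C^1$-perturbation, since the inequalities in~(5) are strict and $N_f$ depends continuously on $f$ by normal hyperbolicity) in order to reduce matters to a single transverse intersection computed at $f_0$. Since $N_f\subset\CB\subset\CB^+\cap\CB^-$ and, by Proposition~\ref{p.CB is sat}, $\CB^+$ is $\cW^s$-saturated while $\CB^-$ is $\cW^u$-saturated, one obtains
\[
\cW^s_f(N_f)\subset \CB^+ \qquad\text{and}\qquad \cW^u_f(N_f)\subset\CB^-.
\]
Moreover, for any $x\in\CB\subset\CB^-$ the whole leaf $\cW^u_f(x)$ is contained in $\CB^-$; hence any point $y\in\cW^u_f(x)\cap\cW^s_f(N_f)$ automatically lies in $\CB^+\cap\CB^-=\CB$, while the endpoint $z\in\cW^s_f(y)\cap N_f$ of the subsequent stable leg lies in $N_f\subset\CB$.

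Next I would verify that $\cW^u_f(x)\cap\cW^s_f(N_f)$ is nonempty and admits a component above each of the $k$ connected components of $N_f$. At $f=f_0$, a direct computation at any intersection point $y=(y_P,y_Q,y_S)$ yields
\[
T_y\cW^u_{f_0}(x) + T_y\cW^s_{f_0}(N_{f_0}) = (E^u_F\oplus E^u_G) + (E^s_F\oplus TQ\oplus TS) = T_yM,
\]
using $E^u_F\oplus E^s_F=TP$ (since $F$ is Anosov) and $E^u_G\subset TQ$; so the intersection is transverse at $f_0$. Nonemptiness above each of the $k$ components $\{F^j(p)\}\times Q\times S$ of $N_{f_0}$ follows from topological transitivity of the Anosov map $F$, which ensures that $\cW^u_F(x_P)\cap\cW^s_F(F^j(p))\neq\varnothing$ for every $j=0,\ldots,k-1$. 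By persistence of transverse intersections under $C^1$-perturbation, combined with the $C^0$-continuity of $\cW^{u,s}_f$ on compact pieces and of $N_f$ in $f$, the same properties hold in a $C^1$-neighborhood of $f_0$.

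The proof is then immediate. Given $x\in\CB$ and a choice of connected component $C$ of $N_f$, pick any $y\in\cW^u_f(x)\cap\cW^s_f(C)$ and let $z$ be a point of $\cW^s_f(y)\cap C$; the two-leg $su$-path $x\xrightarrow{u}y\xrightarrow{s}z$ has all corners in $\CB$. Applying this to both endpoints $x,x'\in\CB$ with the same choice of $C$ gives $2$-leg $su$-paths ending at points $z,z'\in C$. The assumed accessibility of $N_f$ yields an $su$-path from $z$ to $z'$ internal to $N_f$, whose corners lie in $N_f\subset\CB$; concatenation produces the desired $su$-path from $x$ to $x'$ with all corners in $\CB$.

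The main obstacle is justifying the persistence of the transverse intersection $\cW^u_f(x)\cap\cW^s_f(N_f)$ under the $C^1$-perturbation from $f_0$ to $f$, together with the existence of a component above each component of $N_f$. This is handled by a standard robustness-of-transverse-intersections argument, using the $C^0$-continuity of the stable and unstable foliations of $f$ and of the normally hyperbolic manifold $N_f$ on the relevant compact sets.
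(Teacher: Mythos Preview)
Your argument is correct and follows the same overall strategy as the paper: show that every point of $\CB$ can be joined to $N_f\subset\CB$ by a two-leg $su$-path with corners in $\CB$ (using the $\cW^s$- and $\cW^u$-saturation of $\CB^+$ and $\CB^-$ from Proposition~\ref{p.CB is sat}), then invoke accessibility of $N_f$ to connect inside $N_f$. The one genuine difference is how you establish that $\cW^u_f(x)\cap\cW^s_f(N_f)\neq\emptyset$. The paper does this structurally: since the center foliation of $f_0$ (the $S$-fibers) is compact and normally hyperbolic, $f$ is leaf-conjugate to $f_0$ and the induced action on $M/\cW^c_f$ is topologically conjugate to the Anosov map $F\times G$; for an Anosov map any two points are joined by a two-leg $su$-path, so lifting gives the desired intersection for every $x\in M$ and every $f$ in the neighborhood at once, with no separate persistence argument. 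Your direct transversality-plus-persistence route is also valid and avoids invoking HPS leaf conjugacy, but to make it airtight you should remark that at $f_0$ the intersection can be found within a radius $R$ independent of $x$ (by compactness of $P$ and minimality of the stable and unstable foliations of the Anosov map $F$), so that $C^0$-continuity of the foliations on compact leaf pieces really does suffice for the uniform-in-$x$ persistence you need.
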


\begin{proof}
Fix $x \in N_f$.
Obviously $\cW^c(x) \subset N_f \subset \CB$,
and since $N_f$ is stably accessible, any two points
in $\cW^c(x)$ can be joined by an $su$-path with corners
in $N_f$ and hence in $\CB$.  Thus it is enough to show that any $y \in \CB$ can be
joined to some point in $\cW^c(x)$ through an $su$-path
with corners in $\CB$.  The action of $f$ on $M/\cW^c_f$ is topologically conjugated
to the Anosov map $F \times G$, and under this identification, the projection of any
unstable or stable leaf of $f$ is an unstable or stable leaf of $F \times G$.  Obviously,
for $F \times G$ any two points can be connected by an $su$-path with $2$ legs.
We conclude that for every $y \in M$ there exists $z \in \cW^c(x)$ such that
$\cW^u(y) \cap \cW^s(z) \neq \emptyset$.
When $y \in \CB$, $\cW^u(y) \cap \cW^s(z) \subset \CB$ (since $y,z \in \CB$ and
$\CB^+$ is $\cW^s$-saturated while $\CB^-$ is $\cW^u$-saturated), showing that
$y$ is connected to $\cW^c(x)$ by a $2$-legged $su$-path with corners in $\CB$.
\end{proof}

Putting together Lemmas~\ref{product2}, \ref{l.CB access} and 
Corollary \ref {c.weird} we conclude:

\begin{thm}
If $f$ is $C^2$-close to $f_0$ and $N_f$ is
accessible then $f$ is ergodic
(and in fact, a K-system).
\end{thm}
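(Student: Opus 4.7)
The plan is to read this theorem as a direct assembly of the three preceding results: Lemma~\ref{product1} (nonuniform center bunching of perturbations of $f_0$), Lemma~\ref{l.CB access} (accessibility-through-$CB$ when $N_f$ is accessible), and Corollary~\ref{c.weird} (the ergodicity/$K$-criterion derived from Theorem~\ref{t.nonunifbw}). No fundamentally new argument is needed.

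First I would check that $f$ falls under the hypotheses of Corollary~\ref{c.weird}. Since $f_0=F\times G\times H$ is symplectic and partially hyperbolic by construction (conditions (2)--(3)), any $f$ sufficiently $C^1$-close to $f_0$ is again a symplectic---hence volume-preserving---partially hyperbolic diffeomorphism of $M$, and it is $C^2$ by assumption. Next I would verify the two substantive hypotheses of the corollary. Lemma~\ref{product1}, applied to our $C^2$-small perturbation, asserts that $f$ is nonuniformly center bunched, i.e. the set $CB=CB^+\cap CB^-$ has full Lebesgue measure in $M$. On the other hand, the accessibility of $N_f$ combined with Lemma~\ref{l.CB access} provides that \emph{every} pair of points in $CB$ can be joined by an $su$-path whose corners all lie in $CB$; in particular this holds for almost every such pair. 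Both hypotheses of Corollary~\ref{c.weird} are therefore in force.

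Applying Corollary~\ref{c.weird} then yields that $f$ is ergodic and in fact a $K$-system, completing the proof.

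If I had to identify a ``main obstacle'', it is not in this final assembly but rather hidden in the two lemmas being invoked: the nonuniform center bunching in Lemma~\ref{product1} requires the upper semicontinuity of the set of $su$-states of perturbations of a product of Anosov factors (so that $su$-states of the perturbation project close to Lebesgue on $P$, triggering condition~(4)), while Lemma~\ref{l.CB access} leans on Proposition~\ref{p.CB is sat} (the $\cW^s$-saturation of $CB^+$ and the $\cW^u$-saturation of $CB^-$) and the fact that $f$ acts on $M/\cW^c$ as an Anosov homeomorphism conjugate to $F\times G$, which makes two-legged $su$-paths to $\cW^c(x)$ available. Once those ingredients are granted, the present theorem is immediate.
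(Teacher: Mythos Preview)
Your proof is correct and follows the same route as the paper, which simply records that the theorem is obtained by combining the preceding lemmas with Corollary~\ref{c.weird}. You have in fact identified the ingredients more precisely than the paper's one-line justification: it is Lemma~\ref{product1} (full-measure center bunching), not Lemma~\ref{product2} (existence of stably accessible $N_f$), that is logically required alongside Lemma~\ref{l.CB access} and Corollary~\ref{c.weird}.
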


\subsection{Continuity of bi saturated sections and the cohomological equation}

\begin{lemma}
	
	Let $f:M \to M$ be a $C^2$ volume-preserving partially hyperbolic diffeomorphism
and let $Z$ be a bi essentially saturated set of positive Lebesgue measure.
	If $x \in \supp(m|Z)$ then any $su$-path starting at $x$ can be approximated
by an $su$-path with corners in $Z$.
	
\end{lemma}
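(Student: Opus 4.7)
The plan is to prove by induction on the length $k$ of the $su$-path the strengthened statement that for every $\epsilon > 0$ and every $su$-path $x = x_0, x_1, \ldots, x_k$, the set
\[
S_k(\epsilon) \,=\, \bigl\{\, y_k \in Z \cap B(x_k,\epsilon) : \exists\, su\text{-path } y_0,\ldots,y_k \text{ with } y_i \in Z \text{ and } d(x_i,y_i)<\epsilon\ \forall i \,\bigr\}
\]
has positive Lebesgue measure. The base case $k=0$ is immediate from $x \in \supp(m|Z)$, and the lemma follows from the nonemptiness of $S_k(\epsilon)$ for arbitrary $\epsilon$.

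First I would record the following dichotomy. Fix $\cW^s$- and $\cW^u$-saturated sets $Z^s$, $Z^u$ agreeing with $Z$ mod~$0$. In any foliation box $U$ for $\cW^s$ with smooth transversal $\tau$ and $\cW^s$-projection $\pi\colon U \to \tau$, absolute continuity of $\cW^s$ combined with Fubini applied to the null set $Z \triangle Z^s$ gives: for $m_\tau$-a.e.\ $q \in \tau$, either $q \in Z^s$ and $\cW^s_{\mathrm{loc}}(q)\cap Z$ has full $m_{\cW^s}$-measure in $\cW^s_{\mathrm{loc}}(q)$ (denote the collection of such $q$'s by $\tau^s_{\mathrm{good}}$), or else $q\notin Z^s$ and $\cW^s_{\mathrm{loc}}(q) \cap Z$ is $m_{\cW^s}$-null. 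The analogous dichotomy, furnishing a set $\tau^u_{\mathrm{good}}$, holds in every $\cW^u$-foliation box.

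For the inductive step, suppose without loss of generality that $x_{k+1}\in \cW^s(x_k)$ (the unstable case is handled symmetrically, using $Z^u$). Fix $\delta \ll \epsilon$ and a $\cW^s$-foliation box containing the segment from $x_k$ to $x_{k+1}$, with transversals $\tau$, $\tau'$ at $x_k$, $x_{k+1}$ and $\cW^s$-holonomy $h\colon \tau \to \tau'$, $h(x_k)=x_{k+1}$. The inductive hypothesis gives $m(S_k(\delta)) > 0$; since $S_k(\delta) \subseteq Z$, Fubini combined with the dichotomy shows that the set
\[
T \,=\, \bigl\{\, q \in \tau^s_{\mathrm{good}} : m_{\cW^s}(\cW^s_{\mathrm{loc}}(q)\cap S_k(\delta)) > 0 \,\bigr\}
\]
has positive $m_\tau$-measure. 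For each $q \in T$ we may choose $y_k \in \cW^s_{\mathrm{loc}}(q)\cap S_k(\delta)$ (inheriting an approximating path $y_0,\ldots,y_k$ from the inductive hypothesis) and, because $q \in \tau^s_{\mathrm{good}}$ and $h(q) \in B(x_{k+1},\epsilon)$ for $\delta$ small, choose $y_{k+1}$ from the $m_{\cW^s}$-full subset $\cW^s_{\mathrm{loc}}(q) \cap Z \cap B(x_{k+1},\epsilon)$. Then $y_{k+1} \in \cW^s(y_k) \cap Z$ extends the approximating path, witnessing $y_{k+1} \in S_{k+1}(\epsilon)$. A final application of Fubini, over $T$ and its full fibers, gives $m(S_{k+1}(\epsilon)) > 0$.

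The main subtlety is coordinating the two Fubini/absolute-continuity arguments that the induction requires: the positive bulk measure of $S_k(\delta)$ must descend to a positive transversal measure of admissible holonomy parameters, while essential $\cW^s$-saturation (via the null set $Z \triangle Z^s$) must simultaneously force those parameters to have fibers of $m_{\cW^s}$-full $Z$-content from which the next corner can be drawn. No center bunching or accessibility hypothesis enters the argument; only transverse absolute continuity of $\cW^s$ and $\cW^u$, which is available for $C^2$ partially hyperbolic volume-preserving diffeomorphisms.
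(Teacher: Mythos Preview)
Your proof is correct and rests on the same core mechanism as the paper's: induction on path length, using absolute continuity of $\cW^s$ and $\cW^u$ together with bi essential saturation to propagate a positive-measure set of admissible corners from one leg to the next. The paper packages this more compactly by defining a point $z\in Z$ to be \emph{$k$-pretty} if almost every $w\in\cW^u(z)\cup\cW^s(z)$ is $(k-1)$-pretty (with $0$-pretty meaning $z\in Z$); a single Fubini argument then shows almost every $z\in Z$ is $k$-pretty for all $k$, after which one simply approximates $x_0$ by an $n$-pretty $z_0$ and walks forward along the path. This backward-inductive definition absorbs your per-step Fubini work into one statement and sidesteps the measurability bookkeeping for the sets $S_k(\epsilon)$, but the two arguments are genuinely the same idea viewed from opposite ends of the path. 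One minor technicality in your write-up: a single foliation box containing the entire leg from $x_k$ to $x_{k+1}$ need not exist, but this is harmless since one may subdivide legs by inserting intermediate corners on the same leaf.
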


\begin{proof}
	
	Let us say that $z \in Z$ is $k$-pretty if almost every
$w \in \cW^u(z) \cup \cW^s(z)$ is $k-1$-pretty, where all points
of $Z$ are declared to be $0$-pretty.  Since $\cW^u$ and $\cW^s$ are
absolutely continuous, it follows by induction
that almost every $z \in Z$ is $k$-pretty for every $k$.
	
	Consider an $su$-path connecting $x_0$ to $x_n$ through
$x_1,\dots,x_{n-1}$.  Now just approximate $x_0$ by an $n$-pretty point $z_0$, and then
successively $x_i$ by an $n-i$-pretty point $z_i \in \cW^*(z_{i-1})$.
\end{proof}

\begin{thm}
	
Let $f$ be $C^2$-close to $f_0$ and let $\cX$ and $\Psi$ be as in
Theorem \ref {t.nonunifjimmy}.  If $N_f$ is accessible then
$\Psi$ coincides almost everywhere with a continuous bi invariant
section.
	
\end{thm}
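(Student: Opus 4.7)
The plan is to produce the continuous extension $\hat\Psi$ by propagating the value $\tilde\Psi(x_0)$ at some reference point $x_0$ along $su$-paths using the stable and unstable holonomies of $\cX$, and then to verify that $\hat\Psi$ is well-defined, continuous, bisaturated, and agrees with $\tilde\Psi$ (hence with $\Psi$) on a full measure set. The starting point is the full-measure set $Z := MC(\Psi)\cap CB$: by Lemma~\ref{product1} the set $CB$ has full Lebesgue measure, $MC(\Psi)$ has full measure by the discussion preceding Theorem~\ref{t.nonunifjimmy}, and Theorem~\ref{t.nonunifjimmy} itself yields that $MC(\Psi)\cap CB^+$ is $\cW^s$-saturated with $\tilde\Psi$ there $h^s$-saturated, and $MC(\Psi)\cap CB^-$ is $\cW^u$-saturated with $\tilde\Psi$ there $h^u$-saturated. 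Since both of these sets have full measure, $Z$ is bi essentially saturated, so the preceding approximation lemma applies: any $su$-path based at a point of $M=\supp(m|Z)$ can be approximated by an $su$-path with every corner in $Z$. Chaining along corners, for any $su$-path $x_0,x_1,\ldots,x_n$ entirely in $Z$, the composition of the successive holonomies sends $\tilde\Psi(x_0)$ to $\tilde\Psi(x_n)$.

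Fix $x_0\in Z$ and $\eta_0 := \tilde\Psi(x_0)$. Accessibility of the entire manifold $M$ follows from accessibility of $N_f$ combined with the Anosov projection $M/\cW^c_f\simeq P\times Q$, exactly as in the proof of Lemma~\ref{l.CB access}; so every $y\in M$ is joined to $x_0$ by some $su$-path $\gamma$. I would define $\hat\Psi(y)$ to be the limit of $H_n(\eta_0)$, where $\gamma_n$ is an $su$-path from $x_0$ to a point $y_n\to y$ with all corners in $Z$, produced from $\gamma$ by the approximation lemma, and $H_n$ is the composition of holonomies along $\gamma_n$. The chained saturation of the previous paragraph identifies $H_n(\eta_0)$ with $\tilde\Psi(y_n)$; continuity of $(x,y,\eta)\mapsto h^{\ast}_{x,y}(\eta)$ on local leaves, together with the geometric convergence $\gamma_n\to\gamma$, forces the limit to exist and to agree with $\tilde\Psi(y)$ whenever $y\in Z$.

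For well-definedness independent of the choice of $\gamma$, and for continuity, I would exploit local accessibility near $f_0$ (which is an open property, as in the proof of Lemma~\ref{product2}): any two sufficiently close points $y, y'\in M$ can be joined by an $su$-path with a bounded number of legs and geometric size controlled by $d(y,y')$. Approximating this short path by paths with corners in $Z$ and using continuity of the holonomies on local leaves, $\hat\Psi(y)$ and $\hat\Psi(y')$ are connected by an arbitrarily small composition of local holonomies and hence lie close in $\cX$. This simultaneously gives continuity and path-independence (apply with $y=y'$). Bisaturation of $\hat\Psi$ is then automatic: for $y\in M$ and $z\in\cW^s(y)$, approximating the pair $(y,z)$ by pairs $(y_n,z_n)$ with $z_n\in\cW^s(y_n)\cap Z$ and $y_n\in Z$, Theorem~\ref{t.nonunifjimmy} gives $\tilde\Psi(z_n)=h^s_{y_n,z_n}(\tilde\Psi(y_n))$, and the identity $\hat\Psi(z)=h^s_{y,z}(\hat\Psi(y))$ survives the passage to the limit, and analogously for $h^u$.

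The hard part will be this last continuity/path-independence step. The set $Z$ is not a topological object, so convergence of the holonomy compositions along approximating $su$-paths must be verified in the refinable topology of $\cX$ and must be insensitive both to the choice of approximating sequence and to the original $\gamma$. The argument rests on combining three ingredients: the local continuity of $(x,y,\eta)\mapsto h^{\ast}_{x,y}(\eta)$ built into the definition of a bundle with stable and unstable holonomies; the uniform geometric control on approximating paths provided by the approximation lemma; and the openness of accessibility near $f_0$, which permits comparing nearby $su$-paths through a bounded number of local legs. A minor subsidiary check is that every corner of an approximating path lies automatically in $CB^+\cap CB^-$, which is immediate from $Z\subset CB^+\cap CB^-$ and guarantees that at each alternation between $\cW^s$- and $\cW^u$-holonomy the relevant hypothesis of Theorem~\ref{t.nonunifjimmy} is available.
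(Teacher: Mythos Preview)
Your limit construction collapses to simply defining $\hat\Psi(y):=H_\gamma(\eta_0)$, the holonomy along a chosen $su$-path $\gamma$ from $x_0$ to $y$ applied to $\eta_0$; the approximating sequence is only used to identify this with $\lim_n\tilde\Psi(y_n)$. The real issue is path-independence, i.e.\ showing that $H_\ell(\eta_0)=\eta_0$ for every $su$-loop $\ell$ at $x_0$. Your argument ``apply with $y=y'$'' does not do this: two paths $\gamma_1,\gamma_2$ from $x_0$ to $y$ give two points $H_{\gamma_1}(\eta_0),H_{\gamma_2}(\eta_0)$ in the \emph{same} fiber $\pi^{-1}(y)$, and no ``short path from $y$ to $y$'' can compare them. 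If instead you try to close the loop via approximation, you approximate $\ell$ by a path $\ell_n$ with corners in $Z$ ending at some $z_n\to x_0$, and obtain $H_{\ell_n}(\eta_0)=\tilde\Psi(z_n)\to H_\ell(\eta_0)$; to conclude $H_\ell(\eta_0)=\eta_0$ you would need $\tilde\Psi(z_n)\to\tilde\Psi(x_0)$, i.e.\ continuity of $\tilde\Psi$ at $x_0$, which is precisely what is at stake. (The same circularity infects your claim that $\hat\Psi(y)=\tilde\Psi(y)$ for $y\in Z$.) Separately, the ``local accessibility'' you invoke---nearby points joined by $su$-paths of small diameter with a bounded number of legs---is not the same as stable accessibility, and Lemma~\ref{product2} does not provide it.

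The paper breaks this circularity by exploiting the normally hyperbolic submanifold $N_f$, which you do not use at all. Since $N_f\subset\CB$ and $f|N_f$ is accessible and (uniformly) center bunched, Theorem~\ref{t.nonunifjimmy} gives that $\tilde\Psi|N_f$ is already bisaturated for the subsystem $f|N_f$, and then the argument of \cite[\S8.2]{ASV} (bisaturated over accessible implies continuous) yields that $\tilde\Psi|N_f$ is \emph{continuous}. Now choose the basepoint $x\in N_f$. For any $su$-loop at $x$, approximate it by a path with corners in $\CB$; by adding at most four short legs one arranges the new path to start and end at points $y,z\in N_f$ close to $x$, with holonomy $\tilde h$ taking $\tilde\Psi(y)$ to $\tilde\Psi(z)$. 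Continuity of $\tilde\Psi$ on $N_f$ then forces $\tilde h(\tilde\Psi(y))$ and $\tilde\Psi(z)$ both to be close to $\tilde\Psi(x)$, while continuity of the holonomies makes $\tilde h$ close to $h$; this gives $h(\tilde\Psi(x))=\tilde\Psi(x)$ by contradiction. The submanifold $N_f$ is the missing ``continuity anchor'' that your argument lacks.
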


\begin{proof}

	Since any two points of $\CB$ can be joined by an $su$-path with corners
in $\CB$, and $\CB$ has positive Lebesgue measure, it follows that $MC(\Psi)$
contains $\CB$.

	Let us show that we can define a bi saturated section that coincides
with $\tilde \Psi$ on $\CB$.  By the argument of Section~8.2 of \cite {ASV} (where center
bunching does not play a role), the accessibility of $f$ implies that such a section is
necessarily continuous, and since $m(\CB)=1$, $\Psi$ must coincide almost
everywhere with it.

We notice that, restricting the above considerations to $N_f \subset \CB$,
and using that $N_f$ is accessible, we can already conclude that $\tilde \Psi|N_f$
is continuous.

Let $x \in N_f$.  We are going to show that, for any $su$-path starting and
ending at $x$,
the composition of holonomies along the $su$-path fixes $\tilde \Psi(x)$.
Since $f$ is accessible, this allows us to define a bi saturated section:
join $x$ to any $y \in M$ by any $su$-path and apply the holonomy to $\tilde
\Psi(x)$. 
If well defined, this new section automatically will coincide
with $\tilde \Psi(x)$ on $\CB$ by Theorem \ref {t.nonunifjimmy} (since,
by Lemma~\ref{l.CB access}, $x$ can be joined to any $y \in \CB$ through an $su$-path
with corners in $\CB$).

Let us consider thus an $su$-path starting and ending in $x_0$, and its
composed holonomy map $h$.  Assume that $h(\tilde \Psi(x)) \neq \tilde
\Psi(x)$.  By the
previous lemma, it is approximated by an $su$-path with corners in $\CB$. 
Apriori, the extremes of the latter path do not belong to $N_f$, but by
adding at most $4$ short legs to the latter (two at the beginning and two at the
end), we obtain an $su$-path starting and ending at points $y,z \in N_f$. 
Since the corners of this path all belong to $\CB$, the corresponding
composed holonomy map $\tilde h$ takes $\tilde \Psi(y)$ to $\tilde
\Psi(z)$.  Since $y,z \in N_f$ are close to $x$, we can use the continuity
of holonomy maps, and of $\tilde \Psi|N_f$, to conclude that $\tilde
h(\tilde \Psi(y))$ is close to $h(\tilde \Psi(x))$ and $\tilde \Psi(z)$ is
close to $\tilde \Psi(x)$.  Since we assumed that $h(\tilde \Psi(x)) \neq
\tilde \Psi(x)$, this implies that $h(\tilde \Psi(y)) \neq \tilde \Psi(z)$,
contradiction.
\end{proof}

One particular interesting application is the case of the cohomological equation
(see Example 2 in Section \ref {s.jimmy}): if $\psi : M \to \R$ is a H\"older continuous
function then a measurable solution of the cohomological equation
$\psi=\Psi \circ f-\Psi$ coincides almost everywhere with a continuous one.

One can also deduce non-degeneracy of the Lyapunov spectrum of generic
bunched cocycles over $f$ (see Example 3 of Section \ref {s.jimmy}).
However, the application of those ideas to the analysis of the central
Lyapunov exponents of $f$ themselves is more subtle since this cocycle is not
bunched (but only nonuniformly bunched), and will be carried out elsewhere: we will
show for instance that in the case that $S$ is a surface then stably Bernoulli,
nonuniformly hyperbolic examples like above can be obtained.

\subsection{Further examples}

The mechanism for ergodicity implemented above can be abstracted somewhat to a
criterion for ergodicity, which we quickly describe.

Let $f:M \to M$ be a $C^2$ accessible partially hyperbolic volume preserving
diffeomorphism.  Let $N \subset M$ be a normally hyperbolic compact (not necessarily connected)
submanifold.\footnote {Our arguments would also work by taking $N$ as a
(non-compact) leaf of a normally hyperbolic lamination.}
It is easy to see that $T_x N=(E^s(x)
\cap T_x N) \oplus (E^c(x) \cap T_x N) \oplus (E^u(x) \cap T_x N)$ at every
$x \in N$.  If those three subbundles are non-trivial, then this splitting
is partially hyperbolic.
We are interested on the case that $N$ is
$c$-saturated in the sense that $T_x N \supset E^c(x)$ for every $x \in N$.
Assume that $f|N$ is center bunched and has some open accessibility
class.  We will show that
\emph{the restriction of Lebesgue measure to the set $\CB$
is either null or ergodic.}

Let us first note that, since $N$ is normally hyperbolic, the condition that
$f|N$ is center bunched implies that $N \subset \CB$.

For a set $U \subset N$, let $\tilde U$ be the set of all $x \in M$ such
that there exists $y \in U$ such that
$\cW^u(x) \cap \cW^s(y) \neq \emptyset$.  Notice that
since $N$ is $c$-saturated, it is
clear that $\interior \tilde U \supset \interior U$.

We claim that if $\CB$ is dense, and if $U$ is an
open accessibility class for $f|N$, then any two points in $\CB \cap
\bigcup_{k \in \Z} f^{k}(\interior \tilde U)$ can be joined by an $su$-path with
all corners in $\CB$.  

Since $f$ is accessible, almost every orbit is dense
(by Theorem \ref {t.Brin}); hence the claim implies that almost every pair in $\CB$
can be joined by an $su$-path with corners in $\CB$, which gives the
conclusion, by Corollary \ref {c.weird}; that is, if $\CB$ has positive measure
(which, by Theorem \ref {t.Brin}, implies that it is dense), then the restriction
of $f$ to $\CB$ is ergodic.

To prove the claim, note first that if $x \in \CB$, then $\cW^u(x) \cap \cW^s(y)
\subset \CB$,
for any $y\in N$.  Since $U$ is an accessibility class of $f|N$,
and $N\subset CB$, it follows that
any two points in $\CB \cap \tilde U$ can be connected
by an $su$-path with all corners in $\CB$.

Since $f$ is accessible, 
so is $f\times f$; Theorem \ref {t.Brin} implies that $f\times f$ is topologically transitive.
This implies that for any three open sets
$V_1,V_2,V \subset M$
there exists $n\in\Z$ such that
$V_j\cap f^n(V)\ne\emptyset$, for $j=1,2$. 
In particular, for any pair of integers $k_1, k_2$, there exists $n \in \Z$ such that
$f^{k_j}(\interior \tilde U)\cap f^n(\interior \tilde U)\ne\emptyset$, for $j=1,2$. 
Since $\CB$ is dense,
we can find points $x'_j \in \CB \cap f^n(\interior \tilde U)
\cap f^{k_j}(\interior \tilde U)$.  Then we can join $x_1$ to
$x_2$ by an $su$-path with corners in $\CB$ by going through $x_1'$ and
$x_2'$: $x_j$ and $x_j'$ can be joined since
$f^{-k_j}(x_j),f^{-k_j}(x_j') \in \CB \cap \tilde U$, while
$x_1'$ and $x_2'$ can be joined since
$f^{-n}(x_1'),f^{-n}(x_2') \in \CB \cap \tilde U$.  This proves the claim.


One can also apply the argument of the previous section to conclude, for instance, that if $\psi:M \to \R$ is a H\"older continuous function then any measurable solution of the cohomological equation $\psi=\Psi \circ f-\Psi$ defined over $\CB$ coincides almost everywhere with a continuous solution defined in the whole $M$.  We notice that here it is only needed to
assume that $m(\CB)>0$, and apriori the system could even be non-ergodic as far
as the current theory goes.

%
%
%
%
%
%

\appendix

\section{Reobtaining Some Results from~\cite{Bon Diaz Pujals}}


A variation of the method presented in Section~\ref{s.elliptic}
allows one to obtain various \cite{Bon Diaz Pujals}-like (topological)
conclusions from \cite{Bochi Viana}-like (ergodic) results.
To illustrate, we will reobtain the following:

\begin{otherthm}[\cite{Bon Diaz Pujals}] \label{t.original BDP}
A diffeomorphism that has a non-dominated homoclinic class
can be perturbed to display a nearby
periodic orbit with all eigenvalues of the same modulus.
\end{otherthm}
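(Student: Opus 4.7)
The plan is to follow the strategy used for Theorem~\ref{t.BDP} in Section~\ref{s.elliptic}, replacing the symplectic ingredients by their dissipative counterparts. The goal is first to produce an \emph{almost-homothetic} periodic orbit in (a continuation of) the homoclinic class --- one whose period-$N$ derivative has all eigenvalues $\mu_i$ satisfying $\frac{1}{N}\log|\mu_i|$ within $\varepsilon$ of a common value $\lambda$ --- and then to tune the derivative along this orbit so that all eigenvalues have exactly the same modulus.

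To produce the almost-homothetic periodic orbit, I would invoke the linear cocycle version of \cite{Bochi Viana}: since $H(p,f)$ admits no dominated splitting and contains a dense set of hyperbolic periodic orbits, an arbitrarily $C^1$-small perturbation of the cocycle $Df$ over $f|_{H(p,f)}$ produces, for a generic diffeomorphism, an ergodic invariant probability $\mu$ supported in (the continuation of) the homoclinic class such that all Lyapunov exponents of $Df$ with respect to $\mu$ agree. A Franks-type lemma realizes this cocycle perturbation as a $C^1$-small perturbation $f_1$ of the diffeomorphism. Applying Ma\~n\'e's Ergodic Closing Lemma to a Lyapunov-regular, $\mu$-typical, $f_1$-closable point $x$ then yields a further perturbation $f_2$ for which $x$ is a periodic point of some large period $N$. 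The shadowing built into the Closing Lemma, together with a computation analogous to the one in the proof of Lemma~\ref{l.almost elliptic}, forces the time-$N$ Lyapunov spectrum at $x$ to be arbitrarily close to that of $\mu$, and hence almost-homothetic.

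The second task is to perturb the derivatives along this orbit to make the moduli exactly equal. This requires a dissipative analogue of Lemma~\ref{l.nicolas}: given matrices $A_1,\dots,A_n \in GL(d,\R)$ whose product has all eigenvalues $\mu_i$ with $\frac{1}{n}\log|\mu_i|$ within $\varepsilon$ of a common value $\lambda$, there exist $B_1,\dots,B_n$ with $\|B_i-\mathrm{Id}\|\le e^{\varepsilon}-1$ such that every eigenvalue of $A_n B_n\cdots A_1 B_1$ has modulus exactly $e^\lambda$. The argument is modelled on that of Lemma~\ref{l.nicolas}: one chooses orthonormal frames adapted to the Lyapunov filtration of $A_n\cdots A_1$ along the iterates and inserts diagonal rescalings by $\exp((\lambda-\lambda_i)/n)$ to equalize all exponents; the absence of a symplectic constraint actually makes this step \emph{simpler} than Lemma~\ref{l.nicolas}. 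A standard (non-symplectic) pasting lemma then realizes the perturbed cocycle as the derivative of a $C^1$-small perturbation $\tilde f$ of $f_2$, producing the desired periodic orbit with all eigenvalues of the same modulus.

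The principal obstacle lies in the first step: one must ensure that the cocycle perturbation furnished by \cite{Bochi Viana} can be realized inside the homoclinic class while keeping $\mu$ supported there and preserving the homoclinic relationship with the continuation of $p$. The mechanism for exponent collapse in \cite{Bochi Viana} relies on mixing derivative behaviours along periodic orbits with distinct Lyapunov spectra, and the density of such orbits in the non-dominated class $H(p,f)$ is precisely what drives the argument; careful bookkeeping is then needed in order to realize each successive cocycle perturbation as a diffeomorphism perturbation supported in small, disjoint neighbourhoods of these periodic orbits.
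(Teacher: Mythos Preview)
Your overall strategy matches the paper's: use the cocycle version of \cite{Bochi Viana}, then the Ergodic Closing Lemma, then a dissipative analogue of Lemma~\ref{l.nicolas}, then Franks' Lemma. But the order in which you assemble the pieces creates a genuine obstacle, and one ingredient is missing entirely.

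First, you never explain where the ergodic measure $\mu$ comes from. The cocycle result (Proposition~\ref{p.our BV}) needs a \emph{fixed} ergodic automorphism to work over; it does not produce a measure, it perturbs a cocycle over a given one. Moreover, one needs $\supp\mu = H$: if the support is a proper subset of $H$, the restriction of $Df$ to $\supp\mu$ could very well admit a dominated splitting even though $Df|_H$ does not, and then Proposition~\ref{p.our BV} says nothing. The paper handles this with a separate Lemma~\ref{l.erg full supp}: every homoclinic class supports an ergodic measure of full support, obtained by a Baire argument inside the space of invariant measures on an increasing union of horseshoes.

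Second, the ``principal obstacle'' you flag is self-inflicted. You try to realize the $L^\infty$ cocycle perturbation as a diffeomorphism perturbation $f_1$ \emph{before} closing. This cannot be done in general: the perturbed cocycle is only measurable (or, after Lusin, continuous), and a continuous linear cocycle close to $Df$ is not the derivative of anything. The paper never attempts this. Instead it keeps the perturbed cocycle $B$ as an abstract continuous cocycle over the \emph{unperturbed} map $f$, applies the Ergodic Closing Lemma to $f$ itself to get $\tilde f$ with a periodic point $x$ whose $\tilde f$-orbit shadows an $f$-orbit, computes the product $B^{mp}_{\tilde f}(x)$ along the $\tilde f$-periodic orbit (which makes sense because $B$ is continuous and the orbit stays near $H$), and only \emph{then}, along a single finite orbit, invokes Franks' Lemma to insert the adjusted matrices as actual derivatives. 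Realizing a cocycle along one periodic orbit is trivial; realizing it globally is not, and is unnecessary here.
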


Let us explain the result from \cite{Bochi Viana} that we need.
Let $(X,\mu)$ be a non-atomic probability space, and let $f$ be an ergodic automorphism of it.
Fix a positive integer $d$ and let $L^\infty$
be the set of measurable maps (called \emph{cocycles})
$A: X \to \GL(d,\R)$
such that $\|A^{\pm 1}\|$ are $\mu$-essentially bounded,
where maps that differ on zero sets are identified.
Notice $L^\infty$ is a Baire space.

Given a cocycle $A \in L^\infty$,
asymptotic information about the products
$$
A^n_f(x) = A(f^{n-1} (x)) \cdots A(x)
$$
is given by Oseledets Theorem.
So let $\R^d = E^1(x) \oplus \cdots \oplus E^k(x)$
be the Oseledets splitting, defined for $\mu$-a.e.\ $x\in X$,
and let $\lambda_1(A) \ge \cdots \ge \lambda_d(A)$ the Lyapunov exponents repeated according to multiplicity.
(Notice that $k$ and the Lyapunov exponents are constant $\mu$-almost everywhere
by ergodicity.)
We also write $L_i(A) = \sum_{j=1}^i \lambda_j(A)$.
We have
$$
L_i(A)  = \inf_{n \ge 1} \frac{1}{n} \int \log \| \mathord{\wedge}^i A^n_f(x) \| \, d\mu(x) \, .
$$
As a consequence of this formula,
the function $A \in L^\infty \mapsto L_i (A)$ is upper-semicontinous,
and hence its points of continuity form a residual set.
Another semi-continuity property that follows easily from the formula is:

\begin{lemma}\label{l.semicont}
Given $A\in L^\infty$, $C> \|A^{\pm 1}\|_\infty$, and $\eps>0$,
there exists $\delta>0$ such that if $B\in L^\infty$
is such that if $\|B^{\pm 1}\|_\infty < C$ and $\mu [B \neq A] < \delta$
then $L_i(B) < L_i(A) + \eps$.
\end{lemma}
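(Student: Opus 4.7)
The plan is to exploit the infimum formula
$$L_i(A) = \inf_{n\ge 1} \frac{1}{n}\int \log \|\mathord{\wedge}^i A^n_f(x)\|\, d\mu(x)$$
by comparing $\frac{1}{n}\int \log \|\mathord{\wedge}^i B^n_f\|\, d\mu$ to $\frac{1}{n}\int \log \|\mathord{\wedge}^i A^n_f\|\, d\mu$ for a well-chosen, fixed $n$, and then letting $\delta$ depend on that $n$.

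First, given $A$ and $\eps>0$, I would pick $n \ge 1$ large enough so that
$$\frac{1}{n}\int \log \|\mathord{\wedge}^i A^n_f(x)\|\, d\mu(x) < L_i(A) + \tfrac{\eps}{2}.$$
Next, observe that since $\|A^{\pm 1}\|_\infty, \|B^{\pm 1}\|_\infty < C$, one has the pointwise bound $|\log \|\mathord{\wedge}^i A^n_f(x)\|| \le n i \log C$ and similarly for $B$; the lower bound uses $\|\mathord{\wedge}^i M\| \ge \m(M)^i \ge C^{-i}$ for any $M$ with $\|M^{-1}\| \le C$. Now let $G_n = \{x : B(f^j x) = A(f^j x) \text{ for } j=0,\dots,n-1\}$. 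By $f$-invariance of $\mu$ and subadditivity,
$$\mu(X\setminus G_n) \le n\, \mu[B \neq A] < n\delta,$$
and on $G_n$ we have $B^n_f(x) = A^n_f(x)$.

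Splitting the integral over $G_n$ and $X\setminus G_n$ and using the uniform bound $ni\log C$ on each factor, I obtain
$$\int \log \|\mathord{\wedge}^i B^n_f\|\, d\mu \le \int \log \|\mathord{\wedge}^i A^n_f\|\, d\mu + 2\, \mu(X\setminus G_n)\cdot ni \log C,$$
so
$$\frac{1}{n}\int \log \|\mathord{\wedge}^i B^n_f\|\, d\mu \le \frac{1}{n}\int \log \|\mathord{\wedge}^i A^n_f\|\, d\mu + 2 n i \delta \log C.$$
Finally, choose $\delta > 0$ (depending on the already fixed $n$, $i$, $C$, $\eps$) small enough that $2ni\delta \log C < \eps/2$. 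Combining the two estimates with the infimum formula applied to $B$,
$$L_i(B) \le \frac{1}{n}\int \log \|\mathord{\wedge}^i B^n_f\|\, d\mu < L_i(A) + \eps,$$
which is the required bound.

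There is no serious obstacle here; the main point is the order of quantifiers: $n$ is chosen first, based only on $A$ and $\eps$, and then $\delta$ is chosen last, small enough for that particular $n$. The use of $f$-invariance of $\mu$ to propagate the smallness of $\mu[B\neq A]$ to smallness of $\mu(X\setminus G_n)$ is the only mildly delicate step, and the two-sided bound $|\log\|\mathord{\wedge}^i M\|| \le ni\log C$ coming from the hypothesis $\|M^{\pm 1}\|_\infty < C$ is what makes the error on the bad set $X\setminus G_n$ controllable.
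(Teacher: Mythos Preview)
Your proof is correct and is precisely the argument the paper has in mind: the paper does not spell out a proof of this lemma, merely remarking that it ``follows easily from the formula'' $L_i(A)=\inf_n \tfrac{1}{n}\int \log\|\mathord{\wedge}^i A^n_f\|\,d\mu$, and your write-up is the natural way to unpack that remark. The order of quantifiers, the use of $f$-invariance to bound $\mu(X\setminus G_n)$, and the two-sided bound $|\log\|\mathord{\wedge}^i M\||\le ni\log C$ are all exactly right.
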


Let $\R^d = E(x) \oplus F(x)$ be a splitting defined for $\mu$-a.e.\ $x$
and invariant under a cocycle $A\in L^\infty$.
Also assume that $\dim E$ is constant (called the \emph{index} of the splitting).
We say that the splitting is \emph{dominated}
(or, more precisely, that \emph{$E$ dominates~$F$})
in the case that there exists
$m \in \N$ such that
\begin{equation}\label{e.def dom}
\frac{\left\|A^m(x)|_{F(x)}\right\|}
{\m \left( A^m(x)|_{E(x)} \right)} \le \frac{1}{2}
\quad \text{for $\mu$-a.e.\ $x\in X$.}
\end{equation}
It is not hard to check the 
following elementary properties\footnote{Or see e.g.~\cite[Appendix~B]{BDV livro}.}:
\begin{enumerate}
\item The angle between $E$ and $F$ is essentially bounded from below.
\item For a fixed index, the dominated splitting is unique over the points where it exists.
\item In the case that the space $X$ is compact Hausdorff and $A$ is a continuous map,
then the splitting can be defined over each point of $\supp \mu$,
and varies continuously.
\end{enumerate}

We say that the Oseledets splitting of $A$ is \emph{trivial} if $k=1$,
and \emph{dominated} if $k>1$ and
$E^1 \oplus \cdots \oplus E^i$ dominates $E^{i+1} \oplus \cdots \oplus E^k$
for all $i \in \{1,\ldots, k-1\}$.

\begin{otherthm}[\cite{Bochi Viana}]\label{t.BV}
A cocycle $A \in L^\infty$ is a point of continuity of all $L_i$'s
if and only if
the Oseledets splitting is trivial or dominated.
\end{otherthm}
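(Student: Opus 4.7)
The plan is to prove the two directions separately, with the bulk of the argument concentrated in the ``only if'' direction. For the ``if'' direction, I would handle the trivial and dominated cases in turn. If the spectrum is trivial, then $\lambda_j(A) = L_d(A)/d$ for every $j$; since $L_d(B) = \int \log|\det B|\,d\mu$ depends continuously on $B$ (it is even Lipschitz once $\|B^{\pm 1}\|_\infty$ is bounded), and each $L_i$ is upper semi-continuous, the chain of inequalities $L_d(B)/d \le \lambda_1(B) \le L_1(A)+o(1) = L_d(A)/d + o(1)$ forces $\lambda_1(B) - L_d(B)/d \to 0$, and iterating this observation for the remaining exponents gives continuity of every $L_i$. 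If instead the Oseledets splitting is dominated, the key point is that dominated splittings of fixed index persist under small $L^\infty$ perturbations: the defining inequality~\eqref{e.def dom} is an open condition (up to replacing $1/2$ by, say, $2/3$), and the invariant bundles $F^i_B = E^1_B \oplus \cdots \oplus E^i_B$ are uniquely determined and vary continuously. Because $F^i_B$ is $B$-invariant, one has $L_i(B) = \int \log \big|\det(B|_{F^i_B})\big|\,d\mu$, and continuity of this integral in $B$ follows from the continuous dependence of the bundle and of $\log\det$.

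The ``only if'' direction is the hard part. Assume $A$ is a continuity point of every $L_i$ and that the Oseledets splitting is non-trivial; I will show it must be dominated. Suppose instead that for some $1\le i<k$ the decomposition $(E^1\oplus\cdots\oplus E^i)\oplus(E^{i+1}\oplus\cdots\oplus E^k)$ fails~\eqref{e.def dom} for every $m$. The plan is to construct, for each $\eps>0$, a perturbation $B\in L^\infty$ with $\|B^{\pm 1}\|_\infty$ uniformly bounded, $\mu[B\ne A]<\eps$, and $L_i(B) \le L_i(A) - c$ for a fixed $c = c(A,i)>0$ independent of $\eps$. Combined with Lemma~\ref{l.semicont}, producing such a $B$ for arbitrarily small $\eps$ contradicts continuity of $L_i$ at~$A$.

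The construction of $B$ is a Mañé-style rotation perturbation. First, non-domination at index $i$ together with a Pliss-type selection argument yields, for every $m$, a set of positive measure of ``bad'' points $x$ along whose forward orbit of length $\ge m$ the ratio $\|A^n_f(x)|_{E^{i+1}\oplus\cdots\oplus E^k}\| / \m(A^n_f(x)|_{E^1\oplus\cdots\oplus E^i})$ stays close to $1$; by Oseledets regularity, these orbit segments also realize the $i$-th and $(i{+}1)$-st exponents up to small error. Second, along such a segment one composes $A$ with a small rotation $R_x$ at the two endpoints, where $R_x$ rotates a chosen direction of $E^i(x)$ towards $E^{i+1}(x)$ (and the inverse rotation at the return time), arranged so that the perturbed cocycle maps a one-dimensional sub-line of the old $E^i$ into the old $E^{i+1}$ after $m$ steps. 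Third, by choosing a disjoint collection of such orbit segments covering a definite fraction of phase space (via a Vitali/tower argument), the perturbed cocycle $B$ satisfies $\lambda_i(B)+\lambda_{i+1}(B) = \lambda_i(A)+\lambda_{i+1}(A)$ (since $\log\det$ is unchanged on the relevant $2$-plane bundle) while $\lambda_i(B)-\lambda_{i+1}(B)$ drops by a definite amount, yielding the claimed bound on $L_i(B)$.

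The main obstacle will be step three: carrying out the ``mixing rotations'' so that (a) the cumulative $L^\infty$ change is concentrated on an arbitrarily small measure set, yet (b) the perturbation produces a \emph{fixed} drop in $\lambda_i-\lambda_{i+1}$, not just a drop that degenerates with the measure of the support. This is the delicate multi-scale issue at the core of the Bochi--Viana argument: one must use the freedom in choosing $m$ together with the full non-domination hypothesis to ensure that even rare rotation events accumulate enough ``swaps'' of directions between the two Oseledets blocks to push the two exponents together by a definite amount, uniformly in~$\eps$.
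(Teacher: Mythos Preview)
The paper does not prove this theorem; it is quoted from \cite{Bochi Viana} and used as a black box to derive Proposition~\ref{p.our BV}. Your outline is a fair summary of the original Bochi--Viana strategy: the ``if'' direction is soft (continuity of $L_d$ plus upper-semicontinuity of each $L_i$ handles the trivial case; persistence of dominated splittings together with the determinant formula $L_i(B)=\int \log\big|\det(B|_{F^i_B})\big|\,d\mu$ handles the dominated case), and the ``only if'' direction is the Ma\~n\'e-type construction that exchanges directions between adjacent Oseledets blocks along long orbit segments where domination fails. Your identification of the crux is accurate: the whole difficulty lies in arranging the rotations on a set of arbitrarily small measure while still producing a \emph{uniform} drop in $L_i$, and this is exactly where the multi-scale tower argument of \cite{Bochi Viana} does the real work. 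So there is nothing to compare against in this paper, but your plan matches the source.

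One caution on the ``if'' direction in the dominated case: in the purely measurable $L^\infty$ setting you cannot speak of the invariant bundles $F^i_B$ ``varying continuously'' in any pointwise topological sense. What actually makes the argument go through is a cone-field criterion: the dominated splitting for $A$ yields $\mu$-a.e.\ invariant cone fields that are still mapped strictly into themselves by any $B$ that is $L^\infty$-close to $A$, from which one extracts $B$-invariant bundles $F^i_B$ whose angle with $F^i_A$ is essentially bounded by the $L^\infty$ distance. The integrand $\log\big|\det(B|_{F^i_B})\big|$ then converges in $L^1$, which is what you need. Phrase the argument this way rather than invoking pointwise continuity of $x\mapsto F^i_B(x)$.
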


\begin{rem}\label{r.groups}
As shown in \cite{Bochi Viana},
the statement of Theorem~\ref{t.BV} remains true if $\GL(d,\R)$ is replaced by
any Lie group of matrices that acts transitively on the projective space,
for example the symplectic group.
\end{rem}

We will deduce from Theorem~\ref{t.BV} the following:

\begin{prop}\label{p.our BV}
If $A \in L^\infty$ has no dominated splitting then
there exists $B \in L^\infty$ arbitrarily close to $A$
whose Oseledets splitting is trivial.
\end{prop}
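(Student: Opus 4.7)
The plan is to combine Theorem~\ref{t.BV} with the openness of dominated splittings and a Bochi--Viana type perturbation argument that merges Lyapunov exponents. First I would verify that the set $\cD \subset L^\infty$ of cocycles admitting a dominated splitting of some index is open: the standard cone-field characterization of domination carries over to the $L^\infty$ category, since cone invariance is a pointwise condition stable under $L^\infty$-small perturbations. Consequently $\cD^c$ is closed, and contains $A$ by hypothesis.

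Next, using that upper semicontinuity of each $L_i$ makes the set $\cC$ of common continuity points residual (as asserted right after the definition of $L_i$), I would choose a sequence $A_n \to A$ with $A_n \in \cC$. By Theorem~\ref{t.BV}, each $A_n$ has trivial or dominated Oseledets splitting. If infinitely many $A_n$ have trivial Oseledets we are done. Otherwise, passing to a further subsequence, each $A_n$ has a dominated splitting of some fixed index $i \in \{1, \ldots, d-1\}$, with domination constant $m_n$ as in~\eqref{e.def dom}. I claim $m_n \to \infty$: otherwise, using $A_n \to A$ in $L^\infty$ together with the essential lower bound on the angle between the corresponding $V_n^+$ and $V_n^-$ (the first of the elementary properties listed after~\eqref{e.def dom}), one could extract subsequential Grassmannian limits of the splittings and obtain an $M$-dominated splitting for $A$ with $M=\liminf m_n$, contradicting $A\in\cD^c$.

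Finally, I would apply a quantitative variant of Lemma~\ref{l.nicolas}---the Bochi--Viana cocycle perturbation, run along the orbit $f^n$ via Rokhlin-tower-type constructions instead of a single periodic orbit---to each $A_n$ across its marginal splitting of index $i$. Divergence of $m_n$ means the splitting becomes arbitrarily weak, so a perturbation of size $o(1)$ in $L^\infty$ suffices to equalize $\lambda_i(A_n)$ with $\lambda_{i+1}(A_n)$, producing $A_n'$ close to $A_n$ (hence close to $A$) with strictly fewer distinct Lyapunov exponents than $A_n$. Iterating the pair of moves---approximate in $\cC$, then merge across the marginal splitting---at most $d-1$ times yields a cocycle $B$ arbitrarily close to $A$ with a single Lyapunov exponent, i.e.\ trivial Oseledets splitting.

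The main obstacle is the third step: making the merging quantitative in the marginal domination regime, and budgeting the perturbation sizes across the iteration so that the cumulative perturbation over at most $d-1$ rounds is smaller than the prescribed $L^\infty$-distance to $A$. This is exactly the content of the Bochi--Viana technology that underlies Theorem~\ref{t.BV}, and it plays here the same role that Lemma~\ref{l.nicolas} plays in the proof of Theorem~\ref{t.BDP}.
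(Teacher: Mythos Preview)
Your iteration in the third step does not close. After you merge $\lambda_i$ and $\lambda_{i+1}$ in $A_n$ to produce $A_n'$, the cocycle $A_n'$ may still carry dominated splittings at indices $j \ne i$ (inherited from $A_n$), so in general $A_n' \notin \cN$. You therefore cannot restart the argument from $A_n'$: approximating $A_n'$ by points of $\cC$ may land you on cocycles whose domination constant at index $j$ stays bounded (since $A_n'$ itself is dominated there), and then no $o(1)$ merge is available. Restarting from $A$ instead just reproduces the original sequence, so nothing is gained. There is a second, related problem: the merging step itself---taking a cocycle with a genuine dominated splitting of large constant $m_n$ and equalizing $\lambda_i$ with $\lambda_{i+1}$ by a perturbation of size $o(1)$---is not a consequence of Theorem~\ref{t.BV} as a black box (indeed, $A_n$ is a \emph{continuity} point of every $L_i$); it requires re-entering the Bochi--Viana construction, which defeats the purpose of citing the theorem.

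The paper bypasses all of this with a single auxiliary lemma. Since $\cN$ is closed (your observation that $\cD$ is open), it is itself a Baire space, and the upper-semicontinuous restrictions $L_i|\cN$ have a residual set of common continuity points inside $\cN$. Lemma~\ref{l.L infty} then shows that any continuity point of $L_i|\cN$ is already a continuity point of $L_i$ on all of $L^\infty$: given $A \in \cN$, one builds sets $Z_l \subset X$ of arbitrarily small measure with the property that any cocycle agreeing with $A$ on $Z_l$ has $\mathrm{Hull}$ containing $\mathrm{Hull}(A)$, hence lies in $\cN$; combined with Lemma~\ref{l.semicont}, this transfers any drop of $L_i$ near $A$ into a drop along a sequence in $\cN$. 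With this in hand the proof is one line: pick $B \in \cN$ close to $A$ at which every $L_i|\cN$ is continuous; by the lemma $B$ is a continuity point of each $L_i$, so by Theorem~\ref{t.BV} its Oseledets splitting is dominated or trivial, and membership in $\cN$ rules out the former. No merging, no iteration, no quantitative control of domination constants is needed.
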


The proof of the proposition requires a few preliminaries.

Given a cocycle $A \in L^\infty$, 
we define $\mu_A$ as a probability measure on $\GL(d,\R)^\Z$ by
taking the push-forward of $\mu$ under the map $x \mapsto (A(f^n(x)))_n$.
Notice $\mu_A$ is invariant under the shift.
Let $\mathrm{Hull}(A) = \supp \mu_A$; this is a compact Hausdorff space.
Let $\hat{A} : \mathrm{Hull}(A) \to \GL(d,\R)$ be the projection on the zeroth coordinate,
considered as a cocycle over the shift on $\mathrm{Hull}(A)$.
This new cocycle has the advantage of being continuous.
Using the elementary properties listed above, it is easy to see
that a cocycle $A\in L^\infty$ has a dominated splitting if and only if $\hat{A}$
has one.  This means that the existence of a dominated splitting for $A$ depends only on $\mathrm{Hull}(A)$;
in particular, if $B$ has a dominated splitting and $\mathrm{Hull}(A) \subset \mathrm{Hull}(B)$,
then $A$ has a dominated splitting.

Let $\cN$ indicate the set of cocycles $A\in L^\infty$
that have no dominated splitting.
Then $\cN$ is a $G_\delta$ subset\footnote{More precisely, $\cN$ is a closed set, but we will not need this.}
of $L^\infty$, and thus a Baire space.
Indeed, the set of $A \in L^\infty$ that have a dominated splitting with fixed index and fixed $m$ as in \eqref{e.def dom}
is easily seen to be a closed set.

\begin{lemma}\label{l.L infty}
If $A\in \cN$ is a point of continuity of $L_i|\cN$ then $A$ is a point of continuity of~$L_i$.
\end{lemma}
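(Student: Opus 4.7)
The plan is to argue by contradiction. Assume $A\in\cN$ is a continuity point of $L_i|_{\cN}$ but not of $L_i$ on the whole of $L^\infty$. Upper semicontinuity of $L_i$, a consequence of the formula $L_i(A)=\inf_n n^{-1}\!\int\log\|\mathord\wedge^i\! A^n_f\|\,d\mu$, furnishes $\eps>0$ and a sequence $B_n\to A$ in $L^\infty$ with $L_i(B_n)\le L_i(A)-\eps$ for every $n$. The goal is to manufacture $\tilde B_n\in\cN$ with $\tilde B_n\to A$ in $L^\infty$ and $L_i(\tilde B_n)<L_i(A)-\eps/2$, which would contradict continuity of $L_i|_{\cN}$ at $A$.

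First I would reduce to the case where no $B_n$ lies in $\cN$, so each $B_n$ admits some dominated splitting; passing to a further subsequence, all these splittings have the same index $j\in\{1,\dots,d-1\}$. Let $m_n$ denote the least integer witnessing \eqref{e.def dom} for $B_n$. Since the sets $D(j,m)$ are closed for every fixed $m$ and $A\notin\bigcup_m D(j,m)$, the sequence $m_n$ must tend to infinity: if along some subsequence $m_n=m$ were constant, then $B_n\in D(j,m)$ for all large $n$ and, by closedness, $A\in D(j,m)$, contradicting $A\in\cN$. So the dominated splittings of $B_n$ become arbitrarily marginal.

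Next I would construct $\tilde B_n$ by replacing $B_n$ by $A$ on a carefully chosen set $V_n\subset X$: set $\tilde B_n=A$ on $V_n$ and $\tilde B_n=B_n$ on $X\setminus V_n$. The convergence $\tilde B_n\to A$ in $L^\infty$ is automatic regardless of $V_n$, since
$$
\|\tilde B_n-A\|_\infty \;=\; \esssup_{x\notin V_n}\|B_n(x)-A(x)\| \;\le\; \|B_n-A\|_\infty \;\xrightarrow[n\to\infty]{}\;0.
$$
Moreover, taking $\mu(V_n)$ below the threshold $\delta_n$ furnished by Lemma~\ref{l.semicont} (applied with any $C>\|B_n^{\pm 1}\|_\infty$, uniformly available because $B_n\to A$ in $L^\infty$, and with tolerance $\eps/4$) yields $L_i(\tilde B_n)\le L_i(B_n)+\eps/4<L_i(A)-3\eps/4$. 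Since $\delta_n$ stays bounded below by some $\delta>0$, this constraint on $\mu(V_n)$ is mild.

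The delicate point, and the main obstacle, is to choose $V_n$ so that $\tilde B_n\in\cN$, that is, so that the perturbation destroys every dominated splitting of $\tilde B_n$. Here the divergence $m_n\to\infty$ is essential: the cone invariance supporting the dominated splitting of $B_n$ is arbitrarily weak, so a small-measure change suffices to break it. For $\mu(V_n)$ in the range $[c/m_n,\delta]$ — nonempty for all large $n$ precisely because $m_n\to\infty$ — a typical orbit of $f$ visits $V_n$ a definite number of times during its first $m_n$ iterates (by Birkhoff), and at those instants the modified cocycle acts through $A$, which does not preserve the invariant cone field of $B_n$; this breaks the cone invariance of index $j$. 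A generic choice of $V_n$ further prevents the emergence of dominated splittings of any other index in $\tilde B_n$, using that $A\in\cN$ itself preserves no invariant cone field of any index. The hard part is making this last assertion precise: one must balance the requirement $\mu(V_n)<\delta$ (so Lemma~\ref{l.semicont} applies) against the quantitative breaking condition coupling $\mu(V_n)$ and $m_n$, and one must simultaneously block the reappearance of a dominated splitting of a different index — a task whose feasibility rests on the marginality $m_n\to\infty$ inherited from $A\in\cN$.
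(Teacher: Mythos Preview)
Your overall strategy matches the paper's: argue by contrapositive, take $B_n\to A$ with $L_i(B_n)\le L_i(A)-\eps$, modify $B_n$ to equal $A$ on a set of small measure, and invoke Lemma~\ref{l.semicont} to control the drop in $L_i$. The divergence is entirely in how one certifies that the modified cocycle $\tilde B_n$ lies in $\cN$.

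Your proposed mechanism for this step is incomplete, and this is the genuine gap. You argue that the domination constant $m_n$ of $B_n$ must diverge, and then propose that replacing $B_n$ by $A$ on a set $V_n$ of measure roughly between $c/m_n$ and $\delta$ will break the cone invariance of index $j$. But nothing in your sketch actually establishes that such a $\tilde B_n$ has \emph{no} dominated splitting: even granting that the index-$j$ cone field of $B_n$ is disrupted, the hybrid cocycle could well admit a different dominated splitting (possibly of the same index, with a different cone field), and your appeal to a ``generic choice of $V_n$'' blocking all other indices is purely heuristic. The quantitative link you suggest between $\mu(V_n)$, $m_n$, and the destruction of domination is never made precise, and it is not clear that it can be.

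The paper sidesteps this difficulty with a structural observation stated just before the lemma: whether a cocycle has a dominated splitting depends only on its hull $\mathrm{Hull}(A)=\supp\mu_A\subset\GL(d,\R)^\Z$, and if $\mathrm{Hull}(A)\subset\mathrm{Hull}(B)$ with $A\in\cN$ then $B\in\cN$. The paper then constructs, for each $l$, a set $Z_l$ of measure tending to $0$ with the property that \emph{any} cocycle agreeing with $A$ on $Z_l$ has hull containing $\mathrm{Hull}(A)$; this is done by sampling, for each element of a countable dense subset of $\mathrm{Hull}(A)$ and each scale, a tiny set of points whose orbit-sequence under $A$ approximates that element. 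Setting $\tilde B_{n,l}=A$ on $Z_l$ and $\tilde B_{n,l}=B_n$ elsewhere then gives $\tilde B_{n,l}\in\cN$ automatically, with no cone-field analysis whatsoever. This is the key idea you are missing.
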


\begin{proof}
Assume that $L_i$ is \emph{not} continuous at some $A \in \cN$;
we will show that neither is $L_i|\cN$.


Let $a^k=(a^k_n)_n$, $k \geq 0$ be a dense sequence in $\mathrm{Hull}(A)$.
For $j \geq 0$, let
$U_{k,j} \subset \GL(d,\R)^\Z$ be the set of all sequences $(x_n)_n$
with $\|x_n-a^k_n\|<2^{-j}$ for every $|n| \leq j$.  Then each
$U_{k,j}$ is open in $\GL(d,\R)^\Z$ and for each $k \geq 0$,
$\{U_{k,j}\}_{j \geq 0}$
is a fundamental system of neighborhoods of $a^k$.
Let $D_{k,j} \subset X$ be the set of all $x$ such that
$(A(f^n(x)))_n \in U_{k,j}$.  Since $a^k \in \supp \mu_A$, we have
$\mu(D_{k,j})>0$, and since $\mu$ is non-atomic, for every $l \geq 0$
we can choose a subset $D_{k,j,l} \subset D_{k,j}$
with $0<\mu(D_{k,j,l})<2^{-k-j-l}$.  Let $Z_l=\bigcup_{k,j \geq 0}
\bigcup_{|n| \leq j} f^n(D_{k,j,l})$.  Then $\mu(Z_l) \to 0$ as $l \to
\infty$.  Moreover, if $B \in L^\infty$ is any cocycle that coincides with
$A$ on some $Z_l$, then for every $x \in
D_{k,j,l}$, and every $|n|\leq j$, we have $B(f^n(x)) = A(f^n(x))$; the definition of
$U_{k,j}$ then gives that $(B(f^n(x)))_n \in U_{k,j}$.  
This implies successively that $\mu_B(U_{k,j}) \geq \mu(D_{k,j,l})>0$ for every
$k,j \geq 0$, $a^k \in \mathrm{Hull}(B)$ for every $k \geq 0$,
$\mathrm{Hull}(B) \supset \mathrm{Hull}(A)$, and $B \in \cN$.


Since $L_i$ is upper-semicontinuous and not continuous at $A$,
there exists a sequence $A_n \in L^\infty$ converging to $A$
and $\eps>0$ such that $L_i(A_n) < L_i(A) - \eps$ for each~$n$.
Let $B_{n,l}$ be the cocycle equal to $A$ on $Z_l$ and equal to $A_n$ elsewhere.
By Lemma~\ref{l.semicont}, for each $n$ there exists $l_n$ such that
$L_i(B_{n,l_n}) < L_i(A_n) + \eps/2$.
Thus the sequence
$B_{n,l_n}$ is in $\cN$, converges to $A$,
and satisfies $L_i(B_{n,l_n}) < L_i(A) - \eps/2$.
This shows that $L_i|\cN$ is not continuous at $A$, as desired.
\end{proof}

Now we can give the:

\begin{proof}[Proof of Proposition~\ref{p.our BV}]
Let $A$ be a element of $\cN$, that is, a cocycle without dominated splitting.
Since $\cN$ is a Baire space and the functions $L_i$ are upper-semicontinuous,
we can find a point $B$ of continuity of all functions $L_i|\cN$ that is as close to $A$ as desired.
By Lemma~\ref{l.L infty}, $B$ is a point of continuity of all $L_i$'s,
and thus, by Theorem~\ref{t.BV}, its Oseledets splitting is either dominated or trivial.
Since $B \in \cN$, the former alternative is forbidden
and thus all Lyapunov exponents of $B$ are equal.
\end{proof}

\bigskip

Now let us use these results to prove Theorem~\ref{t.original BDP}.
Our approach to needs a suitable measure to start with:

\begin{lemma}\label{l.erg full supp}
For every homoclinic class $H$, there exists an ergodic invariant
probability measure whose support is $H$.
\end{lemma}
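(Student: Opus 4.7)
Let $H = H(p)$ be the homoclinic class of the hyperbolic periodic point $p$, so by definition $H$ is the closure of the set $P(p)$ of hyperbolic periodic points homoclinically related to $p$. Write $\cM(f|H)$ for the compact metrizable convex space of $f$-invariant Borel probability measures with support in $H$, with the weak-$*$ topology. Fix a countable basis of open sets $\{V_n\}$ of $H$; density of $P(p)$ in $H$ implies each $V_n$ contains an orbit in $P(p)$, with associated ergodic periodic measure $\mu_n \in \cM(f|H)$.

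The plan is a Baire category argument in $\cM(f|H)$. The first step is to show that periodic measures supported on orbits of $P(p)$ are dense in $\cM(f|H)$. Any finite collection of pairwise homoclinically related hyperbolic periodic orbits is contained in a topologically transitive hyperbolic basic set $\Lambda \subset H$, produced from the transverse homoclinic intersections via the $\lambda$-lemma and Smale's theorem. On each such $\Lambda$, Sigmund's theorem gives density of periodic measures in $\cM(f|\Lambda)$. Taking a nested exhaustion of $P(p)$ by finite subsets yields basic sets $\Lambda_k \subset H$ with $\overline{\bigcup_k \Lambda_k} = H$, and one approximates an arbitrary $\mu \in \cM(f|H)$ first by measures supported on some $\Lambda_k$ and then by periodic measures on $\Lambda_k$, using ergodic decomposition, Birkhoff averages, and shadowing to realize long generic orbit segments by genuine closed orbits in $\Lambda_k$.

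Once density of periodic measures is in hand, two standard $G_\delta$ subsets of $\cM(f|H)$ come into play. The set $\cE$ of ergodic measures is a $G_\delta$ by Parthasarathy's theorem, and since every periodic measure is ergodic, $\cE$ is $G_\delta$-dense. The set of measures of full support equals $\cF = \bigcap_n \{\mu : \mu(V_n) > 0\}$, also a $G_\delta$; it is dense because, for any $\mu \in \cM(f|H)$ and any $\epsilon > 0$, the convex combination $(1-\epsilon)\mu + \epsilon\mu_n$ charges $V_n$ and lies within $O(\epsilon)$ of $\mu$. By Baire, $\cE \cap \cF$ is a $G_\delta$-dense subset of $\cM(f|H)$, and any element of it is an ergodic invariant probability measure with support equal to $H$.

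The main obstacle is the density of periodic measures in $\cM(f|H)$. In the uniformly hyperbolic setting this is Sigmund's theorem, but a general homoclinic class need not itself be hyperbolic, so approximating an arbitrary invariant measure on $H$ by measures supported on a single horseshoe $\Lambda_k \subset H$ requires justification. A direct route is to use the ergodic decomposition to reduce to the ergodic case, then apply Birkhoff's theorem to locate long orbit segments whose empirical averages approximate $\mu$, and finally invoke the inclination lemma together with a closing/shadowing argument based at $p$ to realize these segments as periodic orbits homoclinically related to $p$ inside some $\Lambda_k$. If this step proves delicate, one can alternatively cite the density of periodic measures in $\cM(f|H)$ established by Abdenur--Bonatti--Crovisier.
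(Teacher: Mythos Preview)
Your Baire-category strategy is the right framework, and the paper's proof follows the same high-level outline: it also identifies the ergodic measures and the full-support measures as $G_\delta$ subsets and intersects them. The crucial divergence is in the choice of ambient space. You work in all of $\cM(f|H)$ and therefore need the ergodic measures to be dense there; you obtain this from density of periodic measures in $\cM(f|H)$, which you either sketch via shadowing or defer to Abdenur--Bonatti--Crovisier. This is the gap. A general homoclinic class is not hyperbolic, so there is no shadowing lemma available on $H$: an ergodic measure on $H$ may be supported far from any horseshoe, and long Birkhoff segments of its generic points need not be closeable into periodic orbits homoclinically related to $p$. The Abdenur--Bonatti--Crovisier result you cite holds only for $C^1$-\emph{generic} diffeomorphisms, whereas the lemma (and the theorem it feeds into) must hold for an arbitrary $f$. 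So density of periodic measures in $\cM(f|H)$ is simply not available here.

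The paper's fix is to avoid $\cM(f|H)$ altogether and run Baire in the smaller compact set $W=\overline{\bigcup_i \cM(H_i)}$, where $H_1\subset H_2\subset\cdots$ is an increasing sequence of horseshoes with $\overline{\bigcup H_i}=H$. Inside each $\cM(H_i)$, Sigmund's theorem (specification for subshifts of finite type) gives that ergodic measures of full support on $H_i$ form a residual set $G_i$. The sets $W_i = W\cap \cM_e(H)\cap \cM(H,H_i)$ are $G_\delta$ in $W$, and each $W_i$ is dense in $W$ because it contains $G_j$ for every $j\geq i$. Intersecting over $i$ gives an ergodic measure whose support contains every $H_i$, hence equals $H$. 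Note in particular that your convex-combination argument for density of $\cF$ would fail inside $W$, since $W$ need not be convex; the paper replaces this with the nested-horseshoe structure, which is exactly what makes the density of ``support $\supset H_i$'' work within $W$.
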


\begin{proof}
This is a simple consequence of the fact that
any non-trivial homoclinic class $H$ is contained in the closure of a
countable union of {\it horseshoes} $H_1 \subset H_2 \subset \cdots$
(by a horseshoe we mean an invariant compact set restricted to which the
dynamics is topologically conjugate to a transitive subshift of
finite type).
This allows one to construct a wealth of invariant
measures with support $H$ (for instance, with positive entropy),
as suitable ``infinite Markovian'' measures, but below we will proceed by a
somewhat less direct argument.

Given a compact invariant set $X \subset M$, let $\cM(X)$ be
the set of invariant probability measures $\mu$ with $\supp \mu \subset X$, endowed with
the weak-star topology.  Let $\cM_e(X) \subset \cM(X)$ be the set of ergodic
measures, and for any compact subset $Y \subset X$, let $\cM(X,Y)$ be the
set of invariant measures whose support contains $Y$.  It is easy to see
that $\cM_e(X)$ and $\cM(X,Y)$ are always $G_\delta$ subsets of $\cM(X)$.

Since $H_i$ is a horseshoe, both $\cM_e(H_i)$ and
$\cM(H_i,H_i)$ are dense (and hence residual) in $\cM(H_i)$.
Let $G_i = \cM_e(H_i) \cap \cM(H_i,H_i)$.
Let $W \subset \cM(H)$ be the closure of the union of the $\cM(H_i)$.
Let $W_i = W \cap \cM_e(H) \cap \cM(H,H_i)$, which is a $G_\delta$-subset of $W$.
Notice that $W_i$ contains $G_j$ for each $j \ge i$.
Since $G_j$ is a $G_\delta$-dense subset of $\cM(H_j)$,
it follows that $W_i$ is dense in $W = \overline {\bigcup_{j \geq i} \cM(H_j)}$ for every $i$.
Now, $W$ is a compact Hausdorff and hence Baire space,
and we conclude that $\bigcap W_i$ is a dense subset of $W$.
Since $H = \overline {\bigcup H_i}$, the set
$\bigcap W_i$ is precisely $W \cap \cM_e(H) \cap \cM(H,H)$.
In particular, $\cM_e(H) \cap \cM(H,H)$ is non-empty, as desired.
\end{proof}

\begin{proof}[Proof of Theorem~\ref{t.original BDP}]
Let $f$ be a diffeomorphism and let $H$ be a homoclinic class that has no dominated splitting.
Choose an ergodic probability measure $\mu$ whose support is $H$, using Lemma~\ref{l.erg full supp}.

We will consider $L^\infty$-perturbations $A$ of the derivative of $f$ restricted to $H$.
Such an object $A$ is the assignment
for $\mu$-a.e.\ $x\in H$ of a linear map
$A(x) : T_x M \to T_{f(x)} M$ that is close to $Df(x)$, and varies measurably.
Now, using Proposition~\ref{p.our BV} 
we can find such a perturbation $A$ of the derivative whose Lyapunov exponents coincide
$\mu$-almost everywhere.
Using Lusin's Theorem, we may alter $A$ on a set of arbitrarily small $\mu$-measure,
while keeping it uniformly close to $Df$, to obtain a \emph{continuous} perturbation $B$.
It follows from Lemma~\ref{l.semicont} that
the Lyapunov exponents of $B$ are all close to each other $\mu$-almost everywhere.
In other words,
there is a small number $\eps>0$ such that
$$
\lim_{n \to \infty} \frac{1}{n} \log \frac{\|B_f^n(x)\|}{\m(B_f^n(x))} \le \frac{\eps}{2}
\quad \text{for $\mu$-a.e.\ $x\in H$,}
$$
where we indicate $B_f^n(x) = B(f^{n-1}(x)) \cdots B(x)$.
Next we apply the Ergodic Closing Lemma (imitating the proof of Lemma~\ref{l.almost elliptic})
and find a $C^1$-perturbation $\tilde f$ of $f$ that has a periodic point $x$ of period $p$
such that
$$
\frac{\|B_{\tilde f}^{mp}(x)\|}{\m(B_{\tilde f}^{mp}(x))} < e^{\eps m p}
\quad \text{for some $m \ge 1$.}
$$
This implies that the moduli of the eigenvalues of $B_{\tilde f}^{p}(x)$
are all close to each other.
By means of an (easier) dissipative analogue of Lemma~\ref{l.nicolas},
we can perturb $B$ along the $\tilde f$-orbit of $x$
to make the eigenvalues of $B_{\tilde f}^{p}(x)$ of the same moduli.
By Franks' Lemma one can perturb the diffeomorphism again,
keeping the periodic orbit and inserting the desired derivatives.
This concludes the proof.
\end{proof}


\begin{thebibliography}{BMVW}

\bibitem[AV]{AV flavors}
{F.~Abdenur, M.~Viana.}
Flavors of partial hyperbolicity.
In preparation.


\bibitem[AK]{Anosov Katok}
D.~V.~Anosov, A.~B.~Katok.
New examples in smooth ergodic theory. Ergodic diffeomorphisms.
\textit{Trans.\ Moscow Math.\ Soc.\ }23 (1970), 1-35.

\bibitem[AM]{Arbieto Matheus}
A.~Arbieto, C.~Matheus.
A pasting lemma and some applications for conservative systems.
\textit{Ergod.\ Th.\ Dynam.\ Sys.\ }27 (2007), no. 5, 1399--1417.

\bibitem[Arna]{Ar}
M.-C.~Arnaud.
Le ``closing lemma'' en topologie $C\sp 1$.
\textit{M\'em.\ Soc.\ Math.\ Fr.\ }74 (1998).

\bibitem[ABC]{Arn Bon Cro}
M.-C.~Arnaud., C.~Bonatti, S.~Crovisier.
Dynamiques symplectiques g\'en\'eriques.
\textit{Ergod.\ Th.\ Dynam.\ Sys.\ }25 (2005), 1401--1436.

\bibitem[Arno]{LArnold}
L.~Arnold.
\textit{Random dynamical systems}.
Springer Verlag, 2003.

\bibitem[A]{Avila}
A.~Avila.
On the regularization of conservative maps.
Preprint.

\bibitem[ASV]{ASV}
A.~Avila, J.~Santamaria, M.~Viana.
Cocycles over partially hyperbolic maps.
Preprint.


\bibitem[B]{Bochi sympl}
{J.~Bochi.}
$C^1$-generic symplectic diffeomorphisms: partial hyperbolicity and zero center Lyapunov exponents.
To appear in \textit{J.\ Inst.\ Math.\ Jussieu}.

\bibitem[BV]{Bochi Viana}
{J.~Bochi, M.~Viana.}
The Lyapunov exponents of generic volume preserving and symplectic maps.
\textit{Annals of Math.\ }161 (2005), 1423--1485.


\bibitem[BC]{Bon Cro}
C.~Bonatti, S.~Crovisier.
R\'ecurrence et g\'en\'ericit\'e.
\textit{Invent.\ Math.\ }158 (2004), 33--104.

\bibitem[BDP]{Bon Diaz Pujals}
C.~Bonatti, L.~D\'\i az, E.~Pujals.
A $C^1$-generic dichotomy for diffeomorphisms: Weak forms of
hyperbolicity or infinitely many sinks or sources.
\textit{Annals of Math.\ }158 (2003), 355--418.

\bibitem[BDV]{BDV livro}
{C.~Bonatti, L.~D\'iaz, M.~Viana.}
\textit{Dynamics beyond uniform hyperbolicity.}
Springer, 2005.


\bibitem[BMVW]{BMVW}
C.~Bonatti, C.~Matheus, M.~Viana, A.~Wilkinson.
Abundance of stable ergodicity.
\textit{Comment.\ Math.\ Helv.\ }79 (2004), 753--757.


\bibitem[Br]{Brin}
{M.~Brin.}
Topological transitivity of a certain class of dynamical systems, and flows of frames on manifolds of negative curvature.
\textit{Functional Anal.\ Appl.\ }9 (1975), 8--16.

\bibitem[BuDP]{Burns Dolgo Pesin}
{K.~Burns, D.~Dolgopyat, Y.~Pesin.}
Partial hyperbolicity, Lyapunov exponents, and stable ergodicity.
\textit{J.\ Stat.\ Phys.\ }109 (2002), 927--942.

\bibitem[BDPP]{BDPP}
{K.~Burns, D.~Dolgopyat, Y.~Pesin, M.~Pollicott.}
Stable ergodicity for partially hyperbolic attractors with negative central exponents.
\textit{J.\ Mod.\ Dyn.\ }2 (2008), 63--81.

\bibitem[BW]{Burns Wilk}
{K.~Burns, A.~Wilkinson.}
On the ergodicity of partially hyperbolic systems.
To appear in \textit{Annals of Math.}

\bibitem[DW]{Dolgo Wilk}
{D.~Dolgopyat, A.~Wilkinson.}
Stable accessibility is $C^1$ dense.
Geometric Methods in Dynamics II, \textit{Ast\'erisque} 287 (2003), 33--60.

\bibitem[G]{Gourmelon}
{N.~Gourmelon.}
Adapted metrics for dominated splittings.
\textit{Ergod.\ Th.\ Dynam.\ Sys.\ }27 (2007), 1839--1849.


\bibitem[HPS]{HPS}
M.~Hirsch, C.~Pugh, M.~Shub.
Invariant manifolds.
\textit{Lecture Notes in Mathematics}, Vol.\ 583. Springer-Verlag, Berlin-New York, 1977.

\bibitem[HT]{Horita Tah}
{V.~Horita, A.~Tahzibi.}
Partial hyperbolicity for symplectic diffeomorphisms.
\textit{Annales Inst.\ H.\ Poincar\'e -- Analyse non lin\'eaire} 23 (2006), 641--661.

\bibitem[M]{Mane ergodic}
{R.~Ma\~{n}\'{e}.}
An ergodic closing lemma.
\textit{Annals of Math.\ }116 (1982), 503--540.

\bibitem[P]{Petersen}
{K.~Petersen.}
\textit{Ergodic Theory}.
Cambridge Univ.\ Press, 1989.

\bibitem[OU]{OU}
J.~C.~Oxtoby, S.~M.~Ulam.
Measure-preserving homeomorphisms and metrical transitivity.
\textit{Annals of Math.\ }42, (1941), 874--920.


\bibitem[PS]{Pugh Shub JEMS}
{C.~Pugh, M.~Shub.}
Stable ergodicity and julienne quasiconformality.
\textit{J.\ Eur.\ Math.\ Soc.\ }2 (2000), 125--179.

\bibitem[PSW]{Pugh Shub Wilk}
{C.~Pugh, M.~Shub, A.~Wilkinson.}
H\"older foliations.
\textit{Duke Math.\ J.\ }86 (1997), 517--546.


\bibitem[RRTU]{RRTU 2d center}
{F.~Rodriguez Hertz, M.~A.~Rodriguez Hertz, A.~Tahzibi, R.~Ures.}
A criterion for ergodicity of non-uniformly hyperbolic diffeomorphisms.
\textit{Electron.\ Res.\ Announc.\ Math.\ Sci.\ }14 (2007), 74--81.



\bibitem[RRU]{RRU 1d center}
{F.~Rodriguez Hertz, M.~A.~Rodriguez Hertz, R.~Ures.}
Accessibility and stable ergodicity for partially hyperbolic diffeomorphisms with 1d-center bundle.
To appear in \textit{Invent.\ Math.}

\bibitem[SX]{SX robust trans}
{R.~Saghin, Z.~Xia.}
Partial hyperbolicity or dense elliptic periodic points for $C^1$-generic symplectic diffeomorphisms.
\textit{Trans.\ Amer.\ Math.\ Soc.\ }358  (2006), 5119--5138.

\bibitem[SW]{Shub Wilk}
{M.~Shub, A.~Wilkinson.}
Stably ergodic approximation: two examples.
\textit{Ergod.\ Th.\ Dynam.\ Sys.\ }20 (2000), 875--893.

\bibitem[T]{Tah}
{A.~Tahzibi.}
Robust transitivity and almost robust ergodicity.
\textit{Ergod.\ Th.\ Dynam.\ Sys.\ }24 (2004), 1261--1269.

\bibitem[W]{Wi}
A.~Wilkinson.
The cohomological equation for partially hyperbolic diffeomorphisms.
Preprint.


\bibitem[Z]{Zehnder}
{E.~Zehnder}.
Note on smoothing symplectic and volume preserving diffeomorphisms.
\textit{Lect.\ Notes in Math.\ }597 (1977), 828--854.

\end{thebibliography}
\end{document}